\theoremstyle{plain}
  \newtheorem{theorem}{Theorem}[section]
  \newtheorem{proposition}[theorem]{Proposition}
  \newtheorem{corollary}[theorem]{Corollary}
  \newtheorem{lemma}[theorem]{Lemma}
\theoremstyle{definition}
  \newtheorem{definition}[theorem]{Definition}
  \newtheorem{remark}[theorem]{Remark}
\newcommand{\R}{{\mathbb R}}
\newcommand{\C}{{\mathbb C}}
\newcommand{\Z}{{\mathbb Z}}
\newcommand{\Q}{{\mathbb Q}}
\newcommand{\F}{{\mathbb F}}
\newcommand{\ccdot}{{\,\cdot\,}}
\newcommand{\Proj}{{\mathbb P}}
\newcommand{\Pone}{\mathbb{P}^1}
\newcommand{\sO}{{\mathcal O}}
\newcommand{\cube}{A}
\newcommand{\sM}{{\mathcal M}}
\newcommand{\sF}{{\mathcal F}}
\newcommand{\NS}{\mathrm{NS}}
\newcommand{\Norm}{\mathrm{Norm}}
\newcommand{\Pic}{\mathrm{Pic}}
\newcommand{\Aut}{\mathrm{Aut}}
\newcommand{\tns}{\otimes}
\newcommand{\Sym}{\mathrm{Sym}}
\newcommand{\Ga}{\mathbb{G}_a}
\newcommand{\Gm}{\mathbb{G}_m}
\newcommand{\SL}{\mathrm{SL}}
\newcommand{\GL}{\mathrm{GL}}
\newcommand{\Sp}{\mathrm{Sp}}
\newcommand{\SO}{\mathrm{SO}}
\newcommand{\disc}{\mathrm{disc}\,}
\newcommand{\cH}{\mathrm{H}}
\newcommand{\subs}{\lrcorner \,}
\newcommand{\ch}{\mathrm{h}}
\newcommand{\T}{\mathrm{T}}
\newcommand{\PGL}{\mathrm{PGL}}
\newcommand{\id}{\mathrm{Id}}
\newcommand{\fd}{F}
\newcommand{\fdbar}{{\kern.1ex\overline{\kern-.3ex F \kern+.1ex}\kern.1ex}}
\DeclareMathOperator{\rank}{rank}
\begin{document}

\authorheadline{Manjul Bhargava, Wei Ho, and Abhinav Kumar}
\runningtitle{Orbit Parametrizations for K3 Surfaces}

\begin{frontmatter}

\title{Orbit Parametrizations for K3 Surfaces}
\author[1]{Manjul Bhargava}
\address[1]{Department of Mathematics, Princeton University, Princeton, NJ 08544
\ead{bhargava@math.princeton.edu}}
\author[2]{Wei Ho}
\address[2]{Department of Mathematics, University of Michigan, Ann Arbor, MI 48109
\ead{weiho@umich.edu}}
\author[3]{Abhinav Kumar}
\address[3]{Department of Mathematics, Massachusetts Institute of Technology, Cambridge MA 02139; Current address: Department of Mathematics, Stony Brook University, Stony Brook, NY 11794
\ead{thenav@gmail.com}}

\begin{abstract}

We study moduli spaces of lattice-polarized K3 surfaces in terms of
orbits of representations of algebraic groups.  In particular, over an
algebraically closed field of characteristic $0$, we show that in many
cases, the nondegenerate orbits of a representation are in bijection
with K3 surfaces (up to suitable equivalence) whose N\'eron-Severi
lattice contains a given lattice.  An immediate consequence is that
the corresponding moduli spaces of these lattice-polarized K3 surfaces
are all unirational.  Our constructions also produce many
fixed-point-free automorphisms of positive entropy on K3 surfaces in 
various families associated to these representations, giving a natural
extension of recent work of~Oguiso.

\end{abstract}

\MSC[2010]{14J28, 14J10, 14J50, 11Exx}

\end{frontmatter}

\tableofcontents

\setcounter{MaxMatrixCols}{50} 

\section{Introduction}

An important classical problem is that of classifying
the orbits of a representation, over a field or over a ring, in terms
of suitable algebraic or geometric objects over that field or ring;
conversely, one may wish to construct representations whose orbits
parametrize given algebraic or geometric objects of interest.

In recent years, there have been a number of arithmetic applications
of such ``orbit parametrizations'' for geometric objects of dimension
$0$ and of dimension $1$.  For example, for extensions of a field, or
ring extensions and ideal classes in those extensions, such
parametrizations have been studied in numerous papers, including
\cite{delonefaddeev, WrightYukie, hcl1, hcl2, hcl3, hcl4,
  melanie-binarynics,melanie-2nn}; these descriptions of the moduli
spaces have been used in an essential way in many applications (see,
e.g., \cite{davenportheilbronn, manjulcountquartic,
  manjulcountquintic,bst, tt, kevinthesis}).  In fact, many of the
cleanest and most useful such bijections between rings/ideal classes
and orbits of representations have arisen in cases where the
representation is {\em prehomogeneous}, i.e., where the ring of
(relative) invariants is a polynomial ring with one generator.

In the case of curves, recent work on orbit parametrizations in cases
of arithmetic interest over a general base field include
\cite{coregular} for genus one curves and
\cite{bg-hyper,jthorne-thesis} for various types of higher genus
curves.  Numerous examples have also been previously considered by
algebraic geometers, even classically, often giving descriptions of
the {\em coarse} moduli space as a GIT quotient.  As before, many of
the most arithmetically useful bijections between data relating to
algebraic curves and orbits of representations have arisen in cases
where the representation has somewhat simple invariant theory --- in
particular, when the representation is {\it coregular}, meaning that
the ring of relative invariants is a polynomial ring.  In these cases,
the coarse moduli space of the geometric data is thus (an open
subvariety of) a weighted projective space.  In conjunction with
geometry-of-numbers and other analytic counting and sieve arguments,
such representations have seen applications in bounding average ranks
in families of elliptic curves over $\Q$ (see, e.g.,
\cite{arulmanjul-bqcount, arulmanjul-tccount}) and
showing that many curves in families of higher genus have few rational
points (see \cite{bg-hyper,poonen-stoll,aruljerry-monichyp,manjul-hyper}).

A natural next step is to determine representations whose orbits
parametrize geometric data of interest associated to algebraic
surfaces.  K3 surfaces form a rich class of surfaces that naturally
lend themselves to such a study, and in fact, there has already been
significant work in this direction (albeit usually over algebraically
closed fields).  For example, it is classically known that a general
polarized K3 surface of genus $g = 3$, $4$, or $5$ may be described as
a complete intersection in projective space $\Proj^g$, and such
descriptions may be easily translated into the language of orbits of a
representation of an algebraic group.  For polarized K3 surfaces of
higher genus, Mukai and others have also described them in several
cases as complete intersections in homogeneous spaces (see, e.g.,
\cite{mukai-k3-genusupto10, mukai-k3-genus1820, mukai-k3-genus1213}).
A sample of these results for polarizations of small degree appears in
Table \ref{table:examples2}.

The purpose of this paper is to generalize these ideas to study moduli
spaces of K3 surfaces with possibly multiple line bundles, namely {\em
  lattice-polarized} K3 surfaces, in terms of the orbits of suitable
representations. More precisely, the classes of line bundles in the
Picard group of a K3 surface $X$ naturally form a lattice, with the
symmetric bilinear pairing being the intersection pairing on divisors
of the surface; this is called the {\it N\'eron-Severi lattice} of
$X$. There is a coarse moduli space $\sM_\Lambda$ of K3 surfaces whose
N\'eron-Severi lattice contains a fixed lattice
$\Lambda$.\footnote{We will actually work with a slight
modification of this moduli space; see \S \ref{sec:K3s}.}  It is a
quasi-projective variety, but in general it is very difficult to
explicitly describe it by equations or to understand its geometry,
especially if $\rank(\Lambda) > 1$.  We show that there are at least
19 representations of algebraic groups whose orbits naturally
parametrize such lattice-polarized K3 surfaces.  We list them in
Table~\ref{table:examples}.  Some of these orbit parametrizations are
classical, but most of the higher rank cases appear to be new.

\begin{sidewaystable}
\renewcommand{\arraystretch}{1.1}
\begin{center}
\begin{tabular}{|c|c|c|l|c|c|}
\hline
No. & Group $G^{\textrm{ss}}$ & Representation $V$ & Rank & Generic NS & Section \\
\hline
1 & $\SL_2$ & $\Sym^8(2) \oplus \Sym^{12}(2)$ & 2 & $\mathfrak{O}_1 = U$ & \S \ref{subsec:ellsurf} \\
2 & $\SL_2^2$ & $\Sym^4(2) \tns \Sym^4(2)$ & 2 & $U(2)$& \S \ref{subsec:44doublecover} \\
3 & $\SL_2 \times \SL_3 $ & $\Sym^2(2)\tns\Sym^3(3)$ & 2 & $\mathfrak{O}_9$  & \S \ref{subsec:23form} \\
4 & $\SL_3^2$ & $3 \tns 3 \oplus \Sym^2(3) \tns \Sym^2(3)$ & 2 & $\mathfrak{O}_{12}$ & \S \ref{subsec:1122complete} \\
5 & $\SL_2^2\times \SL_4$ & $2 \tns 2 \tns \Sym^2 (4)$ & 2 & $\mathfrak{O}_{4}(2)$ &  \S \ref{sec:2x2xSym24} \\
6 & $\SL_4^3$ & $4\tns4\tns4$ & 2 & $\mathfrak{O}_{5}(2)$ & \S \ref{sec:rr}  \\
7 & $\SL_3^2$ & $3 \tns \Sym^2(3) \oplus \Sym^2(3) \tns 3$ & 2 & $\mathfrak{O}_{21}$ &  \S \ref{subsec:1221complete} \\
8 & $\SL_2^3 $ & $\Sym^2(2)\tns\Sym^2(2)\tns\Sym^2(2)$ & 3 & $U(2) \oplus \langle -4 \rangle$ &  \S \ref{subsec:222forms} \\
9 & $\SL_2^5 $ & $2\tns2\tns2\tns2\tns2$ &  4 & $U(2) \oplus A_2(2)$ &  \S \ref{sec:penteracts} \\
10 & $\SL_2\times \SL_4 $ & $\Sym^2(2)\tns\Sym^2(4)$ & 9 & $U \oplus E_7(2)$ & \S \ref{sec:sym22xsym24} \\
11 & $\SL_2^4 $ & $2\tns2\tns2\tns\Sym^2(2)$ & 9 & $U \oplus E_7(2)$ & \S \ref{sec:2sympent} \\
12 & $\SL_4^2$ & $4\tns \Sym^2(4)$ & 11 & $U \oplus E_8(2) \oplus \langle -4 \rangle$ & \S \ref{sec:2symrr} \\
13 & $\SL_2^3 $ & $2\tns\Sym^2(2)\tns\Sym^2(2)$ & 12  & $U(4) \oplus E_8 \oplus \langle -4 \rangle^{\oplus 2}$ & \S \ref{sec:22sympent} \\
14 & $\SL_2^3\times \SL_4$ & $2 \tns 2\tns 2\tns 4$ & 13 & $\langle 4 \rangle \oplus \langle -2 \rangle^{\oplus 4} \oplus D_4^{\oplus 2}$ & \S \ref{sec:2224} \\
15 & $\SL_2^3 $ & $2\tns2\tns\Sym^3(2)$ & 14 & $U(2) \oplus A_2^{\oplus 3} \oplus E_6$ &  \S \ref{sec:3sympent}\\
16 & $\SL_2^2 $ & $\Sym^2(2)\tns\Sym^3(2)$ & 15  & $U \oplus A_2^{\oplus 2} \oplus E_6 \oplus A_3$ & \S \ref{sec:23sympent}  \\
17 & $\SL_4$ & $\Sym^3(4)$ & 16 & $U(2) \oplus A_2 \oplus D_{12}$ & \S \ref{sec:3symrr} \\
18 & $\SL_2^2 $ & $2\tns\Sym^4(2)$ & 17 & $U \oplus \langle -8 \rangle \oplus D_{12} \oplus A_2$ & \S \ref{sec:4sympent} \\
19 & $\SL_2 $ & $\Sym^5(2)$ & 18  & $U \oplus A_4 \oplus D_{12}$  & \S \ref{sec:5sympent}\\
\hline
\end{tabular}
\end{center}
\caption{Representations $V$ whose $G$-orbits parametrize data related
  to K3 surfaces.  The group $G^{\textrm{ss}}$ is a semisimple
  algebraic group with a map to the group $G$, whose kernel is finite
  and cokernel is solvable.  Here $\mathfrak{O}_D$ denotes the lattice
  underlying the quadratic ring of discriminant $D$ with the quadratic
  form being twice the norm form. Root lattices are normalized to be
  negative definite, and $U \cong \mathfrak{O}_1$ denotes the
  hyperbolic plane.}
\label{table:examples}
\end{sidewaystable}

Just as $2\times 2\times 2$ cubical matrices played a key role in the
understanding of many prehomogeneous representations \cite{hcl1}, and
just as $3\times3\times3$ and $2\times 2\times 2\times 2$ matrices
played a key role in the understanding of coregular representations
associated to genus one curves \cite{coregular}, we find that
$4\times4\times4$ and $2\times 2\times 2\times 2\times 2$ matrices
appear as fundamental cases for our study of K3 surfaces.  We refer to
these cases as the ``Rubik's revenge'' and ``penteract'' cases,
respectively.  We also study orbits on symmetrized versions of these
spaces, which turn out to correspond to moduli spaces of K3 surfaces
of higher rank.  For example, we show that $\GL_2$-orbits on the
space of {quintuply symmetric penteracts}---i.e., binary quintic
forms---correspond to elements of a certain family of K3 surfaces
having rank at least 18.

We now state our main theorem more precisely.  Given a K3 surface
defined over a field $\fd$ having algebraic closure $\fdbar$, let
$\NS(X)$ denote the N\'eron-Severi group of $X$, i.e., the group of
divisors on $X$ over $F$ modulo algebraic equivalence.  Let
$\overline{\NS}(X)$ be $\NS(X_{\fdbar})$. (When $\fd$ is algebraically
closed, we have $\NS(X) = \overline{\NS}(X)$.)
Then we define a lattice-polarized K3 surface over $\fd$ as follows.

\begin{definition} \label{def:latticepolarized}
Let $\Lambda$ be an even nondegenerate lattice with signature $(1,s)$ with a choice of basis, and let $\Sigma$ be a saturated sublattice of $\Lambda$.
Then we say that a K3 surface $X$ over $\fd$ is {\em lattice-polarized
  by $(\Lambda,\Sigma)$} if there exists a primitive lattice embedding $\phi: \Lambda
\to \overline\NS(X)$
such that $\phi(\Sigma)$ is fixed pointwise by the action of ${\rm
  Gal}(\fdbar/\fd)$
and the image under $\phi$ of a certain subset $C(\Lambda)$ of $\Lambda$ contains an ample divisor class of $X$.

We now define the subset $C(\Lambda)$.  Let $Z(\Lambda) := \{z \in \Lambda : \langle z, z \rangle = -2 \}$ be the set of roots of $\Lambda$.  Fix a partitioning of $Z(\Lambda)$ into two subsets $Z(\Lambda)^+$ and $Z(\Lambda)^-$, where $Z(\Lambda)^- = \{-z: z \in Z(\Lambda)^+ \}$ and each is closed under positive finite sums; also fix a connected component $V$ of the cone $\{z \in \Lambda \otimes \R: \langle z, z \rangle > 0 \}$.  We then let $C(\Lambda)$ be the subset of $V \cap \Lambda$ consisting of elements that pair positively with all $z \in Z(\Lambda)^+$; this is the intersection of $\Lambda$ with the Weyl chamber defined by the positive roots.

Let $\sM_{\Lambda, \Sigma}$ denote the moduli space of such pairs
$(X,\phi)$, where $X$ is a K3 surface lattice-polarized by $(\Lambda,
\Sigma)$ and $\phi:\Lambda \to \overline\NS(X)$ is a primitive lattice embedding, modulo
equivalence; two pairs $(X,\phi)$ and $(Y, \psi)$ are {\em equivalent}
if there exists an isomorphism $f: Y \to X$ and an isometry $g:\Lambda \to \Lambda$
fixing $\Sigma$ pointwise such that the following diagram commutes:
\begin{equation*}
\xymatrix{
\Lambda \ar[r]^-{\phi} \ar[d]_{g} & \overline{\NS}(X) \ar[d]^{f^*} \\
\Lambda \ar[r]_-{\psi}  & \overline{\NS}(Y).
}
\end{equation*}

More generally, if $X$ is a K3 surface that is lattice-polarized by
$(\Lambda,\Sigma)$, and if $S\subset \Sigma$ is a subset that spans
the $\Q$-vector space $\Sigma\otimes\Q$, then we also say that $X$ is
{\it lattice-polarized by $(\Lambda,S)$}.  Similarly, we may speak of
the moduli space $\sM_{\Lambda,S}:=\sM_{\Lambda,\Sigma}$.
\end{definition}

\begin{remark} \label{rmk:latticeconditions}
By convention, we assume that the lattice $\Lambda$ has a class with
positive norm and has a unique embedding in the {\it K3 lattice}
$E_8^2 \oplus U^3$ (up to equivalence) in order to define the above
moduli space (see \cite{nikulin, dolgachev-mirror}); here $E_8$
denotes the unique $8$-dimensional negative definite even unimodular
lattice and $U$ the hyperbolic lattice with Gram matrix
$(\begin{smallmatrix} 0 & 1 \\ 1 & 0 \end{smallmatrix})$.  In all of
the cases we consider, the lattice $\Lambda$ will satisfy these
properties.  If $S$ equals $\Lambda$ (or contains a set of generators
of $\Lambda$) in Definition~\ref{def:latticepolarized}, then we obtain
the moduli space $\sM_\Lambda$ of {\it $\Lambda$-polarized} K3
surfaces.  Furthermore, if the $\Z$-span of $S$ contains a positive
class, then the cover $\sM_{\Lambda,S} \to \sM_\Lambda$ is finite.
\end{remark}

Then our main theorem is as follows.

\begin{theorem}\label{thm:main2}
  Let $\fd$ be a field of characteristic $0$.  For any line of
  Table~$\ref{table:examples}$, there exists an explicit finite subset
  $S$ of $\Lambda$ such that the $G(\fd)$-orbits of an open subset of
  $V(\fd)$ are in canonical bijection with the $\fd$-points of an open
  subvariety of the moduli space $\sM_{\Lambda,S}$ of K3 surfaces
  lattice-polarized by $(\Lambda, S)$.
\end{theorem}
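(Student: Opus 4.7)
The plan is to establish Theorem~\ref{thm:main2} by treating each of the 19 lines of Table~\ref{table:examples} separately, in the subsections indicated by its last column; however, the overall scheme is uniform, and I will organize each case along the same four-step template below.

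The forward direction proceeds as follows. Given the representation $V$ of $G$ from a line of the table, I would interpret a generic vector $v \in V(\fd)$ as geometric data — typically a multilinear form, tuple of forms, or tensor — on a product of projective spaces $\Proj^{n_1}\times\cdots\times \Proj^{n_k}$ indexed by the tensor factors. I would then define $X_v$ to be a naturally attached subvariety of this product: for the complete-intersection type entries (e.g.\ the $3\tns 3\oplus\Sym^2(3)\tns\Sym^2(3)$ row, or the $\Sym^2(2)\tns\Sym^3(3)$ row) this is simply a zero locus, while for the more subtle entries like the Rubik's revenge case ($4\tns4\tns4$) and the penteract case ($2^{\tns 5}$) and their symmetrized variants, $X_v$ will arise as a determinantal degeneracy locus or as a suitable double cover of such a locus. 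In each case one checks that for $v$ in a dense open subset of $V(\fd)$ the variety $X_v$ is a smooth K3 surface, and that $G(\fd)$-equivalent vectors give $X_v$'s differing only by a change of coordinates on the ambient projective product, hence isomorphic.

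For the lattice polarization, I would identify the natural divisor classes on $X_v$ coming from the construction: pullbacks of hyperplane classes from each projective factor, together with any further classes forced on $X_v$ by the geometry (for example, exceptional divisors from a resolution of a singular model, components of a branch locus for a double cover, or trace divisors on a degeneracy locus). These classes are manifestly defined over $\fd$ and hence Galois-fixed; one takes $S$ to be this explicit finite collection. Their pairwise intersection numbers can be computed via adjunction and standard Chern class calculations on the ambient space, and one then verifies that the resulting Gram matrix matches the lattice $\Lambda$ of the table. A Noether--Lefschetz / specialization argument in each case shows that for generic $v$ the full $\overline{\NS}(X_v)$ is exactly $\Lambda$, so the saturated sublattice condition in Definition~\ref{def:latticepolarized} holds. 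This defines a well-defined map from open orbits in $V(\fd)/G(\fd)$ to $\sM_{\Lambda,S}(\fd)$.

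For the inverse direction, given a K3 surface $X/\fd$ lattice-polarized by $(\Lambda,S)$ over an appropriate open locus of $\sM_{\Lambda,S}$, I would use the classes in $S$ and the linear systems they determine to produce morphisms $X \to \Proj^{n_i}$ for each factor; combining these yields $X \to \Proj^{n_1}\times\cdots\times\Proj^{n_k}$, and the vanishing theorems for K3 surfaces (Kodaira vanishing and Saint-Donat--type results) let one compute the spaces of multilinear forms cutting out the image, recovering a canonical element of $V(\fd)$ up to the residual coordinate change, i.e., up to $G(\fd)$. Steps~2 and~3 are thus mutually inverse by construction. The main obstacle is carrying this out compatibly in the higher-rank entries (the last twelve rows of the table): one must show that the linear systems attached to classes of $S$ have the expected base-point freeness and very ampleness properties generically --- so that the inverse construction genuinely yields an embedding with equations in $V$ --- and separately verify that no unexpected classes enter $\overline{\NS}(X_v)$ for generic $v$. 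Matching the open loci on both sides requires identifying a discriminant locus in $V$ cut out by explicit $G$-invariants, comparing it with the smooth locus of the $X_v$ and with the ample cone condition defining $\sM_{\Lambda,S}$; this comparison is handled case-by-case using the invariant theory of the representation in question.
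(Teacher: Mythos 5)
Your four-step template is essentially the one the paper follows: forward construction of determinantal/complete-intersection K3 surfaces with their tautological divisor classes, computation of the Gram matrix, a saturation argument to pin down the generic N\'eron--Severi lattice, and a reverse construction that recovers the tensor as the kernel of the multiplication map $\cH^0(L_1)\otimes\cdots\otimes\cH^0(L_k)\to\cH^0(L_1\otimes\cdots\otimes L_k)$ (your ``space of multilinear forms cutting out the image'' is exactly this kernel, and the surjectivity you need is indeed supplied by basepoint-free-pencil-trick/ACM-sheaf arguments of Saint-Donat type). For the genuinely multilinear lines of the table --- Rubik's revenge, the penteract, $2\otimes2\otimes2\otimes4$, $2\otimes2\otimes\Sym^2(4)$, and the classical rank-$2$ entries --- your outline would, once fleshed out, reproduce the paper's proofs.

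However, there is a concrete gap for the symmetrized lines, which make up more than half of the table (doubly and triply symmetric Rubik's revenge, all the symmetric penteracts, $\Sym^2(2)\otimes\Sym^2(4)$, $\Sym^5(2)$, etc.). For these, the reverse construction as you describe it only produces an element of the ambient \emph{non-symmetric} tensor space $V_1\otimes\cdots\otimes V_k\otimes(\ker\mu)^\vee$; nothing in your argument shows that this element can be moved into the symmetric subspace $V_1\otimes\Sym^2 V_2$ (or $\Sym^3$, $\Sym^5$, \dots) that actually constitutes the representation $V$ on that line. This is where a large part of the paper's work lies: one first uses the lattice data (e.g.\ the relation $3L_1=2L_2+\sum P_i$ forcing $L_2\cong L_3$, or the penteract relation $L_1+L_2+L_3=2L_4+\sum E_i$ forcing $L_4\cong L_5^{(123)}$) to identify two of the section spaces, and then needs a genuinely new linear-algebra input --- the lemma that a pencil of $n\times n$ matrices whose left and right kernels agree at every point, and which contains no element of corank $\geq 2$, consists of symmetric matrices --- to conclude that the tensor itself is symmetric, not merely that two of its associated line bundles coincide. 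For the triply/quadruply/quintuply symmetric cases this argument must then be iterated over several transpositions and the resulting identifications shown to be compatible. Without this ingredient your bijection fails for those lines: the map from orbits to moduli points is defined, but the candidate inverse lands in the wrong representation. I would also note that your blanket claim that $X_v$ is smooth for generic $v$ is false for the symmetrized models (e.g.\ the quartic symmetroid has $10$ nodes for \emph{every} doubly symmetric $4\times4\times4$ tensor); the correct statement is that the resolved model is smooth and the exceptional curves are part of the polarizing set $S$, which you do anticipate but should not present as a genericity statement about $X_v$ itself.
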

\noindent
In each section, we will specify the relevant subset $S$ of $\Lambda$.

\begin{corollary}
  The moduli space $\sM_{\Lambda,S}$ of $(\Lambda,S)$-polarized K3
  surfaces is unirational. In particular, the moduli space
  $\sM_{\Lambda}$ of $\Lambda$-polarized K3 surfaces is unirational.
\end{corollary}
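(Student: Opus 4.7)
The plan is to derive the corollary directly from Theorem~\ref{thm:main2}. For each row of Table~\ref{table:examples}, the theorem supplies a canonical bijection between $G(\fd)$-orbits on an open subset of $V(\fd)$ and $\fd$-points of an open subvariety of $\sM_{\Lambda,S}$. In the corresponding individual sections, this bijection is constructed geometrically by exhibiting an explicit family of K3 surfaces over (an open subset of) the representation $V$; hence it arises from a $G$-invariant dominant rational map of varieties $\Phi: V \dashrightarrow \sM_{\Lambda,S}$, not merely a map of $\fd$-points. Since $V$ is a representation of $G$, it is isomorphic as a variety to affine space $\mathbb{A}^{\dim V}$, which is rational; composing a birational parametrization $\mathbb{A}^{\dim V} \dashrightarrow V$ with $\Phi$ therefore exhibits $\sM_{\Lambda,S}$ as unirational.

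For the ``in particular'' clause about $\sM_\Lambda$, I would invoke Remark~\ref{rmk:latticeconditions}: when $S$ contains a set of generators of $\Lambda$ one has $\sM_{\Lambda,S} = \sM_\Lambda$ outright, and more generally when the $\Z$-span of $S$ contains a positive class the two moduli spaces are related by a finite map. In each of the 19 rows, the explicit $S$ produced in the individual section will be verified to satisfy this positivity condition---the lattices $\Lambda$ in Table~\ref{table:examples} all contain positive classes by Remark~\ref{rmk:latticeconditions}, and the natural choices of $S$ (typically including a polarization class) inherit this property. Composing the unirational parametrization $\mathbb{A}^{\dim V} \dashrightarrow \sM_{\Lambda,S}$ with this finite map then yields a dominant rational map to $\sM_\Lambda$, proving its unirationality.

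The only non-formal point to verify is that $\Phi$ is a rational map of varieties rather than a mere bijection on $\fd$-points. This, however, is built into the proofs of Theorem~\ref{thm:main2}: in each case the K3 surface associated to an orbit representative $v \in V$ is written down explicitly as the fiber over $v$ of a family cut out by equations depending polynomially on $v$. Consequently no substantive obstacle appears, and the corollary follows as a formal consequence of the main theorem together with the preservation of unirationality under dominant rational maps.
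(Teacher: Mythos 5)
The paper offers no argument for this corollary beyond presenting it as an immediate consequence of Theorem~\ref{thm:main2}, so there is no ``official'' proof to compare against; your write-up supplies exactly the argument the authors intend for the first sentence, and it is sound. The one non-formal point --- which you correctly isolate --- is that the bijection of Theorem~\ref{thm:main2} must come from a dominant rational map of varieties $V \dashrightarrow \sM_{\Lambda,S}$ rather than a mere bijection on $\fd$-points; this holds because each section realizes the K3 surface and its distinguished divisor classes as the fiber of an explicit family over the nondegenerate locus of $V$, and the coarse moduli space receives a morphism from the base of any such family. Since $V \cong \mathbb{A}^{\dim V}$ is rational and the image is a dense open subvariety ($\sM_{\Lambda,S}$ is irreducible by its period-domain description), unirationality of $\sM_{\Lambda,S}$ follows.

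For the ``in particular'' clause there is a subtlety you should not pass over. You take the arrow of Remark~\ref{rmk:latticeconditions} at face value, but with the definition the paper actually uses --- $\sM_\Lambda = \sM_{\Lambda,\Lambda}$ is the quotient of the marked period domain by the pointwise stabilizer $\Gamma(\Lambda)$, while $\sM_{\Lambda,S}$ is the quotient by the \emph{larger} group $\Gamma(\Lambda,S)$ --- the natural finite map goes from $\sM_\Lambda$ \emph{onto} $\sM_{\Lambda,S}$, since quotienting by a larger group yields a smaller space. Unirationality does not in general ascend along finite surjections, so ``compose with the finite map'' only runs in the direction you need if either (i) $S$ generates $\Lambda$, in which case the two moduli spaces coincide (this covers the rank-$2$ rows of Table~\ref{table:examples}, where $S=\{e_1,e_2\}$ is a basis), or (ii) $\sM_\Lambda$ is read as the coarse space of K3 surfaces with $\overline{\NS}\supseteq\Lambda$ (the quotient by the full setwise stabilizer), which is a further finite quotient of $\sM_{\Lambda,S}$ and hence genuinely a dominant rational image of it. For the higher-rank rows, under the pointwise-stabilizer reading of $\sM_\Lambda$, your composition does not by itself establish unirationality of $\sM_\Lambda$, and some additional input (for instance, rigidifying the labeling of the exceptional divisors over a cover of $V$ that is itself rational) would be required; you should state explicitly which reading you are using.
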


A number of the spaces in Table~\ref{table:examples} have been studied
previously, often from the point of view of invariants of group actions and not necessarily with a specific connection to K3 surfaces, and usually over an algebraically closed field.
Some of these results show that many of the moduli spaces in Table~\ref{table:examples} are actually rational over $\fdbar$. For instance, \cite{ma} proves the rationality of Nos.~2, 16, and 18; the rationality of No.~1 follows from \cite{katsylo}.  The rationality of No.~17 follows from the classical computation of invariants of cubic surfaces, and that of No.~19 from the invariant theory of the binary quintic. It is an interesting problem to determine exactly which of the spaces in Table~\ref{table:examples} are rational over $\fdbar$ or over $\fd$.

We note that Table \ref{table:examples} is not intended to be
a complete classification of all orbit spaces that are birational to
moduli spaces related to lattice-polarized K3 surfaces.
For instance, one could consider the space $3 \otimes 3 \otimes 4$,
whose elements define an unordered set of six points in the
plane, or via duality, a set of six lines; the double cover of the
plane branched along the six lines is a K3 surface of Picard number
$16$, so over $\fdbar$ (but not $\fd$), there is a correspondence between these orbits and such K3 surfaces. Such K3 surfaces have been extensively studied in the past, e.g., see \cite{matsumoto-sasaki-yoshida, ng-334, lombardo-peters-schuett}.

As mentioned earlier, in the case where $\Lambda$ $(=\Sigma)$ has rank 1,
several cases have been studied previously, and we have recorded them
in Table \ref{table:examples2}.  The first four cases are classical
and are easily adapted to give the correct parametrization over any
field, while the last five are more recent and arise in the beautiful
work of Mukai \cite{mukai-k3-genusupto10}.  It is an interesting
problem to work out the appropriate forms of Mukai's representations
so that they also parametrize polarized K3 surfaces over a general
field.

\begin{table}[ht!]
\renewcommand{\arraystretch}{1.1}
\begin{center}
\begin{tabular}{|c|c|c|c|}
\hline
Group $G^{\textrm{ss}}$ & Representation $V$ & Degree\\
\hline
$\SL_3$ & $\Sym^6 (3)$ & $2$  \\ 
\hline
$\SL_4$ & $\Sym^4 (4)$ & $4$ \\ 
\hline
$\SL_5$ & $\Sym^2(5) \oplus \Sym^3(5)$ & $6$ \\
\hline
$\SL_3\times \SL_6$ & $3\tns \Sym^2(6)$ & $8$ \\
\hline
\hline
$\SL_2$ & $\Sym^8(2)\oplus\Sym^{12}(2)$ & $10$  \\
\hline
$\SL_8\times \SO_{10}$ & $8\tns S^+(16)$  & $12$ \\
\hline
$\SL_6\times \SL_6$ & $6\tns \wedge^2(6)$ & $14$  \\
\hline
$\SL_4\times \Sp_6$ & $4\tns \wedge_0^3(6)$ & $16$  \\
\hline
$\SL_3\times G_2$ & $3\tns (14)$ & $18$ \\
\hline
\end{tabular}
\end{center}
\abovecaptionskip 0pt
\caption{Representations whose orbits parametrize polarized K3
  surfaces.\\ (The cases of degrees 10 to 18 are due to Mukai.)}
\label{table:examples2}
\end{table}

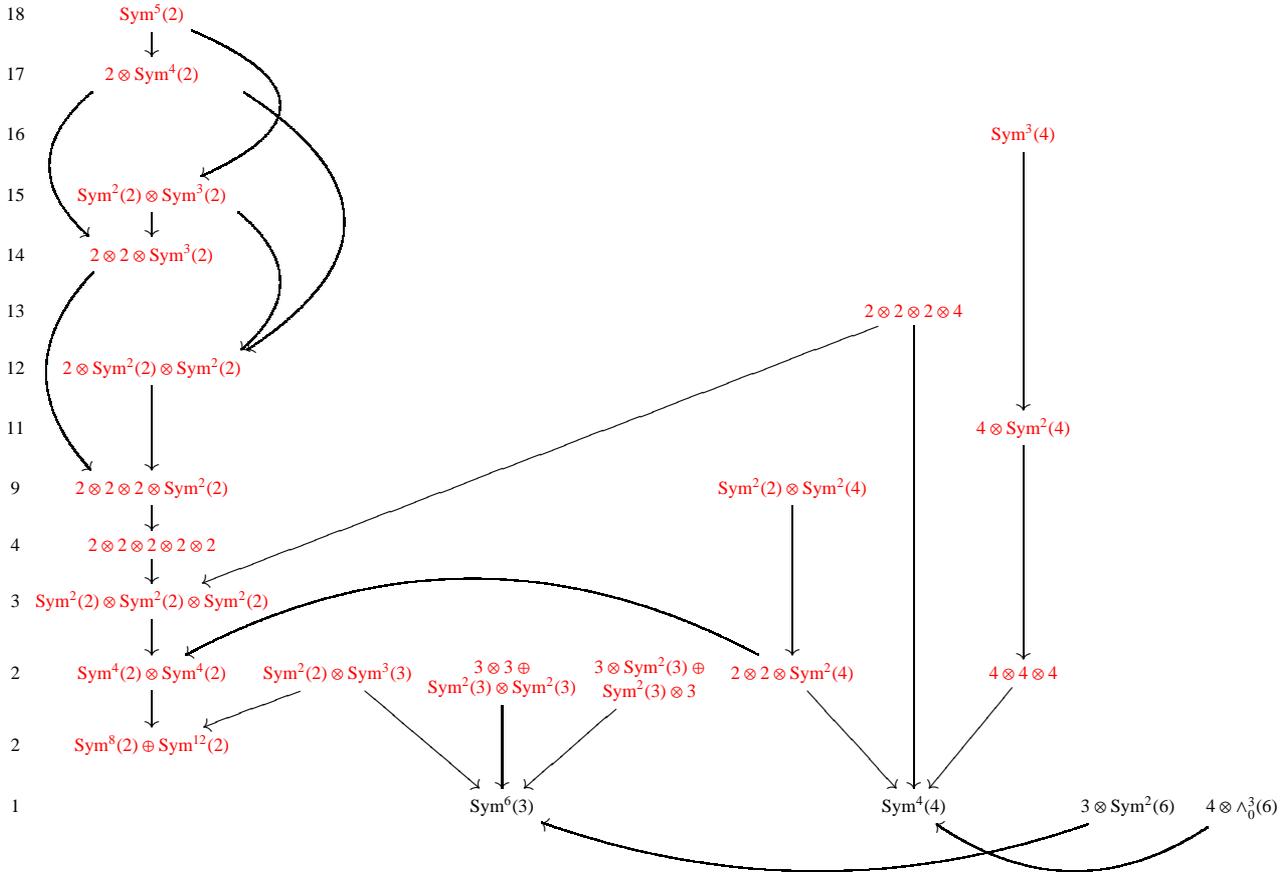
\begin{sidewaysfigure}
\scriptsize
\begin{equation*}
\xymatrix@R=9pt@C=-2pt{
18 & \hyperref[sec:5sympent]{\Sym^5(2)} \ar[d] \ar@/^4pc/[ddd]\\
17 & \hyperref[sec:4sympent]{2 \otimes \Sym^4(2)} \ar@<-1.2pc>@/_2pc/[ddd] \ar@<2pc>@/^4pc/[ddddd] \\
16 & & & & & & & \hyperref[sec:3symrr]{\Sym^3(4)} \ar[ddddd]\\
15 & \hyperref[sec:23sympent]{\Sym^2(2) \otimes \Sym^3(2)} \ar[d] \ar@<2pc>@/^2pc/[ddd]\\
14 & \hyperref[sec:3sympent]{2 \otimes 2 \otimes \Sym^3(2)} \ar@<-1.3pc>@/_2pc/[dddd] \\
13 & & & & & & \hyperref[sec:2224]{2 \otimes 2 \otimes 2 \otimes 4} \ar[dddddddd] \ar[lllllddddd] \\
12 & \hyperref[sec:22sympent]{2 \otimes \Sym^2(2) \otimes \Sym^2(2)} \ar[dd] \\
11 & & & & & & & \,\hyperref[sec:2symrr]{4 \otimes \Sym^2(4)}\, \ar[dddd]\\
9 & \hyperref[sec:2sympent]{2 \otimes 2 \otimes 2 \otimes \Sym^2(2)}  \ar[d] & & & & \!\!\!\!\!\hyperref[sec:sym22xsym24]{\Sym^2(2) \otimes \Sym^2(4)}\!\!\!\!\! \ar[ddd] \\
4 & \hyperref[sec:penteracts]{2 \otimes 2 \otimes 2 \otimes 2 \otimes 2} \ar[d] \\
3 & \hyperref[subsec:222forms]{\Sym^2(2) \otimes \Sym^2(2) \otimes \Sym^2(2)} \ar[d] \\
2 & \;\;\hyperref[subsec:44doublecover]{\Sym^4(2) \otimes \Sym^4(2)}\;\; \ar[d] 
	& \!\!\!\!\!\hyperref[subsec:23form]{\Sym^2(2) \otimes \Sym^3(3)} \ar[ld] \ar[rdd] 
	& \hyperref[subsec:1122complete]{\txt{$3 \otimes 3\  \oplus$\\$\;\Sym^2(3) \otimes \Sym^2(3)\;$}} \ar[dd] 
	& \hyperref[subsec:1221complete]{\txt{$\;\!3 \otimes \Sym^2(3) \ \oplus\;\;\;$\\$\!\Sym^2(3) \otimes 3\;\;$}} \ar[ldd] 
	& \hyperref[sec:2x2xSym24]{2 \otimes 2 \otimes \Sym^2(4)} \ar@/_3pc/[llll] \ar[rdd]
	& & \hyperref[sec:rr]{4 \otimes 4 \otimes 4} \ar[ldd]\\
2  & \hyperref[subsec:ellsurf]{\Sym^8(2) \oplus \Sym^{12}(2)} \\
1 & & & \Sym^6(3) & & & \Sym^4(4) & 
	& \!3 \otimes \Sym^2(6)\,\, \ar@/^2pc/[lllll]
	& \ \ \;4 \otimes \wedge^3_0(6) \ar@/^2pc/[lll] \\
	\\
}
\end{equation*}
\caption{Covariance relations among orbit parametrizations of
  lattice-polarized K3 surfaces.}
\label{fig:covariantrelationships}
\end{sidewaysfigure}

Figure \ref{fig:covariantrelationships} shows how many of the cases
from both Table \ref{table:examples} and Table \ref{table:examples2}
are related.  In particular, each arrow from a representation $V$ of
the group $G$ to a representation $V'$ of $G'$ indicates that there is
a group homomorphism $\tau: G \to G'$ and a map $V \to V'$ that is
$G$-equivariant with respect to~$\tau$.  For each arrow, there is a
map between the associated moduli spaces of lattice-polarized K3
surfaces as well as a reverse inclusion of the corresponding
polarization lattices $\Lambda$ and $\Sigma$. The ranks of the
polarization lattices $\Lambda$ are indicated in the first column.

Such explicit descriptions of the moduli spaces of lattice-polarized
K3 surfaces also have several other potential applications. For
example, these moduli spaces are related to Noether-Lefschetz
divisors, which are special cycles on moduli spaces of polarized K3
surfaces (see, e.g., \cite{kudla}). There has also been a great deal
of recent activity surrounding the Noether--Lefschetz conjecture
\cite{maulikpandharipande, bergeron-li-millson-moeglin}, and it would
be interesting to extend the work of Greer--Li--Tian \cite{greerlitian}
on the GIT stability of the Mukai models to these spaces of
lattice-polarized K3s.

In many of the new cases listed in Table~\ref{table:examples}, we also
obtain natural automorphisms of the corresponding K3 surfaces.  Due to
their frequently extremely interesting and rich groups of
automorphisms, K3 surfaces have provided a natural setting in recent
years on which to study questions of dynamics (see, e.g.,  \cite{cantat,
  mcmullen, oguiso-salem}; for a nice survey, see \cite{cantat-survey}).
In particular, there has been considerable interest in exhibiting positive
entropy automorphisms of projective algebraic K3 surfaces.  Recall that
the {\em entropy} of an automorphism $\phi$ of a projective surface $X$
is defined to be $\log \lambda(\phi)$, where $\lambda(\phi)$ is the spectral
radius of $\phi^*$ on $\overline{\NS}(X) \otimes \R$.  When $X$ is defined
over $\C$, this definition agrees with the topological entropy
(see \cite[\S 4.4.2]{cantat-survey}).

Recently, Oguiso~\cite{oguiso} showed that any projective algebraic
K3 with a fixed-point-free automorphism of positive entropy must have
Picard number at least 2.
He also produced a family of examples with Picard number 2 by
considering the Cayley K3 surfaces, i.e., the K3 surfaces arising from
Rubik's revenge (Line~6 of Table~\ref{table:examples}).  More
precisely, he proved that any K3 surface with N\'eron-Severi lattice
exactly $\mathfrak{O}_5(2)$ has a fixed-point-free automorphism with
entropy $\eta_{\mathrm{RR}} = 6\log\bigl(\frac{1+\sqrt5}{2}\bigr)> 0$.
(See also~\cite{festi}.)

Our perspective on the Rubik's revenge case in Section~\ref{sec:rr}
allows us to give a simpler proof of Oguiso's theorem, and in a
stronger form, in Section~\ref{sec:applications}.  More precisely, we
prove:

\begin{theorem}\label{thm:oguisoextension}
Let $X$ be a K3 surface with line bundles $L_1$ and $L_2$ satisfying
$L_1^2 = L_2^2 = 4$ and $L_1 \cdot L_2 = 6$. Assume that the
projective embedding of $X$ corresponding to $L_1$ is smooth. Then $X$
has a fixed-point-free automorphism of entropy
$\eta_{\mathrm{RR}}=6\log\bigl(\frac{1+\sqrt5}{2}\bigr)\approx2.887>0$.
\end{theorem}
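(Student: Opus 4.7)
The plan is to use the Rubik's revenge orbit parametrization of Section~\ref{sec:rr} to construct an explicit fixed-point-free automorphism $\sigma$ of $X$ realizing the entropy lower bound. The sublattice $\Lambda_0 := \langle L_1, L_2 \rangle \subset \NS(X)$ has Gram matrix $(\begin{smallmatrix} 4 & 6 \\ 6 & 4 \end{smallmatrix})$, isometric to $\mathfrak{O}_5(2)$ (line~6 of Table~\ref{table:examples}). Since the $L_1$-embedding is smooth, $(X, \{L_1, L_2\})$ is a nondegenerate $\mathfrak{O}_5(2)$-polarized K3 surface in the sense of Definition~\ref{def:latticepolarized}; applying Theorem~\ref{thm:main2} to this case yields an $\SL_4^3$-orbit of a tensor $T \in V_1 \otimes V_2 \otimes V_3$ (with $V_i \cong \fdbar^4$) such that $X$ appears as each of three mutually isomorphic smooth determinantal quartics $X_i \subset \Proj(V_i)$, polarized by hyperplane classes $L_1, L_2, L_3$. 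The kernel maps $\psi_{ij}\colon X_i \to X_j$, which send a point $v\in X_i$ to the kernel of the corresponding slice of $T$, provide canonical identifications of the three $X_i$'s with $X$.

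Next, I construct $\sigma$ as a composition of such identifications. Setting $L_{i+1} := 3 L_i - L_{i-1}$ for $i \ge 2$ extends $L_1, L_2$ to an infinite sequence in $\Lambda_0$, and a short induction shows that each $L_i$ has square $4$, consecutive pairs have intersection $6$, and so each pair $(L_i, L_{i+1})$ again realizes $X$ as a Cayley quartic via a Rubik's revenge tensor. The orbit parametrization then produces an automorphism $\psi_i \colon X\to X$ carrying $(L_i, L_{i+1})$ to $(L_{i+1}, L_{i+2})$, and composing three such identifications yields an automorphism $\sigma := \psi_3 \psi_2 \psi_1 \in \Aut(X)$ whose action on $\Lambda_0$ has characteristic polynomial $\lambda^2 - 18\lambda + 1$ and eigenvalues $\phi^{\pm 6}$, where $\phi = (1+\sqrt{5})/2$. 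By the Gromov--Yomdin theorem the topological entropy of $\sigma$ equals the logarithm of the spectral radius of $\sigma^*$ on $H^{1,1}(X,\R)$; since the transcendental part of $H^2$ contributes spectral radius $1$ and $\Lambda_0$ contributes $\phi^6$, the entropy is at least $6\log\phi = \eta_{\mathrm{RR}}$, with equality precisely when $\NS(X) = \Lambda_0$.

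The main obstacle is to show that $\sigma$ is fixed-point-free. Positive entropy rules out any fixed curve of positive genus, and the infinite order of $\sigma^*|_{\Lambda_0}$ ensures $\sigma^n \neq \id$ for every $n\neq 0$, so the task reduces to excluding isolated fixed points. I apply the Lefschetz fixed-point formulas. First, I determine $\sigma^*|_{H^{2,0}(X)}$: tracing how each of the three Cayley-type identifications twists the holomorphic $2$-form via residues of $T$, one verifies that $\sigma^*$ acts on $H^{2,0}$ by $-1$. The holomorphic Lefschetz formula then yields
\begin{equation*}
\sum_{p\in X^\sigma} \frac{1}{\det(\id - d\sigma_p)} \;=\; \mathrm{tr}(\sigma^* \mid H^0(X,\sO_X)) - \mathrm{tr}(\sigma^* \mid H^2(X,\sO_X)) \;=\; 1 - 1 \;=\; 0,
\end{equation*}
and combining this with the topological Lefschetz formula $\chi(X^\sigma) = 2 + \mathrm{tr}(\sigma^* \mid H^2(X, \R))$ and the spectral data above forces $X^\sigma = \varnothing$. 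Since every ingredient depends only on $L_1, L_2$ and the associated tensor $T$, the same automorphism and fixed-point analysis apply when $\NS(X)$ properly contains $\Lambda_0$, giving the claimed strengthening of Oguiso's theorem.
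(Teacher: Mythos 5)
Your construction of the automorphism and the entropy bound are essentially the paper's: the pair $(L_1,L_2)$ produces a Rubik's revenge tensor via Theorem~\ref{thm:masterRR}, and the composite $\Phi=\psi_{31}\circ\psi_{23}\circ\psi_{12}$ acts on $\Lambda_0=\langle L_1,L_2\rangle$ with characteristic polynomial $\lambda^2-18\lambda+1$, giving entropy at least $6\log\bigl(\tfrac{1+\sqrt5}{2}\bigr)$. The gap is in the fixed-point-freeness. Your Lefschetz argument needs $\mathrm{tr}(\sigma^*\mid H^2(X,\R))$, and this is \emph{not} ``determined by $L_1$, $L_2$ and the tensor $T$'': it depends on how $\sigma^*$ acts on the orthogonal complement of $\Lambda_0$ inside $\NS(X)$ and on the transcendental lattice. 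Only when $\NS(X)=\Lambda_0$ does one get $\sigma^*=-\id$ on the rank-$20$ transcendental lattice and hence $\chi(X^\sigma)=2+18-20=0$; and even then, $\chi(X^\sigma)=0$ together with a vanishing holomorphic Lefschetz number does not force $X^\sigma=\varnothing$ --- one must still exclude fixed curves (which uses that $1$ is not an eigenvalue of $\sigma^*$ on $\NS(X)$, again a rank-$2$ statement) and rule out cancellation among the local terms $1/\det(\id-d\sigma_p)$. So your argument, once repaired, is Oguiso's original proof and only covers the case $\NS(X)\cong\mathfrak{O}_5(2)$; it does not yield the strengthening to arbitrary smooth Cayley quartics, which is precisely what Theorem~\ref{thm:oguisoextension} adds to \cite{oguiso}. (The assertion that $\sigma^*$ acts by $-1$ on $H^{2,0}$ is also left unverified, and your holomorphic Lefschetz identity has the wrong sign on the $H^2(X,\sO_X)$ term, though the total still happens to be $0$.)

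The paper's mechanism is entirely different and avoids cohomological traces: if $\Phi$ had a fixed point $\bar v_1\in X_1$, then setting $\bar v_2=\psi_{12}(\bar v_1)$ and $\bar v_3=\psi_{23}(\bar v_2)$, the decomposable vector $v_1\otimes v_2\otimes v_3$ lies in the kernel of $T$ (each $v_k$ spans the kernel of the corresponding slice), so the hyperdeterminant of $T$ vanishes (Theorems~\ref{hypint} and~\ref{hypint11}). But a kernel vector forces every partial derivative of $\det(T\subs x)$ to vanish at $\bar v_1$ --- the hyperdeterminant divides the discriminant of the quartic --- so smoothness of the $L_1$-embedding already rules this out, with no hypothesis on the Picard number. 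That hyperdeterminant step is the missing idea in your proposal.
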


The key ingredient is the use of the hyperdeterminant.  The {\it
  hyperdeterminant} is a generalization of the determinant for
multidimensional matrices, which was introduced by Cayley and studied
in depth by Gelfand, Kapranov, and Zelevinsky \cite{GKZ}.  For most
orbit parametrizations of rings and ideal classes by multidimensional
matrices (e.g., all those in \cite{hcl1, hcl2, melanie-2nn}), the
hyperdeterminant can be shown to equal the discriminant of the
corresponding ring.  For most orbit parametrizations of algebraic
curves in terms of multidimensional matrices (e.g., all those in
\cite{coregular,beauville,BGW}), the hyperdeterminant can be shown to
equal the discriminant of the corresponding algebraic curve.  Thus,
for all these parametrizations of data associated to rings and curves
by multidimensional matrices, the nonvanishing of the hyperdeterminant
corresponds to the nondegeneracy of the associated ring and the
nonsingularity of the associated curve, respectively.

The orbit parametrizations (of K3 surfaces by multidimensional
matrices) considered in this paper yield a number of examples where
the hyperdeterminant does {\it not} coincide with the discriminant.
Indeed, in these cases, we show that the hyperdeterminant only divides
the discriminant of the K3 surface, but is {not} equal to it.  This
raises the question as to the interpretation of the hyperdeterminant
in these cases.  We will prove that the nonvanishing of the
hyperdeterminant corresponds precisely to an associated automorphism
of the K3 surface being fixed-point-free.
This interpretation is what consequently allows us to prove
Theorem~\ref{thm:oguisoextension} for all nonsingular Cayley K3
surfaces.

An additional advantage of our method is that it also naturally
extends to other cases.  For example, we may use Line 9 of Table 1
together with our hyperdeterminant method to produce examples of K3
surfaces of rank 4 having many fixed-point-free automorphisms of
positive entropy:

\begin{theorem}\label{thm:penteractentropy}
  Let $X$ be a nonsingular K3 surface corresponding to a
  penteract. Then $X$ has a fixed-point-free automorphism with entropy
  $\log (\lambda_{\mathrm{pent}}) \approx 2.717>0$, where
  $\lambda_{\mathrm{pent}} = 4 + \sqrt{13} + \sqrt{7 + 2 \sqrt{13}}
  \approx 15.145$ is the unique real root of $x^4 - 16x^3 + 14x^2 -
  16x + 1$ greater than $1$.
\end{theorem}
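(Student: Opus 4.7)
The strategy is to produce an automorphism of $X$ as a composition of natural involutions arising from the penteract structure, and to show that its action on $\NS(X) = U(2)\oplus A_2(2)$ has the prescribed spectral radius.

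First, following the construction of Section~\ref{sec:penteracts}, I would identify, for each of the five axes of the penteract $A \in (\C^2)^{\otimes 5}$, a natural involution $\sigma_i$ of $X$: slicing $A$ along the $i$-th axis exhibits $X$ (or an auxiliary variety through which $X$ factors) as a double cover whose covering involution is $\sigma_i$. The class of $\sigma_i^\ast$ on $\NS(X)$ is determined by how $\sigma_i$ acts on the natural divisor classes $H_1,\ldots, H_5$ pulled back from the five $\Pone$-factors: each $\sigma_i$ preserves the divisors coming from the other four $\Pone$-factors and sends $H_i$ to an explicit class computable from the intersection form on $U(2)\oplus A_2(2)$.

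Second, I would take the composition $\varphi = \sigma_i \circ \sigma_j$ for a well-chosen pair of indices (or a longer composition of two or more distinct involutions, if that is required to reach the bound). The induced action $\varphi^\ast$ on $\NS(X)$ is then an integral $4\times 4$ matrix preserving the intersection form. By Gromov--Yomdin together with the work of Cantat and McMullen on K3 dynamics, the topological entropy of $\varphi$ equals the logarithm of the spectral radius of $\varphi^\ast$ on $\NS(X)$. A direct calculation of the characteristic polynomial of $\varphi^\ast$ should yield the Salem polynomial $x^4 - 16x^3 + 14x^2 - 16x + 1$, whose unique real root greater than $1$ is $\lambda_{\mathrm{pent}}$; hence the entropy of $\varphi$ is at least $\log \lambda_{\mathrm{pent}}$.

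Third, to establish that $\varphi$ is fixed-point-free, I would invoke the hyperdeterminant interpretation discussed in the introduction and developed in Section~\ref{sec:applications} for the Rubik's revenge case. For the penteract, each $\sigma_i$ is a covering involution of a double cover whose ramification locus maps to the degenerate-slice locus of $A$; a fixed point of $\sigma_i$ on $X$ would force $X$ to meet this ramification locus, which in turn would force a singularity of $X$. Since $X$ is nonsingular by hypothesis, no $\sigma_i$ has a fixed point, and hence neither does any nontrivial composition $\varphi$.

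The main technical obstacle is the explicit computation on $\NS(X) = U(2)\oplus A_2(2)$: one must first produce an integral basis adapted to the geometry of the five projections, verify that the intersection pairing in this basis indeed yields $U(2)\oplus A_2(2)$, and then carefully track the action of each $\sigma_i$ on this basis. Once these matrices are in hand, computing the characteristic polynomial of the composition is elementary, and the stated entropy bound follows.
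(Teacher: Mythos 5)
Your entropy computation follows the paper's route in outline: the paper likewise composes the sheet-switching involutions $\alpha_{kl,m}$ of the double covers $X_{ijkl}\to\Proj(V_i^\vee)\times\Proj(V_j^\vee)$ and reads off the Salem polynomial from the induced action on $L_1,\dots,L_4$. Two corrections, though. These involutions are indexed by \emph{pairs} of the four remaining factors (the double cover forgets two coordinates), not by single axes, and each one moves two of the classes $L_i$, as in \eqref{pentaut}, not one. More importantly, a composition of just two of them will not reach the bound: such products are the ``$4$-cycles'' on the $5$-cell, which act by translation by a section of a Jacobian fibration, have entropy zero, and have fixed points on reducible fibers. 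The polynomial $x^4-16x^3+14x^2-16x+1$ is achieved only by the $5$-cycles such as $\Phi_{51234}=\alpha_{34,5}\circ\alpha_{23,5}\circ\alpha_{12,5}$, a composition of \emph{three} involutions, whose matrix is \eqref{eq:pentaut}. Since you allow for longer compositions, this part is repairable.

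The genuine gap is your fixed-point-freeness argument, which fails at both steps. The individual involutions are \emph{not} fixed-point-free on a nonsingular $X$: the fixed locus of $\alpha_{kl,m}$ is precisely the ramification curve of the double cover $X_{ijkl}\to\Proj(V_i^\vee)\times\Proj(V_j^\vee)$, a nonempty curve lying over the bidegree $(4,4)$ branch curve; its nonemptiness has nothing to do with $X$ being singular (a fixed-point-free involution would force the quotient to be an Enriques surface rather than $\Proj^1\times\Proj^1$). And even if each involution were fixed-point-free, that property is not preserved under composition, so ``hence neither does any nontrivial composition'' is a non sequitur --- the $4$-cycles above are compositions of these involutions and do have fixed points. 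The paper's actual argument (Theorem~\ref{hypint2}) is of a different nature: a fixed point of the $5$-cycle $\Phi$ produces a decomposable tensor $v_1\otimes\cdots\otimes v_5$ annihilating every partial evaluation of $A$, i.e.\ a nontrivial element of $\ker(A)$ in the sense of Gelfand--Kapranov--Zelevinsky, which is equivalent to the vanishing of the hyperdeterminant of the penteract. Fixed-point-freeness is therefore controlled by the nonvanishing of the hyperdeterminant --- exactly the phenomenon the paper is advertising, since the hyperdeterminant divides but does not equal the discriminant --- and not by nonsingularity of $X$ or of the fixed loci of the individual involutions.
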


In fact, we will show that these K3 surfaces coming from penteracts
have infinitely many automorphisms of positive entropy.  Recall that a
{\em Salem number} is a real algebraic integer $\lambda > 1$ whose
conjugates other than $\lambda^{\pm 1}$ lie on the unit circle%
\footnote{We follow the convention of McMullen \cite{mcmullen}, where the set of Salem numbers includes quadratic integers with these properties. Another convention is to call such quadratic integers {\em Pisot numbers}.}%
; its irreducible minimal polynomial is then called a {\em        
Salem polynomial} (see \cite{bams-salem} for a survey of problems involving Salem numbers). ÊThe entropy      
of an automorphism of a projective K3 surface is either $0$ or the logarithm of a {Salem number}          
\cite[\S 3]{mcmullen}, and it is an interesting question as to which Salem polynomials arise from             
automorphisms of K3 surfaces (see, e.g., \cite{cantat, mcmullen-glue, mcmullen, reschke}).                    
                                                                                                              
We obtain a plethora of quadratic and quartic Salem polynomials 
from the automorphisms of the K3 surfaces arising from penteracts.  
In fact, we will demonstrate in \S\ref{sec:pentauts} that, for a positive proportion
of natural numbers $n$, both the polynomials $x^2 - (4n^2 \pm 2)x + 1$ and
$x^2 - (12n^2 \pm 2)x + 1$ arise as Salem polynomials of automorphisms of
the general K3 surface in the penteract family.  As a consequence, it follows that
all real quadratic fields occur as the splitting fields
of Salem polynomials of automorphisms of general K3 surfaces in this family.

Other examples, similar to those mentioned in the preceding two
theorems, will be constructed in Section~\ref{sec:applications}; for
example, we will construct fixed-point-free automorphisms on K3
surfaces in certain families having entropy equal to the logarithm of
$3 + 2 \sqrt{2}$, $\frac{3 + \sqrt{5}}{2}$, and $2 + \sqrt{3}$,
respectively.

\

\noindent {\bf Outline.}  This paper is organized as follows.  In \S
\ref{sec:prelims}, we give some background on lattices and K3
surfaces, as well as notation, that will be used throughout the paper.
In \S \ref{sec:classical}, we then very briefly discuss some cases
from Table \ref{table:examples} that are classical, e.g., some of the
moduli spaces where the general K3 has Picard rank~$2$.  As seen in
Figure \ref{fig:covariantrelationships}, these classical examples
often appear as covariants of other cases that we consider in this
paper.

The bulk of this paper lies in \S\S
\ref{sec:rr}--\ref{sec:sym22xsym24}.  In each of these sections, we
prove Theorem \ref{thm:main2} for the specified group $G$,
representation $V$, lattice $\Lambda$, and subset $S$ of $\Lambda$ as
listed in Table \ref{table:examples}.  We begin each section with a
construction of the K3 surfaces, and associated line bundles/divisors,
obtained from a general $G(\fd)$-orbit of $V(\fd)$.  In many of the
sections, we also discuss various automorphisms of the relevant K3
surfaces arising from these constructions.

Finally, in \S \ref{sec:applications}, we consider some connections
between these bijections and hyperdeterminants, as well as some
applications to dynamics on K3 surfaces.

\section{Preliminaries} \label{sec:prelims}
\subsection{Lattices} \label{sec:lattices}

A {\it lattice} $\Lambda$ is a free abelian group of finite rank,
equipped with a bilinear pairing $\langle \, , \, \rangle : \Lambda
\times \Lambda \to \Z$. We will assume that that pairing is
nondegenerate. A lattice is often described by the Gram matrix $G = (
\langle v_i, v_j \rangle)_{ij}$ with respect to a basis $v_1, \dots,
v_n$ of $\Lambda$. The {\it discriminant} of the lattice is then
$\det(G)$ and the {\it discriminant group} of $\Lambda$ is $A =
\Lambda^*/\Lambda$, where $\Lambda^*$ is the dual lattice.  It is
equipped with a {\it discriminant form} $\phi: A \times A \to
\Q/\Z$. The {\it real signature} of $\Lambda$ is the pair $(r_+, r_-)$
consisting of the number of positive and negative eigenvalues of its
Gram matrix $G$. We say that $\Lambda$ is {\it positive definite}
(respectively, {\it negative definite}, or {\it indefinite}) if $r_- =
0$ (resp., $r_+ = 0$, or $r_+ r_- \neq 0$). Similarly, one can define
the {\it integral $p$-adic signatures} for every prime $p$, by
considering $\Lambda_p = \Lambda \otimes \Z_p$ and its discriminant
group $A_{\Lambda_p}$. These are invariants of the isomorphism class of
the lattice.

We now record for use in this paper some useful theorems regarding
embeddings and isomorphisms of lattices.

\begin{theorem} 
Let $\Lambda$ be an indefinite integral lattice of rank $n \geq 3$ and
discriminant $d$. Assume that there is no odd prime $p$ such that
$p^{n(n-1)/2} \mid d$, and that $2^{n(n-3)/2 + \lfloor (n+1)/2 \rfloor
} \nmid d$. Then the class number of the genus of $\Lambda$ is
$1$. That is, $\Lambda$ is determined by the collection of its real
and $p$-adic signatures.
\end{theorem}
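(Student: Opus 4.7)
The plan is to invoke the classical theory of spinor genera. Since $\Lambda$ is indefinite of rank $\geq 3$, Eichler's strong approximation theorem for the spin group shows that every spinor genus in the genus of $\Lambda$ consists of a single isomorphism class. Hence the class number of the genus of $\Lambda$ coincides with the number of spinor genera in that genus, and it suffices to prove that under the hypotheses this number equals~$1$.

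The number of spinor genera in a genus is always a power of $2$, realized as the index of $\Q^{\times} \cdot \theta\bigl(O^{+}(\Lambda_{\mathbb{A}})\bigr)$ inside the id\`ele group $\mathbb{A}_{\Q}^{\times}$, where $\theta$ denotes the spinor norm and $O^{+}(\Lambda_{\mathbb{A}})$ is the adelic proper-orthogonal group preserving $\Lambda$ locally everywhere. Since $\Lambda$ is indefinite, $\theta\bigl(O^{+}(\Lambda_{\R})\bigr) = \R^{\times}$, so the real place contributes nothing. At every finite prime $p$ where $\Lambda_{p}$ is unimodular, the local spinor norm group $\theta\bigl(O^{+}(\Lambda_{p})\bigr)$ already contains $\Z_{p}^{\times} \cdot \Q_{p}^{\times 2}$, and so those primes also contribute nothing. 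Thus the obstruction is entirely controlled by the primes $p \mid d$.

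The remaining local analysis uses the Jordan decomposition of $\Lambda_{p}$ at each such prime. A standard estimate (via a case-by-case examination of symmetries with respect to vectors in each Jordan component) shows that the local spinor norm group $\theta\bigl(O^{+}(\Lambda_{p})\bigr)$ can fail to cover $\Q_{p}^{\times}/\Q_{p}^{\times 2}$ only when the $p$-part of $d$ reaches the extremal bound $p^{n(n-1)/2}$ for odd $p$, respectively $2^{n(n-3)/2 + \lfloor (n+1)/2 \rfloor}$ for $p = 2$. Under our hypotheses these divisibilities are excluded, so every local spinor norm group is maximal, the global index collapses, and only one spinor genus exists. The main obstacle is the local computation at $p=2$: Jordan decompositions at the prime~$2$ are not unique, one must simultaneously handle the binary blocks $U$ and $V$ and the odd-rank unary blocks, and the spinor norms of the resulting symmetries are sensitive to parities of block ranks and to the type (I or II) of the lattice. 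This delicacy is precisely what produces the asymmetric exponent appearing at $p = 2$ in the hypothesis.
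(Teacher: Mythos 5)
The paper never proves this statement: it is recorded in \S\ref{sec:lattices} as a known classical fact (it is essentially the Eichler--Kneser class-number-one criterion, cf.\ Conway--Sloane, \emph{Sphere Packings, Lattices and Groups}, Ch.~15, and O'Meara \S\S92--93), so there is no in-paper argument to compare yours against. Your outline is the standard and correct route: Eichler's strong approximation for the spin group of an indefinite space of rank $\ge 3$ identifies classes with spinor genera; the number of spinor genera is the index of $\Q^\times\,\theta\bigl(O^{+}(\Lambda_{\mathbb{A}})\bigr)$ in the id\`eles; the archimedean place contributes $\R^\times$ by indefiniteness; the places where $\Lambda_p$ is unimodular of rank $\ge 3$ contribute $\Z_p^\times\Q_p^{\times 2}$; and since $\Q$ has class number one, the whole index collapses to $1$ as soon as every local spinor norm group at $p\mid d$ contains $\Z_p^\times\Q_p^{\times 2}$.

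The one caveat is that the step carrying the actual content of the theorem --- that $\theta\bigl(O^{+}(\Lambda_p)\bigr)\supseteq\Z_p^\times\Q_p^{\times 2}$ unless $p^{n(n-1)/2}\mid d$ for odd $p$, respectively $2^{n(n-3)/2+\lfloor (n+1)/2\rfloor}\mid d$ --- is asserted as ``a standard estimate'' rather than proved. For odd $p$ it is short: the spinor norm group contains the units as soon as some Jordan component has rank $\ge 2$, and if every component has rank $1$ the scales are strictly increasing, forcing $v_p(d)\ge 0+1+\cdots+(n-1)=n(n-1)/2$. At $p=2$ the corresponding computation (non-unique Jordan splittings, the even binary blocks versus odd unary blocks, types I and II) is genuinely the hard part and is exactly where the asymmetric exponent comes from; you correctly locate the difficulty but do not carry it out. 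So your proposal is a correct reduction of the theorem to the classical local spinor norm computations rather than a self-contained proof --- which, given that the paper itself only cites the result, is a reasonable level of detail, but you should either supply the dyadic case analysis or cite it explicitly.
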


\noindent This theorem is used in Table~\ref{table:examples} to
identify the various N\'eron-Severi lattices that arise in our orbit
problems with direct sums of familiar lattices. For the lower-rank
examples, it is easy to give a direct identification; for higher rank,
it suffices to compute the genus symbols of the N\'eron-Severi lattice
(defined by a Gram matrix in the corresponding section), and match
them with those of the lattices given in the table. For brevity, we
omit these verifications in the text.

\begin{theorem}[Nikulin {\cite[Theorem 1.14.4]{nikulin-quadforms}}]
Let $M$ be an even lattice with real signature $(t_+, t_-)$ and
discriminant form $\phi_M$, and let $\Lambda$ be an even unimodular
lattice of signature $(s_+, s_-)$. Suppose that
\begin{enumerate}
\item[{\rm (a)}]
 $t_+ < s_+$
\item[{\rm (b)}] $t_- < s_-$
\item[{\rm (c)}] $\ell(A_{M_p}) \leq \rank(\Lambda) - \rank(M) - 2$ for $p \neq 2$
\item[{\rm (d)}] One of the following condition holds at the prime $2$.
\begin{enumerate}
\item[{\rm (i)}] $\ell(A_{M_2}) \leq \rank(\Lambda) - \rank(M) - 2$, or
\item[{\rm (ii)}] $\ell(A_{M_2}) = \rank(\Lambda) - \rank(M)$ and
  $\phi_M \cong u_2^+(2) \oplus q'$ or $\phi_M \cong v_2^+(2) \oplus
  q'$ for some $q'$.
\end{enumerate}
\end{enumerate}
Then there exists a unique primitive embedding of $M$ into $\Lambda$.
\end{theorem}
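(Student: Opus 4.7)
The approach I would take is to work within Nikulin's discriminant-form framework, translating primitive embeddings $M \hookrightarrow \Lambda$ into the combined data of an orthogonal complement $N := M^{\perp}$ together with gluing data, and then establishing existence and uniqueness for each piece separately.

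First, I would recall that a primitive embedding $M \hookrightarrow \Lambda$ is equivalent to exhibiting $\Lambda$ as an overlattice of $M \oplus N$ corresponding to an isotropic subgroup $H := \Lambda/(M \oplus N) \subset A_M \oplus A_N$. Since $\Lambda$ is unimodular, $H$ must be maximal isotropic and its two projections to $A_M$ and $A_N$ are isomorphisms, so the gluing is the data of an anti-isometry $\gamma : (A_M, \phi_M) \xrightarrow{\sim} (A_N, -\phi_N)$. Consequently, the existence of the embedding reduces to the existence of a lattice $N$ of signature $(s_+ - t_+,\, s_- - t_-)$ whose discriminant form is $-\phi_M$, together with a compatible anti-isometry.

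For existence, I would invoke Nikulin's companion existence theorem for lattices with prescribed signature and discriminant form. Conditions (a) and (b) ensure that both components of the target signature are strictly positive, so such an $N$ is forced to be indefinite. The rank inequality in (c) handles all odd primes $p$ by guaranteeing enough room to realize the $p$-primary part of $-\phi_M$ in the Jordan decomposition of $N$, while condition (d) is exactly the $2$-adic constraint needed to realize the $2$-part of $-\phi_M$ when $\ell(A_{M,2})$ attains the extremal value $\rank(\Lambda) - \rank(M)$. For uniqueness, any two primitive embeddings produce orthogonal complements $N_1, N_2$ with the same signature and the same discriminant form, hence in the same genus. The same rank-versus-length hypotheses, combined with indefiniteness, put us inside the regime of Eichler's criterion / Nikulin's uniqueness theorem for indefinite forms, which forces $N_1 \cong N_2$. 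The remaining choice of gluing subgroup is then absorbed by showing that $O(N) \to O(\phi_N)$ is surjective under these hypotheses, so any two gluings can be conjugated into one another, producing the desired isometry of $\Lambda$ carrying one embedding to the other.

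The main obstacle is the $2$-adic bookkeeping in condition (d). Unlike the odd primes, where the Jordan decomposition of the discriminant form is controlled purely by the ranks of $p$-elementary subgroups, the $2$-adic Jordan blocks carry an additional type invariant (odd versus even), and when $\ell(A_{M,2})$ saturates the bound there is no ``buffer'' block available to absorb a type mismatch. The hypothesis that $\phi_M$ contains either $u_2^{+}(2)$ or $v_2^{+}(2)$ as a summand is precisely what guarantees compatibility between the $2$-adic type of $-\phi_M$ and the allowed $2$-adic behavior of a lattice $N$ of the prescribed rank. Carrying this through cleanly, and simultaneously verifying that the indefinite-uniqueness theorem still applies in this borderline $2$-adic case, is where I expect the proof to become technically delicate.
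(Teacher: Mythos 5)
The paper does not prove this statement: it is quoted as Theorem 1.14.4 of Nikulin's paper on integral symmetric bilinear forms and used as a black box (``the key ingredient in checking that our lattices have unique primitive embeddings in the K3 lattice''). Your outline --- translating a primitive embedding into the unimodular $\Lambda$ into the data of an orthogonal complement $N$ of signature $(s_+ - t_+,\, s_- - t_-)$ with discriminant form $-\phi_M$ glued along an anti-isometry, deducing existence from Nikulin's existence theorem for even lattices with prescribed signature and discriminant form, and deducing uniqueness from the uniqueness theorem for indefinite lattices together with the surjectivity of $O(N) \to O(\phi_N)$ --- is precisely the argument in the cited source, so your proposal is correct and takes the same route as the (external) proof the paper relies on.
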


\noindent Here, $\ell(A)$ denotes the smallest number of generators of
the group $A$, and $u_2^+(2)$ and $v_2^+(2)$ are specific discriminant
forms on certain finite $2$-groups (see
\cite{nikulin-quadforms}). This theorem is the key ingredient in
checking that our lattices have unique primitive embeddings in the K3
lattice. We leave the routine verification to the interested reader.

\subsection{K3 surfaces} \label{sec:K3s}

In this subsection, we first recall some basics of K3 surfaces and
explain the existence of a coarse moduli space $\sM_{\Lambda,S}$ for a
lattice $\Lambda$ as in Remark \ref{rmk:latticeconditions} and a
subset $S$ of $\Lambda$. For simplicity, we work over the complex
numbers, though it is possible also to give an algebraic description
of lattice-polarized K3 surfaces \cite{beauville-fanoK3}.

A K3 surface $X$ over $\fd$ is a projective algebraic nonsingular
surface with trivial canonical bundle and $\ch^1(X,\sO_X) = 0$.  The
cohomology group $\cH^2(X,\Z)$, equipped with the cup product form, is
a $22$-dimensional lattice that is abstractly isomorphic to the {\it
  K3 lattice} $\Lambda_{K3} := E_8^2 \oplus U^3$, where $E_8$ is the
$8$-dimensional negative definite even unimodular lattice and $U$ is
the hyperbolic lattice with Gram matrix $(\begin{smallmatrix} 0 & 1
  \\ 1 & 0 \end{smallmatrix})$.  The N\'eron-Severi group
$\overline{\NS}(X)$ as defined in the introduction is a primitive
sublattice of the K3 lattice, with signature $(1,\rho-1)$.

We start with the notion of a {\it marked $\Lambda$-polarized} K3
surface. Pick an embedding $\Lambda \hookrightarrow \Lambda_{K3}$ (it
does not matter which, since all embeddings are equivalent by
assumption). A {\it marking} is an isomorphism $\phi: \cH^2(X, \Z) \to
\Lambda_{K3}$ such that $\phi^{-1}(\Lambda) \subset \NS(X)$. For such
a marked polarized K3 surface, the class of a regular $2$-form
$\omega$ on $X$ maps under $\phi$ to an element $z \in \Lambda^\perp
\otimes \C$, since $z$ pairs to zero with the algebraic
classes. Furthermore, it is easy to see from Hodge theory that
$\langle z, z \rangle > 0$ and $\langle z, \overline{z} \rangle = 0$.
Therefore, we have $z \in \Omega$ where $\Omega$ is an open subset of
the quadric cone in $\Lambda^\perp \otimes \C$ defined by these two
conditions. Since the form is unique up to scaling, we obtain a
well-defined element of $\Proj(\Omega)$. The map taking $(X, \phi)$ to
$z$ is called the {\em period mapping}. It can be shown that it yields
an isomorphism between the moduli space of marked ample
$\Lambda$-polarized K3 surfaces and the complement $\Omega^0$ of a
union of hyperplanes in $\Omega$.

To remove the marking, let $\Gamma(\Lambda)$ be the group
\[
\Gamma(\Lambda) = \{ \sigma \in O(\Lambda_{K3}) \,:\, \sigma(v) = v
\textrm{ for all } v \in \Lambda \}.
\]
Then an element $\sigma\in\Gamma(\Lambda)$ acts on the moduli space by
sending $(X,\phi)$ to $(X, \phi \circ\sigma)$, which gives an
isomorphism of the polarized K3 surfaces. Let $\Gamma_\Lambda$ be the
image of $\Gamma(\Lambda)$ in $O(\Lambda^\perp)$. Then the moduli
space of $\Lambda$-polarized K3 surfaces is obtained by taking the
quotient by the group action of $\Gamma(\Lambda)$. It establishes an
isomorphism with the period domain, obtained by taking the quotient
$\Omega^0/\Gamma_\Lambda$. For more details, we refer the reader to
\cite{dolgachev-mirror}.

In this paper, we will require a minor modification of this
construction. Namely, we do not quotient by the pointwise stabilizer
of $\Lambda$, but only by the pointwise stabilizer of $S$.  Let
\[
\Gamma(\Lambda,S) = \{ \sigma \in O(\Lambda_{K3}) \, :\,
\sigma(\Lambda) = \Lambda \textrm{ and } \sigma(s) = s \textrm{ for
  any } s \in S \}.
\]
Then $\Gamma(\Lambda,S)$ contains $\Gamma(\Lambda)$ as a subgroup, and
is generally strictly larger.\footnote{However, $\Gamma(\Lambda)$
  is a finite index subgroup of $\Gamma(\Lambda,S)$ if $S$ (or its
  span) contains a positive/ample class.} Let $\Gamma_{\Lambda,S}$ be
its image in $O(\Lambda^\perp)$. The moduli space of
$(\Lambda,S)$-polarized K3 surfaces is obtained by taking the quotient
of the fine moduli space of ample marked $\Lambda$-polarized K3
surfaces by $\Gamma(\Lambda,S)$. From the period mapping, it follows
that the dimension of the space $\sM_{\Lambda,S}$ (when $S$ contains a
positive divisor) is $20 - \rank \Lambda$.

We note here a lemma of Nikulin \cite[Lemma 3]{nikulin-kummer}, which
will be very useful in the determination of N\'eron-Severi groups of
the K3 surfaces studied in this paper.

\begin{lemma}[Nikulin] \label{lem:nikulin}
Let $X$ be a K3 surface, and $E_1, \dots, E_n$ disjoint smooth
rational curves on $X$ such that $\frac12(E_1+\cdots+E_n)\in
\overline{\NS}(X)$. Then $n \in \{0,8,16\}$.
\end{lemma}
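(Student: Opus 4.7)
The plan is to use the divisibility hypothesis to construct a double cover of $X$, then analyze its birational type via the Enriques--Kodaira classification. First, since each $E_i$ is a smooth rational curve on a K3, adjunction gives $E_i^2 = -2$, and by assumption $E_i \cdot E_j = 0$ for $i \neq j$. Setting $v := \tfrac{1}{2}(E_1 + \cdots + E_n) \in \overline\NS(X) \subset H^2(X, \Z)$, I compute $v^2 = -n/2$. Since the K3 lattice $H^2(X,\Z)$ is even, $v^2 \in 2\Z$, so already $n \equiv 0 \pmod{4}$. This rules out many values but not, e.g., $n = 4, 12, 20$.

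Next, because $2v$ is the class of the effective divisor $B := E_1 + \cdots + E_n$ in $\mathrm{Pic}(X)$, there is a line bundle $L$ with $L^{\otimes 2} \cong \sO_X(B)$, giving a double cover $\pi: Y \to X$ branched along $B$. Since the $E_i$ are disjoint and smooth, $B$ is smooth, hence $Y$ is smooth. Standard formulas for double covers give
\begin{equation*}
K_Y = \pi^*(K_X + L) = \pi^* L, \qquad \chi(\sO_Y) = 2\chi(\sO_X) + \tfrac{1}{2}L \cdot (L + K_X) = 4 - n/4.
\end{equation*}
The reduced preimage $\widetilde E_i := \pi^{-1}(E_i)_{\mathrm{red}}$ satisfies $\pi^* E_i = 2\widetilde E_i$, so $\widetilde E_i^2 = E_i^2/2 = -1$; each $\widetilde E_i$ maps isomorphically to $E_i$, so is a smooth rational curve. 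Moreover $\pi^* L \sim \widetilde E_1 + \cdots + \widetilde E_n$, so $K_Y = \sum \widetilde E_i$.

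Now I blow down the disjoint $(-1)$-curves $\widetilde E_i$ via $\sigma: Y \to Y'$, obtaining a smooth surface with $K_{Y'} = \sigma_*(K_Y - \sum \widetilde E_i) = 0$ and $\chi(\sO_{Y'}) = \chi(\sO_Y) = 4 - n/4$. I then argue $Y'$ is minimal: any $(-1)$-curve $C \subset Y'$ would satisfy $-2 = C^2 + K_{Y'}\cdot C = -1 + 0$, a contradiction. Hence $Y'$ is a minimal smooth surface with trivial canonical class, so by Enriques--Kodaira classification $Y'$ is either a K3 surface or an abelian surface, and in either case $\chi(\sO_{Y'}) \in \{0, 2\}$. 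Solving $4 - n/4 \in \{0, 2\}$ yields $n \in \{8, 16\}$, and together with the trivial case $n = 0$ (where the hypothesis is vacuous) we obtain $n \in \{0, 8, 16\}$.

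The main subtlety is verifying that the smoothness of the branch locus $B$ (guaranteed by disjointness of the $E_i$) indeed produces a smooth $Y$, and bookkeeping the numerics of the double cover correctly --- in particular the signs in the self-intersection $\widetilde E_i^2 = -1$ and in $\chi(\sO_Y) = 4 - n/4$, which is what pins down the possible $n$. Everything else is a routine application of the double cover construction and surface classification.
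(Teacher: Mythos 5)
Your proof is correct and follows exactly the route the paper indicates: it explicitly states that the lemma is proved by "a simple calculation of the Euler characteristic of the double cover of $X$ branched along the divisor $\sum E_i$," which is precisely your argument, carried out with the right numerics ($\chi(\sO_Y)=4-n/4$, the $(-1)$-curves upstairs, and the K3/abelian dichotomy after blowing down).
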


\noindent
The proof is a simple calculation of the Euler characteristic of the
double cover of $X$ branched along the divisor $\sum E_i$.

\subsection{Notation and conventions} \label{sec:notation}
We make some brief remarks on the notation and conventions used in
this paper.

\begin{itemize}
\item Unless otherwise stated, we will work over a field $\fd$ of
  characteristic $0$.  We expect that most of the results also hold
  for non-supersingular K3 surfaces in positive characteristic larger
  than 3.
\item For a vector space $V$, the notation $\Sym^n(V)$ denotes the
  $n$th symmetric power of $V$ as a quotient of $V^{\tns n}$.
  However, since we will be working over a field of characteristic
  $0$, this space is canonically isomorphic to the subspace
  $\Sym_n(V)$ of $V^{\tns n}$ and, in fact, it will usually be more
  natural for us to view it as the subspace.
\item If $V$ is a representation of a group $G$, we
  will sometimes consider an action of $\Gm \times G$ on $V$,
  where $\Gm$ acts by scaling.
\item We will be denoting various K3 surfaces using indices; in each
  such case, any permutation of the subscripts will denote the same
  surface, e.g., $X_{123}$ and $X_{132}$ will refer to the same
  surface.
\item We pass between line bundles and divisors on our K3 surfaces
  freely, and we will often use additive notation to denote the tensor
  product of line bundles.  When working with relations among line
  bundles, we also use $=$ to denote an isomorphism (or equivalence
  among divisors).
\item Multilinear forms play a large role in many of our
  constructions.  For example, for vector spaces $V_1$, $V_2$, and
  $V_3$, we sometimes denote an element $A$ of $V_1 \tns V_2 \tns V_3$
  as the trilinear form $A(\ccdot, \ccdot, \ccdot)$, where each
  $\cdot$ may also be replaced by an element of the appropriate dual
  vector space $V_i^\vee$.  By abuse of notation, we may also allow
  points of the projective space $\Proj(V_i^\vee)$ as entries in the
  multilinear form $A$ when we are only asking about the vanishing or
  nonvanishing of $A$.  Finally, the notation $A \subs x$ for an
  element $x \in V_2^\vee$ is just $A(\ccdot, x, \ccdot)$, for
  example.
\item When discussing the induced action of an automorphism of a K3
  surface on the N\'eron-Severi group, the matrices will act on row
  vectors. In particular, if $Q$ is the matrix of the quadratic form
  representing the N\'eron-Severi lattice, then we have $M Q M^t = Q$
  for the matrix $M$ of any automorphism.
\end{itemize}

\section{Some classical moduli spaces for K3 surfaces with low Picard number} \label{sec:classical}

We first recall some of the classical cases listed in the first few
entries of Table \ref{table:examples}. All but the last of them have
Picard number $2$, leading to a moduli space of dimension $20 - 2 =
18$. In each case below, we see directly that the moduli space is
unirational and the points in an open subset correspond bijectively to
orbits of a suitable representation of a reductive group. The excluded
locus in each case is a union of Noether--Lefschetz divisors (in the
sense of \cite{maulikpandharipande}) on the corresponding moduli space
of lattice-polarized K3 surfaces.

For the cases with Picard number $2$, the generic N\'eron-Severi
lattice is even, of signature $(1,1)$ and of (absolute) discriminant
$D$. This lattice coincides with the lattice underlying the quadratic
ring $\mathfrak{O}_D$ of discriminant $D$ equipped with twice the norm
bilinear form, i.e., the form $\langle u, v \rangle = N(u+v) - N(u) -
N(v)$.

\subsection{Elliptic surfaces with section} \label{subsec:ellsurf}
The simplest indefinite even lattice is the hyperbolic plane $U$ of
discriminant $1$.  The moduli space of K3 surfaces lattice-polarized
by $U$ is the same as that of elliptic surfaces with section. Over a
field of characteristic not $2$ or $3$, we may write the Weierstrass
equation of such a surface as
\[
y^2 = x^3 + a_4(t) x + a_6(t),
\]
with $a_4(t)$ and $a_6(t)$ polynomials of degree at most $8$ and $12$
respectively. (For such a Weierstrass equation to describe a K3 rather
a rational surface, we also need $\deg(a_4) > 4$ or $\deg(a_6) > 6$.)
Once we quotient by Weierstrass scaling $(x,y) \to (\lambda^4x,
\lambda^6 y)$ and the $\PGL_2$ action on the base $\Proj^1_t$, we
obtain a moduli space of dimension $9 + 13 - 1 - 3 = 18$, as
expected. This moduli space $\sM_U$ is clearly unirational, and
corresponds to the representation $\Sym^8(2) \oplus \Sym^{12}(2)$.

\subsection{\texorpdfstring{Double covers of $\Proj^1 \times \Proj^1$}{Double covers of P1 x P1}} \label{subsec:44doublecover}
The second discriminant we need to consider is $4$, corresponding to
the lattice $U(2)$. The corresponding K3 surfaces are double covers of
$\Proj^1 \times \Proj^1$, branched along a bidegree $(4,4)$ curve. The
pullbacks of the two hyperplane classes give us line bundles $L_1$ and
$L_2$ with $L_1^2 = L_2^2 = 0$ and $L_1 \cdot L_2 = 2$. Either of the
projections to $\Proj^1$ is a genus one fibration, and exhibits the
surface as an elliptic surface with a $2$-section. The moduli space is
birational to the space of orbits of $\Gm \times \GL_2^2$ on
$\Sym^4(2) \otimes \Sym^4(2)$.

\subsection{\texorpdfstring{Hypersurfaces of bidegree $(2,3)$ in $\Proj^1 \times \Proj^2$}{Hypersurfaces of bidegree (2,3) in P1 x P1}}\label{subsec:23form}
A smooth hypersurface of bidegree $(2,3)$ in $\Proj^1 \times \Proj^2$
is a K3 surface. It has two line bundles $L_1$ and $L_2$ which are
pullbacks of the hyperplane classes, and satisfy $L_1^2 = 0$, $L_2^2 =
2$, and $L_1 \cdot L_2 = 3$. The generic N\'eron-Severi lattice of
this family
\[
\begin{pmatrix}
0 & 3 \\
3 & 2
\end{pmatrix},
\]
has discriminant $9$. The moduli space is birational to the quotient
of $\Sym^2(2) \otimes \Sym^3(3)$ by $\Gm \times \GL_2 \times \GL_3$.

\subsection{\texorpdfstring{Complete intersection of bidegree $(1,1)$ and $(2,2)$ hypersurfaces in $\Proj^2 \times \Proj^2$}{Complete intersection of bidegree (1,1) and (2,2) hypersurfaces in P2 x P2}} \label{subsec:1122complete}
Next, we consider K3 surfaces given as the smooth complete
intersection of bidegree $(1,1)$ and $(2,2)$ forms in $\Proj^2 \times
\Proj^2$.  This time, the pullbacks $L_1$ and $L_2$ of the two line
bundles satisfy $L_i^2 = 2$ (since the intersection of two lines on
one of the $\Proj^2$'s specifies a point, whence $L_i^2$ is obtained
by computing the intersection number of a line and a
conic). Similarly, we check that $L_1 \cdot L_2 = 4$, from the
intersection number of bidegree $(1,1)$ and $(2,2)$ curves on $\Proj^1
\times \Proj^1$. Therefore, the generic N\'eron-Severi lattice has
Gram matrix
\[
\begin{pmatrix}
2 & 4 \\
4 & 2
\end{pmatrix}
\]
with discriminant $12$. The moduli space is birational to the quotient
of $3 \otimes 3 \oplus \Sym^2(3) \otimes \Sym^2(3)$ by $\Gm \times
(\Ga^9 \rtimes \GL_3^2)$, where $\Ga^9$ acts by adding to the $(2,2)$
form the product of the given bidegree $(1,1)$ form with another
bidegree $(1,1)$ form.

\subsection{\texorpdfstring{Complete intersection of bidegree $(1,2)$ and $(2,1)$ hypersurfaces in $\Proj^2 \times \Proj^2$}{Complete intersection of bidegree (1,2) and (2,1) hypersurfaces in P2 x P2}} \label{subsec:1221complete}
Finally, consider K3 surfaces given as the smooth complete
intersection of bidegree $(1,2)$ and $(2,1)$ forms in $\Proj^2 \times
\Proj^2$.  As in the case of discriminant $12$ above, we obtain the
generic N\'eron-Severi lattice
\[
\begin{pmatrix}
2 & 5 \\
5 & 2
\end{pmatrix}
\]
of discriminant $21$. The moduli space is birational to the quotient
of $3 \otimes \Sym^2(3) \oplus \Sym^2(3) \otimes 3$ by $\GL_3^2$.

\subsection{\texorpdfstring{Hypersurfaces of tridegree $(2,2,2)$ in $\Proj^1 \times \Proj^1 \times \Proj^1$}{Hypersurfaces of tridegree (2,2,2) in P1 x P1 x P1}} \label{subsec:222forms}
Finally, we consider K3 surfaces defined by the vanishing of a
tridegree $(2,2,2)$ form on $\Proj^1 \times \Proj^1 \times
\Proj^1$. The three line bundles obtained from pulling back
$\sO_{\Proj^1}(1)$ have intersection matrix
\[
\begin{pmatrix}
0 & 2 & 2 \\
2 & 0 & 2 \\
2 & 2 & 0
\end{pmatrix}.
\]
The moduli space is birational to the quotient of $\Sym^2(2) \otimes
\Sym^2(2) \otimes \Sym^2(2)$ by $\Gm \times \GL_2^3$.

\section{Rubik's revenge: \texorpdfstring{$4 \tns 4 \tns 4$}{4 (x) 4 (x) 4}}
\label{sec:rr}

We begin with a space of K3 surfaces that has been well-studied in the
classical literature in algebraic geometry \cite{cayley,
  snyder-sharpe, jessop, room}, as well as more recently
\cite{beauville, oguiso, festi}: that of determinantal quartics.  Our
perspective is slightly different, however, allowing us to unify
several existing results in the literature; in particular, we classify
orbits on the space of $4\times 4\times 4$ cubical matrices over a
general field $F$ in terms of moduli spaces of certain
lattice-polarized K3 surfaces of Picard rank $2$ over $F$, allowing
general ADE singularities.  The constructions we use here will also
help prepare us for the larger rank cases to follow in later sections.

\begin{theorem} \label{thm:rr}
Let $V_1$, $V_2$ and $V_3$ be $4$-dimensional vector spaces over
$\fd$. Let $G' = \GL(V_1) \times \GL(V_2) \times \GL(V_3)$, and let
$V$ be the representation $V_1 \otimes V_2 \otimes V_3$ of $G'$.  Let
$G$ be the quotient of $G'$ by the kernel of the multiplication map on
scalars, i.e., $\Gm \times \Gm \times \Gm \to \Gm$.  Let $\Lambda$ be
the lattice whose Gram matrix is
\[
\begin{pmatrix}
4 & 6 \\
6 & 4
\end{pmatrix},
\] 
and let $S = \{e_1, e_2\}$. Then the $G(\fd)$-orbits of an open subset
of $V(\fd)$ are in bijection with the $\fd$-points of an open
subvariety of the moduli space $\sM_{\Lambda,S}$ of nonsingular K3
surfaces lattice-polarized by $(\Lambda,S)$.
\end{theorem}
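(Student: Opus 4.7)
The plan is to construct the bijection explicitly in both directions, using the classical theory of linear determinantal representations of quartic surfaces in $\Proj^3$.

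For the forward direction, I would view $A \in V_1 \otimes V_2 \otimes V_3$ as a $4 \times 4$ matrix $M(x) \in V_2 \otimes V_3$ of linear forms in $x \in V_1^\vee$, and define the determinantal quartic $X_1 := \{[x] \in \Proj(V_1^\vee) : \det M(x) = 0\}$. For generic $A$, the degeneracy map $V_2^\vee \otimes \sO(-1) \to V_3 \otimes \sO$ on $\Proj(V_1^\vee)$ has rank exactly $3$ on $X_1$ and rank $4$ elsewhere (the rank-$\leq 2$ locus is expected to be empty for dimension reasons), so $X_1$ is a smooth quartic K3 surface. Symmetric constructions yield quartics $X_2 \subset \Proj(V_2^\vee)$ and $X_3 \subset \Proj(V_3^\vee)$, and the kernel map $\phi_{12}: X_1 \to X_2$ is birational, with inverse the kernel from the direction-$2$ slice, giving a canonical identification of all three quartics with a common K3 surface $X$. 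Pulling back $\sO(1)$ from the three embeddings yields line bundles $L_1, L_2, L_3$ on $X$ with each $L_i^2 = 4$.

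To identify the lattice polarization, I extract $L_1 \cdot L_2$ from a Chern-class computation. The resolution on $\Proj(V_1^\vee)$,
\[
0 \to V_2^\vee \otimes \sO(-1) \to V_3 \otimes \sO \to \iota_* L_3 \to 0,
\]
gives $\chi(\iota_* L_3) = 4$, hence $L_3^2 = 4$. Combined with the identification $\phi_{12}^* \sO(1) \cong (\ker M)^\vee$ and the alternating-sum Chern identity on $X_1$, this forces the linear relation $L_3 = 3 L_1 - L_2$ in $\overline{\NS}(X)$. Substituting into $L_3^2 = 4$ yields $L_1 \cdot L_2 = 6$. Hence $\Z L_1 \oplus \Z L_2$ embeds into $\overline{\NS}(X)$ with Gram matrix $\Lambda$, and the parameter count $\dim V - \dim G = 64 - 46 = 18 = \dim \sM_\Lambda$ shows that for generic $A$ the moduli map is dominant and this sublattice equals the full Néron-Severi lattice; Nikulin's primitive embedding theorem from the preliminaries ensures $\Lambda$ embeds primitively into the K3 lattice.

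For the reverse direction, suppose $X$ is a K3 surface lattice-polarized by $(\Lambda, S)$ with $L_1, L_2$ the classes of $e_1, e_2$, and $L_1$ inducing a smooth quartic embedding $\iota: X \hookrightarrow \Proj(V_1^\vee)$ where $V_1 := H^0(X, L_1)$. Set $L_3 := 3L_1 - L_2$. Riemann–Roch and standard K3 vanishing give $h^0(L_i) = 4$ for $i=1,2,3$, together with $h^i(L_j - kL_1) = 0$ in all further degrees required to invoke Beauville's theorem \cite{beauville} on linear determinantal representations. This produces a minimal free resolution
\[
0 \to V_2^\vee \otimes \sO_{\Proj(V_1^\vee)}(-1) \to V_3 \otimes \sO_{\Proj(V_1^\vee)} \to \iota_* L_3 \to 0
\]
with $V_2 \cong H^0(L_2)$ and $V_3 \cong H^0(L_3)$ both $4$-dimensional; the middle map is a $4 \times 4$ matrix of linear forms, i.e., an element $A \in V_1 \otimes V_2 \otimes V_3$ whose determinant cuts out $X$. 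Uniqueness of the minimal resolution up to the residual $\GL(V_2) \times \GL(V_3)$ action on the cohomology bases shows that the two constructions are mutually inverse on open subsets, recovering $G(\fd)$-orbits from isomorphism classes of $(X, L_1, L_2)$.

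The main anticipated obstacles are arithmetic and lattice-theoretic. First, Beauville's theorem is normally stated over algebraically closed fields, so I must verify that the $\GL(V_2) \times \GL(V_3)$-torsor of minimal resolutions admits an $\fd$-rational point; this should follow because $L_1, L_2 \in S$ are Galois-fixed, making the cohomology spaces $V_i$ defined over $\fd$ and the resolution Galois-equivariant by uniqueness. Second, obtaining an orbit-to-orbit bijection between $G(\fd)$ and $\sM_{\Lambda,S}(\fd)$ requires matching the residual lattice symmetries; notably, the involution of $\Lambda$ sending $L_2 \leftrightarrow 3 L_1 - L_2 = L_3$ fixes $e_1$ but swaps $e_2$ with another class, so it lies in $\Gamma(\Lambda)$ but not in $\Gamma(\Lambda, S)$. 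This explains why the pair $S = \{e_1, e_2\}$ must be marked (rather than just $\{e_1\}$), and comparing this involution to the transpose/swap symmetry of the cubical matrix completes the orbit-level bijection.
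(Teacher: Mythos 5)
Your proposal is correct and follows essentially the same route as the paper: determinantal quartics and kernel maps in the forward direction, the relation $L_2 + L_3 = 3L_1$ (which the paper extracts from the geometry of the $3\times 3$ minors rather than from Chern classes of the resolution) to pin down the Gram matrix, and Beauville's ACM/determinantal-representation result for the reverse direction, with the tensor recovered from the resolution --- equivalently, as the paper phrases it, from the $4$-dimensional kernel of the multiplication map $\cH^0(X,L_1)\otimes \cH^0(X,L_2)\to \cH^0(X,L_1\otimes L_2)$. One small correction: the involution $e_2\mapsto 3e_1-e_2$ is an autometry of $\Lambda$ that does not fix $S$ pointwise, so by the paper's definitions it lies in \emph{neither} $\Gamma(\Lambda)$ (the pointwise stabilizer of $\Lambda$) nor $\Gamma(\Lambda,S)$; your conclusion that it must be excluded from the equivalence relation is right, but it is not an element of $\Gamma(\Lambda)$.
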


\subsection{Construction of K3 surfaces}\label{subsec:rrconst}

We first describe the construction of a K3 surface from an element
$\cube \in V = V_1 \tns V_2 \tns V_3$, where $V_1$, $V_2$, and $V_3$
are $4$-dimensional $\fd$-vector spaces.  With bases for $V_1$, $V_2$,
and $V_3$, we may view $\cube$ as a $4 \times 4 \times 4$ cubical
matrix $(a_{ijk})_{1 \leq i, j, k \leq 4}$ with entries in $\fd$.  For
any $x \in V_1^\vee$, we may obtain a $4 \times 4$ matrix $\cube \subs
x$ of linear forms in $x$. The determinant of this matrix is a form
$f$ of degree $4$ in four variables, and its vanishing locus is a
quartic surface $X_1$ in $\Proj(V_1^\vee) \cong \Proj^3$.  We restrict
our attention to the general case where $X_1$ has at most simple
isolated singularities, which are thus K3 surfaces; in this case, we
say that $A$ is {\em nondegenerate}.

Similarly, we may repeat this construction in the other two directions
(replacing $V_1$ with $V_2$ or $V_3$) to obtain two more K3 surfaces
$X_2 \subset \Proj(V_2^\vee)$ and $X_3 \subset \Proj(V_3^\vee)$. We
claim that these three K3 surfaces are birational to each other.  For
example, to exhibit the map $X_1 \dashrightarrow X_2$, we view $\cube$
as a trilinear form on $V_1^\vee \times V_2^\vee \times V_3^\vee$.
Then let
\begin{equation*}
X_{12} := \left\{ (x,y) \in \Proj(V_1^\vee) \times \Proj(V_2^\vee) : \cube(x,y,\ccdot) = 0 \right\}.
\end{equation*}
Then we observe that the projections of $X_{12}$ to $\Proj(V_1^\vee)$
and $\Proj(V_2^\vee)$ are $X_1$ and $X_2$, respectively, thereby
giving a correspondence between $X_1$ and $X_2$.  In particular, given
a point $x \in X_1$, the determinant of $\cube \subs x$ vanishes, and
the $y \in X_2$ such that $(x,y) \in X_{12}$ are exactly those $y$ (up
to scaling) in the kernel of $\cube \subs x$ in $V_2^\vee$.  We claim
that if the kernel is at least $2$-dimensional, then the point $x \in
X_1$ is a singular point.  Indeed, if all of the $3 \times 3$ minors
$\cube^*_{st}(x)$ of $\cube \subs x$ vanish, for $1 \leq s, t \leq 4$,
then so do the partial derivatives
\[
\frac{\partial f}{\partial x_i}(x) = \sum_{s,t} c_{ist} \cube^*_{st}(x),
\]
where $c_{ist} = (-1)^{s+t} a_{ist}$. Hence if $x\in X_1$ is
nonsingular, then the kernel of $\cube \subs x$ is exactly
1-dimensional.  Generically, if the kernel of $\cube \subs x$ is
2-dimensional, then $x$ gives an isolated singularity of
$X_1$,\footnote{Furthermore, if the kernel is $3$-dimensional, then
  the surface $X_1$ is a rational surface.} which we call a {\it rank
  singularity} of $X_1$.  (It is possible for $X_1$ to have isolated
singularities that are not rank singularities;
see~\S\ref{subsec:hyperdet} for a further discussion of singularities
on these surfaces.)

This describes a map $\psi_{12}: X_1 \dashrightarrow X_2$, and it is
easy to see that it is generically an isomorphism, as we may construct
the inverse map $\psi_{12}^{-1}=\psi_{21}:X_2\dashrightarrow X_1$ in
the analogous manner. Similarly, we have maps $\psi_{ij} =
\psi_{ji}^{-1}$ for all $1 \leq i \neq j \leq 3$. However, we note
that the composition $\Phi := \psi_{31} \circ \psi_{23} \circ
\psi_{12}$ is not the identity!
The resulting automorphism will be discussed further in~\S
\ref{sec:rrauts}.

The isomorphism classes of the K3 surfaces $X_i$ and maps $\psi_{ij}$
are invariant under the action of the group $G$.  As there is a finite
stabilizer group for a generic point in $V$ (in fact, the stabilizer
is trivial; see Lemma \ref{lem:444stab} below), the dimension of the
moduli space of K3 surfaces obtained in this way is $64 - 46 = 18$.

\subsection{N\'eron-Severi lattice} \label{sec:RR-NS}

We will see below that the N\'eron-Severi lattices of these K3
surfaces all contain a particular $2$-dimensional lattice with Gram
matrix
\begin{equation} \label{eq:intmatrix444}
\begin{pmatrix}
4 & 6 \\ 6 & 4
\end{pmatrix}.
\end{equation}
The space of K3 surfaces with this lattice polarization has dimension
$20 - 2 = 18$. Therefore, we see that the N\'eron-Severi lattice of a
generic K3 surface in this family will be this $2$-dimensional lattice
above.

To understand the N\'eron-Severi group of a K3 surface in our family,
say $X_1 = X_1(\cube)$ for a particular choice of $\cube$ and bases
for the vector spaces, we proceed as follows. Let $W$ be the vanishing
locus in $X_1$ of the top left $3 \times 3$ minor of
$\cube(x,\ccdot,\ccdot)$; note that $W$ contains, in particular, all
the isolated rank singularities of $X_1$. The maps $\psi_{12}$ and
$\psi_{13}$ can be expressed by the minors of the last row and column,
respectively, of $\cube(x,\ccdot,\ccdot)$ (with the appropriate
signs).  Note that each of these two sets of minors contains the top
$3\times 3$ minor of $\cube(x,\ccdot,\ccdot)$.  Hence we see that $W$
contains a divisor equivalent to $C = \psi_{12}^*(L_2)$, where $L_2$
is the hyperplane class of $X_2 \subset \Proj(V_2^\vee)$, and
similarly $W$ contains a divisor equivalent to $D =
\psi_{13}^*(L_3)$. By direct calculation, we observe that the scheme
$W$ is reducible, and generically decomposes into two components,
which must therefore be $C$ and $D$. In the N\'eron-Severi lattice, we
therefore have
\begin{equation} \label{eq:444relation}
3H = W = C + D + \sum_i E_i,
\end{equation}
where $H = L_1$ is the hyperplane class of $X_1$ and the $E_i$ are the
exceptional divisors over the isolated rank singularities. In the
generic case, there are no exceptional divisors $E_i$.

We now compute the intersection numbers involving $H$ and $C$,
assuming there are no exceptional divisors.  We have $H^2 = 4$,
and $C^2 = \langle \psi_{12}^*(L_2), \psi_{12}^*(L_2) \rangle = L_2^2
= 4$ and similarly $D^2 = 4$. So we obtain $36 = (3H)^2 = C^2 + D^2 +
2 C \cdot D = 4 + 2 C \cdot D$, giving $C \cdot D = 14$. Therefore $C
\cdot 3H = C \cdot (C + D) = 4 + 14 = 18$, leading to $C \cdot H =
6$. The divisors $H$ and $C$ thus have the intersection matrix
\eqref{eq:intmatrix444}.

\begin{proposition}
The Picard group of the K3 surface $X_1$ corresponding to a very
general point (in the moduli space of Rubik's revenge cubes) is
generated by the classes of $C$ and $H$.
\end{proposition}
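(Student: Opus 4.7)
The plan is to establish two complementary facts: (a) the N\'eron-Severi rank of a very general $X_1$ equals exactly $2$; and (b) the sublattice $\langle H, C \rangle$ is saturated in $\overline{\NS}(X_1)$. Together these give $\overline{\NS}(X_1) = \Z H \oplus \Z C$.

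For (a), I would argue by a dimension count. The orbit space $V/G$ has dimension $\dim V - \dim G = 64 - 46 = 18$, using that the stabilizer of a generic cube is trivial (Lemma~\ref{lem:444stab}). The moduli space $\sM_{\langle H, C\rangle}$ of K3 surfaces polarized by the rank-$2$ lattice $\langle H, C\rangle$ has dimension $20 - 2 = 18$, and the construction of \S\ref{subsec:rrconst} gives a natural map from (an open subset of) $V/G$ to $\sM_{\langle H, C\rangle}$. Provided this map has generically finite fibers (see below), the image is an open subset of $\sM_{\langle H, C\rangle}$. Since the locus in $\sM_{\langle H, C\rangle}$ where $\overline{\NS}$ has rank $\geq 3$ is a countable union of proper subvarieties of dimension $\leq 17$, a very general cube produces a K3 surface whose Picard rank is exactly $2$.

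For (b), I would check directly that $\langle H, C\rangle$ admits no nontrivial even overlattice of rank $2$. Its discriminant is $\det\bigl(\begin{smallmatrix} 4 & 6 \\ 6 & 4 \end{smallmatrix}\bigr) = -20$, so a finite-index overlattice has index $n$ satisfying $n^2 \mid 20$, forcing $n = 2$. Any new class in such an overlattice takes the form $D = \tfrac12(aH + bC)$ with $(a,b) \not\equiv (0,0) \pmod 2$, and a direct computation gives $D^2 = \tfrac14(4a^2 + 12ab + 4b^2) = a^2 + 3ab + b^2$. Reducing modulo $2$ on each of the three nonzero residues $(a,b)$, one finds $D^2$ is odd in every case, which contradicts the fact that $\overline{\NS}(X_1)$ is an even lattice (as a sublattice of the even K3 lattice).

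The main obstacle is the finite-fiber claim in (a): one must show that, generically, distinct $G$-orbits in $V$ yield non-isomorphic polarized K3 surfaces. One natural route is to reconstruct the cube $A$ (up to $G$) from the data $(X_1, H, C)$, using the fact that $H$ realizes $X_1$ as a determinantal quartic in $\Proj(V_1^\vee)$ and that $C$ together with the covariant maps $\psi_{ij}$ from \S\ref{subsec:rrconst} recovers the second and third factors. Alternatively, one can exhibit a single explicit cube for which $X_1$ has Picard rank exactly $2$ and then conclude by upper semi-continuity of the Picard rank in families.
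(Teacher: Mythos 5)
Your proof is correct and follows essentially the same route as the paper: the rank-$2$ claim comes from the same dimension count ($64-46=18=20-2$, with generic finiteness of the parametrization supplied by the reverse construction of Theorem~\ref{thm:masterRR}), and your saturation check --- $n^2\mid 20$ forces index $2$, and $(H/2)^2=(C/2)^2=1$, $((H+C)/2)^2=5$ are all odd --- is word-for-word the paper's argument. One small caution: you cite the triviality of the generic stabilizer via Lemma~\ref{lem:444stab}, but the paper's proof of that lemma already presupposes $\overline{\NS}(X)=\Z H+\Z C$; for the dimension count you only need \emph{finiteness} of the stabilizer, which is available independently (the automorphism group of a polarized K3 surface is finite), so you should cite that instead to avoid circularity.
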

\begin{proof}
The discriminant of the lattice generated by $C$ and $H$ is $20 = 2^2
\cdot 5$, so it is enough to check that it is $2$-saturated. Since
$C/2$ and $H/2$ have self-intersection $1$, which is odd, neither of
these classes are in $\NS(X)$. Similarly, $(C+H)/2$ has
self-intersection $5$. Therefore $\NS(X) = \Z C + \Z H$.
\end{proof}

\begin{lemma} \label{lem:444stab} A quartic surface $X = X_1$
  associated to a very general point in the moduli space of
  $(\Lambda,S)$-polarized K3 surfaces has no linear automorphisms
  $($i.e., induced from $\PGL_4)$ other than the identity.
\end{lemma}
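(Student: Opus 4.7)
The plan is to combine a direct lattice calculation with the generic triviality of the $G$-stabilizer on $\cube \in V$. Since any linear automorphism $g \in \PGL(V_1)$ of $X_1$ preserves the hyperplane class $H$, it induces a lattice isometry of $\NS(X_1) = \Z H + \Z C$ fixing $H$. Writing such an isometry as $H \mapsto H$, $C \mapsto aC + bH$ with $a, b \in \Z$, the two conditions $(aC+bH)^2 = 4$ and $(aC + bH) \cdot H = 6$ reduce to $3a + 2b = 3$ and $a^2 + 3ab + b^2 = 1$. Eliminating $b = (3-3a)/2$ yields $5a^2 = 5$, so $a = \pm 1$, giving exactly two isometries: the identity and the involution $\sigma : C \mapsto D = 3H - C$.

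I would then rule out each case for a very general cube $\cube$. If $g$ acts as the identity on $\NS(X_1)$, then $g$ preserves both complete linear systems $|C|$ and $|D|$, each of which has projective dimension $3$ by Riemann--Roch and Kodaira vanishing. Since these systems define the morphisms $\psi_{12} : X_1 \dashrightarrow X_2 \subset \Proj(V_2^\vee)$ and $\psi_{13} : X_1 \dashrightarrow X_3 \subset \Proj(V_3^\vee)$, the automorphism $g$ must intertwine them with linear automorphisms $h \in \PGL(V_2)$ and $k \in \PGL(V_3)$ satisfying $\psi_{12} \circ g = h \circ \psi_{12}$ and $\psi_{13} \circ g = k \circ \psi_{13}$. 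The triple $(g, h, k)$ then stabilizes the cube $\cube$, by appealing to the reconstruction of $\cube$ from the data $(X_1, \psi_{12}, \psi_{13})$ described in \S\ref{subsec:rrconst}. Since a very general cube has trivial $G$-stabilizer, $g = \id$.

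If instead $g$ acts as $\sigma$, then $g$ interchanges the linear systems $|C|$ and $|D|$, and so induces a linear isomorphism $X_2 \cong X_3$ fitting into $\psi_{13} \circ g = h \circ \psi_{12}$ for some isomorphism $h$ of projective spaces. By the same reconstruction, this would produce an element of $\PGL(V_1) \times \PGL(V_2) \times \PGL(V_3)$ combining $g$ with an exchange of the $V_2$ and $V_3$ factors that stabilizes $\cube$; equivalently, $\cube$ would admit a $V_2 \leftrightarrow V_3$ symmetry (up to the $G$-action). This is a proper closed condition on $V$, which fails for a very general cube.

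The main obstacle is verifying that the induced triples $(g, h, k)$ (respectively, the exchange datum in the second case) genuinely stabilize $\cube$ rather than merely the derived K3 data; this relies on the explicit reconstruction of $\cube$ from $(X_1, \psi_{12}, \psi_{13})$ outlined in \S\ref{subsec:rrconst}, together with the elementary fact that a generic orbit of $G$ on $V = V_1 \otimes V_2 \otimes V_3$ has trivial stabilizer (which follows from a dimension count: $\dim V = 64$ versus $\dim G = 46$, with the moduli dimension $18$ matching $\dim \sM_{\Lambda,S}$).
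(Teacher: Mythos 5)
Your lattice computation at the start is correct and matches the paper's: the only isometries of $\Z H+\Z C$ fixing $H$ are the identity and the swap $C\mapsto D=3H-C$. But the way you dispose of the two cases has a genuine gap, and it is essentially a circularity. Your key input is that ``a very general cube has trivial $G$-stabilizer,'' which you claim follows from the dimension count $64-46=18$. That count only shows the generic stabilizer is \emph{finite} (zero-dimensional), not trivial; the paper is explicit about this, remarking in \S\ref{subsec:rrconst} that the stabilizer is finite ``(in fact, the stabilizer is trivial; see Lemma \ref{lem:444stab} below).'' In other words, in the paper's logic the triviality of the stabilizer is a \emph{corollary} of the lemma you are trying to prove, so you cannot use it as an input without an independent proof. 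Even granting finiteness, your first case does not close: a nontrivial finite-order $g$ acting trivially on $\NS(X)$ is exactly a non-symplectic-type automorphism, and ruling it out for very general $X$ requires knowing that the only Hodge isometries of the transcendental lattice are $\pm\id$ --- which is the Torelli-theoretic ingredient your argument never invokes. Your second case also leans on the unproved assertion that admitting a $V_2\leftrightarrow V_3$ symmetry up to $G$-action is a \emph{proper} closed condition; this is true (symmetric cubes yield ten-nodal symmetroids in the $X_1$ direction, whereas the very general $X_1$ here is smooth), but it needs to be said.

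For comparison, the paper's proof avoids all of this by working entirely on the lattice side: by Nikulin's description, $\Aut(X)\cong\{(g,h)\in O^+(\NS(X))\times O_\omega(\T(X)): \bar g=\bar h\ \text{on the discriminant group}\}$. For very general $X$ one has $h=\pm\id$, and then the swap isometry $(m,n)=(-1,3)$ is excluded because it does not act by $\pm 1$ on the discriminant group (generated by $(H+C)/10$ and $H/2$), while the identity isometry forces $h=\id$ and hence the identity automorphism. If you want to keep your geometric strategy, you would need to first prove the transcendental-lattice statement (or the trivial-stabilizer statement) by some independent means; as written, the argument assumes what the paper derives from this lemma.
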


\begin{proof}
From \cite{nikulin}, we have the following description of the
automorphism group. Let $O^+(\NS(X))$ be the set of isometries of the
N\'eron-Severi lattice which preserve the K\"ahler cone and
$O_\omega(\T(X))$ be the set of isometries of the transcendental
lattice which preserve the period $\omega$ of the K3 surface, up to
$\pm 1$. Then
\[
\Aut(X) \cong \{ (g,h) \in O^+(\NS(X)) \times O_\omega(\T(X)) : \bar{g} = \bar{h} \},
\]
where $\,\bar{}\,$ refers to the natural morphisms from the orthogonal
groups of the lattices $\NS(X)$ or $\T(X)$ to their discriminant
groups, which are isomorphic.

For a general element of the moduli space, the only Hodge isometries
of the transcendental lattice are $h = \pm \id$. Suppose $g$ preserves
the class of $H$ and $g(C) = mC + nH$ for some $m,n \in \Z$. Since
$g(H) \cdot g(C) = H \cdot C = 6$, we obtain $6m + 4n = 6$. Similarly,
$g(C) \cdot g(C) = C \cdot C = 4$ gives $4m^2 + 4n^2 + 12mn =
4$. Combining these, we get $(m,n)$ = $(1,0)$ or $(-1,3)$.  In the
first case, we have $g = \id$ and by the condition on the discriminant
group (which is not $2$-torsion), we see that $h = \id$ is forced,
leading to the identity automorphism of $X$. In the second case, we
see that $C$ and $D$ are switched under $g$; however, since $g$ does
not act by $\pm 1$ on the discriminant group, which is generated by
$(H+C)/10$ and $H/2$, it does not give an automorphism of $X$.
\end{proof}

\begin{corollary}
The stabilizer of the action of $G$ on $V$ is generically trivial.
\end{corollary}

\begin{proof}
  If $g = (g_1, g_2, g_3)$ stabilizes $v \in V$, then $g_i$ gives a
  linear automorphism of $X_i$ for each $i$. Therefore, generically $g
  = 1$.
\end{proof}

\subsection{Moduli problem}

This subsection contains the proof of Theorem \ref{thm:rr}.  We have
already given a construction from an element of $V_1 \otimes V_2
\otimes V_3$ to a $(\Lambda, S)$-polarized K3 surface.  The bulk of
the proof is to show the reverse construction. We start with a
well-known lemma; a simple proof may be found in \cite{mayer}
with more details in \cite{SaintDonat}. We include this proof below, 
since it is a useful template for the proofs of this section. 

\begin{lemma}
  Let $(X,L)$ be a generic point in the moduli space $\sM_4$ of K3
  surfaces equipped with a line bundle $L$ with $L^2 = 4$. Then the
  linear system $|L|$ embeds $X$ as a quartic surface in $\Proj^3$.
\end{lemma}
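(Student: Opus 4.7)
The plan is to combine Riemann--Roch, Kodaira vanishing, and Saint-Donat's theorem on very ampleness of line bundles on K3 surfaces.

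First, I would compute the dimension of the linear system. Since $X$ is a K3 surface, $K_X = 0$, so Riemann--Roch gives $\chi(L) = \tfrac{1}{2}L^2 + 2 = 4$. For a very general point $(X,L)\in\sM_4$, one may assume the Picard rank is $1$ and $\NS(X) = \Z L$, so $L$ is an ample generator of the N\'eron-Severi lattice. By Kodaira vanishing (applied to $L$ and its dual) together with Serre duality, one obtains $h^1(L) = h^2(L) = 0$, giving $h^0(L) = 4$. Hence $|L|$ defines a rational map $\varphi_L\colon X\dashrightarrow \Proj^3$.

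Next I would show that $L$ is very ample by invoking Saint-Donat's criterion: an ample line bundle $L$ on a K3 surface with $L^2 \geq 4$ fails to be very ample only if there exists an effective divisor $E$ with $E^2 = 0$ and $L\cdot E \in \{1,2\}$, or an effective divisor $B$ with $B^2 = 2$ and $L = 2B$, or certain $(-2)$-curve configurations arise. On a K3 with $\NS(X) = \Z L$, every class is of the form $nL$; such a class has self-intersection $4n^2$ and thus cannot equal $0$ or $2$, nor can $L$ itself be divisible. Hence none of the obstructions occur, $L$ is very ample, and $\varphi_L$ is a closed embedding into $\Proj^3$.

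Finally, the image $\varphi_L(X)\subset\Proj^3$ is a surface whose degree equals the self-intersection of the hyperplane class pulled back to $X$, namely $L^2 = 4$. Thus $X$ is realized as a quartic surface in $\Proj^3$, as claimed.

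The only nontrivial step is the appeal to Saint-Donat's theorem; everything else is a direct numerical computation. The genericity hypothesis is used solely to ensure $\NS(X) = \Z L$, which in turn rules out the exceptional configurations in Saint-Donat's list.
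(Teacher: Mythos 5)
Your proof is correct, and it takes a slightly different route from the paper's. The paper also begins with Riemann--Roch to get $\ch^0(L)=4$, but then argues via the classical dichotomy for the morphism $\phi_L\colon X\to\Proj^3$: either $\deg\phi_L=1$ and the image is a quartic, or $\deg\phi_L=2$ and the image is a quadric (the hyperelliptic case, where a member of $|L|$ is a hyperelliptic genus-$3$ curve); the second case is then excluded for generic $(X,L)$ because it forces extra classes into the Picard lattice. You instead invoke Saint-Donat's very-ampleness criterion and use $\NS(X)=\Z L$ to rule out every exceptional configuration ($E^2=0$ with $L\cdot E\le 2$, $L=2B$ with $B^2=2$, and $(-2)$-curves contracted by $|L|$) by pure lattice arithmetic. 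Your version is somewhat more complete on the final point --- very ampleness gives a genuine closed embedding in one stroke, whereas the paper's argument as written only establishes that $\phi_L$ is birational onto a quartic. On the other hand, the paper's formulation isolates the hyperelliptic locus as the precise ``bad'' divisor in moduli, which is exactly what it needs later (Remark \ref{rmk:genericity} and Theorem \ref{thm:masterRR}, where non-hyperellipticity of the curve in $|L_2|$ is the hypothesis driving the ACM/determinantal construction). Both arguments use genericity in the same way, namely only through control of the N\'eron--Severi lattice. One small point of care: Saint-Donat's criterion presupposes $|L|$ has no fixed components and is base-point-free, but these too are governed by the same list of exceptional classes (e.g.\ an elliptic pencil $E$ with $E\cdot L=1$), so your lattice argument disposes of them as well; it would be worth saying so explicitly.
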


\begin{proof}
  By Riemann-Roch, $\ch^0(L) + \ch^2(L) \geq 4$, so $L$ or $-L$ is
  effective. We may assume the former without loss of generality. For
  a generic point in the moduli space, the linear system $|L|$
  contains an irreducible curve $C$. By Bertini's theorem, we may even
  assume $C$ is smooth. It is not difficult to show that $\ch^1(L) =
  0$, so $\ch^0(L) = 4$. Therefore, the associated morphism $\phi_L$
  maps $X$ to $\Proj^3$. Either (i) $\deg(\phi) = 1$ and the image is
  a quartic surface in $\Proj^3$, or (ii) $\deg(\phi) = 2$ and the
  image is a quadric surface, and the curve $C$ is a double cover of a
  plane conic branched at $8$ points, and therefore a hyperelliptic
  curve of genus $3$. The second case does not occur generically (see,
  for instance, the argument in \cite[Remark~2.3.8 and
    Example~2.3.9]{huybrechts} or \cite[Exp.~VI]{k3notes}), and leads
  to an increase in the Picard number.
\end{proof}

\begin{remark}
The locus of K3 surfaces for which $|L|$ does not contain an
irreducible curve (alternatively, has a base locus, necessarily a
smooth rational curve) is a Noether--Lefschetz divisor. In this {\em
  unigonal} case, the complete linear system $|L|$ describes $X$ as an
elliptic surface over a twisted cubic in $\Proj^3$. The hyperelliptic
or {\em digonal case} (ii) in the proof above also corresponds to a
Noether--Lefschetz divisor.
\end{remark}

Most of the proof of Theorem \ref{thm:rr} will be established in the
following result, which we state separately, since it will also be useful
in subsequent sections.

\begin{theorem} \label{thm:masterRR}
  Let $X$ be a K3 surface equipped with two line bundles $L_1$, $L_2$
  such that $L_1^2 = L_2^2 = 4$ and $L_1 \cdot L_2 = 6$. Assume in
  addition that $L_1$ and $L_2$ correspond to effective divisors $C_1$
  and $C_2$ on $X$ that induce maps to $\Proj^3$ whose images are
  normal quartic surfaces. Then $X$ arises from a $4 \times 4 \times
  4$ matrix via the construction of $\S\ref{subsec:rrconst}$.
\end{theorem}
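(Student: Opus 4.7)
The plan is to invert the construction of $\S\ref{subsec:rrconst}$: starting from the data $(X, L_1, L_2)$, I will produce a nondegenerate cube $\cube \in V_1 \otimes V_2 \otimes V_3$ such that $X_1(\cube) \cong X$ with hyperplane class $L_1$, and such that $\psi_{12}^*(L_2^{X_2}) = L_2$. The heart of the argument is the production of a linear $4 \times 4$ determinantal representation of the quartic surface defined by $L_1$.

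First I would embed $X \hookrightarrow \Proj(V_1^\vee) = \Proj^3$ via $|L_1|$, obtaining a normal quartic with defining form $f_1 \in \Sym^4 V_1$. The crucial input is Beauville's theorem on determinantal representations of quartic surfaces \cite{beauville}: a (possibly mildly singular) quartic surface $X \subset \Proj^3$ admits a linear $4 \times 4$ determinantal representation precisely when it carries a non-hyperelliptic curve class of arithmetic genus $3$ and degree $6$. The class $L_2$ supplies exactly such a curve, since $L_2 \cdot L_1 = 6$ is the degree, adjunction on $X$ combined with $L_2^2 = 4$ yields arithmetic genus $p_a = 3$, and the hypothesis that $|L_2|$ has normal quartic image in $\Proj(V_2^\vee)$ is precisely the non-hyperelliptic condition. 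Applying Beauville's equivalence then produces a resolution
\begin{equation*}
0 \to V_3 \otimes \sO_{\Proj^3}(-1) \xrightarrow{\ M\ } V_2 \otimes \sO_{\Proj^3} \to \iota_* L_2 \to 0,
\end{equation*}
where $V_2 = H^0(X, L_2)$ is four-dimensional (by Riemann--Roch together with Kawamata--Viehweg vanishing, since $L_2$ is big and nef), $V_3$ is another four-dimensional space, and $M \in V_1 \otimes V_2 \otimes V_3^\vee$ satisfies $\det M = f_1$ up to scalar.

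Fixing an isomorphism $V_3^\vee \cong V_3$ and setting $\cube := M$, the identity $\det(\cube \subs x) = f_1(x)$ immediately gives $X_1(\cube) = X$. At each smooth point $x \in X$, the one-dimensional kernel of $\cube \subs x\colon V_2^\vee \to V_3$ is the annihilator of $\mathrm{im}\,M(x) \subset V_2$; but $\mathrm{im}\,M(x)$ is exactly the kernel of the evaluation map $V_2 = H^0(L_2) \to L_2|_x$, so the kernel of $\cube \subs x$ in $V_2^\vee$ is spanned by the evaluation functional $\mathrm{ev}_x$. This identifies $\psi_{12}$ with the morphism defined by $|L_2|$, yielding $\psi_{12}^*(L_2^{X_2}) = L_2$. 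The rank singularities of $\cube$ coincide with the ADE singularities of the normal quartic image of $|L_1|$, so $\cube$ is nondegenerate.

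The main obstacle is verifying Beauville's hypotheses for our $(X, L_1, L_2)$ and extending his result to normal (rather than smooth) quartics. The cohomological vanishings reduce to $h^i(L_2) = 0$ for $i > 0$ (Kawamata--Viehweg) and $h^0(L_2 - L_1) = 0$, with the latter holding because any effective representative of $L_2 - L_1$ would have $D^2 = -4$ and $D \cdot L_1 = 2$, contradicting the normality of the image of $|L_1|$. The extension to the singular case is standard: one works on the minimal resolution $\widetilde X \to X$ and pushes forward, the ADE nature of the singularities ensuring that the resulting cokernel sheaf on the quartic remains reflexive and matches $L_2$ off a finite set.
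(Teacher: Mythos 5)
Your proof is correct and follows essentially the same route as the paper's: both rest on Beauville's determinantal-representation theorem for quartics containing a projectively normal genus-$3$, degree-$6$ curve, i.e., on the ACM resolution $0\to\sO_{\Proj^3}(-1)^4\to\sO_{\Proj^3}^4\to j_*L_2\to 0$ --- the paper extracts the cube as the kernel of the multiplication map $\cH^0(L_1)\otimes\cH^0(L_2)\to\cH^0(L_1\otimes L_2)$ obtained by twisting this resolution, whereas you read it off directly as the matrix $M$ of the resolution, an equivalent packaging. One minor caveat: your justification of $h^0(X,L_2\otimes L_1^{-1})=0$ via ``$D^2=-4$ and $D\cdot L_1=2$ contradict normality'' is not right as stated (two disjoint lines on a normal quartic realize exactly these numbers), so this vanishing is better left, as in the paper, to the hypotheses of the cited ACM/Beauville result.
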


\begin{proof}
  We consider $X$ as a quartic surface in $\Proj^3$, embedded through
  the linear system $|L_1|$. Then $L_2$ corresponds to a
  non-hyperelliptic curve $C$ on $X$ of genus~$3$. Equivalently, $C$
  is projectively normal. It is well-known that the sheaf $\sO_X(C)$
  is arithmetically Cohen-Macaulay (see \cite{beauville} and for more
  general hypotheses, \cite[Chapter 4]{dolgachev-CAG}). Therefore,
  there is an exact sequence
  \[
  0 \to \sO_{\Proj^3}(-1)^4 \to \sO_{\Proj^3}^4 \to j_* L_2 \to 0,
  \]
  where $j: X \to \Proj^3$ is the embedding as a quartic
  surface. Taking the long exact sequence, and using $\ch^0(\Proj^3,
  \sO_{\Proj^3}(-1)) = \ch^1(\Proj^3, \sO_{\Proj^3}(-1)) = 0$, we have
  an identification of $\ch^0(\Proj^3, \sO_{\Proj^3}^4)$ with
  $\ch^0(X, L_2)$.

  Next, tensoring with the exact $L_1$ and taking cohomology, we obtain
  \[
  0 \to \cH^0(\sO_{\Proj^3})^4 \to \cH^0(\sO_{\Proj^3}(1))^4 \to
  \cH^0(j_* L_2 \tns L_1) \to \cH^1(\sO_{\Proj^3})^4 = 0.
  \]
  Thus we obtain a surjective map
  \begin{equation}\label{surjmap}
  \mu: \cH^0(X,L_1) \tns \cH^0(X,L_2) \to \cH^0(X, L_1 \tns L_2).
  \end{equation}
  Since each $\cH^0(X,L_i)$ is $4$-dimensional, the map has a
  $4$-dimensional kernel. Thus, we obtain a $4 \times 4 \times 4$
  matrix, giving rise to a determinantal representation of $X$.
\end{proof}

\begin{proof}[Proof of Theorem $\ref{thm:rr}$]
  Given a $4 \times 4 \times 4$ tensor, we have already seen how to
  produce a K3 surface $X$ with two line bundles $L_1$ and $L_2$ with
  the required pairing matrix.

  Conversely, given a K3 surface $X$ with line bundles $L_1$ and
  $L_2$, Riemann-Roch shows that either $L_1$ or its inverse is
  effective, and similarly for $L_2$. Normalizing so that $L_1$ and
  $L_2$ are effective, we see that generically (in the moduli space of
  lattice-polarized K3 surfaces) each gives a quartic embedding to
  $\Proj^3$. Therefore, we may use the result of
  Theorem~\ref{thm:masterRR} to produce a $4 \times 4 \times 4$
  tensor.

  It remains to show that these two constructions are inverse to one
  another.  Given a K3 surface $X$ with two line bundles $L_1$ and
  $L_2$ with intersection matrix \eqref{eq:intmatrix444}, let $Y_{12}$
  be the natural image of $X$ in $\Proj(\cH^0(X,L_1)^\vee) \times
  \Proj(\cH^0(X,L_2)^\vee)$ and let $Y_1$ and $Y_2$ be the projections
  onto the respective factors.

  On the other hand, construct the element $\cube \in \cH^0(X, L_1)
  \otimes \cH^0(X,L_2) \otimes (\ker \mu)^\vee$ from $(X, L_1, L_2)$
  as above, and let $X_{12}$, $X_1$, and $X_2$ be the K3 surfaces
  constructed from $\cube$ in the usual way.  We claim that $X_{12} =
  Y_{12}$ and $X_i = Y_i$ as sets and as varieties.

  By the construction of $\cube$ from the kernel of $\mu$, we have
  $\cube(x,y,\ccdot) = 0$ for any point $(x,y) \in Y_{12}$, so $Y_{12}
  \subset X_{12}$ and $Y_i \subset X_i$.  Now the quartic polynomial
  defining $X_1$ is not identically zero, because $\cube$ must have
  nonzero tensor rank. Therefore, $X_1$ and $Y_1$ are both given by
  quartic polynomials and must be the same variety, and similarly for
  $X_{12}$ and $Y_{12}$.

  Conversely, given a nondegenerate $\cube \in V_1 \otimes V_2 \otimes
  V_3$, let $X$ be the K3 surface $X_{12}$ constructed from $\cube$,
  and let $L_1$ and $L_2$ be the line bundles on $X$.  Then the vector
  spaces $V_1$ and $\cH^0(X,L_1)$ are naturally isomorphic, as are
  $V_2$ and $\cH^0(X,L_2)$, and $V_3^\vee$ may be identified with the
  kernel of the multiplication map $\mu$ in (\ref{surjmap}).  With
  these identifications, the element of $\cH^0(X,L_1) \otimes
  \cH^0(X,L_2) \otimes (\ker \mu)^\vee$ constructed from this
  geometric data is well-defined and $G$-equivalent to the
  original~$\cube$.
\end{proof}

\begin{remark} \label{rmk:genericity}
  Strictly speaking, we have not shown that, for a generic point of
  the moduli space $\sM = \sM_{\mathfrak{O}_{20}}$ of K3 surfaces
  lattice-polarized by the two-dimensional lattice $\mathfrak{O}_{20}$
  with matrix
  \[
  \begin{pmatrix}
    4 & 6 \\ 6 & 4
  \end{pmatrix},
  \]
  the two line bundles $L_1$ and $L_2$ give quartic embeddings---we
  have only showed this for K3 surfaces lattice-polarized by $\langle
  4 \rangle$. Let $B \subset \sM_4$ be the divisor in $\sM_4$
  corresponding to K3 surfaces for which the polarization is the class
  of a hyperelliptic curve; it is 18-dimensional. There are two
  obvious maps $\phi_i: \sM \to \sM_4$, taking $(X,L_1, L_2)$ to $(X,
  L_i)$.  For any value of $i \in \{0,1\}$, since $\sM$ is
  $18$-dimensional, in principle it is possible that the ``bad''
  subvariety $\phi_i^{-1}(B)$ of $\sM$ for which the polarization
  $L_i$ gives a hyperelliptic curve coincides with all of $\sM$.
  However, this does not happen, and there are at least two ways to
  see why. First, one may see it directly in this special example, as
  follows. Suppose $|L_1|$ gives a $2$-to-$1$ map $\phi$ to a quadric
  surface. Then we have $L_1 = E + F$, where $E$ and $F$ are pullbacks
  of the generators of the Picard group of the quadric surface. They
  satisfy $E^2 = F^2 = 0$ and $E \cdot F = 2$. However, the original
  $2$-dimensional lattice has no isotropic vectors, which implies that
  the locus of ``bad'' K3 surfaces is a Noether-Lefschetz divisor in
  $\sM$.

  Another more general way to see that generically $L_1$ and $L_2$
  should give quartic embeddings is the following: the locus $Z$ of K3
  surfaces for which the corresponding map is $2$-to-$1$ to a quadric
  surface, or is composed with a pencil, is closed in the moduli space
  $\sM$. Therefore, it suffices to show that the moduli space is
  irreducible, and to show that it contains a point outside $Z$. The
  first assertion follows (over $\C$) from the description as a
  quotient of a Hermitian symmetric domain, and the second from the
  ``forward'' construction which produces such a K3 surface from a $4
  \times 4 \times 4$ cube. We will use this more general method,
  without further mention, in the doubly and triply symmetrized cases
  of Rubik's revenge. The irreducibility follows from the uniqueness
  of the embedding of $\Lambda$ into the K3 lattice.
\end{remark}

\subsection{Automorphisms} \label{sec:rrauts}

Next, let us compute the action of $\Phi^*=(\psi_{31} \circ \psi_{23}
\circ \psi_{12})^*$ on the part of the N\'eron-Severi lattice given by \eqref{eq:intmatrix444} for
K3 surfaces associated to generic orbits.
The relation \eqref{eq:444relation} holds also for the analogous
divisors on $X_2$ and $X_3$, and in the generic case, there are no
singularities.  We therefore have
\begin{align*}
	3 H &= C + D \\
	3 C &= \Phi^*(D) + H\\
	3 \Phi^*(D) &= \Phi^*(H) + C \\
	3 \Phi^*(H) &= \Phi^*(C) + \Phi^*(D),
\end{align*}
where each relation is the analogue of \eqref{eq:444relation} for
$X_1$, $X_2$, $X_3$, and then $X_1$ again, when applying the $\psi_{ij}$ in
$\Phi$ in turn.  Thus, the automorphism $\Phi^*$ acts on the
sublattice $N_0 := \Z H + \Z C \cong \mathfrak{O}_{20}$ of $\NS(X_1)$
by the matrix
\[
M = \left( 
\begin{array}{cc}
-3 & 8 \\ -8 & 21
\end{array}
\right)
\] 
in the basis $(H,C)$. It describes an automorphism of infinite order,
and in fact
\[
M^n = \left( 
\begin{array}{cc}
-F_{6n -2} & F_{6n} \\ - F_{6n} & F_{6n + 2}
\end{array}
\right)
\] 
where the $F_n$ denote the Fibonacci numbers $F_0 = 0$, $F_1 = 1$, $F_n = F_{n-1}
+ F_{n-2}$. The group generated by~$M$ has index 6 in the integral
orthogonal group $O(N_0,\Z)$ of $N_0$. For a very general such $X$
(that is, if $\NS(X) = N_0$), it can be shown that $\Phi$ generates
$\Aut(X)$.

Note that the automorphism $\Phi$ of $X$ is the same as the
automorphism considered by Cayley \cite[\S 69]{cayley}, and more
recently in the context of dynamics on K3 surfaces by Oguiso
\cite{oguiso}, who showed that for those $X$ having Picard number 2,
the automorphism $\Phi$ is fixed-point-free and has positive entropy
(see also \cite{festi} for more on this case).  In
\S\ref{subsec:hyperdet}, we will give a simple proof of this theorem,
as well as of various extensions, using hyperdeterminants. 

\section{Doubly symmetric Rubik's revenge: \texorpdfstring{$4 \tns \Sym^2(4)$}{4 (x) Sym2(4)}}
\label{sec:2symrr}

We now consider doubly symmetric $4\times 4\times 4$ cubical matrices,
namely elements of the space $V_1\otimes\Sym^2 V_2$ for 4-dimensional
$\fd$-vector spaces $V_1$ and $V_2$.  Since the natural injection of
$V_1 \otimes \Sym^2 V_2$ into $V_1 \otimes V_2 \otimes V_2$ is
equivariant for the $\GL(V_1) \times \GL(V_2)$-actions, one can
understand the $\GL(V_1) \times \GL(V_2)$-orbits of $V_1\otimes\Sym^2
V_2$ using Theorem~\ref{thm:rr}.

However, there are some important differences in the geometric data
attached to a general $4\times4\times4$ cube compared to that attached
to a symmetric one.  For a general $4\times 4\times 4$ cube, the three
resulting K3 surfaces are {\it nonsingular}.  The basic reason is that
in the $\Proj^{15}$ of $4\times 4$ matrices, the variety of matrices
having rank at most two is 11-dimensional and thus will not intersect
a general $\Proj^3\subset \Proj^{15}$ spanned by four $4\times 4$
matrices. As a result, the corresponding determinantal quartic surface
will have no rank singularities and will in fact generically be
smooth.

In the $\Proj^{9}$ of {\it symmetric} $4\times 4$ matrices, the
matrices having rank at most two form a 6-dimensional variety of
degree 10, namely, the secant variety to the image of the Veronese
embedding $\Proj^3 \hookrightarrow \Proj^9$. A general $\Proj^3\subset
\Proj^{9}$ spanned by four $4\times 4$ matrices will intersect the
variety of matrices of rank $\leq 2$ in a zero-dimensional subscheme
of degree $10$; consequently, our determinantal quartic surface will
have 10 isolated rank singularities, which are in fact nodes, and
generically, there will be no other singularities.

These K3 surfaces, cut out by determinants of a symmetric $4 \times 4$
matrix of linear forms, were also classically studied, and are called
{\em quartic symmetroids} \cite{cayley, jessop, cossec}.  We prove
that the general orbits of tensors in $V_1\otimes\Sym^2 V_2$
correspond to certain K3 surfaces with Picard rank at least $11$ over
$\fdbar$:

\begin{theorem} \label{thm:sym2rr}
Let $V_1$ and $V_2$ be $4$-dimensional vector spaces over $\fd$. Let
$G' = \GL(V_1) \times \GL(V_2)$, and let $G$ be the quotient of $G'$
by the kernel of the natural multiplication map on scalars $\Gm \times
\Gm \to \Gm$ sending $(\gamma_1, \gamma_2)$ to $\gamma_1 \gamma_2^2$.
Let $V$ be the space $V_1 \otimes \Sym^2 V_2$.  Let $\Lambda$ be the
lattice given by the Gram matrix
\[
\left(
\begin{array}{ccccccccccc}
\,4\, & \,6\, & 0 & 0 & 0 & 0 & 0 & 0 & 0 & 0 & 0 \\
 6 & 4 & 1 & 1 & 1 & 1 & 1 & 1 & 1 & 1 & 1 \\
 0 & 1 & -2 & 0 & 0 & 0 & 0 & 0 & 0 & 0 & 0 \\
 0 & 1 & 0 & -2 & 0 & 0 & 0 & 0 & 0 & 0 & 0 \\
 0 & 1 & 0 & 0 & -2 & 0 & 0 & 0 & 0 & 0 & 0 \\
 0 & 1 & 0 & 0 & 0 & -2 & 0 & 0 & 0 & 0 & 0 \\
 0 & 1 & 0 & 0 & 0 & 0 & -2 & 0 & 0 & 0 & 0 \\
 0 & 1 & 0 & 0 & 0 & 0 & 0 & -2 & 0 & 0 & 0 \\
 0 & 1 & 0 & 0 & 0 & 0 & 0 & 0 & -2 & 0 & 0 \\
 0 & 1 & 0 & 0 & 0 & 0 & 0 & 0 & 0 & -2 & 0 \\
 0 & 1 & 0 & 0 & 0 & 0 & 0 & 0 & 0 & 0 & -2
\end{array}
\right)
\] 
and let $S = \{e_1, e_2\}$. Then the $G(\fd)$-orbits of an open subset
of $V(\fd)$ are in bijection with the $\fd$-points of an open
subvariety of the moduli space $\sM_{\Lambda,S}$ of nonsingular K3
surfaces lattice-polarized by $(\Lambda,S)$.
\end{theorem}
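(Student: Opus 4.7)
The plan is to leverage the natural inclusion $V_1 \tns \Sym^2 V_2 \hookrightarrow V_1 \tns V_2 \tns V_2$ and reduce as much as possible to Theorem~\ref{thm:rr}. For a generic symmetric tensor $\cube$, the contraction $\cube \subs x$ is a symmetric $4 \times 4$ matrix of linear forms on $\Proj(V_1^\vee)$, so $X_1 := \{\det(\cube \subs x) = 0\}$ is a quartic surface that intersects the rank-$\leq 2$ locus of symmetric $4\times 4$ matrices (a codimension-$3$ subvariety of $\Proj^9$ of degree $10$) in a scheme of length $10$. Hence $X_1$ generically has exactly $10$ rank singularities, all nodes, and no other singularities. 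Let $\pi : X \to X_1$ be the minimal resolution, with exceptional $(-2)$-curves $E_1,\dots,E_{10}$; this is our K3 surface.

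To verify the intersection form, I would transport the analysis of \S\ref{sec:RR-NS} to this setting. The symmetry of $\cube$ in its last two factors forces $\psi_{12} = \psi_{13}$, so the two divisors $C$ and $D$ of that subsection coincide, and the relation \eqref{eq:444relation} becomes the identity $3H = 2C + \sum_{i=1}^{10} E_i$ in $\NS(X)$, where $H = \pi^* L_1$. The intersection numbers $H^2 = C^2 = 4$, $H \cdot C = 6$, $E_i^2 = -2$, $E_i \cdot E_j = 0$ for $i\neq j$, and $H \cdot E_i = 0$ are immediate; the remaining value $C \cdot E_i = 1$ comes from a local computation near each node, verifying that the strict transform of the vanishing locus of a $3\times 3$ principal minor of $\cube \subs x$ meets each exceptional $\Proj^1$ transversally in one point. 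A direct check $(3H)^2 = (2C + \sum E_i)^2 = 36$ confirms consistency with the stated Gram matrix. Saturation of $\Lambda$ in $\NS(X)$ for a very general $X$ follows from Nikulin's Lemma~\ref{lem:nikulin}, since $10 \notin \{0,8,16\}$ rules out $\tfrac12 \sum E_i \in \NS(X)$, and the finitely many remaining candidate even overlattices are avoided generically. Uniqueness of the primitive embedding $\Lambda \hookrightarrow \Lambda_{K3}$ then follows from Nikulin's embedding theorem applied to $\Lambda$, which has signature $(2,9)$.

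For the reverse direction, the plan is to start from a K3 surface $X$ lattice-polarized by $(\Lambda,S)$ and recover the tensor. Generically, the linear system $|H|$ contracts each $E_i$ to a node and realizes $X$ as a $10$-nodal quartic symmetroid $\bar X \subset \Proj^3$. Applying Theorem~\ref{thm:masterRR} to the pair $(H,C)$ on the minimal resolution produces a $4 \times 4 \times 4$ tensor in $V_1 \tns V_2 \tns V_3$ with $V_1 \cong \cH^0(X,H)$, $V_2 \cong \cH^0(X,C)$, and $V_3^\vee$ identified with the kernel of the associated multiplication map. The hard step, and the principal obstacle, is to upgrade this to a symmetric tensor in $V_1 \tns \Sym^2 V_2$. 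Here I would argue that the polarization data $(\Lambda, S)$---in particular the relation $3H = 2C + \sum E_i$ together with $C \cdot E_i = 1$---forces $\bar X$ to be a symmetroid in the classical sense \cite{cossec}, yielding a self-dual resolution of the corresponding sheaf that provides a canonical isomorphism $V_3 \cong V_2$ under which $\cube$ becomes symmetric in its last two factors. The remaining checks---that the two constructions are mutually inverse and that the $G$-stabilizer is generically trivial (consistent with $\dim V - \dim G = 40 - 31 = 9 = 20 - \rank(\Lambda)$)---parallel the arguments of \S\ref{sec:rr}.
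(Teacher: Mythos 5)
Your proposal follows the paper's first proof of Theorem~\ref{thm:sym2rr} quite closely: the forward direction via the degree-$10$ rank-$\le 2$ locus of symmetric matrices, the specialized relation $3H = 2C + \sum E_i$, and, for the reverse direction, the classical symmetroid argument. What you call a ``self-dual resolution'' is made precise in the paper as the isomorphism $\sF \cong \sF^\vee(3)$ for the pushforward $\sF$ of $L_2$ to the nodal quartic, which is exactly what the relation $3L_1 = 2L_2 + \sum P_i$ delivers, following \cite[\S 4.2]{dolgachev-CAG}. Be aware that the paper also gives a second, more elementary proof that sidesteps the symmetroid machinery entirely: it first builds an ordinary $4\times4\times4$ tensor via Theorem~\ref{thm:masterRR}, deduces $L_2 \cong L_3$ by comparing \eqref{eq:444relation} with \eqref{eq:2symRR-relation} (so the left and right kernels of $\cube\subs x$ agree for every $x \in X_1$), and then applies Lemma~\ref{lem:kernelsmatch-sym} --- a pencil of matrices with coinciding left and right kernels and distinct eigenvalues is simultaneously symmetric in a suitable basis --- to conclude the tensor lies in $V_1 \otimes \Sym^2 V_2$. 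That route avoids having to justify the ACM/self-duality input and is worth knowing.

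The one step that is genuinely wrong as written is your saturation argument. Whether a class such as $\tfrac12\bigl(cL_1 + \sum d_i P_i\bigr)$ lies in $\overline{\NS}(X)$ is not a condition that can be ``avoided generically'': the candidate overlattices are determined by classes that exist on every member of the family, so the saturation of $\Z L_1 + \Z L_2 + \sum\Z P_i$ inside $\overline{\NS}(X)$ is either a fixed proper overlattice for the whole family or is $\Lambda$ itself --- there is no generic member to retreat to. You must rule out every index-$2$ (and index-$4$) overlattice by hand. The paper does this explicitly: any element of the dual lattice has the form $\tfrac{c}{4}L_1 + \tfrac12\sum d_i P_i$; if $c$ is odd one derives $\tfrac{c}{2}L_1 \in \overline{\NS}(X)$, which has odd self-intersection, a contradiction; then the symmetry among the $P_i$ produces $\tfrac12 d_i(P_i - P_j) \in \overline{\NS}(X)$, and Lemma~\ref{lem:nikulin} forces all $d_i$ even, collapsing $D$ back into the lattice. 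Your invocation of Nikulin's lemma for $\tfrac12\sum_{i=1}^{10} E_i$ handles only one of the candidates; the rest need the same kind of explicit case analysis, not a genericity appeal.
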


For a generic $\cube \in V_1 \tns \Sym^2 V_2$, from the constructions
in the previous section, we obtain nonsingular quartic surfaces $X_2$
and $X_3$ by slicing the cube $\cube$ in two directions, whereas in
the third direction, we get a quartic surface $X_1$ with generically
ten $A_1$ singularities.  By symmetry, $X_2$ and $X_3$ are in fact
identical surfaces in $\Proj(V_2^\vee)$, but we will sometimes refer
to these separately in the sequel.

Because contracting the cube in the third direction gives a symmetric
matrix, we see that for a generic point $x \in X_1$, the left and
right kernels of $\cube \subs x$ are spanned by the same vector.  The
map $\psi_{12}: X_1 \dashrightarrow X_2$ has a base locus consisting
of the ten singularities on $X_1$ (since the kernel of $\cube \subs x$
at these ten points is (generically) two-dimensional), and hence it is
the minimal resolution of these singularities. Similarly, the map
$\psi_{21}: X_2 \rightarrow X_1$ is the blow-down of the exceptional
divisors, so it is just the map $\psi_{12}^{-1}$ as a rational map.
Furthermore, while $\psi_{13} \circ \psi_{21}$ is the identity map
from $X_2$ to $X_3$, the maps $\psi_{23}$ and $\psi_{32}$ are not the
identity map.

\subsection{N\'eron-Severi lattice}

We begin by describing a set of generators for the N\'eron-Severi
group for the K3 surface $X$ arising from a very general doubly
symmetric $4 \times 4 \times 4$ cubical matrix. Let $L_1$ be the
hyperplane class of $X = X_1$ and $L_2$ be the pullback of the
hyperplane class of $X_2$ via $\psi_{12}$. Finally, let $P_1, \dots,
P_{10}$ be the exceptional divisors corresponding to the ten singular
points.  While $L_1$, $L_2$, and $\sum_{i=1}^{10} P_i$ are defined
over $\fd$, the $P_i$ individually may not be.

\begin{proposition}
The Picard group of $X_{\fdbar}$ is generated by $L_1, L_2$ and the
classes of the $P_i$.
\end{proposition}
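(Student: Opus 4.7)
The strategy is to verify that the classes $L_1, L_2, P_1, \ldots, P_{10}$ lie in $\NS(X_{\fdbar})$, span a sublattice isomorphic to $\Lambda$ of rank $11$, and that this sublattice is saturated. A moduli dimension count gives $\dim V - \dim G = 40 - 31 = 9 = 20 - 11$, matching the expected dimension of $\sM_\Lambda$, so for a very general orbit one expects $\rank \NS(X_{\fdbar}) = 11$; our job is then to identify the lattice.

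First, I would compute all the intersection pairings. The values $L_1^2 = L_2^2 = 4$ and $L_1 \cdot L_2 = 6$ are inherited from the Rubik's revenge construction of Theorem~\ref{thm:rr}. The $P_i$ are disjoint smooth rational $(-2)$-curves resolving the ten nodes, so $P_i \cdot P_j = -2\delta_{ij}$, and $L_1 \cdot P_i = 0$ since $L_1$ is pulled back from the singular quartic $X_1$ which contracts each $P_i$. The key remaining input is the symmetric analogue of relation~\eqref{eq:444relation}: because the maps $\psi_{12}$ and $\psi_{13}$ now coincide and the rank singularities are precisely the ten nodes, one has $3L_1 = 2L_2 + \sum_{i=1}^{10} P_i$ in $\NS(X_{\fdbar})$. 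Intersecting with $P_i$ gives $L_2 \cdot P_i = 1$, recovering the Gram matrix displayed in Theorem~\ref{thm:sym2rr}; the relation also shows that the span has rank at most $11$, so equality of rank follows from nondegeneracy of the Gram matrix.

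A direct block determinant computation using the Schur complement yields $|\disc \Lambda| = 2^{10}$, so the discriminant group is a $2$-group and any overlattice of $\Lambda$ in $\NS(X_{\fdbar})$ arises by adjoining half-integer combinations of basis vectors. The main obstacle is ruling out such overlattices, i.e., showing $\Lambda$ is $2$-saturated in $\NS(X_{\fdbar})$. Glue classes of the form $\tfrac{1}{2}\sum_{i=1}^{10} P_i$ are forbidden by Nikulin's lemma (Lemma~\ref{lem:nikulin}), since $10 \notin \{0, 8, 16\}$. Half-sums of $8$-element subsets of the $P_i$, as well as glue classes also involving $L_1$ or $L_2$, can be excluded either by specializing to an explicit symmetric cube and computing $\NS$ directly (for instance, via an elliptic fibration on the resolved symmetroid), or by combining Nikulin's lemma applied to subconfigurations of rational curves with the integrality constraints on self-intersection imposed by the Gram matrix. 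The irreducibility of $\sM_\Lambda$---which follows from the uniqueness of the primitive embedding $\Lambda \hookrightarrow \Lambda_{K3}$ via Theorem~1.14.4 of~\cite{nikulin-quadforms}---then propagates the conclusion from a single well-chosen specialization to the very general point in our family.
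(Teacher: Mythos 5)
Your setup coincides with the paper's: the dimension count $40-31=9$ bounding the Picard number by $11$, the intersection numbers, the relation $3L_1=2L_2+\sum_i P_i$ obtained by specializing \eqref{eq:444relation}, and the discriminant $2^{10}$ are all exactly what the paper uses, and your derivation of $L_2\cdot P_i=1$ by pairing the relation with $P_i$ is correct. The gap is that the one step carrying the real content --- $2$-saturation of $\Lambda$ in $\overline{\NS}(X_{\fdbar})$ --- is never executed: you offer two routes by which the remaining glue classes ``can be excluded'' without carrying out either. And the remaining classes are genuinely not handled by Lemma \ref{lem:nikulin} alone: a half-sum of eight of the $P_i$ has $n=8\in\{0,8,16\}$, and classes with a fractional $L_1$-coefficient (the paper computes that every dual-lattice element is, modulo $\Lambda$, of the form $\frac{c}{4}L_1+\frac12\sum_{i=1}^9 d_iP_i$, so $L_2$ in fact never appears fractionally) are not sums of disjoint rational curves at all. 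The missing idea is the differencing-via-symmetry trick: for a very general member the ten nodes are interchangeable, so if $D=\frac{c}{2}L_1+\frac12\sum d_iP_i$ lies in $\overline{\NS}(X)$ then so does the class $D'$ obtained by swapping $P_9$ and $P_{10}$, whence $D-D'=\frac{d_9}{2}(P_9-P_{10})\in\overline{\NS}(X)$ and Lemma \ref{lem:nikulin} with $n=2$ forces $d_9$ to be even; iterating kills all the $d_i$ and leaves $\frac{c}{2}L_1$, whose self-intersection $c^2$ is odd for $c$ odd. Your phrase ``Nikulin's lemma applied to subconfigurations together with integrality constraints'' points in this direction, but without the observation that symmetry lets you subtract two such classes and isolate a difference of two $P_i$'s, the exclusion does not go through.

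Your alternative route --- exhibit one explicit symmetroid whose N\'eron--Severi group is exactly $\Lambda$ and propagate to the very general point via irreducibility of the moduli space and the primitivity-preserving specialization of N\'eron--Severi lattices --- is legitimate in principle (it is essentially how the paper handles the triply symmetric case, by citing \cite{dolga-keum}), but you supply no example, no fibration, and no computation, so as written it is a pointer rather than a proof. A minor imprecision as well: since the discriminant group is $\Z/4\Z\oplus(\Z/2\Z)^8$, the dual lattice contains quarter-integer multiples of $L_1$; your reduction to half-integer combinations is still valid, because any proper overlattice of $2$-power index contains an element $x\notin\Lambda$ with $2x\in\Lambda$, but that justification should be stated.
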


\begin{proof}
  We first observe that the dimension of the moduli space of quartic
  symmetroids is $10 \cdot 4 - 15 - 15 - 1 = 9$.  Hence the Picard
  number is at most $11$. Since the classes of $L_1$ and the ten $P_i$
  are all independent, the Picard number is exactly $11$ for a very
  general point on the moduli space, and there are exactly ten
  singular points on the associated quartic surface. We obtain the
  relation
  \begin{equation} \label{eq:2symRR-relation}
  3L_1 = 2L_2 + \sum P_i
  \end{equation}
  by specializing the relation \eqref{eq:444relation}.
  Hence a basis for the span of all these classes is given by
  $\{L_1, L_2$, $P_1, \dots, P_9 \}$. We easily compute that the
  discriminant of the lattice $\Lambda$ they span is $1024 = 2^{10}$
  and the discriminant group is $\Z/4\Z \oplus (\Z/2\Z)^8$.
  
  It is enough to show that $\Lambda$ is saturated in
  $\overline{\NS}(X)$. In fact, by computing the inverse of the Gram
  matrix, one immediately checks that any element of the dual lattice
  of $\Lambda$ must have the form
  \[
  D = \frac{c}{4} L_1 + \frac{1}{2} \Big(\sum_{i=1}^9 d_i  P_i \Big),
  \]
  for integers $c$ and $d_i$. Suppose $D$ is in the saturation
  $\Lambda'$ of $\Lambda$. If $c$ is odd, then $2D - \sum d_i P_i =
  \frac{c}2 L_1$ is also in $\Lambda'$, which is a contradiction since
  its self-intersection is odd. Therefore, we may assume that $D$ has
  the form
  \[
  D = \frac{c}{2} L_1 + \frac{1}{2} \big(\sum_{i=1}^9 d_i P_i\big).
  \]
By symmetry, it follows that $\frac{c}{2} L_1 + \frac{1}{2}
(\sum_{i=1}^8 d_i P_i) + d_9 P_{10} \in \overline{\NS}(X)$, and
therefore we have $\frac12{d_9} (P_9 - P_{10}) \in
\overline{\NS}(X)$. By Lemma \ref{lem:nikulin}, this forces $d_9$ to
be even. Similarly, all the $d_i$ are even, and then $\frac c 2 L_1
\in \overline{\NS}(X)$, which is a contradiction as above. This
concludes the proof.
\end{proof}

\subsection{Moduli problem}

\begin{proof}[Proof \,$1$ of Theorem $\ref{thm:sym2rr}$]
  As before, one direction has already been proved. Starting with a K3
  surface $X$ and line bundles $L_1$, $L_2$ and $P_1, \ldots, P_{10}$
  satisfying the desired intersection relations, we need to construct
  a doubly symmetric $4 \times 4 \times 4$ matrix, or equivalently, a
  symmetric $4 \times 4$ matrix of linear forms. This construction is
  described in, e.g., \cite[\S 4.2]{dolgachev-CAG} (see also \cite{tyurin,cossec});
  we briefly sketch the argument:
    
  Assume without loss of generality that $L_1$ and $L_2$ are
  very ample.  Let $Y$ be the image of the quartic embedding
  corresponding to the line bundle $L_1$. For each $i$, since $P_i^2 =
  -2$ and $P_i \cdot L_2 > 0$, we have that $P_i$ is effective and
  thus corresponds to a smooth rational curve on $X$. These collapse
  to singular points on $Y$, since $L_1 \cdot P_i = 0$. The surface
  $Y$ has ten singular points. Next, let $\sF$ be the pushforward of
  $L_2$ from $X$ to $Y$. We compute that $\sF^\vee$ has the divisor
  class $- L_2 - \sum P_i$. Therefore, the relation
  \[
  \sF \cong \sF^\vee(3)
  \]
  holds in the Picard group of $Y$, so the ACM sheaf $\sF$ gives a
  symmetric determinantal representation.

  Checking that these constructions are inverse to one another is a
  similar verification as in the proof of Theorem \ref{thm:rr}.
\end{proof}

We may give a second, more elementary proof of Theorem
\ref{thm:sym2rr}, using the construction in the proof of Theorem
\ref{thm:rr} together with the following lemma:

\begin{lemma}  \label{lem:kernelsmatch-sym}
  Let $B$ and $C$ be two $n \times n$ matrices over $\fd$ with $C$
  invertible.  Assume $B C^{-1}$ has distinct eigenvalues over
  $\fdbar$ and that for all $x, y \in \fdbar$, the transpose of the
  left kernel of $B x - C y$ is equal to its right kernel.  Then $B$
  and $C$ are symmetric matrices.
\end{lemma}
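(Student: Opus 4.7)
The plan is to extract the symmetry of $B$ and $C$ from the eigentheory of $N := C^{-1}B$. Since $BC^{-1}$ is conjugate to $N$ and has $n$ distinct eigenvalues $\mu_1,\dots,\mu_n$ over $\fdbar$, the matrix $N$ is diagonalizable, and the pencil $Bx - Cy$ is singular (with rank drop exactly one) precisely when $y/x = \mu_i$. Outside these values both kernels are trivial, so the hypothesis is content-free there; all the information lies at the singular slices $y/x = \mu_i$.

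First I would compute the two kernels at each $y/x = \mu_i$ in terms of eigenvectors of $N$. The right kernel is spanned by a right eigenvector $v_i$ of $N$ with eigenvalue $\mu_i$. On the other side, $u^T(Bx - Cy) = 0$ rewrites as $u^T C (N - \mu_i I) = 0$, so $u$ lies in the left kernel iff $u^T C$ is a left eigenvector of $N$ (equivalently, $C^T u$ is a right eigenvector of $N^T$) with eigenvalue $\mu_i$. The hypothesis that the left kernel equals the transpose of the right kernel therefore forces $u = v_i$ up to scalar, so $C^T v_i$ must be a right eigenvector of $N^T$ with eigenvalue $\mu_i$ for every $i$.

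Next I would package this into a single matrix identity. Let $V$ be the matrix whose columns are the $v_i$, so that $V^{-1}NV = \Lambda := \mathrm{diag}(\mu_1,\dots,\mu_n)$; transposing $V^{-1}N = \Lambda V^{-1}$ shows that the columns of $V^{-T}$ form a complete set of right eigenvectors of $N^T$, one for each $\mu_i$. The previous step then reads $C^T V = V^{-T} D$ for some diagonal $D$, giving
\[
C \;=\; V^{-T} D V^{-1},
\]
which is manifestly symmetric. Since $B = CN = V^{-T}(D\Lambda)V^{-1}$ and the diagonal factor $D\Lambda$ is symmetric, $B$ is symmetric as well.

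The only nonroutine step is the bookkeeping translation of the left-kernel condition into the relation ``$C^T v_i$ is a right eigenvector of $N^T$ for $\mu_i$''; once that is in hand, the distinctness of the $\mu_i$ does all the work, by forcing simultaneous diagonalization of $B$ and $C$ in the bases $V$ and $V^{-T}$, from which symmetry is automatic.
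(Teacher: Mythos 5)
Your proof is correct, and it rests on the same skeleton as the paper's: pass to the eigenbasis $v_1,\dots,v_n$ attached to the $n$ singular members of the pencil and show that $B$ and $C$ are simultaneously diagonalized there. The mechanism differs, though. The paper never introduces $N=C^{-1}B$ explicitly; it plays two slices against each other, observing that $v_i^t(Bx_i-Cy_i)v_j=0$ (left kernel at slice $i$) and $v_i^t(Bx_j-Cy_j)v_j=0$ (right kernel at slice $j$), and then uses the linear independence of $(x_i,y_i)$ and $(x_j,y_j)$ to conclude $v_i^tBv_j=v_i^tCv_j=0$ for $i\ne j$, so that $V^tBV$ and $V^tCV$ are diagonal. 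You instead use each slice only once, converting the left-kernel hypothesis into the statement that $C^Tv_i$ spans the (one-dimensional) $\mu_i$-eigenspace of $N^T$; since those eigenspaces are spanned by the columns of $V^{-T}$, i.e.\ the dual basis of the $v_i$, this packages into $C^TV=V^{-T}D$ and the explicit symmetric expressions $C=V^{-T}DV^{-1}$ and $B=V^{-T}(D\Lambda)V^{-1}$. The two arguments encode the same orthogonality relations ($V^TC^TV=D$ diagonal is just the transpose of the paper's conclusion); yours is more structural and produces closed-form diagonalizations, while the paper's is more elementary in that it never inverts $V$ or passes to $N^T$. One point you use implicitly and could state: because $C$ is invertible, $\det(Bx-Cy)=\det(C)\det(Nx-yI)$ has no root at $[0:1]$, so every singular slice really is of the form $y/x=\mu_i$ and your normalization is legitimate.
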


\begin{proof}
Since $\det(B C^{-1}-\lambda I)$ has distinct roots in $\fdbar$ by
assumption, the binary $n$-ic form $\det(B x- C y)=\det(C)\det(B
C^{-1}x - I y)$ has distinct roots $[x_i:y_i]$ ($i=1,\ldots,n$) in
$\Proj^1(\fdbar)$.  For each $i$, let $v_i$ be a nonzero vector in the
right kernel of $B x_i - C y_i$, implying that $v_i^t$ is a nonzero
vector in the left kernel.  The vectors $v_i$ are linearly
independent, because they are eigenvectors corresponding to the
distinct eigenvalues of $B C^{-1}$.
	
Consider the two symmetric bilinear forms $B(\ccdot,\ccdot)$ and
$C(\ccdot,\ccdot)$ defined by $B(w,z)=w^t B z$ and $C(w,z)=w^t C z$.
We wish to show that $B$ and $C$ are symmetric bilinear forms.
To see this, note that $v_i^t(Bx_i-Cy_i)v_j=v_i^t(Bx_j-Cy_j)v_j=0$ for
any $i\neq j$.  Since $(x_i,y_i)$ and $(x_j,y_j)$ are linearly
independent (as they yield distinct points in $\Proj^1(\fdbar)$), we
conclude that $v_i^tBv_j=v_i^tCv_j=0$ for any $i\neq j$.
	
It follows that $B$ and $C$ are diagonal bilinear forms with respect
to the basis $v_1,\ldots,v_n$.  Hence $B$ and $C$ are symmetric
bilinear forms, and thus correspond to symmetric matrices with respect
to any basis.
\end{proof}
  
\begin{proof}[Proof \,$2$ of Theorem $\ref{thm:sym2rr}$]
  Again, we only need to show that the geometric data gives rise to a
  doubly symmetric $4 \times 4 \times 4$ matrix.  Given $(X, L_1, L_2,
  P_1, \dots, P_{10})$, we use the multiplication map
  $$\mu: \cH^0(X, L_1) \otimes \cH^0(X, L_2) \to \cH^0(X, L_1 \tns
  L_2)$$ to obtain a $4 \times 4 \times 4$ matrix $A$ as before.  It
  remains to show that there exists an identification of $V_3 := (\ker
  \mu)^\vee$ and $V_2 := \cH^0(X, L_2)$ such that $A$ is doubly
  symmetric.  Let $V_1 := \cH^0(X, L_1)$.
  
  The proof of Theorem \ref{thm:rr} implies that $A$ in turn produces
  a K3 surface and line bundles isomorphic to those with which we
  started. In particular, the embedding corresponding to $L_1$ has ten
  singular points, since $L_1 \cdot P_i = 0$ implies that these
  $(-2)$-curves are contracted. Therefore, by applying equation
  \eqref{eq:444relation} of \S \ref{sec:RR-NS} and comparing with the
  relation \eqref{eq:2symRR-relation}, we deduce that the line bundles
  $L_2$ and $L_3$ are isomorphic. We therefore have an isomorphism
  $\phi: V_2 \stackrel{\sim}{\to} V_3$.  Let $X_1$ be the image of $X$
  via the quartic embedding given by $L_1$.  For any point $x \in
  X_1$, we have $\det(A(x,\ccdot,\ccdot)) = 0$ and the kernel of
  $A(x,\ccdot,\ccdot)$ in $V_2^\vee$ and in $V_3^\vee$ is the same
  (under $\phi$).  In other words, since $X_1$ spans
  $\Proj(V_1^\vee)$, the image of $V_1^\vee$ in $V_2 \otimes V_2$
  given by $(\id \otimes \phi) \circ A$ is a four-dimensional subspace
  of $V_2 \otimes V_2$ such that the ``left'' and ``right'' kernels of
  each element in $V_2^\vee$ are the same (usually empty, of course).

  We now wish to apply Lemma \ref{lem:kernelsmatch-sym} to any two
  generic matrices in this four-dimensional space.  For two
  nonsingular elements $B$ and $C$ of $V_2 \otimes V_2$, the matrix
  $BC^{-1}$ will have distinct eigenvalues over $\fdbar$ if the binary
  $n$-ic form $\det (Bx - Cy)$ has distinct roots, in which case $B x
  - C y$ has rank at least $3$ for any values of $x$ and $y$.  Recall
  that the K3 surface $X_1$ has only a finite number of isolated
  singularities, points $x \in \Proj(V_1^\vee)$ where $A(x,\ccdot,
  \ccdot)$ has rank $2$.  For any line in $\Proj(V_1^\vee)$ not
  passing through one of those singularities, the corresponding pencil
  of matrices in $\Proj(V_2 \otimes V_2)$ will thus satisfy the
  conditions of the lemma.  That is, let $B$ and $C$ be nonsingular
  elements of $V_2 \otimes V_2$ such that their span does not contain
  an element with rank less than $3$. Lemma \ref{lem:kernelsmatch-sym}
  implies that $B$ and $C$ are symmetric.  We may repeat this process
  to obtain a basis for the image of $V_1^\vee$ in $V_2 \otimes V_2$
  only consisting of symmetric elements, thereby giving an element of
  $V_1 \otimes \Sym^2 V_2$ as desired.
  
  Since these constructions are inverse to one another in the proof of
  Theorem~\ref{thm:rr}, they are also inverse to one another here.
\end{proof}

\subsection{Automorphisms} \label{sec:2symRRauts}

The map $\Phi = \psi_{31} \circ \psi_{23} \circ \psi_{12}: X_1
\dashrightarrow X_1$ considered in Section \ref{sec:rrauts} can be
constructed in this situation as well. Though it is only a birational
automorphism of $X_1$, it can be lifted to an actual automorphism of
the blown-up nonsingular model $X_{12}$. This follows from the general
fact that a birational map between two minimal nonsingular algebraic
surfaces with non-negative Kodaira dimension is an isomorphism (see,
for instance, \cite[Theorem 10.21]{badescu}).

First, we observe that $\Phi$ is an involution.  indeed, the symmetry
implies that $\psi_{12} = \psi_{13}$, $\psi_{23} = \psi_{32}$, and
$\psi_{31} = \psi_{21}$, and thus $\Phi = \Phi^{-1}$.  We now compute
its induced action on the N\'eron-Severi group.

The main idea is the same as in \S \ref{sec:rrauts}: use the relation
\eqref{eq:444relation} repeatedly, as we apply the maps $\psi_{12}$,
$\psi_{23}$, $\psi_{31}$, and $\psi_{12}$ again.  Let $L_1$, $L_2$,
and $P_i$ for $1 \leq i \leq 10$ be the classes introduced earlier.
Then we obtain the following equations among these classes (written
additively):
\begin{align*}
	3 L_1 &= 2 L_2 + \sum_i P_i, &
	3 L_2 &= L_1 + L_2, \\
	3 \Phi^*(L_2) &= \Phi^*(L_1) + L_2, & 
	3 \Phi^*(L_1) &= 2 \Phi^*(L_2) + \sum_i \Phi^*(P_i).
\end{align*}
By checking intersection numbers, we compute that $\Phi^* L_1 = -3 L_1
+ 8 L_2$, $\Phi^* L_2 = -L_1 + 3 L_2$, and $\Phi^* P_i = 2 L_1 -
\sum_{j \neq i} P_j$.
The associated transformation matrix squares to the identity, as expected.

\begin{remark}
The automorphism group of a general quartic symmetroid contans a
subgroup of the automorphism group of a general nodal Enriques
surface. The latter group is a finite-index subgroup of the reflection
group $W_{2,4,6}$ corresponding to the Coxeter diagram of type
$T_{2,4,6}$, and was computed explicitly by Cossec and Dolgachev \cite{cossec-dolgachev}.
\end{remark}

\section{Triply symmetric Rubik's revenge: \texorpdfstring{$\Sym^3(4)$}{Sym3(4)}}
\label{sec:3symrr}

We consider next the triply symmetric Rubik's revenge, in order to
understand the orbits of $\Gm \times \GL(V)$ on $\Sym^3 V$ for a 4-dimensional vector space $V$.

Such a cube is doubly symmetric in all three directions, and the three
K3's arising from such a triply symmetric Rubik's revenge are
identical.  A generic such triply symmetric Rubik's revenge will thus
give rise to a K3 that has at least 10 singularities, and a numerical
example shows that we obtain exactly 10 singularities in general.

The quartic surface $X$ has been well studied in the classical
literature \cite{cayley, hutchinson1, hutchinson2, jessop}, as a {\em
  Hessian quartic symmetroid}, since the matrix of linear forms whose
determinant defines $X$ is the Hessian (the matrix of second partial
derivatives) of a single cubic form $F$ in four variables. For more
recent references, see \cite{cossec, dolga-keum}.

Generically, over an algebraically closed field, there are five planes
tangent (along a degenerate conic) to such a Hessian surface. If
$\ell_i$ are the linear forms defining these planes $Z_i$, the
equation of the quartic may be written as
\[
\frac{1}{a_1 \ell_1} + \dots + \frac{1}{a_5 \ell_5} = 0.
\]
for some constants $a_1, \dots, a_5$. The cubic form is $F = a_1
\ell_1^3 + \dots + a_5 \ell_5^3$. The ten singular points are given by
the intersections of all ten triples of the hyperplanes $Z_i$. In
addition, the surface contains ten special lines, which come from the
pairwise intersections of the $Z_i$. Thus, the singular points may be
labelled $P_{ijk}$ and the lines $L_{lm}$, with $P_{ijk}$ lying on
$L_{lm}$ exactly when $\{l,m\} \subset \{i,j,k\}$. Therefore, there
are three singular points on each special line and three special lines
passing through each singular point.  For $1 \leq i \neq j \leq 3$,
the maps $\psi_{ij}$ defined in \S \ref{sec:rr} are all
identical. Denoting them by $\psi$, it is clear that $\psi$ is a
birational involution on the K3 surface $X$, blowing up the ten
singular points $P_{ijk}$ and blowing down the $L_{lm}$. In fact, it
exchanges $P_{ijk}$ and $L_{lm}$, where $\{i,j,k,l,m\} =
\{1,2,3,4,5\}$.

We show that the general orbits of tensors in $\Sym^3 V$ correspond to
certain K3 surfaces with Picard rank at least $16$ over $\fdbar$.

\begin{theorem} \label{thm:sym3rr}
Let $V_1$ be a $4$-dimensional vector space over $\fd$. Let $G' = \Gm
\times \GL(V_1)$ and $G$ be the quotient of $G'$ by the kernel of the
multiplication map $\Gm \times \Gm \to \Gm$ given by $(\lambda_1,
\lambda_2) \mapsto \lambda_1 \lambda_2^3$.  Let $V$ be the
representation $\Sym^3 V_1$ of $G$.  Let $\Lambda$ be the lattice
given by the Gram matrix

\begin{footnotesize}
\begin{equation}\label{symrrgram}
\begin{pmatrix}
4 & 6 & 1 & 1 & 1 & 1 & 1 & 0 & 0 & 0 & 0 & 0 & 0 & 0 & 0 & 0 \\
6 & 4 & 0 & 0 & 0 & 0 & 0 & 1 & 1 & 1 & 1 & 1 & 1 & 1 & 1 & 1 \\
1 & 0 & -2 & 0 & 0 & 0 & 0 & 1 & 1 & 1 & 0 & 0 & 0 & 0 & 0 & 0 \\
1 & 0 & 0 & -2 & 0 & 0 & 0 & 1 & 0 & 0 & 1 & 1 & 0 & 0 & 0 & 0 \\
1 & 0 & 0 & 0 & -2 & 0 & 0 & 0 & 1 & 0 & 1 & 0 & 1 & 0 & 0 & 0 \\
1 & 0 & 0 & 0 & 0 & -2 & 0 & 1 & 0 & 0 & 0 & 0 & 0 & 1 & 1 & 0 \\
1 & 0 & 0 & 0 & 0 & 0 & -2 & 0 & 0 & 0 & 1 & 0 & 0 & 1 & 0 & 0 \\
0 & 1 & 1 & 1 & 0 & 1 & 0 & -2 & 0 & 0 & 0 & 0 & 0 & 0 & 0 & 0 \\
0 & 1 & 1 & 0 & 1 & 0 & 0 & 0 & -2 & 0 & 0 & 0 & 0 & 0 & 0 & 0 \\
0 & 1 & 1 & 0 & 0 & 0 & 0 & 0 & 0 & -2 & 0 & 0 & 0 & 0 & 0 & 0 \\
0 & 1 & 0 & 1 & 1 & 0 & 1 & 0 & 0 & 0 & -2 & 0 & 0 & 0 & 0 & 0 \\
0 & 1 & 0 & 1 & 0 & 0 & 0 & 0 & 0 & 0 & 0 & -2 & 0 & 0 & 0 & 0 \\
0 & 1 & 0 & 0 & 1 & 0 & 0 & 0 & 0 & 0 & 0 & 0 & -2 & 0 & 0 & 0 \\
0 & 1 & 0 & 0 & 0 & 1 & 1 & 0 & 0 & 0 & 0 & 0 & 0 & -2 & 0 & 0 \\
0 & 1 & 0 & 0 & 0 & 1 & 0 & 0 & 0 & 0 & 0 & 0 & 0 & 0 & -2 & 0 \\
0 & 1 & 0 & 0 & 0 & 0 & 0 & 0 & 0 & 0 & 0 & 0 & 0 & 0 & 0 & -2 \\
\end{pmatrix}
\end{equation}
\end{footnotesize}%
and let $S = \{e_1, e_2\}$. Then the $G(\fd)$-orbits of an open subset
of $V(\fd)$ are in bijection with the $\fd$-points of an open
subvariety of the moduli space $\sM_{\Lambda,S}$ of nonsingular K3
surfaces lattice-polarized by $(\Lambda,S)$.
\end{theorem}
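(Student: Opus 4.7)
The forward direction specializes the Rubik's revenge construction of \S\ref{sec:rr} to the diagonal embedding $\Sym^3 V_1 \hookrightarrow V_1^{\otimes 3}$.  For triply symmetric $\cube \in \Sym^3(V_1)$, the three K3 surfaces $X_1, X_2, X_3$ of \S\ref{subsec:rrconst} coincide as a single quartic Hessian symmetroid $X \subset \Proj(V_1^\vee)$, which generically has ten nodes $P_{ijk}$ (indexed by the $3$-subsets of $\{1,\ldots,5\}$) and contains ten special lines $\ell_{lm}$ (indexed by the $2$-subsets).  The birational involutions $\psi_{ij}$ of \S\ref{subsec:rrconst} then coincide in a common birational involution $\psi$ of $X$; on the minimal resolution $\widetilde X$, the map $\psi$ exchanges the exceptional divisor $E_{ijk}$ over $P_{ijk}$ with the proper transform $\tilde\ell_{lm}$ whenever $\{i,j,k,l,m\}=\{1,\ldots,5\}$.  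We take $L_1 = H$ to be the hyperplane class and $L_2 = \psi^*L_1$.

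\textbf{Lattice verification.}  From \S\ref{subsec:rrconst} we have $L_1^2 = L_2^2 = 4$ and $L_1 \cdot L_2 = 6$.  The involution forces $E_{ijk} \cdot L_1 = \tilde\ell_{lm} \cdot L_2 = 0$ and $E_{ijk} \cdot L_2 = \tilde\ell_{lm} \cdot L_1 = 1$, while the $S_5$-incidence structure gives $E_{ijk} \cdot \tilde\ell_{lm} = 1$ precisely when $\{l,m\} \subset \{i,j,k\}$.  The specialization $3L_1 = 2L_2 + \sum_{i<j<k} E_{ijk}$ of \eqref{eq:444relation}, together with its $\psi^*$-image $3L_2 = 2L_1 + \sum_{l<m} \tilde\ell_{lm}$ and additional incidence relations from the $S_5$-configuration, cut the span of the $22$ classes $\{L_1, L_2\} \cup \{E_{ijk}\} \cup \{\tilde\ell_{lm}\}$ down to rank $16$; choosing $\{L_1, L_2\}$ together with five of the $\tilde\ell_{lm}$ and nine of the $E_{ijk}$ as a basis recovers the Gram matrix \eqref{symrrgram}.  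Saturation in $\overline{\NS}(\widetilde X)$ for very general $\cube$ follows from Lemma~\ref{lem:nikulin} (to preclude $2$-divisibility of partial sums of $(-2)$-curves), combined with the matching moduli dimensions $20 - 16 = 4$ on both sides of the bijection.

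\textbf{Reverse direction.}  Given a K3 surface $X$ lattice-polarized by $(\Lambda, S)$, we apply Theorem~\ref{thm:masterRR} to $(X, L_1, L_2)$ to produce a cube $\cube \in V_1 \otimes V_2 \otimes V_3$ with $V_i = \cH^0(X, L_i)$ and $V_3 = (\ker\mu)^\vee$.  The nine basis $(-2)$-curves $E_i$ with $E_i \cdot L_1 = 0$, together with the tenth class supplied by the relation $3L_1 = 2L_2 + \sum E_{ijk}$, contract to ten isolated rank singularities of the $L_1$-embedding.  By the argument of Theorem~\ref{thm:sym2rr} via Lemma~\ref{lem:kernelsmatch-sym}, this forces a canonical identification $V_2 \cong V_3$ under which $\cube$ becomes doubly symmetric.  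Symmetrically, the five basis classes pairing as $(1,0)$ with $(L_1, L_2)$, together with the five more supplied by $3L_2 = 2L_1 + \sum \tilde\ell_{lm}$, yield ten isolated rank singularities of the $L_2$-embedding, and a second application of Lemma~\ref{lem:kernelsmatch-sym} supplies a canonical identification $V_1 \cong V_3$.  The two identifications combine to promote $\cube$ to an element of $\Sym^3 V_1$.  That the forward and reverse constructions are mutually inverse is then verified exactly as in the proof of Theorem~\ref{thm:rr}.

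\textbf{Main obstacle.}  The key subtlety is showing that the two directional symmetrizations produce \emph{compatible} identifications, so that the resulting cube lies in $\Sym^3 V_1$ rather than merely being invariant under a proper subgroup of $S_3$.  The cleanest resolution is to use the involution $\psi$, which induces a canonical isomorphism $\psi^*:\cH^0(X, L_1) \to \cH^0(X, L_2)$ and thus a natural identification of $V_1$ with $V_2$; one then verifies, using the uniqueness up to the $G$-action of the reconstruction in Theorem~\ref{thm:masterRR}, that this identification intertwines the two identifications $V_2 \cong V_3$ and $V_1 \cong V_3$ produced by Lemma~\ref{lem:kernelsmatch-sym}.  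This compatibility is precisely what upgrades the doubly symmetric cube to a triply symmetric one.
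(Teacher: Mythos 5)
Your proposal follows essentially the same route as the paper: the forward direction via the Hessian quartic symmetroid with its ten nodes and ten special lines, and the reverse direction by running the doubly symmetric reconstruction (Lemma~\ref{lem:kernelsmatch-sym}) in two directions and observing that the transpositions $(12)$ and $(23)$ generate $S_3$ — which is precisely how the paper disposes of the compatibility issue you isolate as the main obstacle. The one place where your sketch is thinner than what is required is the saturation of the rank-$16$ lattice in $\overline{\NS}(X)$: its discriminant is $-48 = -2^4\cdot 3$, so Lemma~\ref{lem:nikulin} together with the moduli dimension count does not rule out $3$-divisible classes; the paper sidesteps this point by citing the Dolgachev--Keum determination of the full Picard group of the general Hessian quartic.
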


\subsection{N\'eron-Severi lattice}
The Picard group of the generic Hessian surface (base changed to
$\overline{F}$) is spanned over $\Z$ by the classes of the lines
$L_{ij}$ and the exceptional curves corresponding to the singular
points $P_{ijk}$. The lattice spanned by these has rank $16$ and
discriminant $-48$. Since its discriminant group is $\Z/3\Z \oplus
(\Z/2 \Z)^4$, a case-by-case argument shows that this lattice is the
full Picard group. We omit this proof, since the result is established
in \cite{dolga-keum} (using elliptic fibrations) and by a different
method in \cite{dardanelli-vangeemen}.

\subsection{Moduli problem}

We now proceed to the proof of Theorem \ref{thm:sym3rr}. Given a
triply symmetric $4 \times 4 \times 4$ cube and the resulting K3
surface $X$, we have seen that the Picard group of $X_{\fdbar}$ is
generated by the classes of the ten nodes and lines. Let $H_1$ and
$H_2$ be the hyperplane classes for $X_1$ and $X_2$ respectively. The
set
\[
\{ H_1, H_2, L_{12}, L_{13}, L_{14}, L_{23}, L_{34}, P_{123}, P_{124}, P_{125}, P_{134}, P_{135}, P_{145}, P_{234}, P_{235}, P_{245} \}
\]
is easily checked to be a basis for $\overline{\NS}(X)$, yielding the
Gram matrix (\ref{symrrgram}).  This data is fixed up to isomorphism
by the action of $\Gm \times \GL(V_1)$.

Conversely, given a $(\Lambda, S)$-polarized K3 surface, we construct
a triply symmetric Rubik's revenge by using the second proof of
Theorem \ref{thm:sym2rr}.  In particular, we may use that proof to
construct a $4 \times 4 \times 4$ cube $A$ in $V_1 \otimes V_2 \otimes
V_3$, for certain four-dimensional vector spaces $V_i$, where there is
an isomorphism $\phi_{32}: V_3 \to V_2$ so that $A$ is symmetric,
i.e., maps to an element of $V_1 \otimes \Sym^2 V_2$ under $\id
\otimes \id \otimes \phi_{32}$.  Here, we may also use the same proof
to show that $A$ is symmetric under an isomorphism $\phi_{21}: V_2 \to
V_1$ , i.e., gives an element of $\Sym^2 V_1 \otimes V_3$ under the
map $\id \otimes \phi_{21} \otimes \id$.  Thus, since the
transpositions $(12)$ and $(23)$ generate $S_3$, we may use
$\phi_{12}$ and $\phi_{23}$ to identify all three vector spaces and
obtain an element of $\Sym^3 V_1$.

\subsection{Automorphisms}

The automorphism group of the Hessian quartic surface is quite
large. In \cite{dolga-keum}, Dolgachev and Keum identified a set of
generators for the automorphism group. However, the relations between
these are not completely known, so a complete presentation for
the automorphism group is still unknown.

To connect with the earlier sections, we note that the maps $\psi$ are
birational involutions (recall that they all are identical).  In this
case, each $\psi$ is also the same as the $3$-cycle $\Phi$ described
in \S \ref{sec:rrauts} because of the symmetry.  We described above
the action induced by this involution on $\overline{\NS}(X)$: the
divisors $H_1$ and $H_2$ are switched, and the $P_{ijk}$ and $L_{lm}$
are switched for $\{i,j,k,l,m\} = \{1,2,3,4,5\}$.  Again, this is
studied extensively in \cite{dolga-keum}.

%%%%%%%%%%%%%%%%%%%%%%%%%%%%%%%%%%%%%%
%%%%%%%%%%%%%%%%%%%%%%%%%%%%%%%%%%%%%%
%%%%%%        Penteracts       %%%%%%%
%%%%%%%%%%%%%%%%%%%%%%%%%%%%%%%%%%%%%%
%%%%%%%%%%%%%%%%%%%%%%%%%%%%%%%%%%%%%%

\section{Penteracts (or 5-cubes): \texorpdfstring{$2 \tns 2 \tns 2 \tns 2 \tns 2$}{2 (x) 2 (x) 2 (x) 2 (x) 2}} \label{sec:penteracts}

Consider the representation $V = V_1 \tns V_2 \tns V_3 \tns V_4 \tns
V_5$, where each $V_n$ is a $2$-dimensional $\fd$-vector space, of the
group $G' = \GL(V_1) \times \GL(V_2) \times \GL(V_3) \times \GL(V_4)
\times \GL(V_5)$.  With a choice of bases for each $V_n$ for
$n\in\{1,\ldots,5\}$,
an element $A \in V(\fd)$ may be visualized as a $5$-dimensional cube,
or {\em penteract}, with entries $a_{ijklm} \in \fd$ for $i, j, k, l,
m \in\{1,2\}$.  The space of penteracts is extremely rich, and indeed
the next several sections, through \S \ref{sec:5sympent}, will focus
on variations of this space of penteracts.

Let $G$ be the quotient of $G'$ by the kernel of the multiplication
map $\Gm^5 \to \Gm$.  In this section, we will study the
$G(\fd)$-orbits on $V(\fd)$, and in particular, describe the
relationship between (an open subvariety of) the orbit space
$V(\fd)/G(\fd)$ and the moduli space of certain K3 surfaces having
N\'eron-Severi rank at least 4:

\begin{theorem} \label{thm:penteractorbits}
Let $V = V_1 \tns V_2 \tns V_3 \tns V_4 \tns V_5$, where each $V_n$ is
a $2$-dimensional $\fd$-vector space.  Let $G' = \GL(V_1) \times
\GL(V_2) \times \GL(V_3) \times \GL(V_4) \times \GL(V_5)$, and let $G$
be the quotient of $G'$ by the kernel of the multiplication map $\Gm^5
\to \Gm$.  Let $\Lambda$ be the lattice whose Gram matrix is
\begin{equation}
\begin{pmatrix}
0 & 2 & 2 & 2 \\
2 & 0 & 2 & 2 \\
2 & 2 & 0 & 2 \\
2 & 2 & 2 & 0
\end{pmatrix},
\end{equation}
and let $S = \{ e_1, e_2, e_3, e_4\}$.  Then the $G(\fd)$-orbits of an
open subset of $V(\fd)$ are in bijection with the $\fd$-rational
points of an open subvariety of the moduli space $\sM_{\Lambda,S}$ of
K3 surfaces $X$ lattice-polarized by $(\Lambda,S)$.
\end{theorem}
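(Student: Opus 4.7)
The plan is to follow the template of Theorem~\ref{thm:rr} but with the determinantal quartic in $\Proj^3$ replaced by a complete intersection in $(\Proj^1)^4$. For the forward direction, given a penteract $A \in V_1 \tns V_2 \tns V_3 \tns V_4 \tns V_5$, view $A$ as a quintilinear form on $V_1^\vee \times \cdots \times V_5^\vee$ and distinguish the last factor, defining
\begin{equation*}
X := \{ (x_1,x_2,x_3,x_4) \in \Proj(V_1^\vee) \times \Proj(V_2^\vee) \times \Proj(V_3^\vee) \times \Proj(V_4^\vee) : A(x_1,x_2,x_3,x_4, \ccdot) = 0 \in V_5 \}.
\end{equation*}
The condition is a pair of equations of multidegree $(1,1,1,1)$, so for generic $A$ the scheme $X$ is a smooth complete intersection and thus a K3 surface. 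Let $L_i$ denote the restriction to $X$ of the hyperplane class pulled back from the $i$th factor; the construction is clearly $G$-equivariant and yields four line bundles on $X$.

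Next, I would compute intersection numbers in the Chow ring of $(\Proj^1)^4$: the hyperplane classes $H_i$ satisfy $H_i^2 = 0$ and $H_1 H_2 H_3 H_4 = [\mathrm{pt}]$, while $[X] = (H_1 + H_2 + H_3 + H_4)^2 = 2 \sum_{i<j} H_i H_j$. Hence $L_i^2 = 0$ for every $i$, and for $i \neq j$ the only surviving term is the one with $\{k,l\} = \{1,2,3,4\} \setminus \{i,j\}$, giving $L_i \cdot L_j = 2$. This reproduces the Gram matrix of $\Lambda$ exactly, so $(X, L_1, L_2, L_3, L_4)$ is $(\Lambda, S)$-polarized.

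For the reverse direction, start with a $(\Lambda, S)$-polarized K3 surface $X$ with classes $L_1, \ldots, L_4$; by Riemann-Roch we may assume each $L_i$ is effective, and since $L_i^2 = 0$ with $L_i$ nef against the other $L_j$, each $L_i$ defines a genus-one fibration $X \to \Proj(V_i^\vee)$ where $V_i := \cH^0(X, L_i)^\vee$ is $2$-dimensional. Together they produce a morphism $\phi: X \to \Proj(V_1^\vee) \times \cdots \times \Proj(V_4^\vee)$. Setting $L = L_1 + L_2 + L_3 + L_4$, one has $L^2 = 24$ and $\chi(L) = 14$, and a standard vanishing argument parallel to the one used in the proof of Theorem~\ref{thm:masterRR} gives $h^0(X, L) = 14$ for a generic member. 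Since $\cH^0((\Proj^1)^4, \sO(1,1,1,1)) \cong V_1 \tns V_2 \tns V_3 \tns V_4$ has dimension $16$, the restriction map
\begin{equation*}
\mu: V_1 \tns V_2 \tns V_3 \tns V_4 \longrightarrow \cH^0(X, L)
\end{equation*}
is surjective with $2$-dimensional kernel. Defining $V_5 := (\ker \mu)^\vee$, the inclusion $\ker\mu \hookrightarrow V_1 \tns V_2 \tns V_3 \tns V_4$ corresponds canonically to an element of $V_1 \tns V_2 \tns V_3 \tns V_4 \tns V_5$, which is the desired penteract. Checking that the two constructions are mutually inverse proceeds as in the closing paragraphs of the proof of Theorem~\ref{thm:rr}, since the image of $\phi$ is visibly cut out by the $(1,1,1,1)$-forms in $\ker\mu$.

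The main obstacles are the usual ones. First, one must check that the N\'eron-Severi lattice of a very general $X$ arising from the construction equals $\Lambda$ and not a larger overlattice. The discriminant $-48$ of $\Lambda$ factors as $2^4 \cdot 3$, so saturation at $3$ is automatic (the discriminant group has no element of order $3$ that could refine), and saturation at $2$ can be verified by an odd self-intersection argument in the spirit of the proof given for Rubik's revenge, together with Nikulin's Lemma~\ref{lem:nikulin}; a dimension count ($\dim V - \dim G = 32 - 16 = 16 = 20 - \rank \Lambda$) corroborates that the generic Picard rank is $4$. Second, one must rule out degenerations of the map $\phi$ in which some $L_i$ fails to give the expected elliptic fibration; the argument from Remark~\ref{rmk:genericity} applies verbatim, using the irreducibility of $\sM_{\Lambda, S}$ together with the existence of a smooth example produced by the forward construction. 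Finally, one needs to verify triviality of the generic $G$-stabilizer, which I would deduce (as in Lemma~\ref{lem:444stab} and its corollary) from the absence of non-trivial linear automorphisms of $X$ that preserve the ordered tuple $(L_1, L_2, L_3, L_4)$.
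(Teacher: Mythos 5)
Your overall architecture is the same as the paper's: the forward construction via the two $(1,1,1,1)$-forms $A(x_1,x_2,x_3,x_4,\ccdot)=0$, the intersection numbers $L_i^2=0$, $L_i\cdot L_j=2$ (your Chow-ring computation in $(\Proj^1)^4$ is a clean equivalent of the paper's computation via the $(2,2,2)$-form defining $X_{ijk}$), the reverse construction via the kernel of the multiplication map $\mu$, and the same closing argument that the two constructions are mutually inverse. The saturation, genericity, and stabilizer remarks all match the paper's treatment.

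There is, however, one genuine gap: you assert that $\mu$ is ``surjective with $2$-dimensional kernel'' as if this followed from comparing $\dim 16$ against $h^0(X,L)=14$, but a dimension count gives no surjectivity. Moreover, the vanishing argument you point to from Theorem~\ref{thm:masterRR} does not transfer: that proof rests on $j_*L_2$ being an arithmetically Cohen--Macaulay sheaf on a quartic in $\Proj^3$ (projective normality of the genus-$3$ curve), and there is no analogous resolution for a complete intersection of two $(1,1,1,1)$-forms in $(\Proj^1)^4$. The paper instead proves surjectivity by three successive applications of the basepoint-free pencil trick, which requires the chain of vanishings $\cH^1(X,L_i^{-1})=\cH^1(X,L_i\otimes L_j^{-1})=\cH^1(X,L_i\otimes L_j\otimes L_k^{-1})=\cH^1(X,L_i\otimes L_j\otimes L_k\otimes L_\ell^{-1})=0$ of Lemma~\ref{lem:pentvanish}. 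That lemma is not a formality: the key step is showing that $L_1\otimes L_2\otimes L_3^{-1}$ restricts to a \emph{nontrivial} degree-zero bundle on a genus-one fiber $D_1\in|L_1|$, which forces $\cH^1(D_1,\cdot)=0$ and requires ruling out the degeneration where the map to $\Proj(V_1^\vee)\times\Proj(V_2^\vee)\times\Proj(V_3^\vee)$ is $2$-to-$1$ rather than an embedding. Your proposal needs this lemma (or a substitute for it) to be a complete proof.
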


\subsection{Constructions of K3 surfaces} \label{sec:penteractconstruct}

Given a general $A \in V(\fd)$, we construct a K3 surface with Picard
number at least $4$ as follows.  First, let
\[
X_{123} := \{ (v,w,x) \in \Proj(V_1^\vee) \times \Proj(V_2^\vee)
\times \Proj(V_3^\vee) : \det A(v,w,x,\ccdot,\ccdot) = 0 \}.
\] 
We similarly define $X_{ijk}$ for any subset $\{i,j,k \}$ in
$\{1,2,3,4,5\}$ (where permutation of the indices does not change the
variety).  The equation defining each $X_{ijk}$ is a tridegree
$(2,2,2)$ form in $\Pone \times \Pone \times \Pone$ and thus $X_{ijk}$
is generically a K3 surface; specifically, we only allow isolated
rational double point singularities.  Also, let
\[
X_{1234} := \{ (v,w,x,y) \in  \Proj(V_1^\vee) \times \Proj(V_2^\vee) \times \Proj(V_3^\vee) \times \Proj(V_4^\vee) : A(v,w,x,y,\ccdot) = 0 \},
\]
and define $X_{ijkl}$ for any subset $\{i,j,k,l \}$ in $\{1,2,3,4,5\}$
in the analogous way.  This variety $X_{ijkl}$ is the intersection of
two multidegree $(1,1,1,1)$ forms in $\Pone \times \Pone \times \Pone
\times \Pone$, which is also generically a K3 surface. In other words,
we can view the K3 surface as the base locus of a pencil of
divisors of type $(1,1,1,1)$ in $(\Proj^1)^4$. Note that the
projections from $X_{ijkl}$ to $\Proj(V_i^\vee)$ are genus one
fibrations.

For any permutation $\{i,j,k,l,m\}$ of $\{1,2,3,4,5\}$, there exists a
projection $\pi: X_{ijkl} \to X_{ijk}$, which is an isomorphism for
the generic $A \in V(\fd)$.  The fiber of each point $(v,w,x) \in
X_{ijk}$ is determined by the kernel of the singular map $A(v,w,x):
V_l^\vee \to V_m$.  If $A(v,w,x,\ccdot,\ccdot)$ is the zero matrix,
then $X_{ijk}$ is singular at $(v,w,x)$, and we then call $(v,w,x)$ a
{\it rank singularity}.  For nonsingular points $(v,w,x)$ on
$X_{ijk}$, the bilinear form $A(v,w,x,\ccdot,\ccdot)$ has a
one-dimensional kernel, and the fiber of $(v,w,x)$ under the map $\pi$
is a single point, given algebraically.  In particular, we see that if
$X_{ijk}$ is nonsingular, then it is isomorphic to $X_{ijkl}$.

More generally, if $X_{ijk}$ has an isolated rank singularity at
$(v,w,x)$, then the fiber of $\pi$ at $(v,w,x)$ is the entire line
$\{(v,w,x,y) \in \Proj(V_i^\vee) \times \Proj(V_j^\vee) \times
\Proj(V_k^\vee) \times \Proj(V_l^\vee): y \in \Proj(V_l^\vee) \}$.
Since such rational double point singularities are blown up in one
step, the surfaces $X_{ijkl}$ are nonsingular even when the $X_{ijk}$
have isolated rank singularities.  We call a penteract {\em
  nondegenerate} if the surfaces $X_{ijkl}$ are all nonsingular.

\subsection{N\'eron-Severi lattice} \label{sec:penteractNS}

We now compute the N\'eron-Severi lattice of $X = X_{1234}$.  The
surface $X$ comes equipped with the four line bundles $L_i$, given by
the pullbacks of $\sO_{\Proj(V_i^\vee)}(1)$ for $1 \leq i \leq 4$.
Their intersection numbers are easy to compute: because of the
description of $X_{ijk}$ as the vanishing of a tridegree $(2,2,2)$
form, we have $L_i \cdot L_j = 2(1-\delta_{ij})$.  As the dimension of
the orbit space is $2^5 - (3 \cdot 5 + 1) = 16$, the dimension of
$\NS(X)$ for a generic $X$ in this family must be exactly $4$.
Therefore, although we will find other natural divisors, these four
$L_i$ generate the Picard group of the generic K3 surface in this
family.

There are other line bundles given by, for example, considering the
pullback of $\sO_{\Proj(V_5^\vee)}(1)$ via the isomorphisms $X =
X_{1234} \to X_{123} \to X_{1235}$ followed by the projection to
$\Proj(V_5^\vee)$.  This particular line bundle $L_5^{(123)}$
satisfies the following:

\begin{lemma} \label{lem:pentrelation}
If $X_{123}$ is nonsingular $($and therefore isomorphic to $X)$, then
we have the relation
\begin{equation} \label{eq:penteractrelation}
L_1 + L_2 + L_3 =  L_4 + L_5^{(123)}
\end{equation}
among the above line bundles on $X$.  More generally, if $X_{123}$ has
isolated rational double point singularities, we have
\begin{equation} \label{eq:penteractrelation2}
L_1 + L_2 + L_3 =  L_4 + L_5^{(123)} + \sum_i E_i
\end{equation}
where the $E_i$ are the line bundles corresponding to the exceptional
divisors on $X$ arising from the singularities on $X_{123}$.
\end{lemma}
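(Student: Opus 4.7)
The plan is to compute the Weil divisor of the section $M_{11}$ of the line bundle $L_1+L_2+L_3$ on $X = X_{1234}$, where $M = A(v,w,x,\ccdot,\ccdot) \in V_4 \otimes V_5$ is the $2 \times 2$ matrix whose vanishing determinant cuts out $X_{123} \subset \Pone^3$. The inclusion $X \hookrightarrow \Pone^4$ is given by the four linear identities $M_{1j}\,y_1 + M_{2j}\,y_2 = 0$ (for $j=1,2$), and the map $X \to \Proj(V_5^\vee)$ whose pullback of $\sO(1)$ defines $L_5^{(123)}$ is characterized on the smooth model $X$ by the dual identities $M_{i1}\,z_1 + M_{i2}\,z_2 = 0$ (for $i=1,2$), where $(z_1:z_2)$ parametrize the row-kernel of $M$.

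The central assertion is the divisor equality
\[ (M_{11}) \;=\; (y_2) + (z_2) + \sum_i E_i \]
on $X$, where the sum runs over the exceptional curves above the rank singularities of $X_{123}$. Passing to Picard classes immediately yields $L_1+L_2+L_3 = L_4 + L_5^{(123)} + \sum_i E_i$, which collapses to the smooth-case relation when there are no singularities.

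To verify the equality on the regular locus $X \setminus \bigcup_i E_i$, I would argue by a short case analysis. At a point with $M_{11} = 0$ and $M$ of rank one, the identity $M_{11}y_1 + M_{21}y_2 = 0$ forces either $y_2 = 0$ or $M_{21} = 0$. In the former case the point lies on $(y_2)$; in the latter, the identity $M_{11}z_1 + M_{12}z_2 = 0$ forces either $z_2 = 0$ (placing the point on $(z_2)$) or $M_{12} = 0$. But if $M_{11} = M_{12} = M_{21} = 0$, then the remaining identity $M_{12}y_1 + M_{22}y_2 = 0$ (with $y_2 \neq 0$) forces $M_{22} = 0$ as well, contradicting the rank-one hypothesis. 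Hence on the regular locus the support of $(M_{11})$ is exactly $(y_2) \cup (z_2)$, each appearing with multiplicity one. For the exceptional contribution, $M_{11}$ is pulled back from $\Pone^3$ where it vanishes to first order at each rank singularity, so the exceptional curve $E_i$ above each such singularity appears in $(M_{11})$ with multiplicity one.

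The main obstacle will be to confirm that the divisors $(y_2)$ and $(z_2)$ represent precisely the Picard classes $L_4$ and $L_5^{(123)}$, with no hidden exceptional contributions. I would verify this by noting that both morphisms $X \to \Proj(V_4^\vee)$ and $X \to \Proj(V_5^\vee)$ restrict to surjections on every exceptional curve $E_i$, since over a rank-zero point of $X_{123}$ both the left- and right-kernels of $M$ degenerate to the full $2$-dimensional spaces. Consequently neither $y_2$ nor $z_2$ vanishes identically on any $E_i$, so their zero divisors meet the exceptional locus only in a finite set of points and indeed represent $L_4$ and $L_5^{(123)}$ in $\mathrm{Pic}(X)$.
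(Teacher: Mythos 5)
Your proof is correct and is essentially the paper's own argument: both decompose the zero divisor of the $(1,1,1)$-form $A_{11}=M_{11}$ (a section of $L_1+L_2+L_3$) into the fiber of the map to $\Proj(V_4^\vee)$ over a point, the fiber of the map to $\Proj(V_5^\vee)$ over a point, and the exceptional curves, using the rank-one structure of $A(v,w,x,\ccdot,\ccdot)$. The only cosmetic difference is that you run the case analysis on $X_{1234}$ via the incidence equations, while the paper works on $X_{123}$ with the explicit kernel formulas $[-A_{21}:A_{11}]$ and $[-A_{12}:A_{11}]$ and the identity $A_{11}A_{22}=A_{12}A_{21}$.
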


\begin{proof}
We first assume that $X_{123}$ is smooth.  The rational maps $\nu_4$
and $\nu_5$ from $X_{123}$ to $\Proj(V_4^\vee)$ and $\Proj(V_5^\vee)$,
which define $L_4$ and $L_5^{(123)}$, respectively, are each given by
the appropriate kernel of $A(v,w,x,\ccdot,\ccdot)$ in $V_4 \tns V_5$.
With a choice of basis vectors, let $$A(v,w,x,\ccdot,\ccdot)
= \begin{pmatrix} A_{11} & A_{12} \\ A_{21} & A_{22} \end{pmatrix},$$
where each $A_{ij}$ is a tridegree $(1,1,1)$ form on $\Proj(V_1^\vee)
\times \Proj(V_2^\vee) \times \Proj(V_3^\vee)$.  Then $\nu_4$ and
$\nu_5$ are given by, e.g., the forms $[- A_{21} : A_{11}]$ and $[-
  A_{12} : A_{11}]$, respectively.  It is easy to check that the line
bundle $\nu_4^* \sO_{\Proj(V_4^\vee)}(1)$ is isomorphic to
$\sO(Z(A_{11},A_{12}))$, where $Z(A_{11},A_{12})$ refers to the common
zero locus of those two forms; similarly, $\nu_5^*
\sO_{\Proj(V_5^\vee)}(1)$ is isomorphic to $\sO(Z(A_{11},A_{21}))$.
Thus, the right side of \eqref{eq:penteractrelation} is isomorphic to
$\sO(Z(A_{11}))$, and thus to the pullback of $\sO(1,1,1)$ from
$\Proj(V_1^\vee) \times \Proj(V_2^\vee) \times \Proj(V_3^\vee)$ to
$X$.

If $X_{123}$ has isolated rational double point singularities, then
for the singular points $(v,w,x)$, we have that
$A(v,w,x,\ccdot,\ccdot)$ is identically zero.  Thus, the divisors
$Z(A_{11})$ on $X_{123}$ contains components corresponding to $L_4$,
to $L_5^{(123)}$, and to each of the singularities, giving
\eqref{eq:penteractrelation2}.
\end{proof}

We will use versions of the relation of Lemma \ref{lem:pentrelation}
(with permuted indices, as necessary) to determine how divisor classes
interact in many of the subsequent sections.

\begin{proposition} 
For a very general $X$ in this family of K3 surfaces, $\NS(X)$ is
spanned over $\Z$ by $L_1$, $L_2$, $L_3$, and $L_4$.
\end{proposition}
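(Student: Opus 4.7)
The plan is to show that $M := \Z L_1 + \Z L_2 + \Z L_3 + \Z L_4$ is saturated in $\overline{\NS}(X)$ for a very general $X$, by enumerating and ruling out all nontrivial overlattices. The Gram matrix from Section~\ref{sec:penteractNS} yields $|\det M| = 48$ and discriminant group $A_M := M^*/M \cong (\Z/2)^4 \oplus \Z/3$; even overlattices of $M$ within $M^*$ correspond to isotropic subgroups of the discriminant form.

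First I would enumerate the isotropic subgroups. A $2$-torsion class $\frac{1}{2}\sum_i a_i L_i$ with $a_i \in \{0,1\}$ has self-intersection $\binom{k}{2}$ where $k = \sum_i a_i$, which is even only when $k \in \{0,1,4\}$; the cases $k \in \{2,3\}$ are eliminated by the parity argument used in Section~\ref{sec:rr}. The pairings $(L_i/2) \cdot (L_j/2) = 1/2$ and $(L_i/2) \cdot D = 3/2$, where $D := \frac{1}{2}\sum_i L_i$, are non-integer modulo $\Z$, so no $2$-torsion isotropic subgroup of order greater than $2$ exists. A short check on the $3$-torsion generator $c := \frac{1}{3}\sum_i L_i$ shows that $c$, $2c$, $c + L_i/2$, and $c + D$ all have self-intersection congruent to $2/3$ modulo $2\Z$, so no isotropic subgroup meets the $3$-part. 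Hence the only nontrivial candidate overlattices are $M + \Z(L_i/2)$ for $i=1,2,3,4$ and $M + \Z D$.

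Next I would eliminate each candidate. For $L_i/2$: the projection $\pi_i : X \to \Proj(V_i^\vee) \cong \Pone$ from Section~\ref{sec:penteractconstruct} is a genus-one fibration with $L_i$ as a fiber class; since K3 surfaces are simply connected they admit no multiple fibers, forcing $L_i$ to be primitive in $\overline{\NS}(X)$. For $D$: the locus in the $16$-dimensional moduli space on which $D$ becomes algebraic is a proper Noether-Lefschetz subvariety (cut out by the Hodge condition $D \cdot \omega = 0$ for the holomorphic $2$-form $\omega$), hence avoided by a very general $X$.

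I expect the $D$ case to be the main obstacle, as no direct geometric obstruction is evident and one must appeal to Hodge-theoretic genericity. The prerequisite for that appeal is that $\sM_{\Lambda,S}$ is a nonempty, irreducible, $16$-dimensional moduli space, which reduces to the existence and uniqueness of a primitive embedding of $\Lambda$ into the K3 lattice $E_8^2 \oplus U^3$; this follows from Nikulin's criterion recalled in Section~\ref{sec:lattices}, whose hypotheses (a)--(d) are straightforward to verify for our lattice of signature $(1,3)$ and discriminant group $(\Z/2)^4 \oplus \Z/3$.
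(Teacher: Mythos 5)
Your overall strategy---enumerate the even overlattices of $M = \Z L_1 + \cdots + \Z L_4$ via isotropic subgroups of the discriminant form, then kill each candidate---is sound, and the enumeration itself is correct: the only candidates are $M + \Z(L_i/2)$ and $M + \Z D$ with $D = \frac12(L_1+L_2+L_3+L_4)$. Your elimination of $L_i/2$ via primitivity of the fiber class of the genus-one fibration $X \to \Proj(V_i^\vee)$ is also valid (and is an argument the paper itself uses in later sections); the paper instead rules out $L_i/2$ by symmetry from the odd self-intersection of $(L_i+L_j)/2$.

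The gap is in the $D$ case, which is exactly the step you flagged as the main obstacle. Since $D \in \Lambda \otimes \Q$ and the period $\omega$ is orthogonal to all of $\NS(X) \supseteq \Lambda$, the condition $D \cdot \omega = 0$ holds identically on the whole family; it cuts out nothing. Whether $D$ is algebraic is not a Hodge-theoretic (Noether--Lefschetz) condition but the question of whether $D$ is an \emph{integral} class, i.e., whether $D$ lies in the saturation of $\Lambda$ inside $H^2(X,\Z)$. That is a locally constant condition on the connected family of penteract K3 surfaces: it holds either for every member or for no member, so it cannot be ``avoided by a very general $X$,'' and no genericity or irreducibility argument can decide which alternative occurs. (Nikulin's uniqueness of primitive embeddings does not help either: it presupposes primitivity rather than proving it for the embedding the geometry actually produces.) You need geometric input specific to the construction, and the paper supplies it: by Lemma \ref{lem:pentrelation}, $L_1+L_2+L_3 = L_4 + L_5^{(123)}$, so $D = L_4 + \frac12 L_5^{(123)}$; if $D$ were in $\NS(X)$ then so would be $\frac12 L_5^{(123)}$, and the symmetry of the penteract construction under permuting the five tensor factors would then force $\frac12 L_i \in \NS(X)$ for all $i$, contradicting what you already proved. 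Substituting this step for the Noether--Lefschetz appeal makes your proof complete.
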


\begin{proof}
Because this moduli space has dimension $32-16 = 16$, the Picard number
of such a very general $X$ is at most $4$, and
we know that $L_1, \dots, L_4$ span a finite
index subgroup of $\NS(X)$.  The discriminant of the lattice generated
by $L_1, L_2, L_3, L_4$ is $-48 = -2^4 \cdot 3$, and we only need to
check that it is $2$-saturated.

For $i \neq j$, the class $(L_i + L_j)/2$ cannot be integral, since
its self-intersection is odd, and similarly for $(L_i + L_j + L_k)/2$
for $i$, $j$, $k$ distinct. By symmetry, therefore, $L_i/2$ cannot be
in $\NS(X)$ (otherwise $L_i/2 + L_j/2$ would be). Finally, if $(L_1 +
L_2 + L_3 + L_4)/2 = L_4 + L_5^{(123)}/2$ were in $\NS(X)$, so would
$L_5^{(123)}/2$, and therefore all $L_i/2$ by symmetry, which is a
contradiction.
\end{proof}

\begin{lemma} \label{lem:22222stab} A K3 surface $X_{123}$ associated to
  a very general point in the moduli space of $(\Lambda,S)$-polarized
  K3 surfaces has no linear automorphisms $($i.e., induced from
  $\PGL_2 \times \PGL_2 \times \PGL_2)$ other than the identity.
\end{lemma}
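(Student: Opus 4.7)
The plan is to mimic the proof of Lemma~\ref{lem:444stab}: combine an elementary intersection-number calculation with Nikulin's description of $\Aut(X)$ to enumerate all isometries of $\NS(X)$ that can arise from a linear automorphism. Since $\PGL_2 \times \PGL_2 \times \PGL_2$ acts on $(\Proj^1)^3$ without permuting the three factors, any linear $\phi \in \Aut(X_{123})$ must satisfy $\phi^*(L_i) = L_i$ for $i = 1, 2, 3$; hence the induced isometry $g := \phi^*$ is the identity on $\Z L_1 + \Z L_2 + \Z L_3$, and only $g(L_4)$ remains to be pinned down.

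Writing $g(L_4) = aL_1 + bL_2 + cL_3 + dL_4$ and imposing $g(L_4)\cdot L_i = 2$ for $i = 1, 2, 3$ should force $a = b = c$ and $d = 1 - 2a$, while $g(L_4)^2 = 0$ reduces to $12a(1-a) = 0$. So either $g = \id$ or $g(L_4) = L_1 + L_2 + L_3 - L_4$; the latter, by Lemma~\ref{lem:pentrelation}, is the natural exchange of $L_4$ with $L_5^{(123)}$. To rule out the nontrivial case, I would invoke the Nikulin criterion recalled in the proof of Lemma~\ref{lem:444stab}: for a very general $X$, $O_\omega(\T(X)) = \{\pm\id\}$, and every automorphism arises as a pair $(g, h)$ whose images agree in $A_\Lambda := \Lambda^*/\Lambda$. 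A direct calculation using $Q^{-1} = (J - 3I)/6$ gives $A_\Lambda \cong (\Z/2)^4 \oplus \Z/3$; in particular the test class $[L_4/2]$ is a nontrivial element of $A_\Lambda$ (the intersection pattern shows $L_4/2 \in \Lambda^*$). Under the nontrivial $g$ it maps to $[(L_1 + L_2 + L_3 - L_4)/2]$, which differs from both $\pm[L_4/2]$ in $A_\Lambda$ because neither $(L_1 + L_2 + L_3)/2$ nor $(L_1 + L_2 + L_3 - 2L_4)/2$ lies in $\Lambda$. Hence the nontrivial $g$ does not act as $\pm\id$ on $A_\Lambda$ and is not realized by any automorphism of $X$.

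The remaining case $g = \id$ forces $h = +\id$, since the $\Z/3$ summand of $A_\Lambda$ distinguishes $\bar{+\id}$ from $\bar{-\id}$; this yields only the identity automorphism of $X$. Since $X$ is a non-degenerate hypersurface in $(\Proj^1)^3$, the only $\phi \in \PGL_2 \times \PGL_2 \times \PGL_2$ acting trivially on $X$ is the identity, completing the proof. The main subtlety I expect lies in the discriminant-group step and the choice of a detecting class; selecting $[L_4/2]$ handles it directly via the two integrality checks above, and the rest is bookkeeping with the explicit Gram matrix.
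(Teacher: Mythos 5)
Your proposal is correct and follows essentially the same route as the paper's own proof: fix $L_1,L_2,L_3$, solve the intersection-number constraints to find that $g(L_4)$ is either $L_4$ or $L_1+L_2+L_3-L_4$, and kill the nontrivial case because $\bar g\neq\pm\id$ on the discriminant group. Your version just makes explicit a few steps the paper leaves implicit (the computation $A_\Lambda\cong(\Z/2)^4\oplus\Z/3$, the test class $[L_4/2]$, and the exclusion of $h=-\id$ via the $\Z/3$ summand), all of which check out.
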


\begin{proof}

  We proceed as in the proof of Lemma \ref{lem:444stab}. Let $g \in
  O^+(\NS(X))$ and $h \in O_\omega(\T(X))$ agree on the discriminant
  groups. As before we can assume $h = \pm \id$. On the other hand,
  $g$ fixes the classes of $L_1$, $L_2$ and $L_3$. Let
\[
g(L_4) = aL_1 + b L_2 + cL_3  + d L_4,
\]
for some integers $a,b,c,d$. Taking the intersection with $g(L_1) =
L_1$ through $g(L_3) = L_3$ and using $g(L_i) \cdot g(L_4) = L_i \cdot
L_4$, we obtain the equations
\begin{align*}
2 &= 0 + 2b + 2c + 2d \\
2 &= 2a + 0 + 2c + 2d \\
2 &= 2a + 2b + 0 + 2d,
\end{align*}
implying $a = b = c$. These equations reduce to $2 = 4a + 2d$, or $d =
1 - 2a$. Finally, $g(L_4)^2 = L_4^2 = 0$ yields $a(a+d) = 0$. Now $a =
0$ implies $d =1$ and $g = \id$, whereas $a = -d$ implies $d =
-1$. But in the latter case, $\bar{g}$ is not $\pm \id$, a
contradiction. This completes the proof.
\end{proof}

\begin{corollary}
The stabilizer of the action of $G$ on $V$ is generically trivial.
\end{corollary}

\begin{proof}
  If $g = (g_1, g_2, g_3, g_4, g_5)$ stabilizes $v \in V$, then each
  triple $(g_i, g_j, g_k)$ gives a linear automorphism of $X_{ijk}$
  for each $(i,j,k)$. Therefore, generically $g = 1$.
\end{proof}

\subsection{Reverse construction} \label{sec:penteractreverse}

We now give the proof of the reverse direction of Theorem
\ref{thm:penteractorbits}. We start from the data of a nonsingular K3
surface $X$ with non-isomorphic line bundles $L_1$, $L_2$, $L_3$ and
$L_4$ such that $L_i \cdot L_j = 2 (1-\delta_{ij}$). It follows from
Riemann-Roch that $L_i$ or $L_i^{-1}$ is effective, since $\ch^0(L_i) +
\ch^0(L_i^{-1}) \geq 2$. We assume the former without loss of
generality, noting that $L_i \dot L_j = 2$ forces a compatible choice.
Consider the multiplication map
\begin{equation}\label{construct22222}
\mu: \cH^0(X, L_1) \otimes \cH^0(X, L_2) \otimes \cH^0(X, L_3) \otimes \cH^0(X, L_4) \rightarrow \cH^0(X, L_1 \otimes L_2 \otimes L_3 \otimes L_4)
\end{equation}
on sections.  The dimension of the domain is $2^4 = 16$. Since
\[ 
(L_1 + L_2 + L_3 + L_4)^2 = 2 \cdot 2 \cdot 6 = 24, 
\]
an easy application of Riemann-Roch on $X$ then yields
\[
h^0(X,L_1 + L_2 + L_3 + L_4) = \frac{1}{2} (L_1 + L_2 + L_3 + L_4)^2 + \chi(\sO_X) = \frac{24}{2} + 2 = 14.
\]
Furthermore, we claim that the map \eqref{construct22222} is
surjective from repeated applications of the basepoint-free pencil
trick \cite[p.\ 126]{ACGH}.  We first check that a number of line
bundles have vanishing $\cH^1$ groups.

\begin{lemma}  \label{lem:pentvanish}
For generic $X$ and for distinct $i,j,k,\ell \in \{1,2,3,4\}$, the cohomology groups $\cH^1(X, L_i^{-1})$, $\cH^1(X, L_i \otimes
L_j^{-1})$, $\cH^1(X, L_i \otimes L_j \otimes L_k^{-1})$, and $\cH^1(X, L_i \otimes L_j \otimes L_k \otimes L_\ell^{-1})$ all vanish.
\end{lemma}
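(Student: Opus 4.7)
The main tools are Riemann--Roch and Serre duality on the K3 surface $X$: for any line bundle $D$ we have $\chi(D) = D^2/2 + 2$ and $h^2(X,D) = h^0(X,-D)$ since $K_X = 0$, so vanishing of $h^1(D)$ reduces to controlling $h^0(D)$ and $h^0(-D)$. Using $L_i \cdot L_j = 2(1-\delta_{ij})$, one computes $D^2 = 0$ in the cases (i) $D = -L_i$ and (iv) $D = L_i + L_j + L_k - L_\ell$, and $D^2 = -4$ in the cases (ii) $D = L_i - L_j$ and (iii) $D = L_i + L_j - L_k$.

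In cases (i) and (iv), the plan is to exhibit $-D$ as a base-point free fiber class of a genus-one fibration on $X$.  For (i), Serre duality gives $h^1(L_i^{-1}) = h^1(L_i)$, and $L_i$ is base-point free by construction with $h^0(L_i) = 2$.  For (iv), the symmetric analogue of Lemma~\ref{lem:pentrelation} for the triple $\{i,j,k\}$ identifies $D$ with $L_5^{(ijk)}$, the pullback of $\sO(1)$ via $X \to X_{ijk} \to X_{ijk5} \to \Proj(V_5^\vee)$, again a base-point free fiber class with $h^0 = 2$.  In both cases, Riemann--Roch together with $h^0(-L_i) = h^0(-L_5^{(ijk)}) = 0$ (since these classes are effective and nontrivial, their negatives cannot be effective) yields $h^1 = 2 - 2 + 0 = 0$.

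For (ii) and (iii), $\chi(D) = 0$, so $h^1(D) = h^0(D) + h^0(-D)$, and one must show both $\pm D$ are non-effective.  For (ii), $(L_i - L_j) \cdot L_i = -2 < 0$; since $L_i$ is nef, $L_i - L_j$ cannot be effective, and the symmetric argument handles $L_j - L_i$.  For (iii), let $\ell$ be the fourth index; by the symmetric analogue of Lemma~\ref{lem:pentrelation}, the class $L_5^{(ij\ell)} = L_i + L_j + L_\ell - L_k$ is effective and base-point free, hence nef, and a short expansion yields
\[
(L_i + L_j - L_k) \cdot L_5^{(ij\ell)} \;=\; (L_5^{(ij\ell)})^2 \;-\; L_\ell \cdot L_5^{(ij\ell)} \;=\; 0 - 2 \;=\; -2 \;<\; 0,
\]
ruling out effectivity of $D$.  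For $-D = L_k - L_i - L_j$, the inequality $(-D) \cdot L_k = -4 < 0$ combined with the nefness of $L_k$ forces non-effectivity.

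The one point that genuinely requires attention, and where the genericity hypothesis is used, is the invocation of the symmetric versions of Lemma~\ref{lem:pentrelation} for arbitrary index triples, together with the fact that each $L_5^{(ijk)}$ really does define a base-point free genus-one pencil on $X$.  Both follow from the manifest $S_5$-symmetry of the penteract construction in the five vector spaces $V_n$ and from the hypothesis that $X$ is generic in the family (Picard rank exactly $4$, with each $X_{ijk}$ having at worst isolated rational double points), so that the relation of Lemma~\ref{lem:pentrelation} has no unaccounted exceptional summands and $L_5^{(ijk)}$ is genuinely a primitive fiber class rather than a multiple of one.
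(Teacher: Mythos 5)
Your proof is correct, but it takes a genuinely different route from the paper's for the two middle cases. The paper argues inductively by restricting to an irreducible genus-one curve $D_1\in|L_1|$: it runs the long exact sequence of $0\to L_1^{-1}\to\sO_X\to\sO_{D_1}\to 0$ (suitably twisted) and controls the boundary maps by computing degrees of the restricted bundles on $D_1$; the delicate step there is showing that $L_1\otimes L_2\otimes L_3^{-1}$ restricts to a \emph{nontrivial} degree-zero bundle on $D_1$, which the authors handle by ruling out a $2$-to-$1$ map to $\Proj(V_1^\vee)\times\Proj(V_2^\vee)\times\Proj(V_3^\vee)$. You instead compute $\chi(D)=D^2/2+2$ directly and, in the cases $D=L_i-L_j$ and $D=L_i+L_j-L_k$ where $\chi(D)=0$, kill $h^0(\pm D)$ by pairing against suitable nef classes ($L_i$, $L_k$, and the fiber class $L_5^{(ij\ell)}=L_i+L_j+L_\ell-L_k$); this replaces the paper's restriction-to-curve analysis with a purely intersection-theoretic non-effectivity argument, which is arguably cleaner. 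The trade-off is that your argument leans on the symmetric instances of Lemma~\ref{lem:pentrelation} and on $L_5^{(ij\ell)}$ being an honest base-point-free fiber class, i.e.\ on the generic nonsingularity of the relevant $X_{ij\ell}$ — but this is the same flavor and strength of genericity the paper already invokes (both proofs use $L_5^{(123)}$ for the fourth bundle, and the paper's choice of an irreducible $D_1\in|L_1|$ with $h^0(L_1)=2$ is the same hypothesis you use in case (i)). Your identity $(L_i+L_j-L_k)\cdot L_5^{(ij\ell)}=(L_5^{(ij\ell)})^2-L_\ell\cdot L_5^{(ij\ell)}=-2$ checks out against the Gram matrix, as do the values $D^2\in\{0,-4\}$ in the four cases.
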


\begin{proof}
By symmetry, it suffices to check that $\cH^1(X, L_1^{-1})$, $\cH^1(X,
L_1 \otimes L_2^{-1})$, $\cH^1(X, L_1 \otimes L_2 \otimes L_3^{-1})$,
and $\cH^1(X, L_1 \otimes L_2 \otimes L_3 \otimes L_4^{-1})$ all
vanish. Note that Riemann-Roch and Serre duality for each of these
line bundles $L$ implies that
\begin{equation} \label{RR-penteract}
  \ch^1(L) = \ch^0(L) + \ch^0(L^{-1}) - 2 - (L \cdot L)/2.
\end{equation}

First, for $L = L_1^{-1}$, it is immediate that $\cH^0(X,L_1^{-1}) =
0$ since $L_1$ is effective and nonzero, and because $\ch^0(X,L_1) =
2$ and $L_1 \cdot L_1 = 0$, we conclude that $\ch^1(X,L_1^{-1}) =
0$. It also follows that the complete linear system described by any
of the $L_i$ is a genus one fibration on $X$.

Next, for $L = L_1 \otimes L_2^{-1}$, we have $L^2 = (L_1 - L_2)^2 =
-4$, so $\ch^0(L) - \ch^1(L) + \ch^2(L) = 0$. However, $L$ and $-L$
are not effective by genericity (since $L \cdot L_1 = -2$ and $(-L)
\cdot L_2 = -2$), so $\ch^0$ and $\ch^2$ vanish (the latter by Serre
duality). Therefore $\ch^1(L) = 0$.

Similarly, $L = L_1 \otimes L_2 \otimes L_3^{-1}$ has $L^2 = -4$, and
furthermore $(-L) \cdot L_3 = -4$, while $L \cdot L_1 = L \cdot L_2 =
0$. We can conclude again by genericity that $L$ and $-L$ are
ineffective. So $\ch^1$ vanishes for this line bundle as well.

Finally, for $L = L_1 \otimes L_2 \otimes L_3 \otimes
L_4^{-1}$, we have $L^2 = 0$, so in fact $L$ or $-L$ must be
effective. In fact, from equation~\eqref{eq:penteractrelation2}, we see
that $L = L_5^{(123)} + \sum E_i$, and it immediately follows that
$-L$ is not effective (alternatively, the latter also follows from
$(-L) \cdot L_i < 0$). From the Gram matrices for the Picard groups in
each case, one can see that $L \cdot E_i = -1$, which means that $E_i$
lie in the base locus of the linear system described by $L$. Therefore
$\ch^0(L) = \ch^0(L_5^{(123)}) = 2$. We obtain $\ch^1(L) = 0$ from
equation~\eqref{RR-penteract} above.
\end{proof}

\begin{remark}
We will treat several subvarieties of this moduli space (or rather,
finite covers of them) through the various symmetrizations of the
penteract, in subsequent sections. The comments in Remark
\ref{rmk:genericity} can be adapted to show that the genericity
assumption of the lemma does not exclude these subvarieties.
\end{remark}

The proof of surjectivity of the map \eqref{construct22222} follows
from three applications of the basepoint-free pencil trick.
Therefore, the kernel of $\mu$ in (\ref{construct22222}) has dimension
$2$, and we obtain a penteract as an element of $\cH^0(X, L_1) \otimes
\cH^0(X, L_2) \otimes \cH^0(X, L_3) \otimes \cH^0(X, L_4) \otimes
(\ker \mu)^\vee$, up to the action of $G$.

\begin{proof}[Proof of Theorem $\ref{thm:penteractorbits}$]
It only remains to show that the two constructions described above are
inverse to one another.  Given a nonsingular K3 surface $X$ with
appropriate line bundles $L_1$, $L_2$, $L_3$, and $L_4$ as in the
statement of the theorem, let $Y_{1234}$ be the natural image of $X$
in $\Proj(\cH^0(X,L_1)^\vee) \times \Proj(\cH^0(X,L_2)^\vee) \times
\Proj(\cH^0(X,L_3)^\vee) \times \Proj(\cH^0(X,L_4)^\vee)$, and let
$Y_{ijk}$ for $\{i,j,k\} \subset \{1,2,3,4\}$ be the projection onto
the $i$th, $j$th, and $k$th factors.

On the other hand, construct the penteract $A \in \cH^0(X, L_1)
\otimes \cH^0(X, L_2) \otimes \cH^0(X, L_3) \otimes \cH^0(X, L_4)
\otimes (\ker \mu)^\vee$ from $(X,L_1,L_2,L_3,L_4)$ as above, and let
$X_{1234}$ and $X_{ijk}$ be the K3 surfaces constructed from $A$ in
the usual way.  By the construction of $A$ as the kernel of $\mu$, we
have $A(v,w,x,y,\ccdot) = 0$ for any point $(v,w,x,y) \in Y_{1234}$,
so $Y_{1234} \subset X_{1234}$ and $Y_{ijk} \subset X_{ijk}$.  We
claim that $X_{1234} = Y_{1234}$ and $X_{ijk} = Y_{ijk}$ as sets and
as varieties.

At least one of the tridegree $(2,2,2)$ polynomials $f_{ijk}$ defining
$X_{ijk}$ is not identically zero; for such a variety $X_{ijk}$, we
have that $X_{ijk}$ and $Y_{ijk}$ are both given by nonzero tridegree
$(2,2,2)$ forms and thus must be the same variety, and similarly for
$X_{1234}$ and $Y_{1234}$.  Moreover, because $Y_{1234}$ is assumed to
be nonsingular, the tensor $A$ is nondegenerate.

Conversely, given a nondegenerate penteract $A \in V_1 \otimes V_2
\otimes V_3 \otimes V_4 \otimes V_5$, let $X$ be the nonsingular K3
surface $X_{1234}$ constructed from $A$ and $L_1$, $L_2$, $L_3$, $L_4$
be the line bundles on $X$.  Then the vector spaces $V_i$ and
$\cH^0(X,L_i)$ are naturally isomorphic for $1 \leq i \leq 4$, and
$V_5^\vee$ can be identified with the kernel of the multiplication map
$\mu$ from above.  With these identifications, the penteract
constructed from this geometric data is well-defined and
$G$-equivalent to the original~$A$.
\end{proof}

\subsection{Automorphisms} \label{sec:pentauts}

Given a nondegenerate penteract $A$, we may consider
the following composition of the isomorphisms from \S
\ref{sec:penteractconstruct}:
$$\alpha_{34,5}: X_{1234} \to X_{124} \to X_{1245} \to X_{125} \to
X_{1235} \to X_{123} \to X_{1234}.$$ Since each map is an isomorphism,
the entire composition is an automorphism of $X_{1234}$.  It is easy
to see that it is not the identity, however; in fact, a point
$(v_0,w_0,x_0,y_0) \in X_{1234} \subset \Proj(V_1^\vee) \times
\Proj(V_2^\vee) \times \Proj(V_3^\vee) \times \Proj(V_4^\vee)$ is sent
to $(v_0,w_0,x_1,y_1)$, where $x_0$ and $x_1$ are the two solutions
for $x$ in the equation $\det A(v_0,w_0,x,\ccdot,\ccdot) = 0$ (and
similarly for $y_0$ and $y_1$).

We may similarly define automorphisms $\alpha_{kl,m}$ of $X_{ijkl}$ for
any permutation $\{i,j,k,l,m\}$ of $\{1,2,3,4,5\}$ (where the ordering
of the indices in the subscript of $\alpha$, but not of $X$, is
relevant).  For example, the automorphisms $\alpha_{kl,m}$ and
$\alpha_{lk,m}$ of $X_{ijkl}$ are inverse to one another 
(and actually the same, as described below), but $\alpha_{km,l}$
is an automorphism of $X_{ijkm}$.

A more geometric way to describe these automorphisms is by viewing
$X_{ijkl}$ as a double cover of $\Proj(V_i^\vee) \times
\Proj(V_j^\vee)$; then $\alpha_{kl,m}$ switches the two sheets of this
double cover.  It is clear that all of these automorphisms have order
two, and thus $\alpha_{kl,m} = \alpha_{lk,m}$.

Using the relation \eqref{eq:penteractrelation} and its analogues, we
may easily compute how $\alpha_{kl,m}$ acts on the N\'eron-Severi
lattice.  For example, the automorphism $\alpha_{34,5}$ is equivalent
to the action of the matrix
\begin{equation}\label{pentaut}
\begin{pmatrix}
1 & 0 & 0 & 0 \\
0 & 1 & 0 & 0 \\
2 & 2 & -1 & 0 \\
2 & 2 & 0 & -1
\end{pmatrix}
\end{equation}
on $\NS(X)$.  Conjugating (\ref{pentaut}) by $4\times 4$ permutation
matrices yields all six automorphisms of the form $\alpha_{kl,5}$ for
$k,l\in\{1,2,3,4\}$.

For very general $X$, the group $\Gamma_{\mathrm{pent}}$ generated by
these automorphisms $\alpha_{kl,m}$ turns out to have index $60$ in
the orthogonal group $O(\NS(X),\Z)$ of $\NS(X)$,\footnote{We are
  grateful to Igor Rivin for performing this interesting
  computation.}
and therefore also finite index in $\Aut(X)$.  One
way to visualize these automorphisms is by placing each of the five
$X_{ijkl}$ on a vertex of the $5$-cell (a.k.a.~$4$-simplex) and by
viewing each edge as the isomorphism from $X_{ijkl}$ to $X_{ijkm}$
through $X_{ijk}$ (again, as defined in \S
\ref{sec:penteractconstruct}).  Then each $\alpha_{kl,m}$ is the
traversal of a triangle in the $1$-dimensional boundary of the
$5$-cell.

\begin{figure}[h]
\begin{center}
\begin{tikzpicture}
\begin{scope}[decoration={
    markings,
    mark=at position 0.5 with {\arrow[scale=2]{>}}}
    ] 
\draw [fill=black] (-2.2248595,-0.75867081) circle (0.1) node[label=below:$X_{1234}$]{};
\draw [fill=black] (1.8203396,-1.6782718) circle (0.1) node[label=below:$X_{1245}$]{};
\draw [fill=black] (-0.20225996,-0.71269076) circle (0.1) node[label={[label distance=0pt]150:$X_{1345}$}]{};
\draw [fill=black] (0.80903984,1.8392020) circle (0.1) node[label=above:$X_{2345}$]{};
\draw [fill=black] (-0.7,1.3104314) circle (0.1) node[label=above:$X_{1235}$]{};
\draw [line width=2pt, postaction={decorate},red] (-2.2248595,-0.75867081)--(1.8203396,-1.6782718);
\draw (-2.2248595,-0.75867081)--(-0.20225996,-0.71269076);
\draw (-2.2248595,-0.75867081)--(0.80903984,1.8392020);
\draw [line width=2pt, postaction={decorate},red] (-0.7,1.3104314)--(-2.2248595,-0.75867081);
\draw (1.8203396,-1.6782718)--(-0.20225996,-0.71269076);
\draw (1.8203396,-1.6782718)--(0.80903984,1.8392020);
\draw [line width=2pt, postaction={decorate},red] (1.8203396,-1.6782718)--(-0.7,1.3104314);
\draw (-0.20225996,-0.71269076)--(0.80903984,1.8392020);
\draw (-0.20225996,-0.71269076)--(-0.7,1.3104314);
\draw (0.80903984,1.8392020)--(-0.7,1.3104314);
\node at (0,-3) [red] {$\alpha_{34,5}: X_{1234} \to X_{1234}$};
\end{scope}
\end{tikzpicture}
\qquad \qquad
\begin{tikzpicture}
\begin{scope}[decoration={
    markings,
    mark=at position 0.5 with {\arrow[scale=2]{>}}}
    ] 
\draw [fill=black] (-2.2248595,-0.75867081) circle (0.1) node[label=below:$X_{1234}$]{};
\draw [fill=black] (1.8203396,-1.6782718) circle (0.1) node[label=below:$X_{1245}$]{};
\draw [fill=black] (-0.20225996,-0.71269076) circle (0.1) node[label={[label distance=0pt]150:$X_{1345}$}]{};
\draw [fill=black] (0.80903984,1.8392020) circle (0.1) node[label=above:$X_{2345}$]{};
\draw [fill=black] (-0.7,1.3104314) circle (0.1) node[label=above:$X_{1235}$]{};
\draw (1.8203396,-1.6782718)--(-2.2248595,-0.75867081);
\draw (-2.2248595,-0.75867081)--(-0.20225996,-0.71269076);
\draw [line width=2pt, postaction={decorate},red] (-2.2248595,-0.75867081)--(0.80903984,1.8392020);
\draw [line width=2pt, postaction={decorate},red] (-0.7,1.3104314)--(-2.2248595,-0.75867081);
\draw [line width=2pt, postaction={decorate},red] (-0.20225996,-0.71269076)--(1.8203396,-1.6782718);
\draw (1.8203396,-1.6782718)--(0.80903984,1.8392020);
\draw [line width=2pt, postaction={decorate},red] (1.8203396,-1.6782718)--(-0.7,1.3104314);
\draw [line width=2pt, postaction={decorate},red] (0.80903984,1.8392020)--(-0.20225996,-0.71269076);
\draw (-0.20225996,-0.71269076)--(-0.7,1.3104314);
\draw (0.80903984,1.8392020)--(-0.7,1.3104314);
\node at (0,-3) [red] {$\Phi_{51234}: X_{1234} \to X_{1234}$};
\end{scope}
\end{tikzpicture}
\end{center}
\caption{Some automorphisms of the K3 surface associated to a penteract.}
\label{sec:5cellpic}
\end{figure}
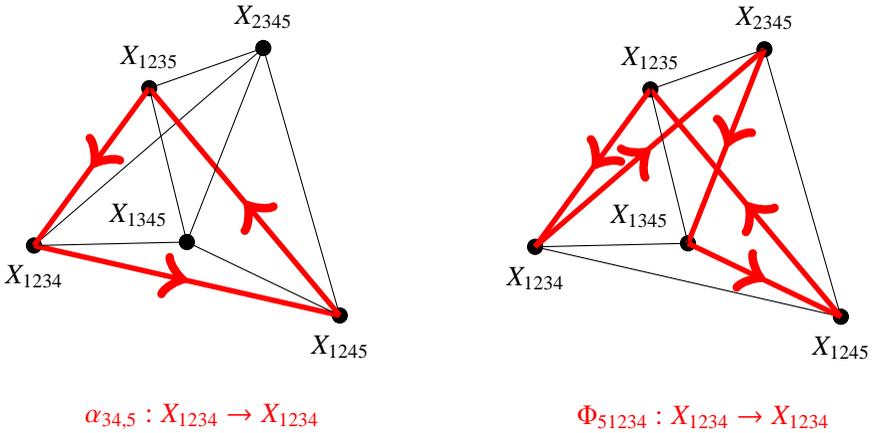

These $\alpha_{kl,m}$'s may be composed to yield nontrivial
automorphisms that are $4$- and $5$-cycles on the boundary of the
$5$-cell.  The $4$-cycles give automorphisms that preserve one of the
genus one fibrations and act by translation by a section of the
Jacobian fibration (see \cite[\S 6.2]{coregular}).  It is easy to
check that they have fixed points on reducible fibers of the
fibrations.

An example of a $5$-cycle is the automorphism
$$\Phi_{51234}: X_{1234} \to X_{2345} \to X_{1345} \to X_{1245} \to
X_{1235} \to X_{1234},$$ which is the composition $\alpha_{34,5} \circ
\alpha_{23,5} \circ \alpha_{12,5}$.  Applying $\Phi_{51234}$ to $X$
induces the action of the matrix
\begin{equation} \label{eq:pentaut}
\begin{pmatrix}
-1 & 0 & 2 & 2 \\
-2 & 1 & 2 & 4 \\
-4 & 2 & 5 & 6 \\
-6 & 2 & 8 & 11
\end{pmatrix}
\end{equation}
on $\Lambda$ in $\NS(X)$.  By symmetry, all of the $5$-cycles that
meet all $5$ vertices act in a similar way on $\NS(X)$.
As we will see in \S \ref{sec:positiveentropy}, these 5-cycle
automorphisms of $X$ turn out to be fixed-point-free in general and
have positive entropy.

As mentioned in the introduction, the elements of
$\Gamma_{\mathrm{pent}}$ are often fixed-point free and of positive entropy.  One obtains many different quadratic and quartic Salem polynomials as the
characteristic polynomials of these automorphisms.  In particular, the
N\'eron-Severi lattice $\NS(X)$ is isomorphic to $U(2) \oplus A_2(2)$
for a very general $X$ in our family. Therefore, $\Aut(X)$ has finite
index in $O(U \oplus A_2)$, which is commensurable to
$\SL_2(\Z[\omega])$ where $\omega$ denotes a third root of unity\footnote{We are grateful to Curt McMullen for pointing out this commensurability.}.
One way to see this commensurability is as follows: consider the Hermitian form over
$\Z[\omega]$ with matrix given by
\begin{equation} \label{eq:hermform}
M = \begin{pmatrix}
x & z - \omega w \\
z - \omega^2 w & y 
\end{pmatrix}.
\end{equation}
The discriminant of this form (i.e., the determinant of the matrix) is
$xy - (z^2+ zw + w^2)$, which is half the quadratic form corresponding
to the lattice $U \oplus A_2$. Therefore, it is enough to show that a
finite index subgroup of the group $\SL_2(\Z[\omega])$ acts as a group
of automorphisms of the Hermitian form \eqref{eq:hermform}. This is readily accomplished
by considering the action $g \cdot M = g M g^\dagger$, where $g \in
\SL_2(\Z[\omega])$ and $g^\dagger$ is the conjugate transpose of
$g$. We omit checking the standard details, referring the interested
reader to, e.g., \cite[\S 13.9, pg.~317]{cassels}.%
\footnote{We may also see this commensurability geometrically by
  comparing the automorphism group of the abelian surface $E \times E$
  (where $E = \mathbb{C}/\Z[\omega]$ is the elliptic curve of
  $j$-invariant $0$) with the automorphisms of its transcendental
  lattice $U(-1) \oplus A_2(-1)$.  See also Aurel Page's answer in
  \cite{mathoverflow-aurel}, which outlines a proof that $O(U\oplus
  A_2)$ is in fact isomorphic to $\PGL_2(\Z[\omega])$.}%

The Salem polynomial corresponding to the action of $g \in \SL_2(\Z[\omega])$ on the Hermitian form $M$ is
$p_g(T) = T^4- ee'T^3 + (e^2 + e'^2 - 2) T^2 - ee'T + 1,$
where $e = \mathrm{Trace}(g)$ and $e'$ is the conjugate of $e$. The splitting field
of this quartic polynomial has Galois group isomorphic to the dihedral group $D_4$ of order $8$, and it is also the splitting field of
\begin{align*}
q_g(T) &= \Norm_{\Q(\omega)[T] / \Q[T]} \det (g-T \cdot \mathrm{Id}) \\
     &= \Norm_{\Q(\omega)[T] / \Q[T]} (T^2 - eT + 1) \\
     & = T^4 - (e + e')T^3 + (2 + ee')T^2 - (e + e' )T + 1
\end{align*}
over $\Q$. In fact, the fields $K_p$ and $K_q$ obtained by adjoining a root of $p_g(T)$ and $q_g(T)$, respectively, to $\Q$ are dual $D_4$-quartic fields, i.e., they are the fixed fields of the two subgroups of order $2$ (up to conjugacy) in $D_4$ which are interchanged by the outer involution of $D_4$. In particular, the quadratic resolvent field of $K_p$ is $\Q(\omega)$.

It is then easy to deduce that the quadratic Salem polynomials of the automorphism group of $X$ generate all real quadratic fields, while the quartic Salem polynomials generate all $D_4$-quartic fields whose quadratic resolvent field is $\Q(\omega)$. Below, we indicate how to explicitly find automorphisms to prove the stronger statement in the introduction about specific quadratic Salem polynomials of the form $x^2-(4n^2 \pm 2)x+1$ and $x^2-(12n^2 \pm 2)x+1$. A similar analysis using the unit group of the quartic field $\Q(\omega)[T]/(T^2 - eT + 1)$ gives the second half of our assertion.

Let
\[
\gamma_1 = \alpha_{34,5} \circ \alpha_{24,5} = 
\begin{pmatrix}
1 & 0 & 0 & 0 \\
2 & -1 & 2 & 0 \\
6 & -2 & 3 & 0 \\
4 & -2 & 2 & 1 
\end{pmatrix} \quad \textrm{and} \quad
\gamma_2 = \alpha_{13,5} \circ \alpha_{12,5} = 
\begin{pmatrix}
1 & -2 & 2 & 4 \\
0 & -1 & 2 & 2 \\
0 & -2 & 3 & 6 \\
0 & 0 & 0 & 1
\end{pmatrix}.
\]
Then it is easily verified that for $k \in \Z$, the automorphism
$\gamma_1^k \gamma_2$ produces the Salem polynomial
\begin{equation} \label{eq:manySalempolys}
x^2 - \big( 4(2k + 1)^2 - 2 \big) x + 1.
\end{equation}
A positive proportion of the polynomials $x^2 - (4n^2 + 2) x + 1$ and $x^2 - (12n^2 \pm 2) x + 1$
may also be obtained as Salem polynomials of penteract automorphisms in a similar way:
\begin{enumerate}
\item For $\gamma_1 = \alpha_{34,5} \circ \alpha_{24,5}$ and $\gamma_2 = \alpha_{14,5}$, the Salem polynomial corresponding to the automorphism $\gamma_1^k \gamma_2$ is $x^2 - (4(2k)^2 + 2)x + 1$.
\item For $\gamma_1 = \alpha_{34,5} \circ \alpha_{24,5} \circ \alpha_{34,5}$ and $\gamma_2 = \alpha_{23,5} \circ \alpha_{12,5}$, the Salem polynomial corresponding to the automorphism $\gamma_1 \gamma_2^k$ is $x^2 - (12(2k)^2 - 2)x + 1$.
\item For $\gamma_1 = \alpha_{12,5} \circ \alpha_{34,5}$ and $\gamma_2 = \alpha_{23,5} \circ \alpha_{34,5} \circ \alpha_{24,5} \circ \alpha_{34,5}$, the Salem polynomial corresponding to the automorphism $\gamma_1 \gamma_2^k$ is $x^2 - (12(2k+1)^2 + 2)x + 1$.
\end{enumerate}

Obtaining the Salem polynomials of the form \eqref{eq:manySalempolys}
is sufficient to deduce that all real quadratic fields occur as
the splitting fields of Salem polynomials of automorphisms of general 
K3 surfaces in our penteract family.  
To see this,
for each discriminant $D$ of a real quadratic field, we wish to show 
the existence of a pair $(m,n)$ of positive integers 
with $m=2k+1$ such that $Dn^2=(4m^2-2)^2-4=16m^2(m^2-1)$, or 
equivalently, the existence of a pair $(m,n')$ of 
positive integers with $m$ odd such that $m^2-Dn'^2=1$ (for we may 
then set $n$ to be $4mn'$).  The latter Brahmagupta-Pell equation is 
well-known to have infinitely many positive integer solutions $(m,n')$ for every 
discriminant $D$, even with  the restriction that $m$ is odd, proving the claim.                                                           

%%%%%%%%%%%%%%%%%%%%%%%%%%%%%%%%%%%%%%
%%%%%%%%%%%%%%%%%%%%%%%%%%%%%%%%%%%%%%
%%%%%%     2-Sym Penteracts    %%%%%%%
%%%%%%%%%%%%%%%%%%%%%%%%%%%%%%%%%%%%%%
%%%%%%%%%%%%%%%%%%%%%%%%%%%%%%%%%%%%%%

\section{Doubly symmetric penteracts: \texorpdfstring{$2 \otimes 2 \otimes 2 \otimes \Sym^2(2)$}{2 (x) 2 (x) 2 (x) Sym2(2)}} \label{sec:2sympent}

We now consider doubly symmetric penteracts, namely elements of $V =
V_1\otimes V_2\otimes V_3\otimes \Sym^2 V_4$ for 2-dimensional
$\fd$-vector spaces $V_1$, $V_2$, $V_3$, and $V_4$, with an action of
the group $\GL(V_1)\times \GL(V_2)\times \GL(V_3)\times \GL(V_4)$.
Since the space of doubly symmetric penteracts maps naturally into the
space of all penteracts from \S \ref{sec:penteracts} after the
identification of $V_4$ and $V_5$, one may understand the orbits of
doubly symmetric penteracts by using constructions from
Theorem~\ref{thm:penteractorbits}.  We find that these orbits
correspond to certain K3 surfaces of Picard rank at least 9 over
$\fdbar$:

\begin{theorem} \label{thm:2sympenteractorbits} Let $V = V_1\otimes
  V_2\otimes V_3\otimes \Sym^2 V_4$ for 2-dimensional $\fd$-vector
  spaces $V_1$, $V_2$, $V_3$, and $V_4$.  Let $G' = \GL(V_1)\times
  \GL(V_2)\times \GL(V_3)\times \GL(V_4)$ and let $G$ be the quotient
  of $G'$ by the kernel of the natural multiplication map $\Gm \times
  \Gm \times \Gm \times \Gm \to \Gm$ sending $(\gamma_1, \gamma_2,
  \gamma_3, \gamma_4) \mapsto \gamma_1 \gamma_2 \gamma_3 \gamma_4^2$.
  Let $\Lambda$ be the lattice whose Gram matrix is
\begin{small}
\begin{equation} \label{eq:NS-2sym}
\begin{pmatrix}
\,0\, & \,2\, & \,2\, & \,2\, & 0 & 0 & 0 & 0 & 0 \\
2 & 0 & 2 & 2 & 0 & 0 & 0 & 0 & 0 \\
2 & 2 & 0 & 2 & 0 & 0 & 0 & 0 & 0 \\
2 & 2 & 2 & 0 & 1 & 1 & 1 & 1 & 1 \\
0 & 0 & 0 & 1 & -2 & 0 & 0 & 0 & 0 \\
0 & 0 & 0 & 1 & 0 & -2 & 0 & 0 & 0 \\
0 & 0 & 0 & 1 & 0 & 0 & -2 & 0 & 0 \\
0 & 0 & 0 & 1 & 0 & 0 & 0 & -2 & 0 \\
0 & 0 & 0 & 1 & 0 & 0 & 0 & 0 & -2
\end{pmatrix},
\end{equation}
\end{small}%
and let $S = \{e_1, e_2, e_3, e_4 \}$.  Then the $G(\fd)$-orbits of an
open subset of $V(\fd)$ are in bijection with the $\fd$-points of an
open subvariety of the moduli space $\sM_{\Lambda,S}$ of K3
surfaces $X$ lattice-polarized by $(\Lambda, S)$.
\end{theorem}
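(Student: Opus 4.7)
The plan is to deduce Theorem~\ref{thm:2sympenteractorbits} from Theorem~\ref{thm:penteractorbits} for general penteracts by exploiting the symmetry in the last two tensor factors to produce additional exceptional $(-2)$-curves that account for the rank-$9$ N\'eron-Severi lattice.

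For the forward direction, I would view $A \in V_1 \otimes V_2 \otimes V_3 \otimes \Sym^2 V_4$ as a general penteract with $V_5 := V_4$ and apply the construction of \S\ref{sec:penteractconstruct} to obtain a K3 surface $X = X_{1234}$ equipped with line bundles $L_1, L_2, L_3, L_4$ satisfying $L_i \cdot L_j = 2(1 - \delta_{ij})$. The symmetry identifies $X_{1234}$ with $X_{1235}$ as embedded varieties and forces $L_5^{(123)} = L_4$ on $X$. The rank singularities of $X_{123}$ now occur where the symmetric $2 \times 2$ matrix $A(v, w, x, \ccdot, \ccdot)$ vanishes identically, i.e., where its three independent entries (each a $(1,1,1)$-form on $\Pone \times \Pone \times \Pone$) simultaneously vanish; this gives six isolated points generically, producing six disjoint $(-2)$-curves $E_1, \dots, E_6$ on $X$ with $E_i \cdot L_4 = 1$ and $E_i \cdot L_j = 0$ for $j \leq 3$. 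Combining Lemma~\ref{lem:pentrelation} with $L_5^{(123)} = L_4$ yields the relation $E_1 + \dots + E_6 = L_1 + L_2 + L_3 - 2 L_4$, and the classes $L_1, \dots, L_4, E_1, \dots, E_5$ then span a rank-$9$ sublattice with the stated Gram matrix~\eqref{eq:NS-2sym}. A dimension count ($\dim V - \dim G = 24 - 13 = 11 = 20 - 9$) shows this sublattice has finite index in $\NS(X)$ for very general $X$; saturation would then follow by combining Nikulin's Lemma~\ref{lem:nikulin} (which rules out half-sums of subsets of $\{E_1, \dots, E_6\}$, since fewer than $8$ curves are available) with standard parity arguments on half-classes involving the $L_i$'s.

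For the reverse direction, given data $(X, L_1, L_2, L_3, L_4, E_1, \dots, E_5)$ with the intersection numbers encoded in $\Lambda$, I would first apply Theorem~\ref{thm:penteractorbits} to produce a general penteract $A \in \cH^0(X, L_1) \otimes \cH^0(X, L_2) \otimes \cH^0(X, L_3) \otimes \cH^0(X, L_4) \otimes (\ker \mu)^\vee$. The class $E_6 := L_1 + L_2 + L_3 - 2L_4 - E_1 - \dots - E_5$ is an effective $(-2)$-class and must coincide with the sixth exceptional divisor produced by the penteract construction; comparing with Lemma~\ref{lem:pentrelation} then forces $L_5^{(123)} = L_4$, giving a canonical isomorphism $\phi\colon (\ker \mu)^\vee \cong \cH^0(X, L_4)$. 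Under $\phi$, the tensor $A$ becomes an element of $\cH^0(X, L_1) \otimes \cH^0(X, L_2) \otimes \cH^0(X, L_3) \otimes \cH^0(X, L_4)^{\otimes 2}$, and to finish I would show that $A$ lies in the symmetric subspace via an analog of Lemma~\ref{lem:kernelsmatch-sym} for pencils of $2 \times 2$ matrices: for very general $(v, w, x) \in X_{123}$, the left and right kernels of $A(v, w, x, \ccdot, \ccdot)$ both correspond under $\phi$ to the unique preimage via the coinciding projections $X_{1234} \to X_{123}$ and $X_{1235} \to X_{123}$, forcing them to agree. Propagating this across a pencil of matrices then forces $A$ to be symmetric.

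Mutual inverseness of the two constructions follows by the template of the proof of Theorem~\ref{thm:penteractorbits}. The principal obstacle I anticipate is the saturation of $\Lambda$ in $\NS(X)$: with six exceptional curves (only five of which enter as lattice generators) together with four isotropic classes, several candidate index-$2$ overlattices must be ruled out, typically by showing that any such half-class would restrict to one of the forbidden forms $L_i/2$, $(L_i + L_j)/2$, or a half-sum of fewer than eight $(-2)$-curves. A secondary obstacle is adapting Lemma~\ref{lem:kernelsmatch-sym} to the $2 \times 2$ setting, where the distinct-root hypothesis reduces to ensuring that a general pencil (equivalently, a general line in $\Proj(V_1^\vee) \times \Proj(V_2^\vee) \times \Proj(V_3^\vee)$) avoids the six rank singularities of $X_{123}$.
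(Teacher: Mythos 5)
Your proposal is correct and follows essentially the same route as the paper: reduce to the penteract parametrization, identify the six rank singularities of $X_{123}$ as the common zeros of three $(1,1,1)$-forms, derive the relation $L_1+L_2+L_3 = 2L_4 + \sum_i E_i$, prove saturation via Nikulin's lemma and parity arguments, and in the reverse direction use the lattice data together with Lemma~\ref{lem:pentrelation} to force $L_5^{(123)} \cong L_4$ before symmetrizing. The only divergence is the final symmetrization step, where you adapt the pencil argument of Lemma~\ref{lem:kernelsmatch-sym} to $2\times 2$ matrices, whereas the paper uses the simpler pointwise observation that a rank-one element of $V\otimes V$ with $\dim V=2$ whose left and right kernels are spanned by the same vector lies in $\Sym^2 V$, and then concludes because $X_{123}$ spans $\Proj(V_1^\vee)\times\Proj(V_2^\vee)\times\Proj(V_3^\vee)$; both arguments work, yours requiring only the additional genericity (which you note) that the pencil over a general $(v,w)$ avoids the rank singularities.
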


Many of the constructions in \S \ref{sec:penteracts} apply, but there
are some important differences. For a general penteract, the ten
different K3 surfaces in $\Proj^1\times \Proj^1\times\Proj^1$ are
nonsingular.  For a doubly symmetric penteract $A$, however, the locus
of $(r,s,t)$ where $A(r,s,t,\ccdot,\ccdot)$ is identically zero is
zero-dimensional and of degree 6, as it is given as the intersection
of {\em three} $(1,1,1)$ forms on $\Proj(V_1^\vee) \times
\Proj(V_2^\vee) \times \Proj(V_3^\vee)$.  Therefore, the K3 surface
$X_{123}$ defined by $\det\,A(r,s,t,\ccdot,\ccdot)=0$ will have, in
general, $6$ isolated (rank) singularities over $\fdbar$.  These
singularities of $X_{123}$ are blown up by the map $X_{1234} \to
X_{123}$, and while these singularities may not be individually
defined over $\fd$, the entire degree $6$ subscheme is defined over
$\fd$.  It is easy to check that all of the other $X_{ijk}$ coming
from $A$ are generically nonsingular.

\subsection{N\'eron-Severi lattice}

To compute the N\'eron-Severi group of the nonsingular K3 surface
$X_{1234}$ here, we observe that there still are line bundles $L_i$ on
$X_{1234}$ coming from pulling back $\sO_{\Proj(V_i^\vee)}(1)$ to
$X_{1234}$ for $1 \leq i \leq 4$.  In addition, over $\fdbar$, there
are six exceptional fibers $E_i$ for $1 \leq i \leq 6$, coming from
the blowups of the $6$ singularities in $X_{123}$; the sum of these
$E_i$ is a divisor defined over $\fd$.  It is easy to compute all of
the intersection numbers: the only nonzero ones are $L_i \cdot L_j = 2
(1 - \delta_{ij})$, $L_4 \cdot E_i = 1$ for $1 \leq i \leq 6$, and
$E_i^2 = -2$.

Recall from Lemma \ref{lem:pentrelation} that there is a relation
among the line bundles; here, it is slightly simplified because of the
symmetry (that is, $L_4$ and $L_5^{123}$ are isomorphic):
\begin{equation} \label{eq:2sympentrelation}
L_1 + L_2 + L_3 = 2 L_4 + \sum_{i=1}^6 E_i.
\end{equation}
We thus expect the N\'eron-Severi group to generically have rank $9$,
and in fact, the intersection matrix of all of these divisor classes
may be reduced to the matrix \eqref{eq:NS-2sym}, which is the
intersection matrix for $L_1, \dots, L_4, E_1,\dots, E_5$. Note that
the lattice they span has discriminant $2^{8}$.

\begin{proposition} 
  For a very general $X$ in this family of K3 surfaces, $\overline{\NS}(X)$ is
  spanned over $\Z$ by $L_1$, $L_2$, $L_4$, and the $E_i$, $i = 1,
  \dots, 6$.
\end{proposition}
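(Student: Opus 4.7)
The plan is to adapt the saturation argument from the doubly symmetric Rubik's revenge case in Section~\ref{sec:2symrr}. First, the $G$-orbit count yields dimension $24 - 13 = 11$, matching $\dim \sM_{\Lambda,S} = 20 - 9 = 11$, so for a very general $X$ in this family the Picard rank is at most $9$. The nine classes $L_1, L_2, L_4, E_1, \ldots, E_6$ are equivalent, via the relation \eqref{eq:2sympentrelation}, to the basis $\{L_1, L_2, L_3, L_4, E_1, \ldots, E_5\}$ whose Gram matrix \eqref{eq:NS-2sym} has discriminant $2^8$. They thus generate a rank-$9$ sublattice $\Lambda_0 \subseteq \overline{\NS}(X)$, and it will remain only to show that $\Lambda_0$ is $2$-saturated.

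For $2$-saturation, we suppose $D = \tfrac{1}{2}\bigl(aL_1 + bL_2 + cL_4 + \sum_{i=1}^{6} d_i E_i\bigr) \in \overline{\NS}(X)$ and successively constrain the coefficients. The first key input is that, for a very general penteract, the monodromy acting on the six rank singularities of $X_{123}$ (a $0$-dimensional complete intersection of three $(1,1,1)$-forms in $(\Proj^1)^3$) realizes the full symmetric group $S_6$; hence all of the $d_i$ must share a common parity, since otherwise $\tfrac{1}{2}(E_i - E_j) \in \overline{\NS}(X)$ for some pair, yielding $\tfrac{1}{2}(E_i + E_j) \in \overline{\NS}(X)$ and contradicting Nikulin's Lemma~\ref{lem:nikulin}. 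If all the $d_i$ are even, intersection of $D$ with $E_j$ forces $c$ to be even, and then the self-intersection $\bigl(\tfrac{1}{2}(aL_1+bL_2)\bigr)^2 = ab$ must also be even, so (by $L_1 \leftrightarrow L_2$ symmetry of our setup) we may assume $b$ is even, leaving the question of whether $\tfrac{1}{2}L_1 \in \overline{\NS}(X)$. If instead all the $d_i$ are odd, then substituting $\sum E_i = L_1 + L_2 + L_3 - 2L_4$ reduces $D$ to a class of the form $\tfrac{1}{2}(\alpha L_1 + \beta L_2 + L_3 + \gamma L_4) \in \overline{\NS}(X)$; intersecting with $E_j$ forces $\gamma$ even, and parity of self-intersection $\alpha\beta + \alpha + \beta$ forces both $\alpha$ and $\beta$ to be even, so the question reduces to whether $\tfrac{1}{2}L_3 \in \overline{\NS}(X)$.

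The remaining --- and main --- obstacle is to rule out $\tfrac{1}{2} L_i \in \overline{\NS}(X)$ for $i \in \{1,2,3\}$ at a very general $X$; these classes are not disjoint smooth rational curves, so Nikulin's Lemma does not apply directly. We will handle this step by verifying the hypotheses of Nikulin's Theorem~1.14.4 from Section~\ref{sec:lattices} for the lattice $\Lambda_0$, which yields a \emph{unique} primitive embedding $\Lambda_0 \hookrightarrow \Lambda_{K3}$. Surjectivity of the period map then implies that $\overline{\NS}(X) = \Lambda_0$ at a very general point of the associated period domain, precluding the existence of any proper overlattice; equivalently, each $L_i$ realizes $X$ as a genus-one fibration over $\Proj^1$ with no multiple fibers for a generic penteract, hence each $L_i$ is primitive in $\overline{\NS}(X)$. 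Combined with the preceding case analysis, this will complete the proof that $\Lambda_0 = \overline{\NS}(X)$ for very general $X$ in our family.
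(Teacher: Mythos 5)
Your overall strategy is sound and close in spirit to the paper's: bound the rank by the dimension count, then prove $2$-saturation using Nikulin's Lemma~\ref{lem:nikulin}, the symmetry of the construction, and parities of intersection numbers. Your case division (first forcing a common parity on the $d_i$, then reducing to whether $\tfrac12 L_1$ or $\tfrac12 L_3$ lies in $\overline{\NS}(X)$) differs from the paper's, and the computations in it are correct; the paper instead symmetrizes $D+D'$ so that the coefficients of $L_1$ and $L_2$ are both odd, and then kills the remaining cases with odd self-intersections and another application of Lemma~\ref{lem:nikulin}, thereby never having to confront the classes $\tfrac12 L_i$ at all. (Two small points: the paper's dual-lattice computation allows denominator $4$ on the $E_i$-coefficients, so your restriction to denominator $2$ needs the one-line remark that any $2$-power-index overlattice contains an index-$2$ one; and your first step only needs the monodromy on the six rank singularities to be transitive on pairs, not all of $S_6$.)

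The genuine problem is your primary justification of the final step. Nikulin's Theorem~1.14.4 produces a unique \emph{primitive} embedding of the abstract lattice $\Lambda_0$ into $\Lambda_{K3}$, and surjectivity of the period map then says that the very general point of \emph{that} period domain has $\overline{\NS}=\Lambda_0$. But this does not address whether the embedding $\Lambda_0\hookrightarrow \cH^2(X,\Z)$ furnished by the actual divisors $L_1,L_2,L_4,E_1,\dots,E_6$ is primitive --- which is exactly what the proposition asserts. An even overlattice such as $\Lambda_0+\Z\cdot\tfrac12(L_1+L_2+E_1+E_2)$ also embeds in $\Lambda_{K3}$, and the penteract family could a priori land in the (still $11$-dimensional) locus polarized by such an overlattice; no dimension count or abstract embedding theorem can exclude this, only geometric input. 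Fortunately the argument you label as ``equivalent'' is not equivalent but is the one that works: each $L_i$ for $i\in\{1,2,3\}$ is the class of a fiber of a genus-one fibration on a K3 surface, which has no multiple fibers, so $L_i$ is primitive in $\overline{\NS}(X)$; this is precisely the argument the paper deploys at the analogous point of the doubly-doubly symmetric case and of \S\ref{sec:2x2xSym24}. Delete the period-map reasoning and keep that sentence, and your proof is complete.
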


\begin{proof}
  Since the moduli space here has dimension $8 \cdot 3 - 13 = 11$, the Picard
  number of a very general $X$ is at most $9$.
  It is enough to check that the lattice $L$ spanned by these divisors
  is $2$-saturated. Computation of the discriminant group shows that
  any element of the dual lattice may be written as
  \[
  D = \frac{1}{2}(c_1 L_1 + c_2 L_2 + c_4 L_4) + \frac{1}{4} \sum d_i
  E_i,
  \]
  with $c_i, d_i$ integers. First, by Lemma \ref{lem:nikulin}, no
  divisor of the form $(\sum e_i E_i)/2$, with $e_i$ integers, can be
  in $\overline{\NS}(X)$, unless all the $e_i$ are even. It follows
  that in the expression for $D$, all the $d_i$ must be even, as $2D -
  c_1 L_1 - c_2 L_2 - c_4 L_4$ would otherwise be a counterexample to
  the above observation.  So we may assume that $D$ has the form
  \begin{equation} \label{eq:2sympentdivisorcomp}
  D = \frac{1}{2}\bigl( c_1 L_1 + c_2 L_2 + c_4 L_4 + \sum_{i=1}^6 e_i
    E_i \bigr).
  \end{equation}
  Intersection with $E_i$ shows that $c_4$ is an even integer, so we
  may assume it is zero. If $c_1$ and $c_2$ are even, then we get a
  contradiction to Lemma \ref{lem:nikulin} as above. At least one of
  $c_1$ and $c_2$ is odd, and if both are not odd, we may construct a
  divisor $D' \in \overline{\NS}(X)$ by reversing the roles of $L_1$
  and $L_2$, by symmetry. Then $D + D'$ has the same shape as
  \eqref{eq:2sympentdivisorcomp}, with both coefficients $c_1$ and
  $c_2$ odd. So we may assume $c_1 = c_2 = 1$ and all $e_i \in
  \{0,1\}$ by subtracting an element of $L$.
  
  The self-intersection of $D$ is $1 - \sum e_i^2/2$ and is then even,
  so $\sum e_i^2 = 2$ or $6$. If exactly two of the $e_i$ are $1$, say
  $E_1$ and $E_2$, then by symmetry each of the divisors $(L_1 + L_2 +
  E_i + E_j)/2$ is in $\overline{\NS}(X)$. Subtracting two of these,
  we see that $(E_1 + E_2 + E_3 + E_4)/2$ is in $\overline{\NS}(X)$,
  which contradicts Lemma \ref{lem:nikulin}.  Finally, if all six
  $e_i$ are $1$, then another application of symmetry shows that $D' =
  (L_1 + L_3 + \sum E_i) \in \overline{\NS}(X)$. Therefore $D-D' =
  (L_1 - L_3)/2 \in \overline{\NS}(X)$, which is impossible since it
  has odd self-intersection.
\end{proof}

\begin{corollary} The divisors $L_1, \dots, L_4, E_1, \dots, E_5$ form
  a basis for $\overline{\NS}(X)$, for $X$ very general.
\end{corollary}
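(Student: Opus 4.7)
The plan is to deduce this corollary directly from the preceding proposition, which established that $L_1, L_2, L_4, E_1, \ldots, E_6$ span $\overline{\NS}(X)$ over $\Z$. The goal is to swap $E_6$ for $L_3$ in this spanning set, using the relation \eqref{eq:2sympentrelation}:
\[
E_6 \;=\; L_1 + L_2 + L_3 - 2L_4 - \sum_{i=1}^{5} E_i.
\]
This rearrangement immediately shows that $\{L_1, L_2, L_3, L_4, E_1, \ldots, E_5\}$ spans the same lattice as $\{L_1, L_2, L_4, E_1, \ldots, E_6\}$, hence spans $\overline{\NS}(X)$.

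Next I would verify that the Gram matrix of this ordered 9-tuple coincides with the matrix displayed in \eqref{eq:NS-2sym}. The intersection numbers $L_i \cdot L_j = 2(1-\delta_{ij})$, $L_4 \cdot E_i = 1$, $L_1 \cdot E_i = L_2 \cdot E_i = L_3 \cdot E_i = 0$, and $E_i \cdot E_j = -2\delta_{ij}$ were computed just before the relation \eqref{eq:2sympentrelation}, and these are exactly the entries of \eqref{eq:NS-2sym}.

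Finally, since the determinant of the matrix in \eqref{eq:NS-2sym} is nonzero (equal to $2^8$, as noted in the discussion preceding the proposition), the nine divisors $L_1, \ldots, L_4, E_1, \ldots, E_5$ are linearly independent. A spanning set of size equal to the rank, which is also linearly independent, is a basis.

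There is essentially no obstacle here — the work was done in the proposition, and this corollary is a one-line substitution using the linear relation \eqref{eq:2sympentrelation} followed by a determinant check. The only thing to note is that for the very general $X$ the rank is indeed $9$: this is guaranteed by the dimension count $2^4 - (3\cdot 4 + 1) \cdot 1 = \ldots$ combined with the fact that the relation \eqref{eq:2sympentrelation} is the only one among the listed classes, both of which are implicit in the preceding proposition and in the discussion of the moduli space dimension $20 - 9 = 11$.
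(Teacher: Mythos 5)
Your proof is correct and follows essentially the same route as the paper, which states this corollary without proof as an immediate consequence of the preceding proposition: the relation $L_1+L_2+L_3 = 2L_4+\sum_{i=1}^6 E_i$ lets you trade $E_6$ for $L_3$ in the spanning set, and the nonvanishing discriminant $2^8$ of the Gram matrix \eqref{eq:NS-2sym} gives independence, hence a basis. (Your parenthetical dimension count at the end is garbled --- the representation has dimension $24$, not $2^4$ --- but it is not needed, since spanning plus independence already forces the rank to be $9$.)
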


\subsection{Moduli problem}

We now complete the proof of Theorem \ref{thm:2sympenteractorbits}.

\begin{proof}[Proof of Theorem $\ref{thm:2sympenteractorbits}$]

  The constructions in both directions almost exactly follow those for
  Theorem~\ref{thm:penteractorbits}.  Given a doubly symmetric
  penteract, we obtain the K3 surfaces $X_{ijk}$ and $X_{ijkl}$ with
  the divisor classes (and intersection matrix) as described above.

  On the other hand, given such a K3 surface $X$ lattice-polarized by
  $(\Lambda,S)$, we must show that the penteract $A$ constructed by
  the reverse map of \S \ref{sec:penteractreverse} is doubly symmetric
  under an identification of two of the vector spaces.  Let $L_1$,
  $L_2$, $L_3$, and $L_4$ be line bundles corresponding to the four
  elements of $S$ (in order).  These are the line bundles used to
  construct the penteract $A \in \cH^0(X,L_1) \otimes \cH^0(X,L_2)
  \otimes \cH^0(X,L_3) \otimes \cH^0(X,L_4) \otimes (\ker \mu)^\vee,$
  where $\mu$ is the multiplication map \eqref{construct22222}.

  Note that $A$ in turn gives rise to isomorphic K3 surfaces and line
  bundles, as well as a fifth line bundle, say $L_5^{(123)}$, via the
  map from $X_{1234} \to X_{123} \dasharrow X_{1235} \to \Proj(\ker
  \mu)$.  In addition, from the intersection matrix
  \eqref{eq:NS-2sym}, we see that there are six singularities on
  $X_{123}$ which are blown up in $X_{1234}$ (whose exceptional fibers
  correspond to the last six rows/columns of the intersection
  matrix). Thus, using the relation \eqref{eq:penteractrelation2} and
  the intersection matrix \eqref{eq:NS-2sym}, we find that $L_4$ and
  $L_5^{(123)}$ are in fact isomorphic.  Therefore, we may identify
  the vector spaces $V_4 := \cH^0(X,L_4)$ and $V_5 := (\ker
  \mu)^\vee$.

  With this identification, the maps from $X_{123}$ to
  $\Proj(V_4^\vee)$ and $\Proj(V_5^\vee)$ are identical and given in
  the usual way by taking the appropriate kernels of
  $A(v,w,x,\ccdot,\ccdot) \in V_4 \otimes V_5$ for $(v,w,x) \in
  X_{123}$.  The remaining key idea is very simple to check (e.g.,
  explicitly) in this case: for a rank one element $\zeta \in V
  \otimes V$ for a $2$-dimensional $\fd$-vector space $V$, if
  $\zeta(v, \ccdot) = 0$ and $\zeta(\ccdot, v) = 0$, then $\zeta$ is
  in fact in the symmetric subspace $\Sym^2 V$ of $V \otimes V$.
  Since $X_{123}$ spans the ambient space $\Proj(\cH^0(X,L_1)^\vee)
  \times \Proj(\cH^0(X,L_2)^\vee) \times \Proj(\cH^0(X,L_3)^\vee)$,
  the penteract $A$ is in fact symmetric, i.e., an element of
  $\cH^0(X,L_1) \otimes \cH^0(X,L_2) \otimes \cH^0(X,L_3) \otimes
  \Sym^2 \cH^0(X,L_4)$, as desired.
\end{proof}

\subsection{Automorphisms} \label{sec:2sympentauts}

The automorphisms $\alpha_{ij,k}$ defined in \S \ref{sec:pentauts} are
again automorphisms of the K3 surfaces obtained from doubly symmetric
penteracts.  Of course, because of the symmetry in this case, some of
these are the same automorphism, e.g., $\alpha_{14,2} =
\alpha_{15,2}$.

Moreover, the action of the $5$-cycles on the N\'eron-Severi lattice
is different, since we now have to take the exceptional divisor
classes into account.  To compute this action for the generic K3
surface in this family, we employ the same methods as for the doubly
symmetric Rubik's revenge, namely, repeated applications of the
relation \eqref{eq:penteractrelation2} and computations of
intersection numbers.  For example, the action of
$$\Phi_{54321}: X_{1234} \to X_{1235} \to X_{1245} \to X_{1345} \to X_{2345} \to X_{1234}$$
on $\overline{\NS}(X)$ here is given by the matrix

\begin{small}
\begin{equation} \label{eq:2sympentaut}
\begin{pmatrix}
5 & 2 & -4 & 6 & 0 & 0 & 0 & 0 & 0 \\
2 & 1 & -2 & 4 & 0 & 0 & 0 & 0 & 0 \\
2 & 0 & -1 & 2 & 0 & 0 & 0 & 0 & 0 \\
1 & 1 & -1 & 1 & 0 & 0 & 0 & 0 & 0 \\
1 & 0 & -1 & 2 & 1 & 0 & 0 & 0 & 0 \\
1 & 0 & -1 & 2 & 0 & 1 & 0 & 0 & 0 \\
1 & 0 & -1 & 2 & 0 & 0 & 1 & 0 & 0 \\
1 & 0 & -1 & 2 & 0 & 0 & 0 & 1 & 0 \\
1 & 0 & -1 & 2 & 0 & 0 & 0 & 0 & 1
\end{pmatrix}.
\end{equation}
\end{small}%
We will look at this automorphism again in \S \ref{sec:positiveentropy}.

By symmetry, all of the $5$-cycles that meet all five models
$X_{ijkl}$ are either analogous to $\Phi_{54321}$ above or to
$\Phi_{53421}$.  In the latter case, the induced action on the line
bundles $L_i$ is similar to that of $\Phi$ in the penteract case (from
which the action on the divisors $E_j$ may be immediately
deduced). These two types of automorphisms will be shown in \S
\ref{sec:positiveentropy} to be fixed-point-free in general and of
positive entropy.

%%%%%%%%%%%%%%%%%%%%%%%%%%%%%%%%%%%%%%
%%%%%%%%%%%%%%%%%%%%%%%%%%%%%%%%%%%%%%
%%%%%%    3-Sym Penteracts     %%%%%%%
%%%%%%%%%%%%%%%%%%%%%%%%%%%%%%%%%%%%%%
%%%%%%%%%%%%%%%%%%%%%%%%%%%%%%%%%%%%%%

\section{Triply symmetric penteracts: \texorpdfstring{$2 \otimes 2 \otimes \Sym^3(2)$}{2 (x) 2 (x) Sym3(2)}} \label{sec:3sympent}

Suppose we now have a penteract that is symmetric in the last {\it
  three} coordinates.
We prove that the general orbits of such tensors correspond to certain
K3 surfaces with Picard rank at least $14$ over $\fdbar$:

\begin{theorem} \label{thm:3sympenteractorbits} Let $V = V_1\otimes
  V_2\otimes\Sym^3 V_3$ for $2$-dimensional $\fd$-vector spaces
  $V_1,V_2,V_3$. Let $G' = \GL(V_1) \times \GL(V_2) \times \GL(V_3)$
  act on $V$, and let $G$ be the quotient of $G'$ by the kernel of the
  map $\Gm \times \Gm \times \Gm \to \Gm$ sending $(\gamma_1,
  \gamma_2, \gamma_3) \mapsto \gamma_1 \gamma_2 \gamma_3^3$.  Let
  $\Lambda$ be the lattice whose Gram matrix is

{\footnotesize \begin{equation} \label{eq:NS-3sym}
\begin{pmatrix}
  \,0\,  & \, 2\,  & \,2\,  & \,2\,  & 0  & 0  & 0  & 0  & 0  & 0  & 0  & 0  & 0  &  0  \\
  2  &  0  & 2  & 2  & 0  & 0  & 0  & 0  & 0  & 0  & 0  & 0  & 0  &  0  \\
  2  &  2  & 0  & 2  & 0  & 1  & 0  & 1  & 0  & 1  & 0  & 1  & 0  &  1  \\
  2  &  2  & 2  & 0  & 1  & 0  & 1  & 0  & 1  & 0  & 1  & 0  & 1  &  0  \\
  0  &  0  & 0  & 1  & -2 & 1  & 0  & 0  & 0  & 0  & 0  & 0  & 0  &  0  \\
  0  &  0  & 1  & 0  & 1  & -2 & 0  & 0  & 0  & 0  & 0  & 0  & 0  &  0  \\
  0  &  0  & 0  & 1  & 0  & 0  & -2 & 1  & 0  & 0  & 0  & 0  & 0  &  0  \\
  0  &  0  & 1  & 0  & 0  & 0  & 1  & -2 & 0  & 0  & 0  & 0  & 0  &  0  \\
  0  &  0  & 0  & 1  & 0  & 0  & 0  & 0  & -2 & 1  & 0  & 0  & 0  &  0  \\
  0  &  0  & 1  & 0  & 0  & 0  & 0  & 0  & 1  & -2 & 0  & 0  & 0  &  0  \\
  0  &  0  & 0  & 1  & 0  & 0  & 0  & 0  & 0  & 0  & -2 & 1  & 0  &  0 \\
  0  &  0  & 1  & 0  & 0  & 0  & 0  & 0  & 0  & 0  &  1 & -2 & 0  &  0 \\
  0  &  0  & 0  & 1  & 0  & 0  & 0  & 0  & 0  & 0  &  0 & 0  & -2 &  1 \\
  0 & 0 & 1 & 0 & 0 & 0 & 0 & 0 & 0 & 0 & 0 & 0 & 1 & -2
\end{pmatrix},
\end{equation}
}%
and let $S = \{ e_1, e_2, e_3, e_4 \}.$ Then the $G(\fd)$-orbits of an
open subset of $V(\fd)$ are in bijection with the $\fd$-points of an
open subvariety of the moduli space $\sM_{\Lambda,S}$ of K3
surfaces $X$ lattice-polarized by $(\Lambda, S)$.
\end{theorem}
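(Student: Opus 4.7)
The plan is to adapt the approach used for Theorems~\ref{thm:penteractorbits} and~\ref{thm:2sympenteractorbits}. A triply symmetric penteract $A \in V_1 \otimes V_2 \otimes \Sym^3 V_3$ embeds into the full space of penteracts $V_1 \otimes V_2 \otimes V_3^{\otimes 3}$ (with the last three factors identified). Applying the penteract construction of \S\ref{sec:penteractconstruct} yields a smooth K3 surface $X := X_{1234}$ together with four line bundles $L_1, L_2, L_3, L_4$ whose pairwise intersection numbers form the upper left $4\times 4$ block of \eqref{eq:NS-3sym}. By the symmetry in the last three slots of $A$, the triples of surfaces $X_{123} = X_{124} = X_{125}$, $X_{134} = X_{135} = X_{145}$, and $X_{234} = X_{235} = X_{245}$ each coalesce, producing additional line bundles $L_5^{(ijk)}$ that are identified with $L_3$ or $L_4$ via (appropriately index-permuted versions of) the relation \eqref{eq:penteractrelation2}.

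The next step is to identify the ten $(-2)$-classes appearing in rows $5$--$14$ of \eqref{eq:NS-3sym} as exceptional divisors above singularities of the various $X_{ijk}$. Because the three contractions of $A$ along the symmetric slots now take values in symmetric subspaces (e.g.\ $A(v,w,x,\ccdot,\ccdot) \in \Sym^2 V_3$, while $A(v,w,\ccdot,\ccdot,\ccdot)$ is a binary cubic), the singularities of the coalesced $X_{ijk}$ surfaces occur at special configurations forced by the $\Sym^3$-symmetry, and they resolve on $X_{1234}$ into five pairs of $(-2)$-curves meeting in $A_2$-patterns, as read off directly from \eqref{eq:NS-3sym}. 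The intersection numbers of these curves with $L_3$ and $L_4$ come from the degrees of the corresponding components under the $\Proj(V_3^\vee)$ and $\Proj(V_4^\vee)$ projections; matching the resulting lattice against $U(2) \oplus A_2^{\oplus 3} \oplus E_6$ (consistent with Table~\ref{table:examples}) will confirm the computation.

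Having identified the lattice, I would verify that the orbit map sweeps out an open subvariety of $\sM_{\Lambda, S}$. The dimension count $\dim V - \dim G = 16 - 10 = 6 = 20 - \operatorname{rank}\Lambda$ shows the generic orbit has the expected codimension, and a $2$-saturation argument modeled on those of Sections~\ref{sec:rr}--\ref{sec:2sympent} (using Nikulin's Lemma~\ref{lem:nikulin} to rule out half-sums of exceptional divisors) then implies that $\Lambda$ is the full N\'eron-Severi lattice of a very general $X$ in the family. For the reverse direction, given a $(\Lambda, S)$-polarized K3 surface $X$, the four classes in $S$ yield line bundles satisfying the hypotheses of Theorem~\ref{thm:penteractorbits}, producing a penteract $A \in \cH^0(X, L_1) \otimes \cdots \otimes \cH^0(X, L_4) \otimes (\ker\mu)^\vee$ via the multiplication map \eqref{construct22222}. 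The additional exceptional classes of $\Lambda$ force isomorphisms $L_5^{(12k)} \cong L_k$ for $k = 3, 4$, so that $V_5 := (\ker\mu)^\vee$ may be naturally identified with both $V_3$ and $V_4$. Applying the symmetry-detection argument from the second proof of Theorem~\ref{thm:sym2rr} in slots $4, 5$ and then in slots $3, 5$ promotes $A$ to an element of $V_1 \otimes V_2 \otimes \Sym^3 V_3$.

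The main obstacle will be the precise geometric analysis of the singular loci of the coalesced $X_{ijk}$ surfaces and the identification of the resolving $(-2)$-curves with the classes in \eqref{eq:NS-3sym}. Unlike the doubly symmetric case, where each relevant surface acquires six well-separated $A_1$ nodes, the $\Sym^3$-structure causes the nodes to merge into $A_2$-configurations at points governed by the discriminant of the binary cubic $A(v,w,\ccdot,\ccdot,\ccdot)$, and correctly counting and resolving these configurations will require some effort. A secondary difficulty is consistency of the two symmetrization steps in the reverse direction: one must verify that the identifications $V_4 \cong V_5$ and $V_3 \cong V_5$ obtained by applying Theorem~\ref{thm:sym2rr} in sequence are mutually compatible, so that $A$ is actually symmetric under the full $S_3$-action on the last three slots and not merely under the subgroup generated by two transpositions acting separately.
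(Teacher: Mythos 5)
Your overall strategy is the same as the paper's: construct the K3 surface and the four line bundles $L_1,\dots,L_4$ via the general penteract machinery, identify the extra $(-2)$-classes with exceptional curves over the singularities of the coalesced surface $X_{123}=X_{124}=X_{125}$, prove saturation of the resulting rank-$14$ lattice, and reverse the construction by building the penteract from the multiplication map and then detecting the symmetry one transposition at a time (the paper does exactly this, running the argument of Theorem \ref{thm:2sympenteractorbits} first in slots $(4,5)$ and then, using the classes $Q_i$ in place of the $P_i$, in slots $(3,5)$; these two transpositions generate $S_3$, which resolves the compatibility worry you raise at the end). One geometric point to correct in your sketch: the six rank singularities of $X_{123}$ are $A_2$ points (they sit over the six $A_2$ singularities of the bidegree $(4,4)$ branch curve of the double cover $X_{123}\to\Proj(V_1^\vee)\times\Proj(V_2^\vee)$), and they resolve into \emph{six} pairs $(P_i,Q_i)$ of intersecting lines on $X_{1234}$, not five; only five pairs appear in the basis because the relations $L_1+L_2+L_3=2L_4+\sum_i P_i$ and $L_1+L_2+L_4=2L_3+\sum_i Q_i$ determine $P_6$ and $Q_6$. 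If you literally track only five pairs you will not recover the correct relations or the correct discriminant.

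The one concrete gap is the saturation step. The lattice \eqref{eq:NS-3sym} has discriminant $-324=-2^2\cdot 3^4$, so showing $\overline{\NS}(X)=\Lambda$ for very general $X$ requires ruling out index-$3$ overlattices as well as index-$2$ ones, and the $3$-part is where essentially all of the work lies. A ``$2$-saturation argument using Lemma \ref{lem:nikulin} to rule out half-sums of exceptional divisors'' cannot address this: Nikulin's lemma constrains half-sums of \emph{disjoint} smooth rational curves (and here the $P_i$ and $Q_i$ are not even disjoint), and it says nothing about divisibility by $3$. The paper's actual argument writes any element of the dual lattice as $\tfrac{1}{2}(c_1L_1+c_2L_2)+\tfrac{1}{3}\sum_i d_iZ_i$ with $Z_i=P_i-Q_i$, disposes of the $2$-part by parity of self-intersections together with the symmetry permuting the singular points, and then uses that the $Z_i$ are mutually orthogonal with $Z_i^2=-6$ to show that $\tfrac{1}{3}\sum d_iZ_i$ can only have even self-intersection when exactly three $d_i$ are $\pm1$, a configuration that is then eliminated by a symmetry argument. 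You would need to supply an argument of this kind for the $3$-torsion in the discriminant group; as written, your plan proves only $2$-saturation and hence does not establish that $\Lambda$ is the full N\'eron--Severi lattice of the very general member.
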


\subsection{N\'eron-Severi lattice}

Note that a triply symmetric penteract is also doubly symmetric in any
two of the last three coordinates.  This implies, from \S
\ref{sec:2sympent}, that the K3 surface $X_{123}$ ($= X_{124} =
X_{125}$) has at least six rank singularities (over $\fdbar$), and a
numerical example shows us that generically there are no other
singularities.  Meanwhile, the other surfaces $X_{134}$ and $X_{234}$
are generically nonsingular, as are all the $X_{ijkl}$.

The maps of the type $X_{1234} \to X_{123}$ blow up the six
singular points on $X_{123}$, and thus $X_{1234}$ contains six lines;
call the associated divisor classes $P_i$ for $1 \leq i \leq 6$.
There is also a map $X_{1234} \to X_{124}$, defined by
the identical equations after switching the $3$rd and $4$th
coordinates, so there are at least twelve lines in $X_{1234}$; call
the six lines coming from this map $Q_i$ for $1 \leq i \leq 6$.  These
twelve lines occur in pairs, say $(P_i, Q_i)$ for $1 \leq i \leq 6$,
which are flipped by the birational involution $X_{123} \dasharrow
X_{1234} \to X_{124} = X_{123}$.

Recall that there are line bundles $L_1$, $L_2$, $L_3$, and $L_4$ on
$X_{1234}$ coming from the pullback of $\sO_{\Proj(V_i^\vee)}(1)$, and
by Lemma \ref{lem:pentrelation}, we have the relations
\begin{align*}
 L_1 + L_2 +  L_3 &= 2 L_4 + \sum_{i=1}^6 P_i \qquad \textrm{and} \\
 L_1 + L_2 + L_4 &= 2 L_3 + \sum_{i=1}^6 Q_i.
\end{align*}

Each of the six pairs of lines $(P_i, Q_i)$ determines a single point
of intersection.  Explicitly, if the associated singular point on
$X_{123}$ is $(v,w,x) \in \Proj(V_1^\vee) \times \Proj(V_2^\vee)
\times \Proj(V_3^\vee)$, then the intersection point is $(v,w,x,x) \in
\Proj(V_1^\vee) \times \Proj(V_2^\vee) \times \Proj(V_3^\vee) \times
\Proj(V_3^\vee)$.  These are the only six intersection points among
all of the $P_i$ and $Q_j$, as $(\sum P_i) \cdot (\sum Q_j) = 6$.

Another way to see that each of these pairs of lines intersect once
(and do not intersect any other lines) is to view $X_{123}$ as a
double cover of $\Proj(V_1^\vee) \times \Proj(V_2^\vee)$, branched
along a bidegree $(4,4)$ curve.  One computes that there are exactly
six $A_2$ singularities on that curve.

The map from $X_{1234}$ to $\Proj(V_1^\vee)$ is a genus one fibration
whose discriminant as a binary form on $V_1$ has degree 24, and it
factors as the cube of a degree six form times an irreducible degree
six form. Therefore, the genus one fibration has six reducible fibers
of type $\mathrm I_3$ (in the sense of Kodaira \cite{kodaira1,kodaira23}).  
These reducible fibers each consist of three
lines in a ``triangle''; a distinguished pair of these lines in each
triangle together give us the six pairs of lines described previously.

As a consequence, the N\'eron-Severi lattice (over $\fdbar$) has rank
at least $2\cdot 6+2 = 14$.  It is straightforward to compute the
intersection numbers of all of the known divisor classes (the four
line bundles from pulling back $\sO_{\Proj(V_i^\vee)}(1)$ for $1 \leq
i \leq 4$ and the two distinguished lines in each of the six
triangles). The only nonzero intersection numbers are
\begin{align*}
  L_i \cdot L_j &= 2 \textrm{ for } i \neq j, & L_3 \cdot Q_i &= 1, & L_4 \cdot P_i &= 1, \\
  P_i^2 &= Q_i^2 = -2, & P_i \cdot Q_i &= 1.
\end{align*}
Taking the basis $\{L_1, \dots, L_4, P_1, Q_1, \dots, P_5, Q_5\}$, one
obtains the lattice with Gram matrix \eqref{eq:NS-3sym}.  This lattice
has discriminant $-324$.

\begin{proposition} 
  For a very general $X$ in this family of K3 surfaces, $\overline{\NS}(X)$
  is spanned over $\Z$ by $L_1$, $L_2$, $L_4$, and the exceptional
  classes $P_i, Q_i$, $i = 1, \dots, 6$.
\end{proposition}

\begin{proof}
  A dimension count shows that the moduli space in this case has dimension
  $4 \cdot 4 - 10 = 6$, so the Picard number of a very general $X$ is at most $14$.
  Let $\Lambda$ be the lattice spanned by the above classes. First,
  note that $\Lambda$ is already spanned by $L_1, \dots, L_4$ and the
  ten classes $P_i$, $Q_i$, for $i = 1, \dots, 5$, since we may solve for
  $P_6$ and $Q_6$ from the above relations. Since these remaining
  fourteen classes are linearly independent (they have a nonsingular
  intersection matrix), they form a basis for $\Lambda$.

  Let $Z_i = P_i - Q_i$. Computing the inverse of the Gram matrix
  shows that any element of the dual lattice has the form
  \[
  D = \frac{1}{2}(c_1 L_1 + c_2 L_2) + \frac{1}{3}\sum_{i = 1}^5 d_i
  Z_i
  \]
  where $c_i$ and $d_i$ are integers. Suppose $D \in
  \overline{\NS}(X)$. Then $3D \in \overline{\NS}(X)$, from which it
  follows that $D'= (c_1 L_1 + c_2L_2)/2 \in \overline{\NS}(X)$. We
  claim both $c_1$ and $c_2$ are even. If $c_1$ and $c_2$ are odd,
  then $D'^2$ is odd, a contradiction. So at least one of $c_1$ and
  $c_2$ is even. If one is odd and one is even, we can find another
  divisor (by symmetry) with the parities reversed, and adding them
  will give us an element with both coefficients odd, a
  contradiction. Therefore, we may assume
  \[
  D = \frac{1}{3}\sum_{i = 1}^5 d_i Z_i.
  \]
  We may assume each $d_i \in \{-1,0,1\}$. But note that $Z_i^2 = -6$
  and $Z_i \cdot Z_j = 0$ for $i \neq j$. Hence $D^2 = -2(\sum
  d_i^2)/3$, and since this must be an (even) integer, we see that
  exactly three of the $d_i$ must be $\pm 1$. Suppose without loss of
  generality that $(Z_1 + Z_2 + Z_3)/3 \in \overline{\NS}(X)$. Then by
  symmetry any $(Z_i + Z_j + Z_k)/3 \in \overline{\NS}(X)$. Therefore,
  \[
  \frac{1}{3}(Z_1 + Z_2 + Z_3) - \frac{1}{3}(Z_1 + Z_2 + Z_4) =
  \frac{1}{3}(Z_3 - Z_4) \in \overline{\NS}(X),
  \]
  which is impossible, since exactly three of the $d_i$ are $\pm 1$.
\end{proof}

\subsection{Moduli problem}

We now complete the proof of Theorem \ref{thm:3sympenteractorbits}.

\begin{proof}[Proof of Theorem $\ref{thm:3sympenteractorbits}$]
  The above discussion describes how to construct a K3 surface
  lattice-polarized by $(\Lambda, S)$ from a triply symmetric
  penteract.  It remains to show that from such data, the penteract
  $A$ constructed as in \S \ref{sec:penteractreverse} is in fact
  triply symmetric.  That is, starting from $X$ lattice-polarized by
  $(\Lambda, S)$, let $L_1$, $L_2$, $L_3$, and $L_4$ be the line
  bundles corresponding to the elements of $S$.  Then we obtain a
  penteract $A \in \cH^0(X,L_1) \otimes \cH^0(X,L_2) \otimes
  \cH^0(X,L_3) \otimes \cH^0(X,L_4) \otimes (\ker \mu)^\vee$, where
  $\mu$ is the usual multiplication map on sections.

  The rest of the proof builds on that of Theorem
  \ref{thm:2sympenteractorbits}.  In particular, that proof
  immediately shows that $A$ must be doubly symmetric, i.e., symmetric
  in the fourth and fifth tensor factors.  (Note that this argument
  relies on \eqref{eq:penteractrelation2} with the exceptional fibers
  $P_i$.)  By switching the roles of the indices $3$ and $4$ in that
  argument, and using the $Q_i$ for \eqref{eq:penteractrelation2}, we
  also see that $A$ is symmetric in the third and fifth factors.  In
  other words, there are simultaneous identifications of the vector
  spaces $\cH^0(X,L_3)$, $\cH^0(X,L_4)$, and $(\ker \mu)^\vee$ such
  that $A$ is triply symmetric in these three factors, i.e., under
  these identifications, we may think of $A$ as an element of
  $\cH^0(X,L_1) \otimes \cH^0(X,L_2) \otimes \Sym^3 \cH^0(X,L_3)$.
\end{proof}

\subsection{Automorphisms} \label{sec:3sympentauts}

As in the previous penteract cases, we may again consider many
automorphisms of the form $\alpha_{ij,k}$.  All the $5$-cycles
meeting all five $X_{ijkl}$ are equivalent (up to reordering) to one of the following two:
\begin{align*}
\Phi_{54123} &: X_{1234} \to X_{1235} \to X_{2345} \to X_{1345} \to X_{1245} \to X_{1234}\\
\Phi_{54132} &: X_{1234} \to X_{1235} \to X_{2345} \to X_{1245} \to X_{1345} \to X_{1234}.
\end{align*}
Using the same techniques as in previous sections, namely, applying
Lemma \ref{lem:pentrelation} and computing intersection numbers, we
obtain the action of $\Phi_{54123}$ on the
N\'eron-Severi lattice of the K3 surface $X_{\fdbar}$ arising from a
general triply symmetric penteract as the matrix

\begin{footnotesize}
\begin{equation} \label{eq:3sympentaut}
\begin{pmatrix}
	-1 & 0 & 2 & 2 & 0 & 0 & 0 & 0 & 0 & 0 & 0 & 0 & 0 & 0\\
	-2 & 1 & 2 & 4 & 0 & 0 & 0 & 0 & 0 & 0 & 0 & 0 & 0 & 0\\
	0 & 0 & 0 & 1 & 0 & 0 & 0 & 0 & 0 & 0 & 0 & 0 & 0 & 0\\
	-1 & 1 & 1 & 1 & 0 & 0 & 0 & 0 & 0 & 0 & 0 & 0 & 0 & 0\\
	0 & 0 & 0 & 1 & 0 & -1 & 0 & 0 & 0 & 0 & 0 & 0 & 0 & 0\\
	-1 & 0 & 1 & 1 & 1 & 1 & 0 & 0 & 0 & 0 & 0 & 0 & 0 & 0\\
	0 & 0 & 0 & 1 & 0 & 0 & 0 & -1 & 0 & 0 & 0 & 0 & 0 & 0\\
	-1 & 0 & 1 & 1 & 0 & 0 & 1 & 1 & 0 & 0 & 0 & 0 & 0 & 0\\
	0 & 0 & 0 & 1 & 0 & 0 & 0 & 0 & 0 & -1 & 0 & 0 & 0 & 0\\
	-1 & 0 & 1 & 1 & 0 & 0 & 0 & 0 & 1 & 1 & 0 & 0 & 0 & 0\\
	0 & 0 & 0 & 1 & 0 & 0 & 0 & 0 & 0 & 0 & 0 & -1 & 0 & 0\\
	-1 & 0 & 1 & 1 & 0 & 0 & 0 & 0 & 0 & 0 & 1 & 1 & 0 & 0\\
	0 & 0 & 0 & 1 & 0 & 0 & 0 & 0 & 0 & 0 & 0 & 0 & 0 & -1\\
	-1 & 0 & 1 & 1 & 0 & 0 & 0 & 0 & 0 & 0 & 0 & 0 & 1 & 1
\end{pmatrix}.
\end{equation}
\end{footnotesize}%
The induced action of the other automorphism $\Phi_{54132}$ on the
line bundles $L_i$ and the exceptional divisors $P_j$ is the same (up
to reordering) as the action of $\Phi_{54321}$ on $L_i$ and $E_j$ in
the doubly symmetric penteract case, and the induced action on the
$Q_j$ may also be immediately computed.

%%%%%%%%%%%%%%%%%%%%%%%%%%%%%%%%%%%%%%
%%%%%%%%%%%%%%%%%%%%%%%%%%%%%%%%%%%%%%
%%%%%%   2,2-Sym Penteracts    %%%%%%%
%%%%%%%%%%%%%%%%%%%%%%%%%%%%%%%%%%%%%%
%%%%%%%%%%%%%%%%%%%%%%%%%%%%%%%%%%%%%%

\section{Doubly-doubly symmetric penteracts: \texorpdfstring{$2 \otimes \Sym^2(2) \otimes \Sym^2(2)$}{2 (x) Sym2(2) (x) Sym2(2)}} \label{sec:22sympent}

Suppose we have a penteract $A$ that is symmetric in the second and
third coordinates, and also in the last two coordinates.  Then we may
use the theorems from \S\S \ref{sec:penteracts} and \ref{sec:2sympent}
to study the associated orbit problem. We prove that the general
orbits of such tensors correspond to certain K3 surfaces with Picard
rank at least $12$ over $\fdbar$:

\begin{theorem} \label{thm:22sympenteractorbits} Let $V = V_1 \otimes
  \Sym^2 V_2\otimes\Sym^2 V_3$ for 2-dimensional $\fd$-vector spaces
  $V_1$, $V_2$, $V_3$.  Let $G'$ be the group $\GL(V_1) \times
  \GL(V_2) \times \GL(V_3)$, and let $G$ be the quotient of $G'$ by
  the kernel of the multiplication map $\Gm \times \Gm \times \Gm \to
  \Gm$ given by $(\gamma_1, \gamma_2, \gamma_3) \mapsto \gamma_1
  \gamma_2^2 \gamma_3^2$.  Let $\Lambda$ be the lattice whose Gram
  matrix is

{ \footnotesize
\begin{equation} \label{eq:22sympent-NS}
\begin{pmatrix}
0 & 2 & 2 & 2 & 0 & 0 & 0 & 0 & 0 & 0 & 0 & 0 \\
2 & 0 & 2 & 2 & 0 & 1 & 0 & 0 & 1 & 0 & 0 & 1 \\
2 & 2 & 0 & 2 & 0 & 1 & 0 & 0 & 1 & 0 & 0 & 1 \\
2 & 2 & 2 & 0 & 1 & 0 & 1 & 1 & 0 & 1 & 1 & 0 \\
0 & 0 & 0 & 1 & -2 & 1 & 0 & 0 & 0 & 0 & 0 & 0 \\
0 & 1 & 1 & 0 & 1 & -2 & 1 & 0 & 0 & 0 & 0 & 0 \\
0 & 0 & 0 & 1 & 0 & 1 & -2 & 0 & 0 & 0 & 0 & 0 \\
0 & 0 & 0 & 1 & 0 & 0 & 0 & -2 & 1 & 0 & 0 & 0 \\
0 & 1 & 1 & 0 & 0 & 0 & 0 & 1 & -2 & 1 & 0 & 0 \\
0 & 0 & 0 & 1 & 0 & 0 & 0 & 0 & 1 & -2 & 0 & 0 \\
0 & 0 & 0 & 1 & 0 & 0 & 0 & 0 & 0 & 0 & -2 & 1 \\
0 & 1 & 1 & 0 & 0 & 0 & 0 & 0 & 0 & 0 & 1 & -2
\end{pmatrix},
\end{equation}
}%
and let $S = \{ e_1, e_2,  e_3, e_4 \}$.  Then the $G(\fd)$-orbits of
an open subset of $V(\fd)$ are in bijection with the $\fd$-points of
an open subvariety of the moduli space $\sM_{\Lambda,S}$ of K3
surfaces $X$ lattice-polarized by $(\Lambda,S)$.
\end{theorem}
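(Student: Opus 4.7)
The plan is to adapt the approach of \S\ref{sec:penteracts}, \S\ref{sec:2sympent}, and \S\ref{sec:3sympent}, applying the doubly symmetric argument twice since we now have two disjoint pairs of symmetrized factors. First I view $A \in V_1 \otimes \Sym^2 V_2 \otimes \Sym^2 V_3$ as an element of the ambient space $V_1 \otimes V_2 \otimes V_2 \otimes V_3 \otimes V_3$ of all penteracts, and invoke the forward construction of \S\ref{sec:penteractconstruct} to obtain the surfaces $X_{ijk}$ and $X_{ijkl}$ together with line bundles $L_1, L_2, L_3, L_4$ coming from pullbacks of $\sO(1)$ on the four relevant projective lines, with indexing chosen to match the Gram matrix \eqref{eq:22sympent-NS}.

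Next I expect that each pair of symmetric slots forces one of the surfaces $X_{ijk}$ to acquire six rank singularities over $\fdbar$, precisely as in \S\ref{sec:2sympent}. Resolving both singular surfaces on the common smooth model $X_{1234}$ produces two sets of $(-2)$-curves, which interact with each other: rank-degeneracy points for one pair may coincide with those for the other, yielding the chain-like configurations evident in rows $5$--$12$ of \eqref{eq:22sympent-NS}. Two applications of Lemma~\ref{lem:pentrelation}, one for each symmetry, give two linear relations among these classes and the $L_k$'s; combined with any further incidence relations among the exceptional curves, they cut the collection down to the twelve classes of the stated basis. Direct computation of all pairwise intersections should then reproduce~\eqref{eq:22sympent-NS}.

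For the reverse direction, given a $(\Lambda, S)$-polarized K3 surface $X$, I construct a penteract $A$ via the multiplication map~\eqref{construct22222}; Lemma~\ref{lem:pentvanish} guarantees that $\mu$ is surjective with two-dimensional kernel. The lattice relations encoded in \eqref{eq:22sympent-NS} force pairs of cohomology spaces to be canonically identified (one identification from each symmetry), and the rank-one kernel argument used at the end of the proof of Theorem~\ref{thm:2sympenteractorbits}---which itself is an application of the kernel-matching Lemma~\ref{lem:kernelsmatch-sym} in the style of the second proof of Theorem~\ref{thm:sym2rr}---then shows that $A$ lies in the symmetric subspace for each pair. Since the two symmetrizations involve disjoint tensor factors, they are compatible, yielding an element of $V_1 \otimes \Sym^2 V_2 \otimes \Sym^2 V_3$ up to $G$-equivalence. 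The two constructions will be seen to be inverse to each other by the same argument as in the proofs of Theorems~\ref{thm:penteractorbits} and~\ref{thm:2sympenteractorbits}.

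The most delicate step will be the saturation computation: verifying that the twelve-dimensional sublattice spanned by the above basis equals $\overline{\NS}(X)$ for a very general $X$ in the family. Following the template of \S\ref{sec:2sympent} and \S\ref{sec:3sympent}, this proceeds by enumerating half-integral and higher fractional combinations in the dual lattice and ruling each out via Lemma~\ref{lem:nikulin}, parity constraints on self-intersections, and symmetry arguments exchanging the two symmetrized pairs. The presence of two interacting families of exceptional classes makes the case analysis substantially more involved than in either \S\ref{sec:2sympent} or \S\ref{sec:3sympent} in isolation, and I expect this to be the main technical obstacle.
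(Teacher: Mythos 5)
Your proposal follows essentially the same route as the paper: embed $A$ into the full penteract space, observe that $X_{123}$ and $X_{145}$ each acquire six rank singularities whose resolutions give the two interacting families of $(-2)$-curves (arranged, in the paper, as three $\mathrm{I}_4$ rectangles in the fibration over $\Proj(V_1^\vee)$), apply Lemma~\ref{lem:pentrelation} twice to get the two relations, run the reverse construction through the multiplication map with two disjoint applications of the symmetrization argument of Theorem~\ref{thm:2sympenteractorbits}, and verify saturation via Lemma~\ref{lem:nikulin}, parity, and symmetry. One small correction: that symmetrization step in the penteract setting rests on the elementary observation about rank-one elements of $V\otimes V$ for $2$-dimensional $V$ made at the end of the proof of Theorem~\ref{thm:2sympenteractorbits}, not on Lemma~\ref{lem:kernelsmatch-sym}, which is only needed in the Rubik's revenge cases.
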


\subsection{N\'eron-Severi lattice and moduli problem}

In order to study the orbits of doubly-doubly symmetric penteracts, we may use
the geometric construction from \S \ref{sec:2sympent}, since an
element of $V_1 \otimes \Sym^2 V_2 \otimes \Sym^2 V_3$ is also an
element of $V_1 \otimes V_2 \otimes V_2 \otimes \Sym^2 V_3$ and $V_1
\otimes \Sym^2 V_2 \otimes V_3 \otimes V_3$.  Thus, the K3 surfaces
$X_{123}$ and $X_{145}$ each have at least six (rank) singularities over
$\fdbar$, and a numerical example shows us that they generically have
exactly six singular points.  Meanwhile, for a generic orbit, all of
the other $X_{ijk}$ (namely, $X_{124}$, $X_{234}$ and $X_{345}$) are
nonsingular, and the maps of the type $X_{1234} \to X_{123}$
blow up the six singular points.

The nonsingular K3's---which are all naturally isomorphic---thus
contain two sets of six mutually non-intersecting lines, namely the
exceptional fibers in $X_{124}$ coming from the blow-ups $X_{124}
\stackrel{\sim}{\to} X_{1234} \to X_{123}$ and $X_{124}
\stackrel{\sim}{\to} X_{1245} \to X_{145}$. Let $P_i$ and $Q_i$ for $1
\leq i \leq 6$ denote these exceptional fibers from $X_{123}$ and
$X_{145}$, respectively.  As explained below, each of the six lines in
any one set intersects exactly two lines in the other set.

The map $X_{1234}\to\Proj(V_1^\vee)$ is a genus one fibration whose
discriminant as a binary form on $V_1$ is of degree 24 and factors as
the product of a fourth power of a cubic form times an irreducible
degree twelve form.  Thus the fibration has three reducible fibers of
type $\mathrm I_4$, i.e., each of these reducible fibers consists of
four lines forming a ``rectangle''.  Each set of six lines in the
previous paragraph contains one pair of parallel lines from each of
the three rectangles.  That is, with choices of indices, each
rectangle is made up of the lines corresponding to $P_i$, $Q_i$,
$P_{i+1},$ and $Q_{i+1}$ for $i = 1, 3, 5$.

To explicitly see this correspondence among the twelve lines, we note
that if $r_0\in \Proj(V_1^\vee)$ is a point giving a singular fiber in
the genus one fibration, then it yields two rank singularities
$(r_0,a,b)$ and $(r_0,b,a)$ on $X_{123}$.  The map $X_{124} \to
X_{123}$ blows up these singularities to the lines $(r_0,a,*)$ and
$(r_0,b,*)$, where we use $*$ to mean that the coordinate in
$\Proj(V_3^\vee)$ may vary freely.  Similarly, each such $r_0$ gives
two rank singularities $(r_0,c,d)$ and $(r_0,d,c)$ on $X_{145}$, and
under $X_{124} \to X_{145}$, these blow up to lines $(r_0,*,c)$ and
$(r_0,*,d)$ for any $* \in \Proj(V_2^\vee)$.  Therefore, the latter
two lines each intersect each of the former two lines in a single
point, giving the four intersection points $(r_0,a,c)$, $(r_0,a,d)$,
$(r_0,b,d)$, and $(r_0,b,c)$ in $X_{124}$.

The usual line bundles $L_i$ for $1 \leq i \leq 4$, given as the
pullbacks of $\sO_{\Proj(V_i^\vee)}(1)$ to $X_{1234}$, satisfy:
\begin{align}
L_1 + L_2 + L_3 &= 2 L_4 + \sum_i P_i \qquad \textrm{and}  \label{eq:22sympentrelation1} \\
2 L_1 - L_2 - L_3 + 2 L_4 & = \sum_i Q_i. \label{eq:22sympentrelation2}
\end{align}
Of course, \eqref{eq:22sympentrelation1} is clear from Lemma
\ref{lem:pentrelation}, and the second relation
\eqref{eq:22sympentrelation2} comes from repeated applications of that
lemma along each step of the composition $X_{1234} \to X_{124} \to
X_{1245} \to X_{145}$.

To determine the N\'eron-Severi lattice (over $\fdbar$) of the K3
surface associated to a generic doubly-doubly symmetric penteract, we
use the explicit geometry and the relations described above in
\eqref{eq:22sympentrelation1} and \eqref{eq:22sympentrelation2} to
compute the intersection numbers between the divisor classes.  The
only nonzero intersection numbers are as follows:
\begin{align*} 
L_i \cdot L_j &= 2 \textrm{ for } i \neq j, \qquad\qquad L_2 \cdot Q_j = L_3 \cdot Q_j = 1, \\
P_i^2 &= Q_i^2 = -2,  \qquad\qquad\qquad L_4 \cdot P_j = 1, \\
P_i \cdot Q_i &= P_i \cdot Q_{i+1} = P_{i+1} \cdot Q_{i+1} = P_{i+1} \cdot Q_i = 1 \textrm{ for } i \in \{1,3,5\}.
\end{align*}
The rank of the intersection matrix is 12, and the span of the
divisors is a sublattice of $\overline{\NS}(X)$ of discriminant 256. A
basis for the N\'eron-Severi lattice is $\{L_1, L_2, L_4, P_1, Q_1, P_2,
  P_3, Q_3, P_4, P_5, Q_5, P_6\}$, and the corresponding Gram matrix is given by 
\eqref{eq:22sympent-NS}.

\begin{proposition} 
  For a generic $X$ in this family of K3 surfaces, $\overline{\NS}(X)$
  is spanned over $\Z$ by the $L_i$ for $i = 1, \dots, 4$ and the
  exceptional classes $P_i$ and $Q_i$ for $i = 1, \dots, 6$.
\end{proposition}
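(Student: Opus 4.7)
The plan is to prove that the lattice $\Lambda'$ spanned by $L_1, L_2, L_3, L_4$ together with the exceptional classes $P_i, Q_i$ for $1 \le i \le 6$ equals $\overline{\NS}(X)$ for very general $X$ in this family. A moduli-dimension count gives $\dim \sM_{\Lambda,S} = 18 - 10 = 8$, so the generic Picard rank is $20 - 8 = 12$, which already matches $\rank \Lambda'$. Hence it suffices to show that $\Lambda'$ is saturated in $\overline{\NS}(X)$, and since $\disc \Lambda' = 2^8$, only $2$-saturation needs to be checked.

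Following the template of the analogous propositions in \S\ref{sec:2sympent} and \S\ref{sec:3sympent}, I would assume for contradiction that some nontrivial class $\overline{D} \in (\Lambda')^*/\Lambda'$ has a representative $D \in \overline{\NS}(X)$. After subtracting a suitable element of $\Lambda'$, one may write
\[
D \;=\; \tfrac{1}{2}\Big( a_1 L_1 + a_2 L_2 + a_4 L_4 + \sum_{i=1}^{6} b_i P_i + \sum_{j=1}^{6} c_j Q_j \Big)
\]
with every coefficient in $\{0,1\}$. The main constraints are: integrality of the pairings $D \cdot L_k$, $D \cdot P_i$, $D \cdot Q_j$; evenness of $D^2$; Nikulin's Lemma~\ref{lem:nikulin}, which forbids half-sums of mutually disjoint smooth rational curves of cardinality outside $\{0, 8, 16\}$; and the natural symmetries of the configuration --- the involution swapping $L_2 \leftrightarrow L_3$, the involution exchanging $(L_3, L_4)$ and $(L_4, L_3)$ while swapping $P_i \leftrightarrow Q_i$, and an action of $S_3 \ltimes (\Z/2)^3$ permuting the three $I_4$ rectangles and, within each rectangle, the two pairs.

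The execution proceeds as follows. Intersecting $D$ with each $P_i$ and each $Q_j$ forces $a_4$ and $a_2$ to be even, so we may reduce to $a_2 = a_4 = 0$. Using the $I_4$-fiber relation $P_i + Q_i + P_{i+1} + Q_{i+1} = L_1$ for $i \in \{1, 3, 5\}$ (which follows from the genus-one fibration $X \to \Proj(V_1^\vee)$), any $L_1$-contribution can be absorbed into the exceptional part, reducing $D$ to a half-sum of some subset of the twelve $(-2)$-curves $P_i, Q_j$. If only $P$'s (or only $Q$'s) appear, they form a set of at most $6$ mutually disjoint curves, and Nikulin's lemma gives an immediate contradiction since $6 \notin \{0, 8, 16\}$.

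The main obstacle will be the mixed configurations involving both $P$'s and $Q$'s, since pairs such as $(P_i, Q_i)$ or $(P_i, Q_{i+1})$ within the same rectangle are not disjoint. My plan is to use the symmetry group to reduce to a small list of representative configurations, and for each one to produce a contradiction by one of the following: (a) noninteger pairing with some $L_k$ or integral exceptional class, (b) odd self-intersection, or (c) replacing a mixed pair within a rectangle via the linear equivalence $P_i + P_{i+1} \equiv L_1 - Q_i - Q_{i+1}$ to obtain a half-sum supported on a set of mutually disjoint $(-2)$-curves of cardinality not in $\{0,8,16\}$, again contradicting Nikulin's lemma.
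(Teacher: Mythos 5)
Your overall strategy is sound and close in spirit to the paper's own proof: both reduce to $2$-saturation of a rank-$12$, discriminant-$256$ lattice and then combine integrality of pairings, parity of self-intersections, Nikulin's Lemma~\ref{lem:nikulin}, and symmetries of the configuration. Your organizational idea of using the $\mathrm{I}_4$-fiber relation $P_i+Q_i+P_{i+1}+Q_{i+1}=L_1$ to push everything onto the twelve exceptional curves is a legitimate variant (the paper instead keeps the coefficient of $L_1$ in play until the very last step). The first reduction you assert --- that $a_2$ and $a_4$ are even --- does follow, but only after combining the per-rectangle congruences from $D\cdot P_i,\,D\cdot Q_j\in\Z$ with the conditions $\sum b_i\equiv\sum c_j\equiv 0\pmod 2$ coming from $D\cdot L_2,\,D\cdot L_4\in\Z$; as written it is too quick.

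The genuine gap is in the endgame. After your reductions, the surviving $2$-torsion classes include those in which some rectangle contributes all four of its lines, i.e.\ classes of the form $D=E+\tfrac12 L_1$ with $E$ a half-sum of disjoint $P$'s and/or $Q$'s; the extreme case is $D=\tfrac12 L_1=\tfrac12(P_1+Q_1+P_2+Q_2)$ itself. None of your three tools eliminates this class: every pairing of $\tfrac12 L_1$ with the lattice is integral, its self-intersection is $0$, Nikulin's lemma does not apply since $P_i\cdot Q_i=1$, and the substitution $P_i+P_{i+1}\equiv L_1-Q_i-Q_{i+1}$ simply returns $\tfrac12 L_1$ (it is circular on any full-rectangle configuration). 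Ruling out $\tfrac12 L_1$ requires the additional fact that $L_1$, being the class of a genus-one fiber whose general member is irreducible, is primitive in $\overline{\NS}(X)$ --- this is precisely the closing step of the paper's proof and is absent from your toolbox. Relatedly, for the remaining mixed cases you will need the symmetry argument in its stronger form --- if $D\in\overline{\NS}(X)$ then so is its image $D'$ under a symmetry of the configuration, and one applies Nikulin's lemma to $D-D'$ --- rather than merely using symmetry to cut down the list of representatives; the paper uses this repeatedly (e.g.\ to kill the $Q$-coefficients and to force all the $P$-coefficients to agree), and without it configurations such as $\tfrac12(P_1+P_2+P_3+P_4)+\tfrac12 L_1$ also slip through (a), (b), and (c).
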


\begin{proof}
  The moduli space of these K3s has dimension $2 \cdot 3 \cdot 3 - 10 = 8$,
  so the Picard number of the generic surface in this space is at most $12$.
  Let $\Lambda$ be the rank $12$ lattice spanned by the above divisors
  (equivalently, with basis $\{L_1$, $L_2$, $L_4$, $P_1$, $Q_1$, $P_2$,
  $P_3$, $Q_3$, $P_4$, $P_5$, $Q_5$, $P_6\}$). Since the discriminant of
  $\Lambda$ is $256$, it is enough to show that it is $2$-saturated.
  Suppose a divisor class
  \[
  D = \alpha_1 L_1 + \alpha_2 L_2 + \alpha_4 L_4 + \sum_{i=1}^6  \beta_i P_i + \gamma_1 Q_1 + \gamma_3 Q_3 + \gamma_5 Q_5
  \]
  is in $\overline{\NS}(X)$ for some collection of rational numbers
  $\alpha_i, \beta_j, \gamma_k$ whose denominators are powers of
  $2$. Then by symmetry, so is the divisor $D'$ obtained by replacing
  $L_2$ by $L_3$. Then $D - D' = \alpha_2( L_2 - L_3) \in
  \overline{\NS}(X)$, which forces $\alpha_2$ to be an integer, since
  the self-intersection $-4 \alpha_2^2$ of this divisor must be an
  even integer. So we may assume $\alpha_2 = 0$, and similarly,
  $\alpha_4 = 0$. Therefore, $D$ has the form
  \[
  D = \alpha_1 L_1 + \sum_{i=1}^6 \beta_i P_i + \gamma_1 Q_1 +  \gamma_3 Q_3 + \gamma_5 Q_5.
  \] 
  Again by symmetry, the divisor $D'' = \alpha_1 L_1 + \sum_{i=1}^6
  \beta_i P_i + \gamma_1 Q_2 + \gamma_3 Q_4 + \gamma_5 Q_6$ is also in
  $\overline{\NS}(X)$. Then $D - D'' \in \overline{\NS}(X)$ forces all
  the $\gamma_i$ to vansh, by Lemma~\ref{lem:nikulin} (since the $Q_i$
  are all disjoint $(-2)$-curves). Hence $D = \alpha_1 L_1 +
  \sum_{i=1}^6 \beta_i P_i$. Then $D''' = \alpha_1 L_1 + \sum_{i=1}^4
  \beta_i P_i + \beta_6 P_5 + \beta_5 P_6$ is also in
  $\overline{\NS}(X)$ by symmetry, and considering $D - D'''$ shows
  that $\beta_5$ and $\beta_6$ are equal modulo $\Z$. Similar symmetry
  arguments force all the $\beta_i$ to be equal to each other. Hence
  $D = \alpha L_1 + \beta (\sum P_i)$. Then $D^2 = -6 \beta^2$ is an
  even integer, which forces $\beta \in \Z$. Subtracting $\beta (\sum
  P_i) \in \Lambda$, we may assume $\alpha L_1 \in
  \overline{\NS}(X)$. In fact, this forces $\alpha \in \Z$ as well,
  since $L_1$ is the class of an elliptic fiber and cannot be a
  nontrivial multiple of another divisor.
\end{proof}

We now complete the proof of Theorem \ref{thm:22sympenteractorbits}.

\begin{proof}[Proof of Theorem $\ref{thm:22sympenteractorbits}$]
  The above geometric constructions explain how to obtain a
  $(\Lambda,S)$-polarized K3 surface from a doubly-doubly symmetric
  penteract.  On the other hand, given such a K3 $X$, let $L_1$,
  $L_2$, $L_3$, and $L_4$ be line bundles corresponding to the
  elements of $S$.  Then we may use these line bundles as in \S
  \ref{sec:penteractreverse} to produce a penteract $A\in \cH^0(X,L_1)
  \otimes \cH^0(X,L_2) \otimes \cH^0(X,L_3) \otimes \cH^0(X,L_4)
  \otimes (\ker \mu)^\vee$, where $\mu$ is the usual multiplication
  map; we now show that it has the appropriate symmetry.

  By the argument in the proof of Theorem
  \ref{thm:2sympenteractorbits}, and using Lemma
  \ref{lem:pentrelation}, we immediately see that there is an
  identification of $\cH^0(X,L_4)$ and $(\ker \mu)^\vee$ such that $A$
  is doubly symmetric in those coordinates.

  Similarly, we may switch the roles of the indices $2$ and $3$ with
  those of $4$ and $5$, respectively, to obtain the second symmetry.
  For example, we may use $A$ to construct K3 surfaces $X_{1245}$ and
  $X_{145}$ (with the divisor classes in $\Lambda$).  The line bundles
  $M_i$ coming from pulling back $\sO_{\Proj(V_i)^\vee}(1)$ to
  $X_{1245}$ via projection for $i = 1, 2, 4, 5$ may be used to make
  another penteract $B$, which is $\GL_2^5$-equivalent to $A$ (by the
  proof of Theorem \ref{thm:penteractorbits}).  (Note that in fact
  $M_i$ and $L_i$ are isomorphic for $i = 1, 2, 4$.)  Then the same
  argument as in Theorem \ref{thm:2sympenteractorbits} shows that the
  line bundle $M_2$ is isomorphic to the line bundle $M_3^{(145)}$,
  and in fact, the corresponding vector spaces may be identified so
  that $B$ is symmetric in those two directions.

  Therefore, via the above identifications of vector spaces, our
  penteract $A$ may be viewed as an element of the tensor space
  $\cH^0(X,L_1) \otimes \Sym^2 \cH^0(X,L_2) \otimes \Sym^2
  \cH^0(X,L_4)$, as desired.
\end{proof}

\subsection{Automorphisms} \label{sec:22sympentauts}

We may again consider the automorphisms $\alpha_{ij,k}$ for
doubly-doubly symmetric penteracts.  By symmetry, to understand the
$5$-cycles passing through all five $X_{ijkl}$, which are all
compositions of three $\alpha_{ij,k}$'s, it suffices to understand the
following three:
\begin{align*}
\Phi_{53214} &: X_{1234} \to X_{1245} \to X_{1345} \to X_{2345} \to X_{1235} \to X_{1234}, \\
\Phi_{53421} &: X_{1234} \to X_{1245} \to X_{1235} \to X_{1345} \to X_{2345} \to X_{1234},\\
\textrm{or } \Phi_{53241} &: X_{1234} \to X_{1245} \to X_{1345} \to X_{1235} \to X_{2345} \to X_{1234},
\end{align*} 
We first study the $5$-cycle $\Phi_{54321}$.  Applying Lemma
\ref{lem:pentrelation} and determining intersection numbers, we
compute the action of the automorphism $\Phi_{54321}$ on
$\overline{\NS}(X)$ arising from a general doubly doubly symmetric
penteract as the matrix

{\footnotesize \begin{equation} \label{eq:22sympentaut}
\begin{pmatrix}
1 & 4 & -2 & 2 & 0 & 0 & 0 & 0 & 0 & 0 & 0 & 0 \\
0 & 2 & -1 & 2 & 0 & 0 & 0 & 0 & 0 & 0 & 0 & 0 \\
0 & 1 & 0 & 0 & 0 & 0 & 0 & 0 & 0 & 0 & 0 & 0 \\
1 & 1 & -1 & 1 & 0 & 0 & 0 & 0 & 0 & 0 & 0 & 0 \\
0 & 1 & 0 & 0 & -1 & 0 & 0 & 0 & 0 & 0 & 0 & 0 \\
1 & 1 & -1 & 1 & 0 & -1 & 0 & 0 & 0 & 0 & 0 & 0 \\
0 & 1 & 0 & 0 & 0 & 0 & -1 & 0 & 0 & 0 & 0 & 0 \\
0 & 1 & 0 & 0 & 0 & 0 & 0 & -1 & 0 & 0 & 0 & 0 \\
1 & 1 & -1 & 1 & 0 & 0 & 0 & 0 & -1 & 0 & 0 & 0 \\
0 & 1 & 0 & 0 & 0 & 0 & 0 & 0 & 0 & -1 & 0 & 0 \\
0 & 1 & 0 & 0 & 0 & 0 & 0 & 0 & 0 & 0 & -1 & 0 \\
1 & 1 & -1 & 1 & 0 & 0 & 0 & 0 & 0 & 0 & 0 & -1
\end{pmatrix}
\end{equation}}%
with respect to the basis 
$\{L_1, L_2, L_3, L_4, P_1, Q_1, P_2, P_3, Q_3, P_4, P_5, Q_5 \}$.

The induced action of the automorphism $\Phi_{53421}$ on the $L_i$ is
the same (up to reordering) as in the usual penteract case (and it is
thus simple to compute the action on the exceptional divisors).  For
the automorphism $\Phi_{53241}$, the induced action on the $L_i$ and
the exceptional divisors $Q_j$ are the same (up to reordering) as the
induced action of $\Phi_{54321}$ in the doubly symmetric penteract
case.

%%%%%%%%%%%%%%%%%%%%%%%%%%%%%%%%%%%%%%
%%%%%%%%%%%%%%%%%%%%%%%%%%%%%%%%%%%%%%
%%%%%%   2,3-Sym Penteracts    %%%%%%%
%%%%%%%%%%%%%%%%%%%%%%%%%%%%%%%%%%%%%%
%%%%%%%%%%%%%%%%%%%%%%%%%%%%%%%%%%%%%%

\section{Doubly-triply symmetric penteracts: \texorpdfstring{$\Sym^2(2) \otimes \Sym^3(2)$}{Sym2(2) (x) Sym3(2)}} \label{sec:23sympent}

We now study penteracts that are symmetric in the first two
coordinates and also symmetric in the last three coordinates. We prove
that the orbits of the space of doubly-triply symmetric penteracts are
related to certain K3 surfaces with N\'eron-Severi rank at least 15
over $\fdbar$. Note that these penteracts also have an interpretation
as bidegree $(2,3)$ curves in $\Proj^1 \times \Proj^1$, which can be
used to connect the moduli space below with the universal Picard
scheme $\Pic^1_{\sM_2}$ over the moduli space of genus $2$ curves.

\begin{theorem} \label{thm:23sympenteractorbits} Let $V = \Sym^2
  V_1\otimes\Sym^3 V_2$ for $2$-dimensional $\fd$-vector spaces $V_1$
  and $V_2$.  Let $G'$ be the group $\Gm \times \GL(V_1) \times
  \GL(V_2)$, and let $G$ be the quotient of $G'$ by the kernel of the
  multiplication map $\Gm \times \Gm \times \Gm \to \Gm$ given by
  $(\gamma_1, \gamma_2, \gamma_3) \mapsto \gamma_1 \gamma_2^2
  \gamma_3^3$.  Let $\Lambda$ be the lattice whose Gram matrix is

{\footnotesize \begin{equation} \label{eq:23sympent-NS}
\begin{pmatrix}
0 & 2 & 2 & 2 & 0 & 0 & 0 & 0 & 0 & 0 & 0 & 0 & 0 & 0 & 1 \\
 2 & 0 & 2 & 2 & 0 & 0 & 0 & 0 & 0 & 0 & 0 & 0 & 0 & 0 & 1 \\
 2 & 2 & 0 & 2 & 0 & 1 & 0 & 1 & 0 & 1 & 0 & 1 & 0 & 1 & 0 \\
 2 & 2 & 2 & 0 & 1 & 0 & 1 & 0 & 1 & 0 & 1 & 0 & 1 & 0 & 0 \\
 0 & 0 & 0 & 1 & -2 & 1 & 0 & 0 & 0 & 0 & 0 & 0 & 0 & 0 & 1 \\
 0 & 0 & 1 & 0 & 1 & -2 & 0 & 0 & 0 & 0 & 0 & 0 & 0 & 0 & 0 \\
 0 & 0 & 0 & 1 & 0 & 0 & -2 & 1 & 0 & 0 & 0 & 0 & 0 & 0 & 1 \\
 0 & 0 & 1 & 0 & 0 & 0 & 1 & -2 & 0 & 0 & 0 & 0 & 0 & 0 & 0 \\
 0 & 0 & 0 & 1 & 0 & 0 & 0 & 0 & -2 & 1 & 0 & 0 & 0 & 0 & 0 \\
 0 & 0 & 1 & 0 & 0 & 0 & 0 & 0 & 1 & -2 & 0 & 0 & 0 & 0 & 1 \\
 0 & 0 & 0 & 1 & 0 & 0 & 0 & 0 & 0 & 0 & -2 & 1 & 0 & 0 & 0 \\
 0 & 0 & 1 & 0 & 0 & 0 & 0 & 0 & 0 & 0 & 1 & -2 & 0 & 0 & 1 \\
 0 & 0 & 0 & 1 & 0 & 0 & 0 & 0 & 0 & 0 & 0 & 0 & -2 & 1 & 0 \\
 0 & 0 & 1 & 0 & 0 & 0 & 0 & 0 & 0 & 0 & 0 & 0 & 1 & -2 & 0 \\
 1 & 1 & 0 & 0 & 1 & 0 & 1 & 0 & 0 & 1 & 0 & 1 & 0 & 0 & -2
\end{pmatrix}
\end{equation}}%
and let $S = \{ e_1, e_2, e_3, e_4 \}$. Then the $G(\fd)$-orbits of an
open subset of $V(\fd)$ are in bijection with the $\fd$-points of an
open subvariety of the moduli space $\sM_{\Lambda,S}$ of K3
surfaces $X$ lattice-polarized by $(\Lambda,S)$.
\end{theorem}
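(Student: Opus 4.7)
The plan is to follow the template set by the preceding penteract sections, exploiting the fact that $\Sym^2 V_1 \otimes \Sym^3 V_2$ embeds $G$-equivariantly into the full penteract space $V_1 \otimes V_1 \otimes V_2 \otimes V_2 \otimes V_2$, and that this subspace is simultaneously doubly symmetric in slots $(1,2)$ (as in \S\ref{sec:2sympent}) and triply symmetric in slots $(3,4,5)$ (as in \S\ref{sec:3sympent}). Given a nondegenerate $A\in\Sym^2 V_1 \otimes \Sym^3 V_2$, the construction of \S\ref{sec:penteractconstruct} produces K3 surfaces $X_{ijk}$ and nonsingular models $X_{ijkl}$. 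A contraction along two slots from the triply symmetric part (say $(4,5)$) lands in $\Sym^2 V_2$, so by the argument of \S\ref{sec:2sympent} the surface $X_{123}$ acquires $6$ rank singularities cut out by three $(1,1,1)$-forms. By the argument of \S\ref{sec:3sympent}, the surface $X_{124}$ independently acquires $6$ rank singularities. A contraction along slots $(1,2)$ lands in $\Sym^2 V_1$, so $X_{345}$ (living symmetrically in $\Proj(V_2^\vee)^3$) also has $6$ rank singularities. The remaining $X_{ijk}$ are generically nonsingular, and all $X_{ijkl}$ are smooth.

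For the N\'eron-Severi lattice on $X = X_{1234}$ I expect four line bundles $L_1,\ldots,L_4$ with $L_i\cdot L_j = 2(1-\delta_{ij})$, together with twelve rational curves $P_i, Q_i$ ($1\le i\le 6$) arising as in \S\ref{sec:3sympent} from the two resolutions $X_{1234}\to X_{123}$ and $X_{1234}\to X_{124}$ and paired so that $P_i\cdot Q_i = 1$, plus an extra $(-2)$-curve class $T$ coming from the third resolution $X\cong X_{1345}\to X_{345}$: each of the $6$ singularities of $X_{345}$ blows up to a fiber that, translated back to $X_{1234}$ via the triple-symmetry identification, becomes a smooth rational curve projecting as a $(1,1)$-curve to $\Proj(V_1^\vee)^2$ and to points in the $V_2$-factors, explaining $T\cdot L_1 = T\cdot L_2 = 1$ and $T\cdot L_3 = T\cdot L_4 = 0$. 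Two applications of Lemma~\ref{lem:pentrelation} to the blowups yield the relations $L_1+L_2+L_3 = 2L_4+\sum P_i$ and $L_1+L_2+L_4 = 2L_3+\sum Q_i$, eliminating $P_6$ and $Q_6$; a third relation involving $T$ and the remaining $T$-classes keeps just one $T$ independent, producing a rank-$15$ lattice with the specified Gram matrix. Saturation in $\overline{\NS}(X)$ is then verified by an argument in the style of \S\ref{sec:3sympent}: parametrize potential fractional divisor classes via the inverse Gram matrix, eliminate the $L_i$-coefficients by swapping $L_1\leftrightarrow L_2$ and $L_3\leftrightarrow L_4$ (using the symmetries), and then rule out the remaining cases by invoking Lemma~\ref{lem:nikulin} applied to sub-configurations of disjoint $(-2)$-curves among the $P_i$'s (or $Q_i$'s).

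For the reverse direction, starting from a K3 surface $X$ lattice-polarized by $(\Lambda,S)$ with line bundles $L_1,L_2,L_3,L_4$ corresponding to $e_1,\ldots,e_4$, the multiplication map
\[
\mu\colon \cH^0(X,L_1)\otimes \cH^0(X,L_2)\otimes \cH^0(X,L_3)\otimes \cH^0(X,L_4)\longrightarrow \cH^0(X,L_1\otimes L_2\otimes L_3\otimes L_4)
\]
is surjective by the basepoint-free pencil trick together with Lemma~\ref{lem:pentvanish} (adapted using the presence of the $P_i,Q_i,T$ in $\NS(X)$), producing a penteract $A$ in $V_1^{\otimes 2}\otimes V_2^{\otimes 3}$ after identifying the fifth factor with $(\ker\mu)^\vee$. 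The symmetry of $A$ in slots $(3,4,5)$ follows from the argument of \S\ref{sec:2sympent} applied to the six exceptional curves $P_i$ (forcing $L_4$ and $L_5^{(123)}$ to be isomorphic and matching the corresponding kernels), and then to the $Q_i$ analogously to identify the third factor. The symmetry in slots $(1,2)$ follows from the argument of \S\ref{sec:3sympent} applied via $X\cong X_{1345}\to X_{345}$ and its curves $T_i$. Combining these identifications places $A$ in $\Sym^2 V_1\otimes \Sym^3 V_2$ up to the action of $G$, and a direct check shows the two constructions are inverse.

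The main obstacle will be the precise geometric identification of the $T$-class and the bookkeeping of intersection numbers between $T$ and the individual $P_i,Q_i$ (the nonzero entries in the last row of \eqref{eq:23sympent-NS}). This requires tracking how each singularity of $X_{345}$ (and its exceptional fiber) correlates, via the isomorphisms $X_{1234}\cong X_{1235}\cong X_{1245}\cong X_{1345}\cong X_{2345}$ induced by the triple symmetry, with specific singularities of $X_{123}$ and $X_{124}$; in contrast, establishing surjectivity of $\mu$, the saturation of the lattice, and the full symmetry of $A$ are essentially mechanical extensions of the corresponding arguments in \S\ref{sec:2sympent} and \S\ref{sec:3sympent}.
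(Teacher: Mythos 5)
Your proposal follows the same overall route as the paper: embed $\Sym^2 V_1\otimes\Sym^3 V_2$ into the penteract space, use the double symmetry in slots $(1,2)$ and the triple symmetry in slots $(3,4,5)$ to locate six rank singularities on each of $X_{123}$ and $X_{345}$, assemble the exceptional curves into a rank-$15$ lattice, and reverse the construction by showing that the penteract built from the multiplication map inherits both symmetries (the paper does exactly this, invoking the proofs of Theorems \ref{thm:3sympenteractorbits} and \ref{thm:2sympenteractorbits} for the two symmetries). However, there is a genuine gap at the step you yourself flag as the main obstacle, and your stated mechanism for closing it would not work. There are \emph{six} exceptional curves $E_1,\dots,E_6$ over the six singularities of $X_{345}$, and the single relation $-L_1-L_2+2L_3+2L_4=\sum E_i$ would leave five of them independent, giving a lattice of rank $19$ rather than $15$. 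What actually forces only one new class is the detailed incidence of the $E_i$ with the $P_j$ and $Q_j$: the paper writes all $18$ lines in explicit coordinates and shows that each of the two fibrations $X\to\Proj(V_2^\vee)$ has three $\mathrm{I}_4$ fibers, each a rectangle with two opposite sides among the $E$'s and the other two among the $P$'s (respectively the $Q$'s); the resulting six fiber-class relations cut the $E_i$ down to a single independent class ($E_3$ in the paper's basis) and simultaneously produce the nonzero entries in the last row of \eqref{eq:23sympent-NS}. Without this explicit identification the rank and Gram matrix in the theorem cannot be established, so this bookkeeping is essential rather than deferrable.

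A second, smaller divergence: for saturation the paper does not run the inverse-Gram/Nikulin argument of \S\ref{sec:3sympent}. Since the discriminant is $108=2^2\cdot 3^3$, it instead uses the elliptic fibration $X\to\Proj(V_1^\vee)$ with six $\mathrm{I}_3$ fibers, a zero section, a $3$-torsion section, and a section of height $4/3$ coming from $E_1$, ruling out index-$2$ and index-$3$ overlattices by torsion and height constraints. Your proposed method could plausibly be adapted, but note that Lemma \ref{lem:nikulin} only controls $2$-divisibility of sums of disjoint $(-2)$-curves, so the $3$-part would need a separate symmetry argument (as with the classes $P_i-Q_i$ in \S\ref{sec:3sympent}); as written this is an assertion rather than a proof. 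The remainder of your outline — surjectivity of $\mu$, deduction of the two symmetries of the constructed penteract from Lemma \ref{lem:pentrelation} and the exceptional classes, and the inverse-bijection check — matches the paper's argument.
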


\subsection{N\'eron-Severi lattice and moduli problem}

By exploiting the double symmetry in the first two coordinates and
triple symmetry in the last three coordinates, we may apply the
constructions in all of the previous penteract sections!  That is, the
space $\Sym^2 V_1 \otimes \Sym^3 V_2$ is a subspace of doubly
symmetric penteracts $V_1 \otimes V_1 \otimes V_2 \otimes \Sym^2 V_2$
or $V_2 \otimes V_2 \otimes V_2 \otimes \Sym^2 V_1$, triply symmetric
penteracts $V_1 \otimes V_1 \otimes \Sym^3 V_2$, and doubly-doubly
symmetric penteracts $V_2 \otimes \Sym^2 V_1 \otimes \Sym^2 V_2$.

Given an element of $\Sym^2 V_1 \otimes \Sym^3 V_2$, using the usual
notation, we construct K3 surfaces $X_{123}$ ($=X_{124}=X_{125}$) and
$X_{345}$ that have at least six rank singularities (over $\fdbar$).
A numerical example shows us that generically there are exactly six
singularities on each of $X_{123}$ and $X_{345}$.  Meanwhile, the
other K3 surfaces ($X_{134}$, $X_{1234}$, and $X_{1345}$) are
generically nonsingular and isomorphic, and the maps $X_{134} \to
X_{1234} \to X_{123}$, $X_{134} \to X_{1234} \to X_{124}$, and
$X_{134} \to X_{1345} \to X_{345}$ blow down sets of six lines to each
of the corresponding sets of six singular points.  The nonsingular K3
surface $X_{1234}$ contains at least three sets of six lines; call the
divisors corresponding to these lines $P_i$, $Q_i$, and $E_i$,
respectively, for $1 \leq i \leq 6$.  Their intersection numbers are
computed below, by using the various genus one fibrations.

Either projection map from $X_{1234}$ to $\Proj(V_1^\vee)$ is a genus
one fibration whose discriminant is a degree $24$ binary form on $V_1$
that factors as the cube of a degree six form and an irreducible
degree six form (as a special case of the triply symmetric penteract).
Thus the fibration has six reducible fibers of type $\mathrm I_3$.
Each of these reducible fibers consists of three lines in a
``triangle''; in each of the six triangles, there are two
distinguished lines that correspond to $P_i$ and $Q_i$, respectively,
for $1 \leq i \leq 6$.

Either projection map from $X_{1234}$ to $\Proj(V_2^\vee)$ is a genus
one fibration with three reducible fibers of type $\mathrm{I}_4$ (as a
special case of the doubly-doubly symmetric penteract). That is, each
of the three reducible fibers is a rectangle, and two of the opposite
sides of each rectangle are the $6$ lines $E_i$ from the $18$
described above. The other two parallel sides in each rectangle are
given by $P_{2j-1}$ and $P_{2j}$ for $1 \leq j \leq 3$, for the
projection $\pi_3$ to the third factor of $\Proj^1$; for the
projection $\pi_4$, we get a similar picture, but with the $P$'s
replaced by the $Q$'s.

We explicitly list these $18$ lines in $X_{1234}$.  For $1 \leq j \leq
3$, let $(r_j, s_j) \in \Proj(V_1^\vee) \times \Proj(V_2^\vee)$ be
distinct images of singular points on $X_{123}$ under the projection
$\pi_{12}$ to $\Proj(V_1^\vee) \times \Proj(V_1^\vee)$, where $r_j
\neq s_k$ for any $1 \leq j, k \leq 3$.  Then the other singular
points will project to $(s_j, r_j)$ for $1 \leq j \leq 3$, so the six
singular points on $X_{123}$ will be, for $1 \leq j \leq 3$, given by
$(r_j, s_j, t_j)$ and $(s_j, r_j, t_j)$ for some $t_j \in
\Proj(V_2^\vee)$.  Thus, we obtain four lines in $X_{1234}$ for each
$j$:
\begin{align*}
P_{2j-1} &= (r_j, s_j, t_j, *) &\quad Q_{2j-1} &= (r_j, s_j, *, t_j), \\
P_{2j} &= (s_j, r_j, t_j, *) &\quad Q_{2j} &= (s_j, r_j, *, t_j),
\end{align*}
where we again use $*$ to mean that the coordinate varies freely in
the appropriate $\Proj^1$.  These are the six pairs of lines in the
reducible $\mathrm{I}_2$ fibers in the projection $X_{1234} \to
\Proj(V_1^\vee)$, and it is clear that they are the blowups of the
singular points from $X_{123}$ and $X_{124}$.

Using these explicit points in projective space, we also see that the
six singular points in $X_{345}$ are just the points $(t_i, t_j, t_k)$
for the permutations $\{i,j,k\}$ of $\{1,2,3\}$.  Therefore, the
surface $X_{1234}$ contains six lines of the form
\[
E_i = (*, \diamond, t_j, t_k), \qquad E_{i+3} = (*, \diamond, t_k, t_j)
\] 
for $(i,j,k)$ ranging over cyclic permutations of $(1,2,3)$, where
$*$ and $\diamond$ are varying coordinates connected by a (1,1) equation. 
These are the six lines $E_i$ described above.

The line bundles $L_i$ obtained from pulling back
$\sO_{\Proj(V_i^\vee)}(1)$ to $X_{1234}$, for $1 \leq i \leq 4$,
satisfy the following relations with the exceptional lines:
\begin{align}
L_1 + L_2 + L_3 &= 2 L_4 + \sum_{i=1}^6 P_i, \label{eq:sym23pentrelation1} \\
L_1 + L_2 + L_4 &= 2 L_3 + \sum_{i=1}^6 Q_i, \qquad \textrm{and} \label{eq:sym23pentrelation2} \\
- L_1 - L_2 + 2 L_3 + 2 L_4 &= \sum_{i=1}^6 E_i \label{eq:sym23pentrelation3}.
\end{align}
These are obtained from repeated applications of Lemma~\ref{lem:pentrelation}.
The nonzero intersections between all these divisors are as follows:
\begin{align*}
L_i \cdot L_j = 2 \textrm{ for } i \neq j, && L_1 \cdot E_i = L_2 \cdot E_i = 1, \\
L_3 \cdot Q_i = 1, && L_4 \cdot P_i  = 1, &&  P_i \cdot Q_i = 1, \\
P_1, P_2  \textrm{ intersect } E_3, E_5, && P_3, P_4  \textrm{ intersect } E_1, E_6 , && P_5, P_6 \textrm{ intersect } E_2, E_4, \\
Q_1, Q_2  \textrm{ intersect } E_2, E_6, && Q_3, Q_4 \textrm{ intersect } E_3, E_4 , && Q_5, Q_6 \textrm{ intersect } E_1, E_5,
\end{align*}
where ``intersect'' means has intersection number $1$.  As a
consequence, the N\'eron-Severi lattice of $(X_{1234})_{\fdbar}$ has
rank 15 and discriminant 108. A basis for the lattice consists of the
divisors $L_1$, $L_2$, $L_3$, $L_4$, $P_1$, $Q_1$, $\dots$, $P_5$,
$Q_5$, $E_3$, and the corresponding Gram matrix is
\eqref{eq:23sympent-NS}.

\begin{proposition} For a very general $X$ in this family of K3 surfaces,
  $\overline{\NS}(X)$ is spanned over $\Z$ by $L_i$, $i = 1, \dots,
  4$, and the exceptional classes $P_i, Q_i, E_i$, $i = 1, \dots, 6$.
\end{proposition}

\begin{proof}
  Let $\Lambda$ be the rank $15$ lattice spanned by the $L_i, P_i, Q_i, E_i$
  as in the statement of the proposition.
  Consider the elliptic fibration $\pi: X \to \Proj(V_1^\vee)$. Taking
  the class of the zero section to be
  \[
  O = -L_1 - L_2 + 2P_1 + Q_1 + 2P_2+ Q_2 + P_3 + 2Q_3 + P_4 + 2Q_4 +
  3E_3,
  \]
  we see that $\{P_1, Q_1\}, \dots, \{P_6, Q_6 \}$ give the
  non-identity components of the six $\mathrm{I}_3$ fibers. The curve
  $E_1$ gives a section $Q$ of height $4/3$, whereas the class of $L_3
  - L_2 + L_1 + O$ is a $3$-torsion section $T$. The discriminant of
  the lattice spanned by these sections and the components of the
  fibers is $4/3 \cdot 3^6/9 = 108$, so it is all of $\Lambda$. We
  must now show that generically, $\Lambda$ is all of $\overline{\NS}(X)$.

  For a very general $X$ in this family of K3 surfaces, the rank of $\overline{\NS}(X)$
  is at most $15$, since the dimension of the moduli space is $3 \cdot 4 - 7 = 5$.
  Since the discriminant of $\Lambda$ is $108 = 2^2 \cdot 3^3$, 
  we just need to check that $\Lambda$ is
  $2$- and $3$-saturated. We observe from the configuration of fibers
  that there cannot be a $2$-torsion section, and that $Q$ cannot be
  twice a section $Q'$, since this would force the height of $Q'$ to
  be $1/3$, which is impossible. This checks $2$-saturation. For
  similar reasons, $Q$ cannot be thrice a point, and there cannot be a
  $9$-torsion point. So we just need to check that the elliptic
  surface does not have full $3$-torsion. Observe that if $T'$ were
  another $3$-torsion section, independent of $T$ over $\F_3$, then to
  have height $0 = 4 - 6(2/3)$, both $T$ and $T'$ must intersect
  non-identity components of each of the six $\mathrm{I}_3$
  fibers. But then at least one $T+T'$ or $T-T'$ cannot satisfy the
  same property, and yet it is a $3$-torsion point. This gives a
  contradiction.
\end{proof}

We now complete the proof of Theorem \ref{thm:23sympenteractorbits}.

\begin{proof}[Proof of Theorem $\ref{thm:23sympenteractorbits}$]
  The above discussion explains how to construct a K3 surface
  lattice-polarized by $(\Lambda, S)$ from a doubly-triply symmetric
  penteract. Given such a K3 surface $X$, let $L_1$, $L_2$, $L_3$, and $L_4$
  be line bundles on $X$ corresponding to the elements of $S$.  As in
  the previous cases, we use the construction from \S
  \ref{sec:penteractreverse} to build a penteract $A \in \cH^0(X, L_1)
  \otimes \cH^0(X, L_2) \otimes \cH^0(X, L_3) \otimes \cH^0(X, L_4)
  \otimes (\ker \mu)^\vee$.  The proof of Theorem
  \ref{thm:3sympenteractorbits} shows that there is a simultaneous
  identification of $\cH^0(X, L_3)$, $\cH^0(X,L_4)$, and $(\ker
  \mu)^\vee$ that shows that $A$ is triply symmetric with respect to
  those coordinates, i.e., we can think of $A$ as an element of
  $\cH^0(X, L_1) \otimes \cH^0(X, L_2) \otimes \Sym^3 \cH^0(X, L_3)$.

  To show the last symmetry, we use an argument similar to that in the
  proof of Theorem~\ref{thm:22sympenteractorbits}.  That is, we switch
  the roles of the indices $1$ and $2$ with $4$ and $5$ and apply the
  proof of Theorem~\ref{thm:2sympenteractorbits}.  Using the
  intersection matrix \eqref{eq:23sympent-NS}, combined with Lemma
  \ref{lem:pentrelation}, shows that the vector spaces $\cH^0(X,L_1)$
  and $\cH^0(X,L_2)$ may be identified and $A$ is symmetric in those
  two coordinates.

  Thus, we obtain a penteract $A$ in the space $\Sym^2
  \cH^0(X,L_1) \otimes \Sym^3 \cH^0(X,L_3)$, as desired.
\end{proof}

\subsection{Automorphisms} \label{sec:23sympentauts}

The automorphisms $\alpha_{ij,k}$ again arise for doubly-triply
symmetric penteracts.  The $5$-cycles through all five $X_{ijkl}$ are
all equivalent to either
\begin{align*}
\Phi_{53214} &: X_{1234} \to X_{1245} \to X_{1345} \to X_{2345} \to X_{1235} \to X_{1234} \\
\textrm{or } \Phi_{52413} &: X_{1234} \to X_{1345} \to X_{1235} \to X_{2345} \to X_{1245} \to X_{1234}.
\end{align*}
Applying Lemma~\ref{lem:pentrelation} and computing intersection
numbers, we find that the action of the first automorphism
$\Phi_{53214}$ on $\overline{\NS}(X)$ arising from a general
doubly-triply symmetric penteract is given by the matrix

{\footnotesize \begin{equation} \label{eq:23sympentaut}
\begin{pmatrix}
2 & -1 & 0 & 2 & 0 & 0 & 0 & 0 & 0 & 0 & 0 & 0 & 0 & 0 & 0 \\
1 & 0 & 0 & 0 & 0 & 0 & 0 & 0 & 0 & 0 & 0 & 0 & 0 & 0 & 0 \\
0 & 0 & 0 & 1 & 0 & 0 & 0 & 0 & 0 & 0 & 0 & 0 & 0 & 0 & 0 \\
1 & -1 & 1 & 1 & 0 & 0 & 0 & 0 & 0 & 0 & 0 & 0 & 0 & 0 & 0 \\
0 & 0 & 0 & 0 & 0 & 1 & 0 & 0 & 0 & 0 & 0 & 0 & 0 & 0 & 0 \\
1 & 0 & 0 & 0 & -1 & -1 & 0 & 0 & 0 & 0 & 0 & 0 & 0 & 0 & 0 \\
0 & 0 & 0 & 0 & 0 & 0 & 0 & 1 & 0 & 0 & 0 & 0 & 0 & 0 & 0 \\
1 & 0 & 0 & 0 & 0 & 0 & -1 & -1 & 0 & 0 & 0 & 0 & 0 & 0 & 0 \\
0 & 0 & 0 & 0 & 0 & 0 & 0 & 0 & 0 & 1 & 0 & 0 & 0 & 0 & 0 \\
1 & 0 & 0 & 0 & 0 & 0 & 0 & 0 & -1 & -1 & 0 & 0 & 0 & 0 & 0 \\
0 & 0 & 0 & 0 & 0 & 0 & 0 & 0 & 0 & 0 & 0 & 1 & 0 & 0 & 0 \\
1 & 0 & 0 & 0 & 0 & 0 & 0 & 0 & 0 & 0 & -1 & -1 & 0 & 0 & 0 \\
0 & 0 & 0 & 0 & 0 & 0 & 0 & 0 & 0 & 0 & 0 & 0 & 0 & 1 & 0 \\
1 & 0 & 0 & 0 & 0 & 0 & 0 & 0 & 0 & 0 & 0 & 0 & -1 & -1 & 0 \\
-1 & -1 & 0 & 1 & 1 & 0 & 1 & 0 & 1 & 1 & 1 & 1 & 0 & 0 & 1
\end{pmatrix}
\end{equation}}%
with respect to the basis $\{L_1, L_2, L_3, L_4, P_1, Q_1, \ldots,
P_5, Q_5, E_3 \}$.  Also, the induced action of the $5$-cycle
$\Phi_{52413}$ on the $L_i$ and the $Q_j$ is the same (up to
reordering) as the induced action of $\Phi_{54321}$ in the doubly
symmetric penteract case, and as in previous cases, expressions for
the divisors $\Phi_{52413}^* P_j$ and $\Phi_{52413}^* E_j$ follow
immediately.  These two types of $5$-cycles will be shown in \S
\ref{sec:positiveentropy} to be fixed-point-free in general and have
positive entropy.

%%%%%%%%%%%%%%%%%%%%%%%%%%%%%%%%%%%%%%
%%%%%%%%%%%%%%%%%%%%%%%%%%%%%%%%%%%%%%
%%%%%%    4-Sym Penteracts     %%%%%%%
%%%%%%%%%%%%%%%%%%%%%%%%%%%%%%%%%%%%%%
%%%%%%%%%%%%%%%%%%%%%%%%%%%%%%%%%%%%%%

\section{Quadruply symmetric penteracts: \texorpdfstring{$2 \otimes \Sym^4(2)$}{2 (x) Sym4(2)}} \label{sec:4sympent}

Suppose we now have a penteract that is symmetric in the last {four}
coordinates; we will show that such penteracts give rise to K3
surfaces with N\'eron-Severi rank at least $17$ over $\fdbar$. Note
that such tensors may also be viewed as pencils of binary quartic
forms, whose invariant theory was worked out in \cite{wall}.

\begin{theorem} \label{thm:4sympenteractorbits} Let $V =
  V_1\otimes\Sym^4 V_2$ for $2$-dimensional $\fd$-vector spaces $V_1$
  and $V_2$.  Let $G'$ be the group $\GL(V_1) \times \GL(V_2)$, and
  let $G$ be the quotient of $G'$ by the kernel of the multiplication
  map $\Gm \times \Gm \to \Gm$ given by $(\gamma_1, \gamma_2) \mapsto
  \gamma_1 \gamma_2^4$.  Let $\Lambda$ be the lattice whose Gram
  matrix is

{\footnotesize \begin{equation} \label{eq:4sympent-NS}
\begin{pmatrix}
0 & 2 & 2 & 2 & 0 & 0 & 0 & 0 & 0 & 0 & 0 & 0 & 0 & 0 & 0 & 0 & 0\\
 2 & 0 & 2 & 2 & 1 & 0 & 0 & 1 & 0 & 1 & 0 & 0 & 1 & 0 & 1 & 0 & 0\\
 2 & 2 & 0 & 2 & 0 & 1 & 0 & 0 & 1 & 0 & 1 & 0 & 0 & 1 & 0 & 1 & 0\\
 2 & 2 & 2 & 0 & 0 & 0 & 1 & 0 & 0 & 0 & 0 & 1 & 0 & 0 & 0 & 0 & 1\\
 0 & 1 & 0 & 0 & \!-2\! & 1 & 0 & 0 & 0 & 0 & 0 & 0 & 0 & 0 & 0 & 0 & 0\\
 0 & 0 & 1 & 0 & 1 & \!-2\! & 1 & 0 & 0 & 0 & 0 & 0 & 0 & 0 & 0 & 0 & 0\\
 0 & 0 & 0 & 1 & 0 & 1 & \!-2\! & 1 & 0 & 0 & 0 & 0 & 0 & 0 & 0 & 0 & 0\\
 0 & 1 & 0 & 0 & 0 & 0 & 1 & \!-2\! & 1 & 0 & 0 & 0 & 0 & 0 & 0 & 0 & 0\\
 0 & 0 & 1 & 0 & 0 & 0 & 0 & 1 & \!-2\! & 0 & 0 & 0 & 0 & 0 & 0 & 0 & 0\\
 0 & 1 & 0 & 0 & 0 & 0 & 0 & 0 & 0 & \!-2\! & 1 & 0 & 0 & 0 & 0 & 0 & 0\\
 0 & 0 & 1 & 0 & 0 & 0 & 0 & 0 & 0 & 1 & \!-2\! & 1 & 0 & 0 & 0 & 0 & 0\\
 0 & 0 & 0 & 1 & 0 & 0 & 0 & 0 & 0 & 0 & 1 & \!-2\! & 1 & 0 & 0 & 0 & 0\\
 0 & 1 & 0 & 0 & 0 & 0 & 0 & 0 & 0 & 0 & 0 & 1 & \!-2\! & 1 & 0 & 0 & 0\\
 0 & 0 & 1 & 0 & 0 & 0 & 0 & 0 & 0 & 0 & 0 & 0 & 1 & \!-2\! & 0 & 0 & 0\\
 0 & 1 & 0 & 0 & 0 & 0 & 0 & 0 & 0 & 0 & 0 & 0 & 0 & 0 & \!-2\! & 1 & 0\\
 0 & 0 & 1 & 0 & 0 & 0 & 0 & 0 & 0 & 0 & 0 & 0 & 0 & 0 & 1 & \!-2\! & 1\\
 0 & 0 & 0 & 1 & 0 & 0 & 0 & 0 & 0 & 0 & 0 & 0 & 0 & 0 & 0 & 1 & \!-2\!
\end{pmatrix}
\end{equation}}%
and let $S = \{ e_1, e_2,e_3, e_4 \}$.  Then the $G(\fd)$-orbits of an
open subset of $V(\fd)$ are in bijection with the $\fd$-points of an
open subvariety of the moduli space $\sM_{\Lambda,S}$ of K3
surfaces $X$ lattice-polarized by $(\Lambda,S)$.
\end{theorem}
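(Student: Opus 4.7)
The plan is to mimic the template of the previous penteract sections, exploiting the fact that a quadruply symmetric penteract can simultaneously be regarded as triply symmetric (in any three of the last four coordinates) and doubly-doubly symmetric (in any partition of the last four coordinates into two pairs). For the forward direction, I take $A \in V_1 \tns \Sym^4 V_2$ and apply the constructions of \S \ref{sec:penteracts}, \S \ref{sec:3sympent}, and \S \ref{sec:22sympent} to produce $X_{ijk}$ and $X_{ijkl}$. The quadruple symmetry forces all ten surfaces $X_{ijk}$ to acquire at least six isolated rank singularities each over $\fdbar$; a numerical example will confirm that for generic $A$ no further singularities occur. The nonsingular common model $X = X_{1234}$ inherits the line bundles $L_1,L_2,L_3,L_4$ together with exceptional divisors over all of these singularities, and many of these classes become identified under the compositions of the isomorphisms $X_{ijkl} \simeq X_{ijkm}$. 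I will package the remaining independent classes as the components of the reducible fibers of the genus-one fibration $X \to \Proj(V_1^\vee)$: the extra symmetry forces its degree-24 discriminant to factor so that the reducible fibers degenerate into two $\mathrm{I}_6$ fibers and one $\mathrm{I}_4$ fiber, explaining the root sublattice $A_5 \oplus A_5 \oplus A_3$ visible in \eqref{eq:4sympent-NS}.

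Next, I establish that these classes really span $\overline{\NS}(X)$ for a very general $X$ in the family. A dimension count gives $\dim V/G = 10 - 7 = 3 = 20 - 17$, so the Picard number is exactly $17$ generically and $\Lambda$ is at least a finite-index sublattice of $\overline{\NS}(X)$. To upgrade this to an equality, I will compute the discriminant group from \eqref{eq:4sympent-NS} and, for each coset of $\Lambda$ in its dual lattice, either rule it out by a parity computation on self-intersections or produce a divisor forbidden by Lemma \ref{lem:nikulin}, using the $S_4$-action that permutes the last four coordinates of the penteract to symmetrize any candidate class. This argument is the same style as the saturation proofs already carried out in \S \ref{sec:3sympent}--\S \ref{sec:23sympent}, only more involved because there are now three independent chains of $(-2)$-curves to juggle.

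For the reverse direction, start with a K3 surface $X$ lattice-polarized by $(\Lambda,S)$ and let $L_1,L_2,L_3,L_4$ be the line bundles corresponding to $e_1,\dots,e_4$. Theorem \ref{thm:penteractorbits} produces a penteract $A \in \cH^0(X,L_1)\tns\cdots\tns\cH^0(X,L_4)\tns(\ker\mu)^\vee$, where $\mu$ is the usual multiplication map. I will then use repeated applications of Lemma \ref{lem:pentrelation} and the intersection data encoded in \eqref{eq:4sympent-NS} to identify $\cH^0(X,L_3)$, $\cH^0(X,L_4)$, and $(\ker\mu)^\vee$ with a single two-dimensional vector space $V_2$, in such a way that the proofs of Theorems \ref{thm:3sympenteractorbits} and \ref{thm:22sympenteractorbits} force $A$ to be symmetric under (a) the cyclic permutation of the last three coordinates and (b) the transposition of the fourth and fifth coordinates. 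Since these two permutations generate $S_4$ on coordinates $\{2,3,4,5\}$, the full $S_4$-symmetry follows, producing an element of $V_1 \tns \Sym^4 V_2$ well defined up to $G(\fd)$. The two constructions are inverse to one another by the same formal argument used at the end of the proof of Theorem \ref{thm:penteractorbits}.

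The main obstacle I anticipate is the saturation step: with discriminant size comparable to those in the previous two sections but with the exceptional classes organized into three overlapping root chains, the case analysis to rule out fractional classes in $\overline{\NS}(X)$ requires careful use of every available $S_4$-symmetry together with Lemma \ref{lem:nikulin}. A secondary but essentially bookkeeping challenge is verifying that the various auxiliary line bundles $L_5^{(ijk)}$ produced by different orderings of the coordinates really do agree with $L_3$, $L_4$, or the new bundle $(\ker\mu)^\vee$; this is again a direct lattice computation from \eqref{eq:4sympent-NS} combined with repeated use of Lemma \ref{lem:pentrelation}.
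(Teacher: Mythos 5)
There are genuine gaps in your proposal, concentrated in the geometric setup and in the final symmetry argument.

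First, the forward direction rests on an incorrect picture of the singularities and fibers. It is not true that all ten surfaces $X_{ijk}$ acquire six rank singularities: for $A\in V_1\otimes\Sym^4 V_2$ the matrix $A(\ccdot,v,w,x,\ccdot)$ with $(v,w,x)$ in three of the symmetric slots is a \emph{general} element of $V_1\otimes V_2$, so its vanishing imposes four conditions on a threefold and the surfaces $X_{ijk}$ with $\{i,j,k\}\subset\{2,3,4,5\}$ are generically nonsingular; only the surfaces $X_{1ij}$ (which all coincide) have the six rank singularities, because there the contracted matrix lands in $\Sym^2 V_2$ and its vanishing is only three conditions. Consequently $X_{1234}$ carries $3\times 6=18$ exceptional lines (from $X_{123}$, $X_{124}$, $X_{134}$), not the $24$ your count would produce — that configuration belongs to the quintuply symmetric case. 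Relatedly, the fibration $X_{1234}\to\Proj(V_1^\vee)$ has \emph{three} fibers of type $\mathrm{I}_6$ (its degree-$24$ discriminant is the sixth power of a cubic times an irreducible sextic), not two $\mathrm{I}_6$ and one $\mathrm{I}_4$; the $A_5\oplus A_5\oplus A_3$ shape you read off from \eqref{eq:4sympent-NS} is an artifact of the basis, which keeps only $5$, $5$, and $3$ of the six lines in each hexagon. Building the lattice and the saturation argument on the wrong divisor configuration would not yield \eqref{eq:4sympent-NS}. (For the saturation itself the paper in fact switches to the fibration over $\Proj(V_2^\vee)$, which has six $\mathrm{I}_3$ fibers, a $3$-torsion section, and explicit non-torsion sections, and argues via Mordell--Weil heights; your proposed Nikulin-lemma case analysis is a legitimate alternative style, but only once the correct $18$-line configuration is in hand.)

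Second, the reverse direction has a group-theoretic error. You identify only $\cH^0(X,L_3)$, $\cH^0(X,L_4)$, and $(\ker\mu)^\vee$, and invoke invariance under the $3$-cycle on slots $\{3,4,5\}$ and the transposition $(45)$; these generate $S_3$ on $\{3,4,5\}$, \emph{not} $S_4$ on $\{2,3,4,5\}$. Nothing in your argument ever ties the factor $\cH^0(X,L_2)$ to the other three, so the quadruple symmetry is not established. The fix is to run the doubly-symmetric transposition argument (from the proof of Theorem~\ref{thm:2sympenteractorbits}) three separate times, for the transpositions $(25)$, $(35)$, and $(45)$ — each requires its own instance of the relation of Lemma~\ref{lem:pentrelation} identifying $L_5^{(1jk)}$ with the remaining $L_i$ — and these three transpositions do generate $S_4$ on the last four slots. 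This is exactly how the paper concludes.
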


\subsection{N\'eron-Severi lattice and moduli problem}

Since quadruply symmetric penteracts are also triply symmetric in,
say, the last three coordinates, we may use our constructions from \S
\ref{sec:3sympent} to help us analyze these.  In particular, the K3
surface $X_{123}$ ($=X_{1ij}$ for any $2\leq i < j$) has at least six
rank singularities (over $\fdbar$), and a numerical example shows that
it then generically has exactly six singular points.  Meanwhile, the
K3 surfaces $X_{ijk}$ for $2\leq i < j < k\leq 5$ and all $X_{ijkl}$
are generically nonsingular, and the maps from $X_{1234}$ to $X_{1ij}$
for $2 \leq i < j \leq 4$ blow down lines to the six singular points
on $X_{1ij}$.  We thus have at least $18$ lines on $X_{1234}$.  Denote
the lines coming from $X_{123}$, $X_{124}$, and $X_{134}$ by $E_{\ell
  4}$, $E_{\ell 3}$, and $E_{\ell 2}$, respectively, for $1 \leq \ell
\leq 6$.

The map $X_{1234}\to\Proj(V_1^\vee)$ is a genus one fibration whose
discriminant as a binary form on $V_1$ is of degree 24 and factors as
the sixth power of a degree three form times an irreducible degree six
form.  An argument similar to the doubly-doubly symmetric case shows
that this fibration indeed has three reducible fibers of type $\mathrm
I_6$, i.e., these reducible fibers each consist of six lines in a
``hexagon''.  These three sets of six lines, yielding a total of 18
lines, correspond exactly to the 18 lines in the previous paragraph
(each set of six lines in the previous paragraph contains one pair of
parallel lines from each of the three hexagons).

These lines can be written down very explicitly.  Let $r_i$ for $1
\leq i \leq 3$ be the three points in $\Proj(V_1^\vee)$ over which the
fibration has reducible fibers.  Then the six singular points in
$X_{123}$ are of the form $(r_i, s_i, t_i)$ and $(r_i, t_i, s_i)$ for
$1 \leq i \leq 3$, and each hexagon in $X_{1234}$ consists of the
lines (in cyclic order)
\begin{align*}
E_{i,4} &= (r_i, s_i, t_i, *), & E_{i,3} &= (r_i, s_i, *, t_i), &  E_{i,2} &=  (r_i, *, s_i,t_i), \\
E_{i+3,4} &=  (r_i, t_i, s_i, *), &  E_{i+3,3} &= (r_i,t_i, *, s_i), &  E_{i+3,2} &= (r_i, *, t_i, s_i).
\end{align*}

The three projections $X_{1234} \to \Proj(V_2^\vee)$ are genus one
fibrations with six $\mathrm{I}_3$ fibers, that is, triangles of
lines.  Two of the lines in each triangle come from our $18$ lines.
For example, for the projection to the second factor, the triangles
contain the pair $(r_i, s_i, t_i, *)$ over $s_i$ and $(r_i, s_i, *,
t_i)$, or the pair $(r_i, t_i, s_i, *)$ and $(r_i, t_i, *, s_i)$ over
$t_i$.

If $L_1$, $L_2$, $L_3$, and $L_4$ denote the pullbacks of
$\sO_{\Proj(V_i)^\vee}(1)$ to $X_{1234}$ via the projection maps, we
obtain relations like in Lemma \ref{lem:pentrelation}:
\begin{align*}
L_1 + L_2 + L_3 &= 2 L_4 + \sum_{\ell = 1}^6 E_{\ell 4}, \\
L_1 + L_2 + L_4 &= 2 L_3 + \sum_{\ell = 1}^6 E_{\ell 3},  \\
\textrm{and} \quad L_1 + L_3 + L_4 &= 2 L_2 + \sum_{\ell = 1}^6 E_{\ell 2}.
\end{align*}
The nonzero intersections of these divisors are as follows:
\begin{align*}
  L_i \cdot L_j &= 2 \textrm{ if } i \neq j, \\
  L_i \cdot E_{\ell i} &= 1, \\
  E_{i,j} \cdot E_{i',j'} &= 1 \textrm{ if } i=i' \textrm{ and } |j -
  j'| = 1, \\
 & \qquad  \textrm{ or if } |i-i'| = 3 \textrm{ and } |j-j'| = 2.
\end{align*}

Thus, the N\'eron-Severi lattice of $(X_{1234})_{\fdbar}$ has rank $17
= 3\cdot 5+2$ and discriminant $96$. A basis for the lattice is given
by the divisor classes $L_1$, $L_2$, $L_3$, $L_4$, $E_{12}$, $E_{13}$,
$E_{14}$, $E_{42}$, $E_{43}$, $E_{22}$, $E_{23}$, $E_{24}$, $E_{52}$,
$E_{53}$, $E_{31}$, $E_{32}$, $E_{33}$, and the corresponding Gram
matrix is in \eqref{eq:4sympent-NS}.

\begin{proposition} 
  For a very general $X$ in this family of K3 surfaces,
  $\overline{\NS}(X)$ is spanned by the above divisors.
\end{proposition}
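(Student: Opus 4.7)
The plan is to verify that the lattice $\Lambda$ spanned by the listed $17$ divisor classes is primitively embedded in $\overline{\NS}(X)$; since $\rank\Lambda$ matches the Picard rank of a very general $X$ in this family (by the usual moduli dimension count $20 - 17 = 3$, consistent with the dimension of the orbit space), this will give $\Lambda = \overline{\NS}(X)$. As $|\disc\Lambda| = 96 = 2^5\cdot 3$, only $2$- and $3$-saturation need be checked.

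The central tool is the symmetry of the construction in the last four tensor factors, which acts on $X$ by permuting the line bundles $L_2, L_3, L_4$ (and the would-be $L_5$, recoverable from the others via a relation analogous to Lemma \ref{lem:pentrelation}) and correspondingly permuting the second index $j \in \{2,3,4\}$ of the exceptional classes $E_{\ell j}$, along with a compatible action on the $\ell$-labels within each hexagon. Given a hypothetical $D \in \overline{\NS}(X) \cap \Lambda^\vee$, I will expand $D$ in the basis and subtract elements of $\Lambda$ together with symmetric translates of $D$; differences of the form $D - \sigma(D)$ must have integer self-intersection, and a sequence of such parity arguments should force the $L_i$-coefficients and the $E_{\ell j}$-coefficients (organized by orbits of the symmetry) into $\Z$. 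The remaining fractional combinations reduce to sums such as $\tfrac12\sum_{(\ell,j)\in T} E_{\ell j}$ over certain subsets $T$, and Lemma \ref{lem:nikulin} will rule these out, since among the $18$ exceptional lines one can exhibit large sets of mutually disjoint $(-2)$-curves (of sizes in $\{6, 8, 16\}$ or forbidden intermediate values) whose half-sums are not integral.

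For $3$-saturation, I will use the elliptic fibration $\pi: X \to \Proj(V_1^\vee)$ with its three $I_6$ fibers (comprising the $E_{\ell j}$) and six $I_1$ fibers, as indicated by the discriminant factorization. Although $\pi$ has only a bisection $L_2$ rather than a section, passing to the Jacobian fibration $J(\pi)$ allows the Shioda--Tate formula to be applied; the Mordell--Weil rank of $J(\pi)$ is forced to be $0$ by the rank count $2 + 3\cdot 5 = 17$, and a $3$-torsion section in $J(\pi)$ would impose a specific combination of non-identity components (at ``distance $2$'' in each $A_5$ diagram) which fails to hold generically, via a height argument analogous to the one at the end of the proof in \S\ref{sec:23sympent}. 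The main obstacle will be the substantial combinatorial bookkeeping; however, the threefold index symmetry together with the cyclic symmetry within each hexagon should reduce the casework to the pattern already established in \S\S\ref{sec:22sympent}--\ref{sec:23sympent}.
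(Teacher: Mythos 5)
Your overall strategy --- compute $\disc\Lambda$, match ranks, then check saturation at the primes dividing the discriminant --- is sound, but it diverges from the paper's proof at both saturation steps. First, your entire third paragraph is unnecessary: since $|\disc\Lambda| = 96 = 2^5\cdot 3$ and $3^2 \nmid 96$, an index-$3$ overlattice is impossible for discriminant reasons alone, so only $2$-saturation needs checking; the paper notes exactly this. Second, for $2$-saturation the paper does \emph{not} continue the symmetry-plus-Nikulin casework of \S\ref{sec:2sympent}--\S\ref{sec:22sympent} that you propose to extend. It switches to the genus one fibration given by projection to $\Proj(V_2^\vee)$ --- not the fibration over $\Proj(V_1^\vee)$ with three $\mathrm{I}_6$ fibers that you invoke --- which has an honest section, six $\mathrm{I}_3$ fibers, a $3$-torsion section, and Mordell--Weil rank $3$; it first recomputes $\disc\Lambda=96$ from the height pairing of three explicit sections, and then rules out index-$2$ overlattices by a parity argument on N\'eron--Tate heights: a nonzero class of $\Lambda^*/\Lambda$ would have to be represented by a section of height $m/3$ with $m$ odd, whereas the height formula $4+2(P\cdot O)$ minus fiber corrections lying in $\{0,2/3\}$ always yields an even numerator.

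The reason this matters is that your $2$-saturation sketch is the step most at risk. Unlike the doubly symmetric case, the $18$ exceptional lines here are \emph{not} pairwise disjoint: they form three hexagons (the $\mathrm{I}_6$ fibers of $\pi_1$), so Lemma \ref{lem:nikulin} applies only to independent sets in that intersection graph, which have size at most $9$; moreover the lemma permits $n=8$ disjoint $(-2)$-curves with half-integral sum, so size-$8$ configurations are not excluded by it and would require a separate argument. You have not shown that your parity and symmetry reductions terminate only in configurations of sizes the lemma forbids, and phrases like ``should force'' and ``should reduce the casework'' leave precisely this open; it is presumably why the authors abandoned the combinatorial method at this point in the hierarchy. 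Your plan is not wrong, but as written the decisive step is asserted rather than proved, whereas the fibration-plus-height argument closes it cleanly.
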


\begin{proof}
  We give a proof using elliptic fibrations. Let $e_1, \dots, e_{22}$
  be the divisor classes $L_1$, \dots, $L_4, E_{12}, \ldots, E_{62},
  E_{13}, \ldots, E_{63}, E_{14}, \ldots, E_{64}$, and $f_1, \dots,
  f_{17}$ the basis chosen. Consider the elliptic fibration with fiber
  class $e_2 = f_2$ (i.e., projection to $\Proj(V_2^\vee)$). A section
  is given by $Z = 2 f_2 - f_5 + f_6 + 2f_8 + f_9$, which we take to
  be our zero section. Then $\{e_6, e_7\}$, $\{e_9, e_{10}\}$,
  $\{e_{12}, e_{13}\}$, $\{e_{15}, e_{16}\}$, $\{e_{18}, e_{19}\}$,
  $\{e_{20}, e_{21}\}$ are the non-identity components of the six
  $\mathrm{I}_3$ fibers. A $3$-torsion section is given by $-f_1 + f_2
  + f_4 + F$. The sections $e_8, e_{11}, e_{17}$ have N\'eron-Tate
  height pairing
  \[
  \begin{pmatrix}
    \frac{8}{3} & 4 & 4 \\
    4 & \frac{20}{3} & 6 \\
    4 & 6 & \frac{20}{3}
  \end{pmatrix}.
  \]
  The determinant of this matrix is $32/27$, and therefore the
  discriminant of the lattice spanned by these sections and the fibers
  is $(32/27) \cdot 3^6 /3^2 = 96$.  Therefore, it is equal to the
  lattice $\Lambda$ spanned by $e_1, \dots, e_{22}$.
  
  We now have to show that $\Lambda$ is equal to $\overline{\NS}(X)$
  for a very general such K3 surface. As usual, the ranks agree
  (because the dimension of the moduli space here is $2 \cdot 5 - 7 =
  3$), so we only need to show $\Lambda$ is saturated. Since the
  discriminant is $96 = 2^5 \cdot 3$, we merely need to show that it
  is $2$-saturated. This can only fail to happen if there is a
  $2$-torsion section (which is not possible because of the fiber
  configuration), or if the Mordell-Weil lattice is larger. A direct
  calculation (by checking all $2^3- 1$ representatives) shows that a
  representative of a nonzero class in $\Lambda^*/\Lambda$ would have
  height $m/3$, for $m$ an odd number. But the height of a section $P$
  equals $4 +2 (P \cdot O)$ minus the sum of the contributions from
  the fibers, which are $0$ or $2/3$, and hence is an even number
  divided by $3$. It follows that $\Lambda$ is saturated.
\end{proof}

\begin{proof}[Proof of Theorem $\ref{thm:4sympenteractorbits}$]
  The above discussion shows that a quadruply symmetric penteract
  gives rise to a K3 surface lattice-polarized by $(\Lambda,S)$.  For
  the reverse, the proof is an obvious generalization of that of
  Theorem \ref{thm:3sympenteractorbits}. The main argument within
  (coming from the proof of Theorem \ref{thm:2sympenteractorbits})
  needs to be repeated three times to show that the constructed
  penteract is symmetric with respect to three transpositions, e.g.,
  $(25)$, $(35)$, and $(45)$, of the tensor factors (under
  identifications of the corresponding vector spaces).
\end{proof}

\subsection{Automorphisms} \label{sec:4sympentauts}

The automorphisms $\alpha_{kl,m}$ and $\Phi_{ijklm}$ also apply in
this case.  All of the $5$-cycles, by the symmetry, act in equivalent
ways, and in particular, $\Phi_{54321}$ induces the action of the
matrix

{\footnotesize
$$\begin{pmatrix}
-1 & 2 & 2 & 0 & 0 & 0 & 0 & 0 & 0 & 0 & 0 & 0 & 0 & 0 & 0 & 0 & 0 \\
-1 & 1 & 1 & 1 & 0 & 0 & 0 & 0 & 0 & 0 & 0 & 0 & 0 & 0 & 0 & 0 & 0 \\
0 & 1 & 0 & 0 & 0 & 0 & 0 & 0 & 0 & 0 & 0 & 0 & 0 & 0 & 0 & 0 & 0 \\
0 & 0 & 1 & 0 & 0 & 0 & 0 & 0 & 0 & 0 & 0 & 0 & 0 & 0 & 0 & 0 & 0 \\
1 & 0 & 0 & 0 & -1 & -1 & -1 & -1 & -1 & 0 & 0 & 0 & 0 & 0 & 0 & 0 & 0 \\
-1 & 0 & 1 & 0 & 0 & 1 & 1 & 1 & 1 & 0 & 0 & 0 & 0 & 0 & 0 & 0 & 0 \\
0 & 1 & 0 & 0 & 0 & -1 & -1 & 0 & 0 & 0 & 0 & 0 & 0 & 0 & 0 & 0 & 0 \\
0 & 0 & 0 & 0 & 0 & 0 & 1 & 0 & 0 & 0 & 0 & 0 & 0 & 0 & 0 & 0 & 0 \\
0 & 0 & 1 & 0 & 0 & 0 & -1 & -1 & 0 & 0 & 0 & 0 & 0 & 0 & 0 & 0 & 0 \\
1 & 0 & 0 & 0 & 0 & 0 & 0 & 0 & 0 & -1 & -1 & -1 & -1 & -1 & 0 & 0 & 0 \\
-1 & 0 & 1 & 0 & 0 & 0 & 0 & 0 & 0 & 0 & 1 & 1 & 1 & 1 & 0 & 0 & 0 \\
0 & 1 & 0 & 0 & 0 & 0 & 0 & 0 & 0 & 0 & -1 & -1 & 0 & 0 & 0 & 0 & 0 \\
0 & 0 & 0 & 0 & 0 & 0 & 0 & 0 & 0 & 0 & 0 & 1 & 0 & 0 & 0 & 0 & 0 \\
0 & 0 & 1 & 0 & 0 & 0 & 0 & 0 & 0 & 0 & 0 & -1 & -1 & 0 & 0 & 0 & 0 \\
-1 & 1 & 1 & -2 & 1 & 1 & 0 & 1 & 1 & 1 & 1 & 0 & 1 & 1 & 0 & 0 & -1 \\
1 & -1 & 0 & 2 & -1 & -1 & 0 & -1 & -1 & -1 & -1 & 0 & -1 & -1 & -1 & 0 & 1 \\
0 & 1 & 0 & 0 & 0 & 0 & 0 & 0 & 0 & 0 & 0 & 0 & 0 & 0 & 0 & -1 & -1
 \end{pmatrix}
$$}% 
on the divisors $\{L_1, L_2, L_3, L_4, E_{12}, \ldots, E_{62}, E_{13},
\ldots, E_{63}, E_{14}, \ldots, E_{64}\}$ in $\overline{\NS}(X)$.
This automorphism has order $4$ and is fixed-point-free; it will be
discussed further in \S \ref{sec:finiteorderauts}.  Note, however,
that the square of $\Phi_{54321}$ is an involution but not
fixed-point-free over $\fdbar$ (see \S \ref{sec:5sympentauts} for a
detailed explanation, which also applies to this case).

%%%%%%%%%%%%%%%%%%%%%%%%%%%%%%%%%%%%%%
%%%%%%%%%%%%%%%%%%%%%%%%%%%%%%%%%%%%%%
%%%%%%    5-Sym Penteracts     %%%%%%%
%%%%%%%%%%%%%%%%%%%%%%%%%%%%%%%%%%%%%%
%%%%%%%%%%%%%%%%%%%%%%%%%%%%%%%%%%%%%%

\section{Quintuply symmetric penteracts: \texorpdfstring{$\Sym^5(2)$}{Sym5(2)}} \label{sec:5sympent}

Let us now consider a quintuply symmetric penteract. We will prove
that the general orbits of such tensors correspond to certain K3
surfaces with Picard rank at least $18$ over $\fdbar$. Note that such
penteracts also have an interpretation as binary quintic forms, i.e.,
degree $5$ subschemes of the projective line.

\begin{theorem} \label{thm:5sympenteractorbits} Let $V = \Sym^5 V_1$
  for a $2$-dimensional $\fd$-vector space $V_1$.  Let $G'$ be the
  group $\Gm \times \GL(V_1)$ and let $G$ be the quotient of $G'$ by
  the kernel of the multiplication map $\Gm \times \Gm \to \Gm$ given
  by $(\gamma_1, \gamma_2) \mapsto \gamma_1 \gamma_2^5$.  Let
  $\Lambda$ be the lattice whose Gram matrix is

{\footnotesize
\begin{equation} \label{eq:5sympent-NS}
\begin{pmatrix}
\!-2\! & 0 & 0 & 0 & 0 & 0 & 1 & 0 & 0 & 0 & 0 & 0 & 0 & 0 & 0 & 0 & 0 & 1 \\
 0 & \!-2\! & 0 & 0 & 0 & 0 & 0 & 1 & 0 & 0 & 0 & 1 & 0 & 0 & 0 & 0 & 0 & 0 \\
 0 & 0 & \!-2\! & 0 & 0 & 0 & 0 & 0 & 1 & 0 & 0 & 0 & 1 & 0 & 0 & 0 & 0 & 0 \\
 0 & 0 & 0 & \!-2\! & 0 & 0 & 0 & 0 & 0 & 1 & 0 & 0 & 0 & 0 & 0 & 1 & 0 & 0 \\
 0 & 0 & 0 & 0 & \!-2\! & 0 & 0 & 0 & 0 & 0 & 1 & 0 & 0 & 0 & 1 & 0 & 1 & 0 \\
 0 & 0 & 0 & 0 & 0 & \!-2\! & 0 & 0 & 0 & 0 & 0 & 1 & 0 & 0 & 0 & 0 & 1 & 0 \\
 1 & 0 & 0 & 0 & 0 & 0 & \!-2\! & 0 & 0 & 0 & 0 & 0 & 1 & 0 & 0 & 1 & 0 & 0 \\
 0 & 1 & 0 & 0 & 0 & 0 & 0 & \!-2\! & 0 & 0 & 0 & 0 & 0 & 1 & 0 & 0 & 0 & 0 \\
 0 & 0 & 1 & 0 & 0 & 0 & 0 & 0 & \!-2\! & 0 & 0 & 0 & 0 & 0 & 0 & 0 & 0 & 0 \\
 0 & 0 & 0 & 1 & 0 & 0 & 0 & 0 & 0 & \!-2\! & 0 & 0 & 0 & 0 & 1 & 0 & 0 & 1 \\
 0 & 0 & 0 & 0 & 1 & 0 & 0 & 0 & 0 & 0 & \!-2\! & 0 & 0 & 0 & 0 & 0 & 0 & 0 \\
 0 & 1 & 0 & 0 & 0 & 1 & 0 & 0 & 0 & 0 & 0 & \!-2\! & 0 & 0 & 0 & 1 & 0 & 0 \\
 0 & 0 & 1 & 0 & 0 & 0 & 1 & 0 & 0 & 0 & 0 & 0 & \!-2\! & 0 & 0 & 0 & 1 & 0 \\
 0 & 0 & 0 & 0 & 0 & 0 & 0 & 1 & 0 & 0 & 0 & 0 & 0 & \!-2\! & 0 & 0 & 0 & 1 \\
 0 & 0 & 0 & 0 & 1 & 0 & 0 & 0 & 0 & 1 & 0 & 0 & 0 & 0 & \!-2\! & 0 & 0 & 0 \\
 0 & 0 & 0 & 1 & 0 & 0 & 1 & 0 & 0 & 0 & 0 & 1 & 0 & 0 & 0 & \!-2\! & 0 & 0 \\
 0 & 0 & 0 & 0 & 1 & 1 & 0 & 0 & 0 & 0 & 0 & 0 & 1 & 0 & 0 & 0 & \!-2\! & 0 \\
 1 & 0 & 0 & 0 & 0 & 0 & 0 & 0 & 0 & 1 & 0 & 0 & 0 & 1 & 0 & 0 & 0 & \!-2\!
\end{pmatrix}
\end{equation}
}% 
and let $S = \{ e_1, e_2,e_3, e_4 \}$.  Then the $G(\fd)$-orbits of
an open subset of $V(\fd)$ are in bijection with the $\fd$-points of
an open subvariety of the moduli space $\sM_{\Lambda,S}$ of K3
surfaces $X$ lattice-polarized by $(\Lambda,S)$.
\end{theorem}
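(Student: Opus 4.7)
The approach is to exploit the equivariant inclusion $\Sym^5 V_1 \hookrightarrow V_1 \otimes \Sym^4 V_1$ together with Theorem~\ref{thm:4sympenteractorbits}. For the forward direction, given a quintuply symmetric penteract $f \in \Sym^5 V_1$, viewing $f$ as a quadruply symmetric penteract produces a K3 surface $X$ together with the rank-$17$ sublattice described by~\eqref{eq:4sympent-NS}. The additional $S_5$-symmetry of $f$ gives rise to further rational curves and relations on $X$, extending the Picard lattice to $\Lambda \cong U \oplus A_4 \oplus D_{12}$ of rank $18$ and giving the Gram matrix~\eqref{eq:5sympent-NS}.

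Geometrically, I expect $X$ to carry an elliptic fibration $X \to \Proj(V_1^\vee)$ whose singular fibers consist of one $\mathrm{I}_5$ fiber (contributing the $A_4$-summand), one $\mathrm{I}_8^*$ fiber (contributing the $D_{12}$-summand), and five $\mathrm{I}_1$ nodal fibers located precisely over the five roots of the binary quintic $f$. This is consistent with the degree-$24$ discriminant of an elliptic K3, since $5 + 14 + 5 \cdot 1 = 24$, and with Picard rank $2 + 4 + 12 = 18$. The subset $S$ should correspond to four of the five components of the $\mathrm{I}_5$ fiber --- four pairwise disjoint $(-2)$-curves --- encoding the five roots of $f$ via the $A_4$-factor of $\Lambda$; the fifth component is determined by the relation inside $\Lambda$.

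I would carry out the forward direction by first identifying the $18$ distinguished $(-2)$-curves on $X$ realizing the given basis of $\Lambda$: most arise directly from the construction of \S\ref{sec:4sympent} (components of reducible fibers and the exceptional curves from the $18$-node configuration), with the remainder given by new curves produced by the full $S_5$-symmetry collapsing certain data from the quadruply symmetric picture. I would then verify the Gram matrix~\eqref{eq:5sympent-NS} by direct computation of intersection numbers, and confirm primitivity of the lattice in $\overline{\NS}(X)$ using Lemma~\ref{lem:nikulin} together with parity and $S_5$-symmetry arguments in the style of the preceding sections. For the reverse direction, given a $(\Lambda,S)$-polarized K3 surface $X$, I would extract line bundles $L_1, \ldots, L_4$ from $\Lambda$ (satisfying the hypotheses $L_i^2 = 0$ and $L_i \cdot L_j = 2$ for $i \neq j$ of Theorem~\ref{thm:4sympenteractorbits}) by taking appropriate combinations of the elements of $S$ with classes from the hyperbolic $U$-summand; this yields a quadruply symmetric penteract by invoking Theorem~\ref{thm:4sympenteractorbits}, and the remaining structure of $\Lambda$ should force the final symmetry.

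The main obstacle will be this final symmetry verification in the reverse direction. Showing that the quadruply symmetric penteract produced from $(X, \Lambda, S)$ is actually quintuply symmetric requires matching the four distinguished $(-2)$-curves of $S$ with the fifth, undistinguished $(-2)$-curve (obtained from the span of $\Lambda$), and showing that this forces an identification between the vector space attached to the ``first'' penteract factor and those attached to the other factors. The combinatorial analysis of the $(-2)$-curve configuration and its interaction with the $S_5$-symmetry --- similar to but more intricate than the iterated symmetrization arguments of \S\ref{sec:3sympent} and \S\ref{sec:4sympent} --- should yield the desired identification, giving an element of $\Sym^5 \cH^0(X, L_1)$ equivalent to the original penteract under the action of $G$.
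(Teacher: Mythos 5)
Your high-level strategy---regard the quintic as a quadruply (hence also doubly and triply) symmetric penteract, identify the extra $(-2)$-curves forced by the full symmetry, and then iterate the symmetrization argument for the reverse direction---is the same as the paper's. But the concrete geometry you propose in the forward direction is incorrect, and the step you yourself flag as ``the main obstacle'' in the reverse direction is precisely the substantive content of the theorem, so it cannot be left at ``should force the final symmetry.'' On the forward direction: the genus one fibration $\pi_1:X_{1234}\to\Proj(V_1^\vee)$ coming from the penteract structure does \emph{not} have configuration $\mathrm{I}_5+\mathrm{I}_8^*+5\,\mathrm{I}_1$. Its discriminant is a degree-$24$ binary form factoring as the sixth power of a cubic times an irreducible sextic, so there are three $\mathrm{I}_6$ fibers and six nodal fibers; in particular the nodal fibers do not lie over the roots of the quintic. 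The rank-$18$ lattice is generated by the four $L_i$ together with $24$ lines $E_\sigma$ indexed by $S_4$ (the blow-ups of the six rank singularities on each of the four $X_{ijk}$), whose intersection graph is the Nauru graph, and saturation is verified using this fibration: trivial lattice $U\oplus A_5^{\oplus 3}$, a $3$-torsion section, and a non-torsion section of height $5/6$. Your picture is also internally inconsistent: any four components of an $\mathrm{I}_5$ fiber form an $A_4$ chain in which consecutive components meet, so they cannot be four pairwise disjoint $(-2)$-curves. Moreover, for the reverse construction to get off the ground, $S$ must supply the four isotropic classes $L_1,\dots,L_4$ with $L_i\cdot L_j=2$ that feed the multiplication map $\mu$ of \S\ref{sec:penteractreverse}, exactly as in every other penteract section; reading $S$ off as roots of an $A_4$-factor loses this.

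On the reverse direction, the paper does not invoke Theorem~\ref{thm:4sympenteractorbits} as a black box. It constructs the plain penteract $A\in\cH^0(X,L_1)\otimes\cdots\otimes\cH^0(X,L_4)\otimes(\ker\mu)^\vee$ directly from the four $L_i$, and then applies the rank-one-kernel symmetrization argument from the proof of Theorem~\ref{thm:2sympenteractorbits} four separate times, once for each transposition $(i\,5)$ of tensor factors: in each case Lemma~\ref{lem:pentrelation} together with the intersection data in $\Lambda$ shows $L_5^{(jkl)}\cong L_i$, giving an identification of the two vector spaces under which $A$ is symmetric in those two slots. Since these transpositions generate $S_5$, the element lies in $\Sym^5\cH^0(X,L_1)$. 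This is the argument your proposal gestures at but does not supply; as written, neither the lattice computation nor the final symmetry is actually established.
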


\subsection{N\'eron-Severi lattice and moduli problem}

As before, we may use the constructions from previous sections; in
particular, a quintuply symmetric penteract is also quadruply
symmetric in any four coordinates.  By the results of \S
\ref{sec:4sympent}, all the K3 surfaces $X_{ijk}$ (for any $1 \leq i <
j < k \leq 5$) has at least six rank singularities (over $\fdbar$),
and it is easy to check numerically that $X_{ijk}$ generically has
exactly six singular points.  Meanwhile, the K3 surfaces $X_{ijkl}$
for $1\leq i < j< k < l \leq 5$ are generically nonsingular, and for
$1 \leq i < j < k \leq 4$, the projections $X_{1234}\to X_{ijk}$ blow
up the six singular points lying on each of the four surfaces
$X_{ijk}$, yielding 24 lines on $X_{1234}$.

For $\{i,j,k,m\} = \{1,2,3,4\}$, we denote the six lines coming from
the blowup $X_{1234} \to X_{ijk}$ by $E_{\sigma}$, for the
permutations $\sigma$ in the symmetric group $S_4$ with
$\sigma^{-1}(1) = m$. Let $L_i$ denote the pullback of the line bundle
$\sO_{\Proj(V_i^\vee)}(1)$ to $X_{1234}$ via the projection. By Lemma
\ref{lem:pentrelation}, we have
\begin{equation} \label{eq:5sympent-relation}
L_i + L_j + L_k = 2 L_m + \sum_{\substack{\sigma \in S_4 \\ \sigma^{-1}(1) = m}} E_{\sigma}.
\end{equation}

Because of the symmetry, if $(a,b,c)$ is one of the singular points in
$X_{123}$, then the other five singular points are just permutations
of the three coordinates.  Therefore, the $6$ lines $E_{\sigma}$ in
$X_{1234}$ obtained from blowing up the six singular points in
$X_{123}$ are given by $\{(\tau(a),\tau(b),\tau(c),*)\} \subset
X_{1234}$, for each permutation $\tau \in S_3$, where $*$ means that
any point of $\Pone$ may be used. More generally, the $24$ lines are
given by the permutations of $(a,b,c,*)$.  Each line intersects
exactly one of the lines in the other three sets of $6$, namely when
two of their non-$*$ coordinates coincide.  If we view these $24$
lines as vertices of a graph, with the edges corresponding to the $36$
intersection points, this graph is the generalized Petersen graph on
$12$ vertices.

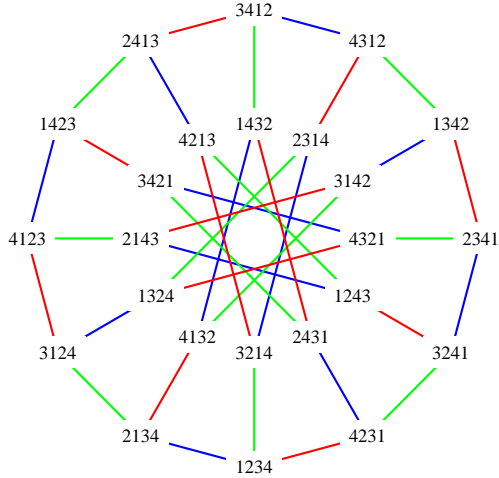
\begin{figure}
\begin{center}
\begin{tikzpicture}
\draw[blue,thick] (4.50,0)--(1.70,0.750);
\draw[blue,thick] (4.30,0.750)--(5.60,1.50);
\draw[blue,thick] (3.75,1.30)--(3.00,-1.50);
\draw[blue,thick] (3.00,1.50)--(2.25,-1.30);
\draw[blue,thick] (2.25,1.30)--(1.50,2.60);
\draw[blue,thick] (1.50,0)--(4.30,-0.750);
\draw[blue,thick] (1.70,-0.750)--(0.402,-1.50);
\draw[blue,thick] (3.75,-1.30)--(4.50,-2.60);
\draw[blue,thick] (6.00,0)--(5.60,-1.50);
\draw[blue,thick] (4.50,2.60)--(3.00,3.00);
\draw[blue,thick] (0.402,1.50)--(0,0);
\draw[blue,thick] (1.50,-2.60)--(3.00,-3.00);
\draw[green,thick] (4.50,0)--(6.00,0);
\draw[green,thick] (4.30,0.750)--(2.25,-1.30);
\draw[green,thick] (3.75,1.30)--(1.70,-0.750);
\draw[green,thick] (3.00,1.50)--(3.00,3.00);
\draw[green,thick] (2.25,1.30)--(4.30,-0.750);
\draw[green,thick] (1.70,0.750)--(3.75,-1.30);
\draw[green,thick] (1.50,0)--(0,0);
\draw[green,thick] (3.00,-1.50)--(3.00,-3.00);
\draw[green,thick] (5.60,1.50)--(4.50,2.60);
\draw[green,thick] (1.50,2.60)--(0.402,1.50);
\draw[green,thick] (0.402,-1.50)--(1.50,-2.60);
\draw[green,thick] (4.50,-2.60)--(5.60,-1.50);
\draw[red,thick] (4.50,0)--(1.70,-0.750);
\draw[red,thick] (4.30,0.750)--(1.50,0);
\draw[red,thick] (3.75,1.30)--(4.50,2.60);
\draw[red,thick] (3.00,1.50)--(3.75,-1.30);
\draw[red,thick] (2.25,1.30)--(3.00,-1.50);
\draw[red,thick] (1.70,0.750)--(0.402,1.50);
\draw[red,thick] (2.25,-1.30)--(1.50,-2.60);
\draw[red,thick] (4.30,-0.750)--(5.60,-1.50);
\draw[red,thick] (6.00,0)--(5.60,1.50);
\draw[red,thick] (3.00,3.00)--(1.50,2.60);
\draw[red,thick] (0,0)--(0.402,-1.50);
\draw[red,thick] (3.00,-3.00)--(4.50,-2.60);
\draw (4.50,0) node[fill=white]{\scriptsize{{$4321$}}};
\draw (4.30,0.750) node[fill=white]{\scriptsize{{$3142$}}};
\draw (3.75,1.30) node[fill=white]{\scriptsize{{$2314$}}};
\draw (3.00,1.50) node[fill=white]{\scriptsize{{$1432$}}};
\draw (2.25,1.30) node[fill=white]{\scriptsize{{$4213$}}};
\draw (1.70,0.750) node[fill=white]{\scriptsize{{$3421$}}};
\draw (1.50,0) node[fill=white]{\scriptsize{{$2143$}}};
\draw (1.70,-0.750) node[fill=white]{\scriptsize{{$1324$}}};
\draw (2.25,-1.30) node[fill=white]{\scriptsize{{$4132$}}};
\draw (3.00,-1.50) node[fill=white]{\scriptsize{{$3214$}}};
\draw (3.75,-1.30) node[fill=white]{\scriptsize{{$2431$}}};
\draw (4.30,-0.750) node[fill=white]{\scriptsize{{$1243$}}};
\draw (6.00,0) node[fill=white]{\scriptsize{{$2341$}}};
\draw (5.60,1.50) node[fill=white]{\scriptsize{{$1342$}}};
\draw (4.50,2.60) node[fill=white]{\scriptsize{{$4312$}}};
\draw (3.00,3.00) node[fill=white]{\scriptsize{{$3412$}}};
\draw (1.50,2.60) node[fill=white]{\scriptsize{{$2413$}}};
\draw (0.402,1.50) node[fill=white]{\scriptsize{{$1423$}}};
\draw (0,0) node[fill=white]{\scriptsize{{$4123$}}};
\draw (0.402,-1.50) node[fill=white]{\scriptsize{{$3124$}}};
\draw (1.50,-2.60) node[fill=white]{\scriptsize{{$2134$}}};
\draw (3.00,-3.00) node[fill=white]{\scriptsize{{$1234$}}};
\draw (4.50,-2.60) node[fill=white]{\scriptsize{{$4231$}}};
\draw (5.60,-1.50) node[fill=white]{\scriptsize{{$3241$}}};

\end{tikzpicture}
\end{center}
\caption{The intersection graph as a Cayley graph of $S_4$ (also known
  as the {Nauru graph}~\cite{eppstein}).  Each vertex is given by an
  element $\sigma \in S_4$ (represented by the string $\sigma(1)\dots
  \sigma(4)$), and the blue, green, and red edges correspond to the
  actions of the transposition $(12)$, $(13)$, or $(14)$,
  respectively.}
\label{fig:nauru}
\end{figure}

To relate the graph in Figure \ref{fig:nauru} to the lines in our K3
surface $X_{1234}$, note that the vertex corresponding to $\sigma \in
S_4$ represents the line given by the action of $\sigma$ on the
ordered set $(*, a, b, c)$.  For example, the bottom vertex is the
line $(*, a, b, c)$ and intersects the lines $(a, *, b, c)$, $(b, a,
*, c)$, and $(c, a, b, *)$.

Each of the projections $\pi_i: X_{1234} \to \Proj(V_1^\vee)$ to the
$i$th factor, for $1 \leq i \leq 4$, is a genus one fibration whose
discriminant as a binary form on $V_1$ is again of degree 24 and
factors as the sixth power of a degree three form times an irreducible
degree six form.  An argument similar to the quadruply symmetric case
shows that this fibration indeed has three reducible fibers of type
$\mathrm I_6$, i.e., these reducible fibers each consist of six lines
in a ``hexagon''.  These account for $18$ of the $24$ lines
encountered earlier,
namely the $E_{\sigma}$ for $\sigma^{-1}(1) \neq i$; the other $6$
lines, via this projection $\pi_i$, in fact cover the entire
$\Proj(V_1^\vee)$.  Thus, the intersection of the $24$ lines with one
of the reducible fibers is exactly a hexagon of lines and $6$ distinct
points.

The intersections among all these divisors can be described as
follows:
\begin{align*}
L_i \cdot L_j &= 2(1 - \delta_{ij}), \qquad\qquad L_i \cdot E_{\sigma} = \delta_{i,\sigma^{-1}(1)}, \qquad\qquad E_{\sigma}^2 = -2, \\
E_{\sigma} \cdot E_{\sigma'} &= \begin{cases} 1 \textrm{ if there is an edge between the corresponding vertices} \\ 0 \textrm{ otherwise. } 
\end{cases}
\end{align*}
The intersection matrix has rank $18$, and the lattice generated by
these divisors has discriminant $20$, so the N\'eron-Severi lattice of
$(X_{1234})_{\fdbar}$ has Gram matrix \eqref{eq:5sympent-NS} and has
rank $18$. A basis for the lattice is given by $L_1$, $L_2$, $L_3$,
$L_4$, $E_{4321}$, $E_{4312}$, $E_{4231}$, $E_{4132}$, $E_{4213}$,
$E_{3421}$, $E_{3412}$, $E_{2431}$, $E_{1432}$, $E_{2413}$,
$E_{3241}$, $E_{3142}$, $E_{3214}$ and $E_{2134}$.

\begin{proposition}
  Let $\Lambda$ be the lattice spanned by the classes of the above
  divisors $E_{\sigma}$. Then for the K3 surface $X$ arising from a very
  general quintuply symmetric penteract, we have $\overline{\NS}(X) =
  \Lambda$.
\end{proposition}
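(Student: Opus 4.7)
The plan is to realize $X$ as an elliptic surface via one of the projections $\pi_i: X_{1234} \to \Proj(V_i^\vee)$ and then combine the Shioda--Tate formula with a height computation, following the template of the previous propositions in the paper (especially the ones in \S\ref{sec:2sympent}, \S\ref{sec:22sympent}, \S\ref{sec:23sympent}, and \S\ref{sec:4sympent}). The rank of $\Lambda$ is already known to be $18$ and $\disc(\Lambda) = 20 = 2^2 \cdot 5$, so it suffices to show that $\Lambda$ is $2$- and $5$-saturated in $\overline{\NS}(X)$.

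First I would fix the elliptic fibration $\pi_1: X \to \Proj(V_1^\vee)$. By the analysis preceding the proposition, $\pi_1$ has three reducible fibers of type $\mathrm{I}_6$, whose non-identity components are precisely the $18$ classes $E_\sigma$ with $\sigma^{-1}(1) \neq 1$, assembled into three hexagons. I would then pick a zero section $O$ from among the six remaining classes $E_\sigma$ with $\sigma^{-1}(1) = 1$, so that the trivial lattice of the fibration is $U \oplus A_5^{\oplus 3}$ of discriminant $6^3 = 216$. The Shioda--Tate formula $\rho(X) = 2 + \sum_v (m_v - 1) + \rank(\mathrm{MW})$ with $\sum_v(m_v-1) = 15$ and $\rho(X) \geq 18$ then forces $\rank(\mathrm{MW}) \geq 1$, and equality must hold because $\Lambda \subseteq \overline{\NS}(X)$ and $\rank(\Lambda) = 18$. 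Next, I would identify one of the five remaining sections (which live among the $E_\sigma$ with $\sigma^{-1}(1) = 1$) as an explicit generator $P$ of the Mordell--Weil lattice modulo torsion, compute its N\'eron--Tate height from the intersections $P \cdot O$ and the fiber-component contributions, and verify that
\[
\disc(\Lambda) \;=\; \frac{\disc(\textrm{triv})}{|\mathrm{MW}_{\mathrm{tors}}|^2} \cdot \langle P, P \rangle \;=\; 20.
\]
This height computation, combined with the previously computed Gram matrix, confirms that $\Lambda$ coincides with the sublattice generated by $O$, the fiber components, and $P$.

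To upgrade from rank equality to saturation, I would argue as in \S\ref{sec:4sympent}: a non-saturation at a prime $p \in \{2,5\}$ must come either from extra torsion sections or from a Mordell--Weil lattice finer than the $\Z P$ just found. A $p$-torsion section would have to meet the non-identity components of every $\mathrm{I}_6$ fiber in a coherent pattern modulo $p$; for $p=2$ this is visibly ruled out by the $A_5^{\oplus 3}$ discriminant group (which has $2$-part $(\Z/2)^3$ but forces any $2$-torsion section to intersect each fiber in a component of order dividing $2$, violating the constraint coming from the $5$ non-identity components of an $\mathrm{I}_6$), and for $p=5$ it is incompatible with the $6$-torsion nature of the discriminant groups of the $A_5$'s. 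A hypothetical section $P'$ with $pP' = P$ modulo torsion would have height $\langle P,P\rangle/p^2$, which for both $p = 2,5$ is not of the form $4 + 2(P' \cdot O) - (\textrm{fiber contributions in } \tfrac16\Z)$, giving the required contradiction.

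The main obstacle is the height computation and the careful enumeration of the fiber-component contributions: with three $\mathrm{I}_6$ fibers, the local contributions to the height pairing lie in $\tfrac{1}{6}\Z$ and are determined by which of the $5$ non-identity components the section meets in each fiber, so one must verify that the unique admissible combination producing height equal to $20 \cdot |\mathrm{MW}_{\mathrm{tors}}|^2 / 216$ is actually realized by an $E_\sigma$. The symmetry of the Nauru graph in Figure \ref{fig:nauru} should make this a finite and tractable check rather than a delicate one, but it is the step requiring the most bookkeeping.
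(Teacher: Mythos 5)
Your proposal follows essentially the same route as the paper: both pass to the elliptic fibration $\pi_1: X \to \Proj(V_1^\vee)$ with three $\mathrm{I}_6$ fibers and trivial lattice of discriminant $6^3$, use Shioda--Tate to pin the Mordell--Weil rank at $1$, verify the discriminant $20$ via the height of a non-torsion section (the paper's $5/6 = 4 - 5/6 - 5/6 - 9/6$, with the $3$-torsion section accounting for the $|\mathrm{MW}_{\mathrm{tors}}|^2 = 9$ in your formula), and rule out an overlattice by excluding $2$-torsion sections and a generator of height $5/24$. The only cosmetic difference is that your $5$-saturation check is vacuous, since $25 \nmid 20$ means only $p=2$ can occur.
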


\begin{proof}
  First, note that the dimension of the moduli space of quintuply
  symmetric penteracts is $6 - 4 = 2$, so the rank of $\overline{\NS}(X)$
  for a very general $X$ is $18$, which is the rank of $\Lambda$.
  
  Consider the elliptic fibration $X \to
  \Proj(V_1^\vee)$. Generically, the root lattice formed by the
  non-identity components of the reducible fibers is $A_5^3$. Since
  $E_{(1)} = E_{1234}$ intersects the fiber class in $1$, it follows
  that the elliptic fibration has a section. The root sublattice has
  rank $15$ and discriminant $6^3$. We check that the (Jacobian of)
  the elliptic fibration has a $3$-torsion section, in fact defined
  over the ground field, and since the Picard number is $18$, the only
  possibility is to have a non-torsion section of height $20/(216/9) =
  5/6 = 4 - 5/6 - 5/6 - 9/6$. We can also check directly that there
  are no $2$-torsion sections, even over the algebraic closure. For
  the Picard group to be any larger, it would have to have
  discriminant $5$, and a Mordell-Weil generator of height $5/24$,
  which is impossible with the fiber configuration.
\end{proof}

\begin{proof}[Proof of Theorem $\ref{thm:5sympenteractorbits}$]
  The above discussion shows that a quintuply symmetric penteract
  produces a K3 surface that is lattice-polarized by $(\Lambda, S)$.
  For the other direction, like for Theorem
  \ref{thm:4sympenteractorbits}, the proof is a straightforward
  generalization of the proof of Theorem
  \ref{thm:3sympenteractorbits}.  We construct a penteract from this
  data, and applying the argument from Theorem
  \ref{thm:2sympenteractorbits} four times shows that all five vector
  spaces related to the penteract may be identified and that the
  penteract is symmetric with respect to any two.
\end{proof}

\subsection{Automorphisms} \label{sec:5sympentauts}

Given a K3 surface $X$ coming from a quintuply symmetric penteract,
the visible automorphisms of $X$ may be described quite easily.

Because of the symmetry in this case, all the automorphisms of the
form $\alpha_{kl,m}$ on $X_{ijkl}$ act in similar ways.  As always,
each $\alpha_{kl,m}$ is an involution of the K3 surface $X_{ijkl}$
switching the $k$th and $l$th coordinates, e.g., $\alpha_{34,5}$ sends
$(a,b,c,d) \in X_{1234}$ to $(a,b,d,c)$.  Thus, these generate a group
of automorphisms isomorphic to $S_4$.  Note that the automorphism
$\Phi_{ijklm}$ introduced in \S \ref{sec:pentauts} is an order $4$
element, as the composition of three of these transpositions.

While $\Phi_{ijklm}$ is fixed-point-free for the general $X$ in this
family (see \S \ref{sec:hypdetfixedpt} or simply observe that the
diagonal $\Proj^1$ in $(\Proj^1)^4$ does not generally intersect $X$),
note that its square is an involution but not fixed-point-free (over
$\fdbar$).  For example, the $\Proj^1\times\Proj^1$ of points
$(a,b,a,b) \in (\Proj^1)^4$ on $X_{1234}$ will be fixed under
$\Phi_{ijklm}^2$; in particular, for the general $X$ in this family,
there will be $8$ fixed points over $\fdbar$, namely the intersection
of $X$ with this diagonal $\Proj^1 \times \Proj^1$ in $(\Proj^1 \times
\Proj^1) \times (\Proj^1 \times \Proj^1)$.

%%%%%%%%%%%%%%%%%%%%%%%%%%%%%
%%%%%%%%%%%%%%%%%%%%%%%%%%%%%
%%%%%%%%   2 (x) 2 (x) 2 (x) 4   %%%%%%%%%
%%%%%%%%%%%%%%%%%%%%%%%%%%%%%
%%%%%%%%%%%%%%%%%%%%%%%%%%%%%

\section{\texorpdfstring{$2 \tns 2 \tns 2 \tns 4$}{2 (x) 2 (x) 2 (x) 4}}
\label{sec:2224}

In this section, we study the space of $2 \times 2 \times 2 \times 4$
matrices and classify their orbits in terms of certain K3 surfaces of
rank at least $13$ over $\fdbar$:

\begin{theorem} \label{thm:2224orbits} Let $V = V_1 \tns V_2 \tns V_3
  \tns U$, where $V_1$, $V_2$, and $V_3$ are $2$-dimensional
  $\fd$-vector spaces and $U$ is a $4$-dimensional $\fd$-vector space.
  Let $G' = \GL(V_1) \times \GL(V_2) \times \GL(V_3) \times \GL(U)$,
  and let $G$ be the quotient of $G'$ by the kernel of the
  multiplication map $\Gm^4 \to \Gm$.  Let $\Lambda$ be the lattice
  whose Gram matrix is

{\footnotesize
\begin{equation} \label{eq:2224NS}
\begin{pmatrix}
4 & 4 & 4 & 4 & 0 & 0 & 0 & 0 & 0 & 0 & 0 & 0 & 0 \\
4 & 0 & 2 & 2 & 0 & 0 & 0 & 1 & 1 & 1 & 1 & 1 & 1 \\
4 & 2 & 0 & 2 & 1 & 1 & 1 & 0 & 0 & 0 & 1 & 1 & 1 \\
4 & 2 & 2 & 0 & 1 & 1 & 1 & 1 & 1 & 1 & 0 & 0 & 0 \\
0 & 0 & 1 & 1 & -2 & 0 & 0 & 0 & 0 & 0 & 0 & 0 & 0 \\
0 & 0 & 1 & 1 & 0 & -2 & 0 & 0 & 0 & 0 & 0 & 0 & 0 \\
0 & 0 & 1 & 1 & 0 & 0 & -2 & 0 & 0 & 0 & 0 & 0 & 0 \\
0 & 1 & 0 & 1 & 0 & 0 & 0 & -2 & 0 & 0 & 0 & 0 & 0 \\
0 & 1 & 0 & 1 & 0 & 0 & 0 & 0 & -2 & 0 & 0 & 0 & 0 \\
0 & 1 & 0 & 1 & 0 & 0 & 0 & 0 & 0 & -2 & 0 & 0 & 0 \\
0 & 1 & 1 & 0 & 0 & 0 & 0 & 0 & 0 & 0 & -2 & 0 & 0 \\
0 & 1 & 1 & 0 & 0 & 0 & 0 & 0 & 0 & 0 & 0 & -2 & 0 \\
0 & 1 & 1 & 0 & 0 & 0 & 0 & 0 & 0 & 0 & 0 & 0 & -2
\end{pmatrix},
\end{equation}
}%
and let $S = \{ e_1, e_2, e_3, e_4\}$.  Then the $G(\fd)$-orbits of an
open subset of $V(\fd)$ are in bijection with the $\fd$-rational
points of an open subvariety of the moduli space
$\sM_{\Lambda,S}$ of K3 surfaces $X$ lattice-polarized by
$(\Lambda,S)$.
\end{theorem}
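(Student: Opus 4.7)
The plan is to follow the strategy established for the penteract and its symmetric degenerations in Sections~\ref{sec:penteracts}--\ref{sec:5sympent}. For the forward direction, I would view a nondegenerate tensor $A \in V_1 \otimes V_2 \otimes V_3 \otimes U$ as a $4$-dimensional linear system of $(1,1,1)$-forms on $\Proj(V_1^\vee)\times\Proj(V_2^\vee)\times\Proj(V_3^\vee) \cong (\Pone)^3$, equivalently as a rational map $f_A \colon (\Pone)^3 \dashrightarrow \Proj(U)$ of degree $6$. The associated K3 surface $X$ then admits two compatible descriptions: as the ramification divisor of $f_A$ in $(\Pone)^3$, which has class $(2,2,2)$ by Riemann--Hurwitz, and as the quartic surface in $\Proj(U^\vee)$ cut out by the Cayley hyperdeterminant $u\mapsto \mathrm{hyperdet}(A(u,\ccdot,\ccdot,\ccdot))$. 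The embedding $X\hookrightarrow (\Pone)^3$ yields line bundles $L_2, L_3, L_4$ via the three projections, with $L_i^2 = 0$ and $L_i\cdot L_j = 2$ for $i\neq j$; the embedding $X\hookrightarrow \Proj(U^\vee)$ yields the hyperplane class $L_1$ with $L_1^2 = 4$, and one checks $L_1\cdot L_i = 4$ for $i = 2, 3, 4$.

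Next, I would identify the nine exceptional divisors. Each of the three elliptic fibrations $X\to\Pone$ induced by $L_2, L_3, L_4$ should have three reducible singular fibers (as in the discriminant analyses of Sections~\ref{sec:3sympent}--\ref{sec:23sympent}), each contributing a single disjoint $(-2)$-curve. This yields the nine exceptional divisors $E_5, \ldots, E_{13}$, partitioned into three groups of three according to which of $L_2, L_3, L_4$ each $E_j$ is orthogonal to (equivalently, which two of $L_2, L_3, L_4$ each $E_j$ serves as a section for). Verifying the intersection matrix \eqref{eq:2224NS} then reduces to a direct calculation. To conclude that this sublattice equals $\overline{\NS}(X)$ for very general $A$, I would check $p$-saturation at each prime dividing its discriminant, applying Nikulin's Lemma~\ref{lem:nikulin} to sums of disjoint $(-2)$-curves and exploiting the $S_3$-symmetry permuting the three fibrations.

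For the reverse direction, given a K3 surface $X$ lattice-polarized by $(\Lambda, S)$, the bundles $L_2, L_3, L_4$ yield a morphism $\phi\colon X \to (\Pone)^3$ with image a $(2,2,2)$-hypersurface, while $L_1$ yields the quartic embedding $\psi\colon X\hookrightarrow \Proj(H^0(X, L_1)^\vee)$. The restriction map $H^0((\Pone)^3,\sO(1,1,1)) \to H^0(X, L_2+L_3+L_4)$ is an isomorphism of $8$-dimensional vector spaces, since $H^i(\sO_{(\Pone)^3}(-1,-1,-1))$ vanishes in all degrees. The core of the reverse construction is to single out a canonical $4$-dimensional subspace of $H^0(X, L_2+L_3+L_4) \cong V_1\otimes V_2\otimes V_3$ corresponding to the quartic embedding $\psi$; this subspace would then be identified with $U^\vee$ for $U = H^0(X, L_1)$, yielding the required element of $V_1\otimes V_2\otimes V_3\otimes U$. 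One would then verify, as in the proofs of Theorems~\ref{thm:penteractorbits} and \ref{thm:2sympenteractorbits}, that the resulting rational map $(\Pone)^3 \dashrightarrow \Proj(U)$ has $\phi(X)$ as its ramification divisor, establishing that the two constructions are mutually inverse.

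The main obstacle is the identification of this $4$-dimensional subspace. Unlike the penteract and its symmetrizations, there is no numerical relation of the form $L_1 = L_2+L_3+L_4 - \sum_j c_j E_j$ available in this setting: the class $L_2 + L_3 + L_4 - L_1$ has self-intersection $-8$, is orthogonal to each $L_i$ for $i = 2, 3, 4$, and has intersection number $2$ with each $E_j$, precluding any integral expression of this type. Moreover, $L_2+L_3+L_4-L_1$ is unlikely to be effective (its self-intersection is less than $-2$), so the multiplication-by-section strategy of Theorems~\ref{thm:penteractorbits}--\ref{thm:5sympenteractorbits} does not apply directly. I would expect to resolve this by working with the double embedding $X \hookrightarrow (\Pone)^3 \times \Proj(H^0(X, L_1)^\vee)$ and using the compatibility between the quartic embedding and the three genus-one fibrations to pin down the correct subspace.
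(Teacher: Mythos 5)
There are two genuine problems here, one in the forward direction and one --- the fatal one --- in the reverse direction. In the forward direction, your fiber analysis is wrong: each of the three genus-one fibrations $X\to\Proj(V_i^\vee)$ has \emph{four} reducible fibers, not three. The degree-$24$ discriminant of each fibration factors as the square of a binary quartic times an irreducible degree-$16$ form, and the four "banana" ($\mathrm I_2$) fibers sit over the four points of $\Proj(V_i^\vee)$ where the $2\times2\times4$ slice $A(v_i)$ has vanishing determinant. This produces \emph{twelve} exceptional $(-2)$-curves $E_1,\dots,E_{12}$ (which are exactly the curves contracted to the twelve singular points of the quartic model $X_U\subset\Proj(U^\vee)$), together with three relations $L_i = H - \tfrac12\sum_{j\in T_i}E_j$ over suitable $8$-element index sets $T_i$ (here $H$ is the quartic class and $L_1,L_2,L_3$ the three $\Proj^1$-pullbacks, i.e., your $L_1$ and $L_2,L_3,L_4$). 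Your picture of nine independent exceptional curves, three per fibration, happens to reproduce the $13\times13$ Gram matrix but does not match the geometry, and without the twelve curves and the half-integral relations the saturation argument (which runs through Nikulin's lemma applied to half-sums of the $E_j$ and the classes $H-L_i$) cannot be carried out as you describe.

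The more serious gap is the reverse construction, which you explicitly leave open. Your proposed starting point --- the $8$-dimensional space $H^0(X,L_1+L_2+L_3)\cong V_1\otimes V_2\otimes V_3$, inside which you hope to "single out" a canonical $4$-dimensional subspace playing the role of $U^\vee$ --- is a dead end, and your correct observation that $L_1+L_2+L_3-H$ is neither effective nor expressible via the exceptional curves is precisely the symptom. The resolution is not to recover $U$ from the three small factors but to recover one of the small factors from the other data: one considers the multiplication map
\begin{equation*}
\mu\colon H^0(X,L_1)\otimes H^0(X,L_2)\otimes H^0(X,H)\longrightarrow H^0(X,L_1+L_2+H),
\end{equation*}
whose source has dimension $16$ and whose target has dimension $\tfrac12(L_1+L_2+H)^2+2 = 14$ by Riemann--Roch (using $(L_1+L_2+H)^2=24$); the $2$-dimensional kernel is then identified with $V_3^\vee$, and the inclusion of the kernel gives the box in $V_1\otimes V_2\otimes V_3\otimes U$ with $V_1=H^0(L_1)$, $V_2=H^0(L_2)$, $U=H^0(H)$. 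That the result agrees with the original box is then checked at the level of the threefold $Y_{12}\subset\Proj(V_1^\vee)\times\Proj(V_2^\vee)\times\Proj(U^\vee)$, of which $X_{12U}$ is the ramification locus over $\Proj(U^\vee)$. Without some such mechanism for producing the missing $2$-dimensional factor, your proposal does not yield a bijection.
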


\subsection{Construction of K3 surfaces}

Given an element $A \in V(K)$, we construct a K3 surface $X$ with
Picard number at least 13 as follows.  We will show that the
intersection of the varieties defined by the equations
\begin{eqnarray} \label{eq:2224eqns1}
A(v_1, v_2, \ccdot, u) &\equiv& 0 \\ \label{eq:2224eqns2}
A(v_1, \ccdot, v_3, u) &\equiv& 0 \\ \label{eq:2224eqns3}
A(\ccdot, v_2, v_3, u) &\equiv& 0
\end{eqnarray}
in $\Proj(V_1^\vee) \times \Proj(V_2^\vee) \times \Proj(V_3^\vee)
\times \Proj(U^\vee)$ is a K3 surface $X$.  The projection $X_U$ of
$X$ to $\Proj(U^\vee)$ is then a quartic surface with $12$
singularities over $\fdbar$ (to be described below).

The projection $X_{123}$ of $X$ to $\Proj(V_1^\vee) \times
\Proj(V_2^\vee) \times \Proj(V_3^\vee)$ is cut out by a single
tridegree $(2,2,2)$ form $f(v_1,v_2,v_3)$.  In order to explicitly
describe this form, let us write $A$ as a quadruple
$(A_1,A_2,A_3,A_4)$ of trilinear forms on $V_1^\vee\times
V_2^\vee\times V_3^\vee$ (by choosing a basis for $U$), and consider
the determinant
{\small
\begin{equation}\label{eq:fmatrix}
D(v_1,v_1',v_2,v_2',v_3,v_3'):=\left| \begin{array}{llll}
A_1(v_1,v_2,v_3) & A_2(v_1,v_2,v_3) & A_3(v_1,v_2,v_3) & A_4(v_1,v_2,v_3) \\[.025in]
A_1(v_1',v_2,v_3) & A_2(v_1',v_2,v_3) & A_3(v_1',v_2,v_3) & A_4(v_1',v_2,v_3) \\[.025in]
A_1(v_1,v_2',v_3) & A_2(v_1,v_2',v_3) & A_3(v_1,v_2',v_3) & A_4(v_1,v_2',v_3) \\[.025in]
A_1(v_1,v_2,v_3') & A_2(v_1,v_2,v_3') & A_3(v_1,v_2,v_3') & A_4(v_1,v_2,v_3') \\
\end{array}
\right|
\end{equation}
}%
for vectors $v_1,v_1'\in V_1^\vee$, $v_2,v_2'\in V_2^\vee$,
$v_3,v_3'\in V_3^\vee$.  Then we observe that if $(v_1,v_2,v_3,v_4)\in
V_1^\vee \times V_2^\vee \times V_3^\vee \times U$ satisfies equations
(\ref{eq:2224eqns1})--(\ref{eq:2224eqns3}), then $u\in U^\vee$ lies in
the (right) kernel of the matrix in (\ref{eq:fmatrix}).  Furthermore,
since the determinant $D$ vanishes if $v_1=cv_1'$, $v_2=cv_2'$, or
$v_3=cv_3'$ for any constant $c\in K$, we see that the polynomial
$D(v_1,v_1',v_2,v_2',v_3,v_3')$ is a multiple of
$\det(v_1,v_1')\det(v_2,v_2')\det(v_3,v_3')$.  The tridegree $(2,2,2)$
form
\begin{equation}\label{eq:fformula}
f(v_1,v_2,v_3):=\frac{D(v_1,v_1',v_2,v_2',v_3,v_3')}{\det(v_1,v_1')\det(v_2,v_2')\det(v_3,v_3')}
\end{equation}
is then easily checked to be irreducble and thus defines the
projection $X_{123}$ of $X$ onto $\Proj(V_1^\vee) \times
\Proj(V_2^\vee) \times \Proj(V_3^\vee)$.

One checks that generically $X_{123}$ is smooth, and thus $X$ and
$X_{123}$ are isomorphic.  Moreover, for $\{i,j,k\} = \{1,2,3\}$, we
have that $X_{123}$ is a double cover of $\Proj(V_i^\vee) \times
\Proj(V_j)^\vee$ branched over a genus~$9$ curve, given by a bidegree
$(4,4)$ equation (namely, the discriminant of $f$ viewed as a
quadratic form on $\Proj(V_k^\vee)$).

Let $X_U$ be the image of $X$ under the fourth projection to
$\Proj(U^\vee)$.  Then $u \in X_U$ if and only if there exists $(v_1,
v_2, v_3) \in \Proj(V_1^\vee) \times \Proj(V_2^\vee) \times
\Proj(V_3^\vee)$ such that the equations
(\ref{eq:2224eqns1})--(\ref{eq:2224eqns3}) are satisfied, which occurs
if and only if the $2 \times 2 \times 2$ cube
$A(\ccdot,\ccdot,\ccdot,u) = A(u)$ has discriminant $0$.  (Recall that
$2 \times 2 \times 2$ cubes have a single $\SL_2 \times \SL_2 \times
\SL_2$-invariant of degree four called the {\em discriminant}, which
is the discriminant of each of the three binary quadratics that arise
from the determinant construction on the cube.  If this discriminant
vanishes, the cube is called {\em singular}, and in this case, all
three of the binary quadratics are multiples of squares of linear
forms, i.e., have double roots in $\Proj^1$.) We conclude that $X_U$
is given by the vanishing of the quartic polynomial $\disc A(u)$.

We may also give the following alternative description of $X_U$.  Let
$Y_{12}$, $Y_{13}$, and $Y_{23}$ denote the threefolds in
$\Proj(V_1^\vee)\times\Proj(V_2^\vee)\times\Proj(U^\vee)$,
$\Proj(V_1^\vee)\times\Proj(V_3^\vee)\times\Proj(U^\vee)$, and
$\Proj(V_2^\vee)\times\Proj(V_3^\vee)\times\Proj(U^\vee)$ defined by
(\ref{eq:2224eqns1}), (\ref{eq:2224eqns2}), and (\ref{eq:2224eqns3}),
respectively.  Then for $(i,j)\in\{(1,2),(1,3),(2,3)\}$, we have that
{$X_U$ is the ramification locus of the double cover $Y_{ij}\to
  \Proj(U^\vee)$ given by projection}.  To see this, fix $u\in
U^\vee$; then $A(v_1,v_2,\ccdot,u)=0$ has generically two solutions
$(v_1,v_2)\in \Proj(V_1^\vee)\times\Proj(V_2^\vee)$, for there are
generically two choices for $v_1$ as the root of the associated binary
quadratic on $V_1^\vee$, and then a uniquely determined choice for
$v_2$ given $v_1$ (namely, $v_2$ is the left kernel of the bilinear
form $A(v_1,\ccdot,\ccdot,u)$).  If this binary quadratic form on
$V_1^\vee$ has only one root (which occurs when $\disc A(u)=0$), then
there will thus be only one $(v_1,v_2)$ giving $A(v_1,v_2,\ccdot,u)$.
Hence $X_U$ is the ramification locus of the double cover $Y_{12}\to
\Proj(U^\vee)$, and similarly is the ramification locus of the double
covers $Y_{13}\to \Proj(U^\vee)$ and $Y_{23}\to \Proj(U^\vee)$.  It
follows, in particular, that the preimage $X_{ijU}$ of $X_U$ in
$Y_{ij}$ is the projection of $X$ onto
$\Proj(V_i^\vee)\times\Proj(V_j^\vee)\times \Proj(U^\vee)$ and is
isomorphic to $X_U$.

\subsection{Singularities and exceptional divisors}

We claim that $X_U$ generically has $12$ singularities; these are
closely related to certain special sets $S_i$ of four points in each
$\Proj(V_i^\vee)$.  To construct this set $S_1$ of four points in
$\Proj(V_1^\vee)$, we consider the $2 \times 2 \times 4$ box
$A(v_1,\ccdot,\ccdot,\ccdot) = A(v_1)$ attached to a given point $v_1 \in
\Proj(V_1^\vee)$.  It has a natural $\SL(V_2) \times \SL(V_3) \times
\SL(U)$-invariant---in fact, an $\SL(V_2 \otimes V_3) \times
\SL(U)$-invariant---of degree four, namely the determinant of $A(v_1)$
when viewed as an element of $(V_2 \otimes V_3) \tns U$.  That is,
with a choice of basis, this is simply the determinant of $A(v_1)$
viewed as a $4 \times 4$ matrix.  This invariant gives a degree 4 form
on $\Proj(V_1^\vee)$, which then cuts out our set $S_1$ of four points
in $\Proj(V_1^\vee)$ over $\fdbar$. The sets $S_i$ for $i=2,3$
are constructed in the analogous manner.

The sets $S_i$ have a further significance.  Consider the projection
$\pi_1: X_{123}\to\Proj(V_1^\vee)$.  The fiber over any point $v_1 \in
V_1^\vee$ is then the curve in $\Proj^1(V_2^\vee)\times
\Proj^1(V_3^\vee)$ defined by the bidegree $(2,2)$-form
$f(v_1,\ccdot,\ccdot)$.  We thus see that $X_{123}\cong X$ is a genus
one fibration over $\Proj(V_1^\vee)$, where each genus one fiber is
described as a bidegree $(2,2)$ curve in $\Proj^1(V_2^\vee)\times
\Proj^1(V_3^\vee)$.  A fiber in this fibration is singular precisely
when the discriminant of this bidegree $(2,2)$ form, which is a binary
form of degree 24 on $\Proj^1(V_1^\vee)$, is zero.  Using
indeterminate entries for $A$, one checks that this degree 24 binary
form factors as the square of a binary quartic form times an
irreducible binary form of degree 16.  Thus, generically, we have 16
nodal fibers, while the remaining four fibers turn out to be {\it
  banana curves}, i.e., they have as components two rational curves
intersecting in two points, as we now show.

In fact, we claim that $S_1$ gives precisely the set of four points
over which the fibers for the map $\pi_1: X\to\Proj(V_1^\vee)$ are
{banana curves}.
Indeed, for $v_1\in S_1$, by construction there exists a (generically
unique) point $u \in \Proj(U^\vee)$ such that $A(v_1,\ccdot,\ccdot,u)
\equiv 0$.  Then the points $(v_2,v_3) \in \Proj(V_2^\vee) \times
\Proj(V_3^\vee)$ such that $(v_1,v_2,v_3,u) \in X$ are cut out by the
single equation $A(\ccdot,v_2,v_3,u) \equiv 0$, which is a bidegree
$(1,1)$-form on $\Proj(V_2^\vee) \times \Proj(V_3^\vee)$.  It follows
that the bidegree $(2,2)$-form that defines the fiber of $\pi_1$ over
$v_1$ factors into two $(1,1)$ forms, as claimed.

Let $E_j$ for $1 \leq j \leq 12$ denote these twelve rational $(1,1)$
curves on $X$ as constructed in the previous paragraph ($1 \leq j \leq
4$ for $i = 1$, $5 \leq j \leq 8$ for $i = 2$, and $9\leq j \leq 12$
for $i =3$).  Recall that each of these $E_j$ must intersect the other
rational curve in its fiber in two points.  To obtain the other
component, we note that for $r\in S_1$, the space of $2 \times 2$
matrices spanned by $A_i(r, \ccdot, \ccdot)$, $i = 1, \ldots, 4$ is
(generically) three-dimensional, by the definition of $S_1$. So there
is a plane conic which describes the linear combinations which are of
rank $1$. With choices of bases for the vector spaces $V_i$ and their
duals, suppose such a rank $1$ matrix is $Z_{s,t} =
(\begin{smallmatrix} s_2t_2 & -s_2t_1 \\ -s_1t_2 &
  s_1t_1 \end{smallmatrix})$; then $(s_1, s_2) \in V_2^\vee$ and
$(t_1, t_2) \in V_3^\vee$ give a point $(r,s,t)$ on the fiber over $r$
(the $U$ component may be computed uniquely, and is the linear
combination above). The locus of these $(s,t) \in \Proj(V_2^\vee)
\times \Proj(V_3^\vee)$ is given by a determinantal condition which
says that $Z_{s,t}$ is linearly dependent with the $A_i$, hence a
$(1,1)$-form.

As we have already noted, the projection $X \to X_{123}$ is an
isomorphism.  The map $X \to X_U$ is, not, however: the 12 rational
curves $E_j$ are blown down to 12 singularities (recall that, for each
$j$, the elements of $E_j$ all have the same $U$-coordinate $u$).
Meanwhile, the other rational curves map to nodal curves.

\subsection{N\'eron-Severi lattice}

We thus have a number of divisors on $X_{\fdbar}$: the $E_j$ for $j =
1, \dots, 12$, as well as $H$ (the pullback of
$\sO_{\Proj(U^\vee)}(1)$) and the $L_i$ (the pullbacks of
$\sO_{\Proj(V_i^\vee)}(1)$) for $i = 1, 2, 3$.  We now compute their
intersection numbers.

First, note that $L_1 \cdot L_2 = 2$ since $X \hookrightarrow
\Proj(V_1^\vee) \times \Proj(V_2^\vee) \times \Proj(V_3^\vee) $ is cut
out by a $(2,2,2)$-form. Intersecting $X$ with the zero loci of
$(1,0,0)$ and $(0,1,0)$ forms, we get an intersection number of $2$,
corresponding to the fact that $X \rightarrow \Proj(V_1^\vee) \times
\Proj(V_2^\vee)$ is a double cover. By symmetry, $L_i \cdot L_j = 2$
for all $i \neq j$.

Next, we show that $L_i \cdot H = 4$. The geometric meaning of the
intersection number $L_3 \cdot H$ is as follows. We fix a point $(r_1,
r_2) \in \Proj(V_3^\vee)$, i.e.\ take a fixed (generic) linear
combination of the front and back faces of our $2 \times 2 \times 2$
cube of linear forms, yielding a $2 \times 2$ matrix of linear
forms. Intersection with $H$ means that we restrict the forms to a
generic hyperplane in $\Proj(U^\vee)$. We look for the number of
points in this plane for which the matrix is singular, and such that
$(r_1, r_2)$ is the unique linear combination of the faces which is
singular. For simplicity, assume that $(1,0)$ is not one of the four
special points in $\Proj(V_3^\vee)$ over which $X$ has a reducible
fiber. Then, due to the $\GL(V_3)$ action, we may compute the
intersection number when $(r_1, r_2)$ is $(1,0)$. The constraint that
the front face
\[
\left(\begin{array}{cc}
a & b \\ c & d
\end{array}\right)
\] 
of our cube is singular describes a conic in the plane $V(H) \subset
\Proj(U^\vee)$. On the other hand, for $(1,0)$ to be the unique linear
combination of the faces which makes the matrix singular, if
\[
\left(\begin{array}{cc}
e & f \\ g & h
\end{array}\right)
\]
denotes the back face of our cube, then we also need the mixed determinant 
\[
ah + ed - fc - bg,
\]
to vanish, and this also describes a plane conic. Generically, these
two conics intersect in four points, proving our assertion.

The nonzero intersections among the divisors $H, L_1, \dots, L_3, E_1, \dots,
E_{12}$ are given by 
\begin{align*}
H^2 = H \cdot L_i &= 4, \qquad\qquad H \cdot E_i = 0, \qquad \qquad L_i \cdot L_j = 2, \\
L_1 \cdot E_i &= 1 \textrm{ for } i \in \{5, \dots, 12\}, \\
L_2 \cdot E_i &= 1 \textrm{ for } i \in \{1,2,3,4, 9,10,11, 12\}, \\
L_3 \cdot E_i &= 1 \textrm{ for } i \in \{1, \dots, 8\}.
\end{align*}
Next, we determine the N\'eron-Severi group of the generic K3 surface
in this family.

\begin{proposition}
  For a generic $X$ in this family of K3 surfaces, $\overline{\NS}(X)$
  is spanned over $\Z$ by $H, E_1, \dots, E_{12}$ and $L_1, L_2, L_3$.
\end{proposition}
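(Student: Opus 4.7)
The plan is to follow the template of the analogous saturation proofs elsewhere in the paper, particularly the one in \S \ref{sec:22sympent}.

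First I would verify that the lattice $\Lambda$ spanned by the sixteen classes $H, L_1, L_2, L_3, E_1, \ldots, E_{12}$ has rank exactly $13$. On one hand, the $13\times 13$ Gram matrix displayed in \eqref{eq:2224NS} (which is the Gram matrix of the subset $\{H, L_1, L_2, L_3, E_1, E_2, E_3, E_5, E_6, E_7, E_9, E_{10}, E_{11}\}$) has nonvanishing determinant, so $\mathrm{rank}\,\Lambda \geq 13$. On the other hand, the moduli-space dimension count $\dim V - \dim G = 32 - 25 = 7$ shows that a very general $X$ in this family has $\rho(X) = 20 - 7 = 13$, forcing $\mathrm{rank}\,\Lambda = 13$ and the inclusion $\Lambda \subset \overline{\NS}(X)$ to have finite index. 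Let $d$ denote the discriminant of $\Lambda$ computed directly from the Gram matrix.

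To show $\Lambda = \overline{\NS}(X)$, it suffices to prove that $\Lambda$ is $p$-saturated for every prime $p \mid d$. Supposing $D \in \overline{\NS}(X)$ represents a nonzero element of $\Lambda^\vee / \Lambda$ with $pD \in \Lambda$, I would write
\[
D = \tfrac{1}{p}\bigl(aH + b_1 L_1 + b_2 L_2 + b_3 L_3 + \sum_{j=1}^{12} c_j E_j\bigr)
\]
and exploit several constraints in combination: the self-intersection $D^2$ must be an even integer; $D$ must pair integrally with every class in $\Lambda$; the $S_3$-symmetry permuting the factors $V_1, V_2, V_3$ (and hence $L_1, L_2, L_3$ and the three corresponding groups of four $E_j$'s) acts on $\overline{\NS}(X)$, so taking sums and differences of symmetrized copies of $D$ isolates particular coefficients; the $S_4$-symmetry permuting the four exceptional divisors within each fiber system does likewise; and Lemma \ref{lem:nikulin} forbids any half-sum $\tfrac{1}{2}\sum_{j \in T} E_j$ from lying in $\overline{\NS}(X)$ unless $|T| \in \{0, 8, 16\}$.

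Using these tools in tandem, the strategy is: first apply the $S_3 \times S_4^3$ symmetry to reduce to a normal form where the coefficients $c_j$ of the $E_j$ within each of the three fiber groups coincide; then use parity and integrality constraints to eliminate the $H$- and $L_i$-coefficients; and finally appeal to Nikulin's lemma to force the remaining $E_j$-coefficients to vanish modulo $p$. The principal obstacle will be the $2$-saturation step, since the twelve disjoint $(-2)$-curves contribute a sizable $2$-elementary subgroup to the discriminant group and hence many candidate half-integral classes. The $S_3 \times S_4^3$ action should reduce the enumeration substantially, and Nikulin's lemma — whose permitted cardinalities $\{0, 8, 16\}$ are never realized by a nontrivial symmetric subset of the $4 + 4 + 4$ structure of the $E_j$ — should finish off the remaining cases. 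Saturation at any odd prime dividing $d$ is expected to be handled more easily, by intersection with a concrete test class such as a section of the genus-one fibration $\pi_1: X \to \Proj(V_1^\vee)$, in the spirit of the proof in \S \ref{sec:23sympent}.
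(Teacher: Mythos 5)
Your setup (rank $\geq 13$ from the nondegenerate Gram matrix, equality from the dimension count $32-25=7$, hence finite index) matches the paper, but the saturation argument --- which you yourself identify as the principal obstacle --- is left at the level of ``should finish off the remaining cases,'' and the one concrete claim you make there is false. You assert that the cardinalities $\{0,8,16\}$ permitted by Nikulin's lemma are ``never realized by a nontrivial symmetric subset of the $4+4+4$ structure of the $E_j$.'' But the union of two of the three groups of four is such a subset, has size $8$, and its half-sum \emph{does} lie in $\overline{\NS}(X)$: one has
\begin{equation*}
L_1 = H - \tfrac{1}{2}(E_5+\cdots+E_{12}),\qquad
L_2 = H - \tfrac{1}{2}(E_1+\cdots+E_4+E_9+\cdots+E_{12}),\qquad
L_3 = H - \tfrac{1}{2}(E_1+\cdots+E_8),
\end{equation*}
as you can verify by checking that the difference of the two sides pairs to zero with everything in a nondegenerate rank-$13$ lattice. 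So $\tfrac{1}{2}\sum_{j\in T}E_j$ with $|T|=8$ a union of two groups equals $H-L_i$ and is already in $\Lambda$; an argument that excludes all nonzero half-sums would prove too much. The correct statement is that every such class is already in $\Lambda$, and distinguishing ``in $\Lambda$'' from ``not in $\overline{\NS}(X)$'' is exactly the case analysis you have not done. (Your within-group symmetrization also only forces the four coefficients in a group to agree modulo the relevant power of $2$; the $S_3$ across groups cannot force the three group-coefficients to coincide, precisely because $H-L_i$ is a non-$S_3$-invariant class in the lattice.)

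The relations above are also the structural fact that makes the paper's proof short, and finding them would have simplified your plan considerably: they show the whole lattice $M=\langle H,L_1,L_2,L_3,E_1,\dots,E_{12}\rangle$ sits inside $\tfrac{1}{2}\bigl(\Z H\oplus\bigoplus\Z E_j\bigr)$ with index $4$ over $\Z H\oplus\bigoplus \Z E_j$, whence $\disc M = 2^{14}/4^2 = 2^{10}$. In particular there are no odd primes to worry about (so your final paragraph about a test section is moot), and any putative $D$ in the saturation can be reduced, using only the parity of $D\cdot E_j$ and the oddness of $(mH/2)^2$, to the form $\tfrac{1}{2}(\epsilon H+\sum c_jE_j)$ with $\epsilon, c_j\in\{0,1\}$ --- no monodromy or symmetry input needed. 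The paper then finishes by combining Nikulin's lemma with the integrality of $D\cdot L_i$ (which forces $|T\cap G_i^c|$ even for each group $G_i$) to show the only solutions with $|T|=8$ are the classes $H-L_i$ themselves. I would encourage you to either carry out that enumeration or, if you want to keep the symmetrization, justify that the monodromy of the family realizes the $S_4$ on each group and then handle the $|T|=8$ classes as elements of $\Lambda$ rather than as contradictions.
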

\begin{proof}
  We have already demonstrated that $\overline{\NS}(X)$ contains $H,
  E_1, \dots, E_{12}$, with $H^2 = 4$ , $H \cdot E_i = 0$ and $E_i
  \cdot E_j = - 2 \delta_{ij}$. Therefore the rank is at least
  $13$. On the other hand, the moduli space has dimension $2 \cdot 2
  \cdot 2 \cdot 4 - (2^2 - 1) \cdot 3 - 4^2 = 7$. Hence the dimension
  of $\overline{\NS}(X)$ for generic $X$ must be exactly $13$.

  Since $L_i \cdot H = 4$ and $L_i \cdot E_j = 0$ if $j \in
  \{4i-3,4i-2,4i-1,4i\}$, by comparing intersection numbers, we obtain
  \begin{align*}
    L_1 &= H - (E_5 + \dots E_{12})/2, \\
    L_2 &= H - (E_1 + \dots + E_4 + E_9 + \dots + E_{12})/2, \\
    L_3 &= H - (E_1 + \dots + E_8)/2.
  \end{align*}
  Note that $L_1 + L_2 + L_3$ is already in $\Z H + \Z E_1 + \dots +
  \Z E_{12}$.

  The discriminant of the lattice $M$ spanned by $H$, the three $L_i$,
  and the twelve $E_j$ is $4 \cdot 2^{12}/ 2^4 = 2^{10} = 1024$. Since
  this discriminant is a power of $2$, if the N\'eron-Severi lattice
  is larger than $M$, there exists an element $D \in \Q H + \Q E_1 +
  \dots \Q E_{12}$ where all the denominators are powers of $2$.

  In that case, we claim that $2D \in \Z H + \dots \Z E_{12}$.  Let
  $2^e$ be the largest power of $2$ in a denominator of a coefficient
  of $E_i$ in $D$. If $e \geq 2$, then $2^{e-2} D \cdot
  E_i$ is not an integer, since $E_i^2 = -2$. Also, we cannot have $D
  = mH/2^e + (c_1 E_1 + \dots c_{12} E_{12})/2$ with $c_i$ integers,
  $e \geq 2$ and $m$ odd, for then $2^{e-1}D = mH/2 + 2^{e-2}(c_1 E_1
  + \dots c_{12} E_{12})$ is in $\overline{\NS}(X)$, and so is $mH/2$. But
  this is impossible, since $(mH/2)^2 = m^2$ is odd while the
  intersection pairing is even. Without loss of generality (by
  subtracting integer multiples of $H$ and $E_i$), we may thus assume
  that $D = (c_1 E_1 + \dots + c_{12} E_{12})/2$ or $D = (H + c_1 E_1 +
  \dots + c_{12} E_{12})/2$, where $c_i \in \{0, 1\}$.

  In the first case, we note that $\sum c_i \in \{0,8,16\}$ by Lemma
  \ref{lem:nikulin}. Now $\sum c_i = 16$ is impossible, while $\sum
  c_i = 0$ is trivial. Therefore, we need to show that if $\sum c_i =
  8$, then $D$ is one of $H - L_i$. If not, then $D \cdot L_3 \in \Z$
  shows that $c_1 + \cdots +c_8$ is even. It must be at least $4$
  (otherwise $\sum c_i = (c_1 + \dots + c_8) + c_9 + \dots + c_{12}$
  would be less than $8$) and cannot be $8$ (otherwise $D = H -
  L_3$). Finally, it cannot be $6$ (otherwise, subtracting $H - L_3$
  would lead to a divisor $y = (\sum d_i E_i)/2$ with $d_i \in
  \{0,1\}$ and $\sum d_i = 4$, which is impossible). We conclude that
  $c_1 + \cdots +c_8 = 4$. Similarly $c_1 + \cdots + c_4 + c_5 +
  \cdots c_{12} = 4$ and $c_5 + \cdots+ c_{12} = 4$. Adding yields
  $\sum c_i = 6$, which is a contradiction.
\end{proof}

An easy discriminant calculation shows:
\begin{corollary}
  For a generic $X$ in this family of K3 surfacs, $\overline{\NS}(X)$
  has a basis given by $H$, $L_1$, $L_2$, $L_3$, $E_1$, $E_2$, $E_3$,
  $E_5$, $E_6$, $E_7$, $E_9$, $E_{10}$, and $E_{11}$.
\end{corollary}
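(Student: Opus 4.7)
The plan is (i) to use the three relations between the $L_i$ and the $E_j$ derived in the proof of the preceding proposition to reduce the $16$-element spanning set of $\overline{\NS}(X)$ to the $13$ proposed classes, and then (ii) to verify linear independence by a direct determinant computation on their Gram matrix.

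For step (i), the proof of the proposition gives the three half-sum relations
\[
2(H - L_i) = \sum_{j \in S_i} E_j,
\]
where $S_1 = \{5,6,\dots,12\}$, $S_2 = \{1,2,3,4,9,10,11,12\}$, and $S_3 = \{1,2,\dots,8\}$. Adding two of these and subtracting the third isolates each of $E_4, E_8, E_{12}$ as a $\Z$-linear combination of $H, L_1, L_2, L_3$ and the nine remaining exceptional classes; for instance, adding the first two and subtracting the third yields $2(E_9 + E_{10} + E_{11} + E_{12}) = 2(H - L_1 - L_2 + L_3)$, so
\[
E_{12} = H - L_1 - L_2 + L_3 - E_9 - E_{10} - E_{11},
\]
with symmetric expressions for $E_4$ and $E_8$. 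Thus the $13$ claimed classes generate $\overline{\NS}(X)$.

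For step (ii), I would first record $L_i^2 = 0$ (which follows from $L_i = H - \tfrac{1}{2}\sum_{j \in S_i} E_j$ by a one-line computation, consistent with $L_i$ being the class of a genus one fiber), together with the intersection numbers $L_1 \cdot E_j = 1$ for $j \in \{5,6,7,9,10,11\}$, $L_2 \cdot E_j = 1$ for $j \in \{1,2,3,9,10,11\}$, and $L_3 \cdot E_j = 1$ for $j \in \{1,2,3,5,6,7\}$, which are immediate from the same formulas. One then sees that the Gram matrix of the $13$ classes in the stated order is precisely \eqref{eq:2224NS}. A direct computation gives determinant $\pm 2^{10}$, which matches the discriminant of $\overline{\NS}(X)$ already computed in the proposition; being nonzero, this witnesses linear independence, so the $13$ classes form a basis.

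There is no substantive obstacle here: once the three half-sum relations and the intersection table are in hand, the argument is bookkeeping plus a sparse $13 \times 13$ determinant whose block structure makes the evaluation routine.
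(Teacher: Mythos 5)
Your argument is correct and is essentially the paper's own (the paper compresses it to ``an easy discriminant calculation''): the three relations $L_i = H - \tfrac12\sum_{j\in S_i}E_j$ let you solve for $E_4,E_8,E_{12}$, so the thirteen classes generate, and their Gram matrix is exactly \eqref{eq:2224NS} with determinant $2^{10}$, matching the discriminant of $\overline{\NS}(X)$ computed in the proposition. The only redundancy is that generation plus nonvanishing determinant already suffices; matching the discriminant is a (welcome) consistency check rather than a needed step.
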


\subsection{Reverse map}

Starting from the data of a K3 surface $X$ with line bundles $L_1,
L_2, L_3$ and $H$ coming from a $2\times 2\times 2\times 4$ box $A$,
we show how to recover the box.  Consider the map
\begin{equation}\label{construct}
\cH^0(L_1) \otimes \cH^0(L_2) \otimes \cH^0(H) \rightarrow \cH^0(L_1
\otimes L_2 \otimes H).
\end{equation}
The dimension of the domain is $2 \cdot 2 \cdot 4 = 16$. The dimension
of the image can be computed by the Riemann-Roch formula, after noting
that
$$
(L_1 + L_2 + H)^2 = 0 + 0 + 4 + 2 \cdot 2 + 2\cdot 4 + 2 \cdot 4 = 24.
$$ 
Since $L_1 + L_2 + H$ is the class of a big and nef divisor, an
easy application of Riemann-Roch on the K3 surface $X$ yields
$$
\cH^0(L_1 + L_2 + H) = \frac{1}{2} (L_1 + L^2 + H)^2 + \chi(\sO_X) = \frac{24}{2} + 2 = 14.
$$ 
Therefore the kernel (which we will soon identify with $V_3^\vee$)
of (\ref{construct}) has dimension $2$, and we obtain a $2 \times 2
\times 2 \times 4$ box $B\in V_1\otimes V_2\otimes V_3\otimes U$,
where $V_1=H^0(L_1)$, $V_2=H^0(L_2)$, and $U=H^0(H)$.

Let $X(B)\in \Proj(V_1^\vee)\times \Proj(V_2^\vee) \times
\Proj(V_3^\vee) \times \Proj(U^\vee)$ denote the K3 surface associated
to $B$.  To see that $B$ is in fact the desired box $A$ (once
$V_3^\vee$ is correctly identified with the kernel of
(\ref{construct})), it suffices to show that $X(B)_{12U}$ is in fact
equal to $X_{12U}$ as sets in $\Proj(V_1^\vee)\times \Proj(V_2^\vee)
\times\Proj(U^\vee)$.
It is equivalent to show that the threefold $Y(B)_{12}$ associated to
$B$ is the same as the threefold $Y_{12}$ in $\Proj(V_1^\vee)\times
\Proj(V_2^\vee) \times\Proj(U^\vee)$, since $X(B)_{12U}$ (and
$X_{12U}$) is then recovered as the ramification locus of
$Y(B)_{12}=Y_{12}\to\Proj(U^\vee)$.  (In other words, if two $2\times
2\times 2\times 4$ boxes yield the same threefold, then they must be
the same box!)  Now the equality $Y_{12}\subset Y(B)_{12}$ is true by
the very construction of $B$, yielding $X_{12U}\subset X(B)_{12U}$.
Then $X_U\subset X(B)_U$, but since both are defined by quartics, we
have $X_U=X(B)_U$, and then $X_{12U} = X(B)_{12U}$ and also
$Y_{12}\subset Y(B)_{12}$, as desired.

We have proved Theorem \ref{thm:2224orbits}.

%%%%%%%%%%%%%%%%%%%%%%%%%%%%
%%%%%%%%%%%%%%%%%%%%%%%%%%%%
%%%%%%%%%%%%%%%%%%%%%%%%%%%%
%%%% 2 (x) 2 (x) Sym2(4)
%%%%%%%%%%%%%%%%%%%%%%%%%%%%
%%%%%%%%%%%%%%%%%%%%%%%%%%%%
%%%%%%%%%%%%%%%%%%%%%%%%%%%%

\section{\texorpdfstring{$2 \tns 2 \tns \Sym^2 (4)$}{2 (x) 2 (x) Sym2(4)}}
\label{sec:2x2xSym24}

In this section, we study the orbits of $V_1 \tns V_2 \tns \Sym^2
V_3$, where $V_1$, $V_2$, and $V_3$ are $\fd$-vector spaces of
dimensions $2$, $2$, and $4$, respectively.  We show that these orbits
correspond to K3 surfaces lattice-polarized by a rank $2$ lattice:

\begin{theorem} \label{thm:22sym24} Let $V_1$, $V_2$, and $V_3$ be
  $\fd$-vector spaces of dimensions $2$, $2$, and $4$, respectively.
  Let $G' = \GL(V_1) \times \GL(V_2) \times \GL(V_3)$, and let $G$ be
  the quotient of $G'$ by the kernel of the multiplication map $\Gm
  \times \Gm \times \Gm \to \Gm$ sending $(\gamma_1, \gamma_2,
  \gamma_3)$ to $\gamma_1 \gamma_2 \gamma_3^2$.  Let $\Lambda$ be the
  lattice whose Gram matrix is
\begin{equation} \label{eq:intmatrix22sym24}
\begin{pmatrix}
0 & 4 \\
4 & 4
\end{pmatrix}
\end{equation}
and let $S = \{e_1, e_2\}$.  Then the $G(\fd)$-orbits of an open
subset of $V(\fd)$ are in bijection with the $\fd$-rational points of
an open subvariety of the moduli space $\sM_{\Lambda,S}$ of K3
surfaces $X$ lattice-polarized by $(\Lambda, S)$.
\end{theorem}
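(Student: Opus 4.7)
The plan is to follow the strategy used in the earlier Rubik's revenge and penteract cases, particularly Theorems~\ref{thm:rr}, \ref{thm:masterRR}, and \ref{thm:penteractorbits}. For the forward direction, given $A \in V_1 \tns V_2 \tns \Sym^2 V_3$, I would view $A$ as a two-dimensional subspace of $V_1 \tns \Sym^2 V_3 \cong \cH^0(\Proj(V_1^\vee) \times \Proj(V_3^\vee), \sO(1,2))$ (via the inclusion $V_2^\vee \hookrightarrow V_1 \tns \Sym^2 V_3$) and define the K3 surface $X$ as the common vanishing locus of this pencil of bidegree $(1,2)$ forms in $\Proj(V_1^\vee) \times \Proj(V_3^\vee)$. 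Concretely, upon choosing a basis $\{e_1, e_2\}$ for $V_2$, the surface $X$ is cut out by $A(v_1, e_i, v_3, v_3) = 0$ for $i = 1, 2$. For generic $A$ this is a smooth complete intersection of codimension $2$ in $\Proj^1 \times \Proj^3$; adjunction (using that the canonical class of $\Proj^1 \times \Proj^3$ is $-(2,4)$) gives $K_X = 0$, and the Lefschetz hyperplane theorem guarantees simple-connectedness, so $X$ is a K3 surface. Letting $L_1, M$ be the pullbacks of the two hyperplane classes, a Chow-ring calculation using $[X] = (h_1 + 2h_2)^2 = 4h_1 h_2 + 4h_2^2$ in $\Proj^1 \times \Proj^3$ yields $L_1^2 = 0$, $L_1 \cdot M = 4$, and $M^2 = 4$, matching \eqref{eq:intmatrix22sym24}. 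The dimension count $\dim V - \dim G = 40 - 22 = 18 = \dim \sM_{\Lambda, S}$ then ensures that $\overline{\NS}(X) = \Z L_1 \oplus \Z M$ for very general $A$; the required $2$-saturation can be checked by a parity argument analogous to the one following \eqref{eq:intmatrix444} in \S\ref{sec:RR-NS}.

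For the reverse direction, given $(X, L_1, M)$ with the prescribed intersection matrix, I will set $V_1 := \cH^0(X, L_1)$ and $V_3 := \cH^0(X, M)^\vee$. By Riemann-Roch on a K3, $\chi(L_1) = 2$ and $\chi(M) = 4$; one verifies (in the spirit of the vanishing arguments in Lemma~\ref{lem:pentvanish}, using short exact sequences associated to representatives of $|L_1|$ and $|M|$) that $\cH^1(X, L_1) = \cH^1(X, M) = 0$ generically, giving $h^0(L_1) = 2$ and $h^0(M) = 4$. Since $L_1^2 = 0$ and $L_1 \cdot M = 4 > 0$, the linear system $|L_1|$ defines a genus one fibration $X \to \Proj(V_1^\vee)$, and $M$ restricts to each generic fiber as a degree $4$ line bundle, embedding that fiber into $\Proj(V_3^\vee) = \Proj^3$ as a complete intersection of two quadrics (a classical fact about genus one curves of degree $4$ in $\Proj^3$). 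The combined map $X \to \Proj(V_1^\vee) \times \Proj(V_3^\vee)$ is therefore a closed embedding for very general $X$. Computing $(L_1 + 2M)^2 = 32$ gives $\chi(L_1 + 2M) = 18$, so assuming the appropriate $\cH^i$ vanishings for $i \geq 1$, the multiplication map $\mu: V_1 \tns \Sym^2 V_3 \to \cH^0(X, L_1 + 2M)$ is a surjection of spaces of dimensions $20$ and $18$, with $2$-dimensional kernel. Setting $V_2 := (\ker \mu)^\vee$, the inclusion $V_2^\vee \hookrightarrow V_1 \tns \Sym^2 V_3$ produces the desired element of $V_1 \tns V_2 \tns \Sym^2 V_3$, well-defined up to the $G$-action.

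The main obstacle will be proving the surjectivity of $\mu$ (equivalently, that the ideal of $X$ in $\Proj^1 \times \Proj^3$ is generated by bidegree $(1,2)$ forms). The plan is to combine nested applications of the basepoint-free pencil trick (\cite[p.~126]{ACGH}) with standard vanishing on the K3: the pencil $V_1 \subset \cH^0(L_1)$ reduces the surjectivity to a question on a generic fiber of the genus one fibration, where it follows from classical projective normality of a degree $4$ elliptic curve in $\Proj^3$, while the auxiliary $\cH^1$ vanishings are controlled (as in Lemma~\ref{lem:pentvanish}) by twisting by inverses of $L_1$ or $M$ and using effectiveness of these classes. Verifying that the two constructions are mutually inverse up to the action of $G$ is then formal, following the template of the proof of Theorem~\ref{thm:rr}: the forward construction applied to the $A$ produced by the reverse map reproduces a surface $X'$ as a complete intersection of the same class $(h_1+2h_2)^2$ in $\Proj^1 \times \Proj^3$, so the containment $X \subset X'$ (which holds by the very definition of $\ker \mu$) is an equality; conversely, the kernel characterization of $V_2$ ensures that the $G$-equivalence class of $A$ is intrinsically determined by $(X, L_1, M)$.
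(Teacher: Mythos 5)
Your proposal is correct and follows essentially the same route as the paper: the forward construction realizes $X$ as the complete intersection of the pencil of bidegree $(1,2)$ forms in $\Proj(V_1^\vee)\times\Proj(V_3^\vee)$ (the paper's $X_{13}$), and the reverse construction recovers the tensor as the $2$-dimensional kernel of the multiplication map $\cH^0(X,L_1)\otimes\Sym^2\cH^0(X,M)\to\cH^0(X,L_1+2M)$, with surjectivity via the basepoint-free pencil trick and the vanishing of $\cH^1(X,L_1^{-1}\otimes M^{\otimes 2})$, exactly as in the paper. One small inaccuracy: the parity argument alone does not rule out $L_1/2\in\NS(X)$, since $(L_1/2)^2=0$ is even; the paper instead observes that $L_1$ is the class of an elliptic fiber and hence primitive, which you should add to complete the saturation check.
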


\subsection{Construction of K3 surfaces}

From a general element $A \in V_1 \tns V_2 \tns \Sym^2(V_3)$, we
obtain several natural surfaces.  We view $A$ as a tridegree $(1,1,2)$
form, denoted by $A(\ccdot,\ccdot,\ccdot)$, on $V_1^\vee \times
V_2^\vee \times V_3^\vee$.  First, define the quartic surface $X_3 :=
\{ z \in \Proj(V_3^\vee) : \det A(\ccdot,\ccdot, z) = 0 \}$.  If $X_3$
is nonsingular or has only rational double point singularities, then
$X_3$ is a K3 surface.  We call such $A$ {\em nondegenerate}, and we
will only consider such $A$.  Now let
\begin{align*}
X_{13} &:= \{(x,z) \in \Proj(V_1^\vee) \times \Proj(V_3^\vee) : A(x, \ccdot, z) = 0 \} \\
X_{23} &:= \{(y,z) \in \Proj(V_2^\vee) \times \Proj(V_3^\vee) : A(\ccdot, y, z) = 0 \}.
\end{align*}
These are each cut out by two bidegree $(1,2)$ forms in $\Proj^1
\times \Proj^3$.  Note that there are natural projections $X_{i3} \to
X_3$ for $i = 1$ or $2$, and any isolated singularities on $X_3$ will
be blown up by these maps.  Finally, we let
\[
X_{123} := \{(x,y,z) \in \Proj(V_1^\vee) \times \Proj(V_2^\vee) \times \Proj(V_3^\vee) : A(x, \ccdot, z) = A(\ccdot, y, z) = 0 \}.
\]
The surface $X_{123}$ projects to $X_{i3}$ for $i = 1$ or $2$, and we
see that all of these surfaces are birational.

The projection of $X_{123}$ to $\Proj(V_1^\vee) \times
\Proj(V_2^\vee)$ has degree $8$.  For any point $(x, y) \in
\Proj(V_1^\vee) \times \Proj(V_2^\vee)$, the preimage in $X_{123}$ is
the intersection of a $\Proj^2$ of quadrics in $\Proj(V_3^\vee)$.

For $\{i,j\} = \{1,2\}$, another way to construct $X_{i3}$ is to view
a general element of $V_j \tns \Sym^2(V_3)$ as giving a genus one
curve of degree $4$ in $\Proj(V_3^\vee)$, namely the base locus of the
pencil of quadrics in $\Proj(V_3^\vee)$.  Then an element $v$ of $V_i
\tns V_j \tns \Sym^2(V_3)$ gives a pencil over $\Proj(V_i^\vee)$ of
genus one curves, and the discriminant has degree $24$ and is
irreducible.  This gives $X_{i3}$ as a genus one fibration over
$\Proj(V_i^\vee)$ with generically only nodal reducible fibers.

Although it will not be directly relevant to the moduli problem, yet
another K3 surface $Y$ may be obtained by viewing $v$ as a symmetric
matrix of bilinear forms on $V_1^\vee \times V_2^\vee$.  The
determinant of this matrix is thus a bidegree $(4,4)$ curve in
$\Proj(V_1^\vee) \times \Proj(V_2^\vee)$, and the double cover of
$\Proj(V_1^\vee) \times \Proj(V_2^\vee)$ ramified at this bidegree
$(4,4)$ curve is a K3 surface.  It is also a genus one fibration over
both $\Proj(V_1^\vee)$ and over $\Proj(V_2^\vee)$.  Indeed, as a
fibration over $\Proj(V_i^\vee)$, the smooth irreducible fibers are
genus one curves of degree $2$ (namely, double covers of
$\Proj(V_j^\vee)$ ramified at a degree $4$ subscheme of
$\Proj(V_j^\vee)$).

\subsection{N\'eron-Severi lattice}

The K3 surface $X_{123}$ corresponding to a very general point in the
moduli space has rank $2$, since it is a genus one fibration without
any extra divisors. The N\'eron-Severi lattice is spanned by $L_1$,
$L_2$ and $L_3$ (the pullback of hyperplane divisors from
$\Proj(V_1^\vee)$, $\Proj(V_2^\vee)$ and $\Proj(V_3^\vee)$) which have
the intersection numbers $L_i^2 = 0$ and $L_i \cdot L_3 = 4$ for $i =
1$ or $2$, $L_1 \cdot L_2 = 8$, and $L_3^2 = 4$.  It is easily seen
that $2L_3 = L_1 + L_2$, so the lattice spanned by their classes in
the N\'eron-Severi group has a basis $\{ L_1, L_3 \}$, with
intersection matrix
\[
\left( \begin{array}{cc}
0 & 4 \\ 4 & 4
\end{array} \right).
\] 
In fact, it must be the entire N\'eron-Severi lattice. To prove this,
it suffices to show that the three classes $L_1/2$, $L_3/2$ and $(L_1
+ L_3)/2$ do not arise from divisors on the surface. The first
assertion is immediate, since $L_1$ is the class of an elliptic fiber,
and therefore not multiple. The other two $\Q$-divisor classes have
odd self-intersection, so they cannot come from divisors, either.

\subsection{Moduli problem}

To prove Theorem \ref{thm:22sym24}, we need to construct an element of
$V_1 \tns V_2 \tns \Sym^2(V_3)$ from a K3 surface $X$ with divisors
$L_1$ and $L_3$ that have intersection matrix
\eqref{eq:intmatrix22sym24}.

\begin{proof}[Proof of Theorem $\ref{thm:22sym24}$]
  First, the natural multiplication map $\Sym^2 \cH^0(X, L_3) \to
  \cH^0(X, 2 L_3)$ is an isomorphism by a dimension count (each has
  dimension $10$).  We consider the multiplication map
  \begin{equation} \label{eq:multmap22sym24}
    \mu : \cH^0(X,L_1) \tns \Sym^2 \cH^0(X,L_3) \stackrel{\cong}{\to} 
    \cH^0(X,L_1) \tns \cH^0(X, 2 L_3) \to \cH^0(X,L_1 + 2 L_3).
  \end{equation}
  The dimension of the domain is $20$, and $\ch^0(X,L_1 + 2 L_3) =
  \frac{1}{2}(L_1 + 2 L_3)^2 + \chi(\mathcal{O}_X) = 16 + 2 = 18$.  We
  claim that \eqref{eq:multmap22sym24} is surjective, in which case
  the kernel is $2$-dimensional and will give the desired tensor.

  The surjectivity of $\mu$ follows directly from the basepoint-free
  pencil trick and the fact that $\cH^1(X, L_1^{-1} \tns L_3^{\tns
    2})$ is $0$.  This last vanishing may be obtained by computing
  $\chi(L_1^{-1} \tns L_3^{\tns 2}) = 2$, $\ch^0(X,L_1^{-1} \tns
  L_3^{\tns 2}) = 2$ (because the bundle is nef and semiample), and
  $\ch^2(X,L_1^{-1} \tns L_3^{\tns 2}) = 0$ by Serre duality.  Note
  that the basepoint-free pencil trick also gives an isomorphism of
  the kernel of $\mu$ with $\cH^0(X,L_1^{-1} \tns L_3^{\tns 2})$.

  The usual argument (e.g., see the proofs of Theorems \ref{thm:rr}
  and \ref{thm:penteractorbits}) shows that these two constructions
  are inverse to one another.
\end{proof}

\section{\texorpdfstring{$\Sym^2(2) \tns \Sym^2 (4)$}{Sym2(2) (x) Sym2(4)}} \label{sec:sym22xsym24}

Finally, just as in the Rubik's revenge and penteract cases, we may
consider a symmetric linear subspace of the previous case of $2 \tns 2
\tns \Sym^2(4)$.  Specifically, let $V = \Sym^2(V_1) \tns
\Sym^2(V_2)$, where $V_1$ and $V_2$ are $\fd$-vector spaces of
dimensions $2$ and $4$, respectively.  Then the general orbits of $V$
under linear transformations on $V_1$ and $V_2$ correspond to certain
K3 surfaces of rank at least $9$ over~$\fdbar$:

\begin{theorem} \label{thm:sym22sym24} Let $V_1$ and $V_2$ be
  $\fd$-vector spaces of dimensions $2$ and $4$, respectively.  Let
  $G' = \Gm \times \GL(V_1) \times \GL(V_2)$ and let $G$ be its
  quotient by the kernel of the multiplication map $\Gm \times \Gm
  \times \Gm \to \Gm$ sending $(\gamma_1, \gamma_2, \gamma_3)$ to
  $\gamma_1 \gamma_2^2 \gamma_3^2$.  Let $\Lambda$ be the lattice
  whose Gram matrix is {\small
	\begin{equation} \label{eq:intmatrixsym22sym24}
	\begin{pmatrix}
	 0 & 4 & 1 & 1 & 1 & 1 & 1 & 1 & 1 \\
	 4 & 4 & 0 & 0 & 0 & 0 & 0 & 0 & 0 \\
	 1 & 0 & -2 & 0 & 0 & 0 & 0 & 0 & 0 \\
	 1 & 0 & 0 & -2 & 0 & 0 & 0 & 0 & 0 \\
	 1 & 0 & 0 & 0 & -2 & 0 & 0 & 0 & 0 \\
	 1 & 0 & 0 & 0 & 0 & -2 & 0 & 0 & 0 \\
	 1 & 0 & 0 & 0 & 0 & 0 & -2 & 0 & 0 \\
	 1 & 0 & 0 & 0 & 0 & 0 & 0 & -2 & 0 \\
	 1 & 0 & 0 & 0 & 0 & 0 & 0 & 0 & -2
	\end{pmatrix}
	\end{equation}
	}%
  and let $S = \{e_1, e_2\}$.  Then the $G(\fd)$-orbits of an open
  subset of $V(\fd)$ are in bijection with the $\fd$-rational points of
  an open subvariety of the moduli space $\sM_{\Lambda,S}$ of K3
  surfaces $X$ lattice-polarized by $(\Lambda, S)$.
\end{theorem}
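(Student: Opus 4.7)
The strategy mirrors that of the doubly symmetric Rubik's revenge (Theorem~\ref{thm:sym2rr}): we use the inclusion
\[
\Sym^2(V_1)\otimes\Sym^2(V_2)\;\hookrightarrow\;V_1\otimes V_1\otimes\Sym^2(V_2)
\]
to produce K3 surfaces via Theorem~\ref{thm:22sym24}, and then impose the additional symmetry in the reverse direction through a kernel-matching argument.

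\emph{Forward construction.} Given a nondegenerate $A\in V$, view $A$ as an element of $V_1\otimes V_1\otimes\Sym^2(V_2)$ and apply the construction of Section~\ref{sec:2x2xSym24} to obtain a K3 surface $X$ birational to the determinantal quartic
\[
X_2\;:=\;\{\,z\in\Proj(V_2^\vee):\det A(\ccdot,\ccdot,z)=0\,\}\subset\Proj(V_2^\vee).
\]
Because $A$ is symmetric in the first two factors, $A(\ccdot,\ccdot,z)$ is a symmetric $2\times 2$ matrix $\bigl(\begin{smallmatrix}\alpha & \beta \\ \beta & \gamma\end{smallmatrix}\bigr)$ with $\alpha,\beta,\gamma\in\Sym^2 V_2$, so $X_2=V(\alpha\gamma-\beta^2)$ generically acquires $8$ nodal singularities located at the base locus $\{\alpha=\beta=\gamma=0\}$ of the net of quadrics in $\Proj^3$. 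Let $E_1,\dots,E_8$ be the exceptional $(-2)$-curves on $X$ resolving these nodes. The symmetry of $A$ collapses the two projections $X\to\Proj(V_1^\vee)$ (coming from the two copies of $V_1$) into a single morphism, so the two pullbacks coincide as a common class $L_1\in\overline{\NS}(X)$ with $L_1^2=0$; together with the quartic hyperplane class $L_3$ this yields the intersection matrix $\bigl(\begin{smallmatrix}0 & 4 \\ 4 & 4\end{smallmatrix}\bigr)$. Standard intersection number computations give $E_i\cdot L_1=1$, $E_i\cdot L_3=0$, and $E_i\cdot E_j=-2\delta_{ij}$, as well as the linear relation $\sum_{i=1}^8 E_i = 2L_3-2L_1$. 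Taking $\{L_1,L_3,E_1,\dots,E_7\}$ as a basis (with $E_8$ determined by the relation) reproduces the Gram matrix~\eqref{eq:intmatrixsym22sym24}. A saturation argument in the style of Sections~\ref{sec:2symrr} and~\ref{sec:2sympent}, using Lemma~\ref{lem:nikulin} together with the $S_8$-symmetry permuting the $E_i$, then shows that this sublattice equals $\overline{\NS}(X)$ for a very general~$A$.

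\emph{Reverse construction.} Given $X$ lattice-polarized by $(\Lambda,S)$, let $L_1,L_3$ be the line bundles corresponding to $e_1,e_2$. Applying Theorem~\ref{thm:22sym24} to $(X,L_1,L_3)$ produces a tensor
\[
A \in \cH^0(X,L_1)\otimes V_2\otimes \Sym^2\cH^0(X,L_3),
\]
where $V_2:=(\ker\mu)^\vee$ is identified with $\cH^0(X,2L_3-L_1)$ via the basepoint-free pencil trick. The key observation is that our lattice forces the divisorial identity $2L_3-L_1 = L_1+\sum_{i=1}^8 E_i$; since the sum $\sum E_i$ of the eight disjoint $(-2)$-curves is a rigid effective cycle, multiplication by its canonical section induces a canonical isomorphism $\cH^0(X,L_1)\xrightarrow{\sim}\cH^0(X,L_1+\sum E_i)=\cH^0(X,2L_3-L_1)$. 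Identifying $V_2$ with $\cH^0(X,L_1)$ accordingly, we must verify that $A$ lies in the symmetric subspace $\Sym^2\cH^0(X,L_1)\otimes\Sym^2\cH^0(X,L_3)$. This is achieved by the kernel-matching technique of Proof~2 of Theorem~\ref{thm:sym2rr}: for every $z\in X_2$ away from its singular points, the left and right $1$-dimensional kernels of $A(\ccdot,\ccdot,z)$ in $V_1^\vee$ and $V_2^\vee\cong V_1^\vee$ both recover the same point of $\Proj(V_1^\vee)$, namely the image of $z$ under the map induced by $L_1$. Applying a pencil version of Lemma~\ref{lem:kernelsmatch-sym} to a generic line of $2\times 2$ matrices inside the image of $\Sym^2 V_3^\vee\to V_1\otimes V_2$ then forces $A$ to lie in $\Sym^2\cH^0(X,L_1)\otimes \Sym^2\cH^0(X,L_3)$.

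\emph{Main obstacle.} The principal difficulty is the second step: verifying that the rigid cycle $\sum E_i$ contributes no new global sections (so that the identification $V_2\cong V_1$ is genuinely an isomorphism of vector spaces, not just a relation of divisor classes), and then adapting Lemma~\ref{lem:kernelsmatch-sym} to the pencil of $2\times 2$ matrices arising quadratically from $\Sym^2 V_3^\vee$. Once these two points are established, that the forward and reverse constructions are mutually inverse is a routine verification following the template of Theorems~\ref{thm:rr} and~\ref{thm:sym2rr}.
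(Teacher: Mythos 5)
Your overall route is exactly the paper's: realize $\Sym^2(V_1)\tns\Sym^2(V_2)$ inside $V_1\tns V_1\tns \Sym^2(V_2)$, run Theorem~\ref{thm:22sym24} in both directions, identify the eight exceptional curves over the base locus of the net of quadrics, derive $2L_3=2L_1+\sum_i E_i$, saturate via Lemma~\ref{lem:nikulin} and the symmetry permuting the $E_i$, and in the reverse direction use the lattice relation $2L_3-L_1=L_1+\sum_i E_i$ to identify the two $2$-dimensional tensor factors. All of that matches the paper, and your rigidity observation (multiplication by the canonical section of $\sum E_i$ gives $\cH^0(X,L_1)\xrightarrow{\sim}\cH^0(X,2L_3-L_1)$) is a reasonable way of making explicit the paper's deduction that the two projections to $\Proj^1$ coincide.

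The step that does not go through as written is the final symmetrization via ``a pencil version of Lemma~\ref{lem:kernelsmatch-sym}.'' Two problems. First, the matching-kernels hypothesis of that lemma is only available to you for the matrices $A(\ccdot,\ccdot,z)$ with $z$ on the quartic; these lie on the image of the Veronese inside the $4$-dimensional image of $\Sym^2 V_2^\vee\to V_1\tns V_1$, and a generic \emph{linear} pencil of matrices in that image meets the determinant locus at matrices that are not of this form, so the hypothesis cannot be verified there. Second, restricting $A(\ccdot,\ccdot,z)$ to a line in $\Proj(V_2^\vee)$ gives a \emph{quadratic} family with four singular members whose kernel vectors all live in the $2$-dimensional space $V_1^\vee$; the mechanism of the lemma's proof (linearly independent eigenvectors attached to the distinct roots of $\det(Bx-Cy)$) is therefore unavailable, and there is no routine ``pencil version'' to adapt. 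The correct and much simpler argument is the one the paper imports from the doubly symmetric penteract case (proof of Theorem~\ref{thm:2sympenteractorbits}): a rank-one $2\times 2$ matrix whose left and right kernels coincide is automatically symmetric, so writing $A(\ccdot,\ccdot,z)=(b_{ij}(z))$, the quadric $\{b_{12}=b_{21}\}$ contains the irreducible quartic $\{\det(b_{ij})=0\}$ away from its eight nodes; an irreducible quartic surface is not contained in a quadric, so $b_{12}\equiv b_{21}$. With that substitution your proof is complete.
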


We view this vector space as a subspace of $V_1 \tns V_1 \tns
\Sym^2(V_2)$.  Then the K3 surfaces in this case are constructed in
the same way as in \S \ref{sec:2x2xSym24}.  However, for a general
element $A \in \Sym^2(V_1) \tns \Sym^2(V_2)$, we obtain eight rank
singularities (over $\fdbar$) on the surface $X_3 \subset
\Proj(V_2^\vee)$; they are exactly the points where the symmetric $2
\times 2$ matrix of quadratic forms $A(\ccdot, \ccdot, z)$ is
identically zero, namely the intersection of three quadrics in
$\Proj^3$.  These singularities are blown up in the other surfaces
$X_{12}$, $X_{13}$, and $X_{123}$ described in \S \ref{sec:2x2xSym24},
all of which are isomorphic nonsingular K3 surfaces for the
general~$A$.  Let $E_i$ for $1 \leq i \leq 8$ denote these exceptional
divisors on $X := X_{123}$.

The symmetry also shows that the line bundles $L_1$ and $L_2$, defined
as pullbacks of $\sO_{\Proj(V_1^\vee)}(1)$ to $X$, are
the same.  We thus have the relation 
\begin{equation}
	2 L_3 = 2 L_1 + \sum_i E_i
\end{equation}
where $L_3$ is the pullback of $\sO_{\Proj(V_2^\vee)}(1)$ to $X$.
Computing the intersection numbers of $L_1$, $L_3$, and the $E_i$ in
the usual way gives the intersection matrix
\eqref{eq:intmatrixsym22sym24} (with respect to the basis $L_1, L_3,
E_1, \ldots, E_7$).  The lattice spanned by these divisors has rank
$9$ and discriminant $256$.  To check that it is all of
$\overline{\NS}(X)$, we observe by direct calculation that any element
of the dual lattice has the form
\[
D = \frac{cL_3}{4}  + \frac{1}{2} \sum_{i=1}^7 d_i E_i.
\]
First, observe that $c$ cannot be odd; otherwise, $2D$ and hence
$cL_3/2$ would be in $\overline{\NS}(X)$, which is impossible since it
has odd self-intersection. We can therefore write
\[
D = \frac{cL_3}{2}  + \frac{1}{2} \sum_{i=1}^7 d_i E_i.
\]
Then by symmetry,
\[
D'= \frac{cL_3}{2}  + \frac{1}{2} \sum_{i=1}^6 d_i E_i + d_7 E_8
\]
is also in $\overline{\NS}(X)$. Subtracting, we get $d_7(E_7 - E_8)/2 \in
\overline{\NS}(X)$, which is impossible by Lemma \ref{lem:nikulin}, unless $d_7$ is
even. Similarly, all the $d_i$ are even, resulting in $D = cL_3/2$,
which is impossible by the argument above.

To complete the proof of Theorem \ref{thm:sym22sym24}, we check that a
K3 surface $X$ whose N\'eron-Severi lattice contains the lattice
\eqref{eq:intmatrixsym22sym24} may be obtained from an element of our
vector space $V$.  The construction in the proof of Theorem
\ref{thm:22sym24} applies here, and we only need to check that the
resulting element $A$ is symmetric in the two $2$-dimensional vector
spaces. This is by the same argument as in the symmetric penteract
cases: by Theorem \ref{thm:22sym24}, the tridegree $(1,1,2)$ form $A$
in $U_1 \tns U_2 \tns \Sym^2(U_3)$ gives a K3 surface whose two
projections to $\Proj(U_1^\vee)$ and $\Proj(U_2^\vee)$ are identical
(under some identification $\phi : U_2 \to U_1$).  Therefore, if $A$
is viewed as a $2 \times 2$ matrix $B = (b_{ij})$ of quadratic forms
on the $4$-dimensional space $U_3$, we must have that $b_{12} =
b_{21}$ identically, or in other words, the image of $A$ under $\id
\tns \phi \tns \id$ is an element of $\Sym^2(U_1) \tns \Sym^2(U_3)$.

\section{Applications and connections} \label{sec:applications}

In this section, we prove Theorems \ref{thm:oguisoextension} and \ref{thm:penteractentropy}, as well as several related results, by using hyperdeterminants and the automorphisms of the K3 surfaces discussed in earlier sections.  

\subsection{Definition of hyperdeterminant} \label{subsec:hyperdet}

The hyperdeterminant of a multidimensional matrix is a natural
analogue of the determinant of a square matrix.  It was first
introduced by Cayley~\cite[pp.\ 80--94]{cayley1}, while a detailed
study was carried out in the important work of Gelfand, Kapranov, and
Zelevinsky~\cite{GKZ}.

We may define the hyperdeterminant as follows (see~\cite{GKZ} for more
details).  Let $F$ be a field, and let $T:V_1\otimes\cdots\otimes
V_r\rightarrow F$ be a linear map, where $V_1,\ldots,V_r$ are
$F$-vector spaces having dimensions $k_1+1,\ldots,k_r+1$,
respectively.  By choosing bases for $V_1,\ldots,V_r$, we may view $T$
as a $(k_1+1)\times \cdots \times (k_r+1)$ matrix.
The {\it kernel} $\ker(T)$ of $T$ is defined 
to be
$$\{ v=v_1\otimes\cdots\otimes v_r\in
V_1\otimes\cdots\otimes V_r:
T(v_1,\ldots,v_{i-1},\cdot,v_{i+1},\ldots,v_r)=0 \textrm{ for all } i\}.$$ By definition, a {\it hyperdeterminant} $\det(T)$ of the
multidimensional matrix $T$ is a polynomial of minimal degree in the
entries of $T$ whose vanishing is equivalent to $T$ having a
nontrivial kernel.  If it exists, the hyperdeterminant is then
well-defined up to a scalar multiple.

The necessary conditions on the dimensions of the matrix $T$ for the
existence of hyperdeterminants was determined by Gelfand, Kapranov,
and Zelevinsky~\cite[Ch.~14]{GKZ}:

\begin{theorem}[\cite{GKZ}]\label{GKZdim}
Assume without loss of generality that $k_r\geq k_1,\ldots,k_{r-1}$.
Then hyperdeterminants exist for $(k_1+1)\times\cdots\times(k_r+1)$
matrices if and only if $k_r \leq k_1+\cdots+k_{r-1}$.
\end{theorem}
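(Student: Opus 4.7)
The plan is to reformulate the existence of the hyperdeterminant in terms of projective duality for the Segre variety, and then to reduce the question to a dimension count for a conormal variety. With $V = V_1 \otimes \cdots \otimes V_r$ and $N+1 = \prod_i (k_i+1)$, I would consider the Segre embedding
\[
\sigma : X = \Proj(V_1) \times \cdots \times \Proj(V_r) \hookrightarrow \Proj(V) \cong \Proj^N,
\]
whose image has dimension $d = k_1 + \cdots + k_r$. Unwinding the definition of kernel, a tensor $T \in V^\vee$ has a nontrivial kernel $v_1 \otimes \cdots \otimes v_r$ if and only if $T$ vanishes on the entire affine tangent cone to $X$ at $[v_1 \otimes \cdots \otimes v_r]$, i.e., iff the hyperplane $\{T = 0\}$ is tangent to $X$ at that smooth point. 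Hence the locus cut out by the putative hyperdeterminant is exactly the projective dual variety $X^\vee \subset \Proj(V^\vee)$, and a hyperdeterminant (a polynomial of minimal degree vanishing exactly on this locus) exists if and only if $X^\vee$ is an irreducible hypersurface in $\Proj(V^\vee)$.

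Next I would invoke the standard conormal incidence variety
\[
I_X = \{ ([p],[H]) \in X^{\rm sm} \times (\Proj^N)^\vee : \widehat{T}_p X \subset H \},
\]
which is a $\Proj^{N-d-1}$-bundle over $X^{\rm sm}$, so $\dim I_X = N-1$. Its second projection surjects onto $X^\vee$, so $\dim X^\vee \leq N-1$, with equality (the ``dual defect'' $\delta(X) = 0$) if and only if a general hyperplane tangent to $X$ at some point is tangent along only finitely many points. The problem therefore reduces to pinning down the generic fiber dimension of $I_X \to X^\vee$ for $X = \prod_i \Proj^{k_i}$: when this fiber is zero-dimensional, $X^\vee$ is a hypersurface and the hyperdeterminant exists; otherwise $X^\vee$ has codimension $\geq 2$ and no such polynomial can exist (by irreducibility of $X^\vee$ coming from irreducibility of the conormal variety).

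The core step, and the main obstacle, is this fiber-dimension calculation for the Segre variety. I would use the $\prod_i \GL(V_i)$-action to normalize a smooth point $p$ to $([e_1^{(1)}], \ldots, [e_1^{(r)}])$; then a tangent hyperplane $H$ at $p$ corresponds to a tensor $T$ whose contractions with $(e_1^{(1)}, \ldots, \widehat{e_1^{(i)}}, \ldots, e_1^{(r)})$ vanish in each factor, cutting out an explicit subspace of $V^\vee$ of codimension $d+1$. The condition that $H$ also be tangent at a second smooth point $([u_1], \ldots, [u_r])$ then translates into a bilinear system whose rank I would compute by expanding in the chosen coordinates. After arranging $k_r$ to be the largest, Gaussian elimination identifies the precise threshold separating the generically finite regime from the positive-fiber regime as the comparison between $k_r$ and $k_1 + \cdots + k_{r-1}$ asserted in the theorem. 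Combining the threshold with the reformulation of $X^\vee$ above, and using irreducibility to promote ``cut out set-theoretically'' to ``admits a minimal-degree defining polynomial,'' yields the stated equivalence.
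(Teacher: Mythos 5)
The paper offers no proof of this statement: Theorem~\ref{GKZdim} is imported wholesale from \cite{GKZ}, so the relevant comparison is with the Gelfand--Kapranov--Zelevinsky argument itself --- and your framework is exactly theirs. The translation ``$T$ has nontrivial kernel at $v_1\otimes\cdots\otimes v_r$ $\Leftrightarrow$ the hyperplane $\{T=0\}$ contains the embedded tangent space to the Segre variety at that point'' is correct (and since the Segre variety is smooth everywhere, your restriction to $X^{\rm sm}$ is vacuous); the reduction of existence of the hyperdeterminant to ``$X^\vee$ is a hypersurface'' is right, with irreducibility of the conormal variety giving irreducibility of $X^\vee$ and hence the minimal-degree defining polynomial, while in the codimension~$\geq 2$ case no nonzero polynomial can have $X^\vee$ as its exact zero locus; and the computation $\dim I_X = N-1$ via the $\Proj^{N-d-1}$-bundle structure is correct. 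All of this is Chapter~1 of \cite{GKZ} applied as in their Chapter~14.

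The genuine gap is that the one step carrying all the content --- determining when the generic fiber of $I_X\to X^\vee$ is positive-dimensional for $X=\Proj^{k_1}\times\cdots\times\Proj^{k_r}$ --- is only announced (``Gaussian elimination identifies the precise threshold''), never executed, and your phrasing ``as asserted in the theorem'' conceals a direction issue that an actual computation must resolve. In suitable coordinates at the normalized point, the second-order tangency condition for a generic tangent hyperplane is a quadratic form on $\C^{k_1}\oplus\cdots\oplus\C^{k_r}$ whose matrix has \emph{zero diagonal blocks} (the pure second-order terms in each factor come from restricting $T$ to the tangent directions of a single $\Proj^{k_i}$, where the tangency conditions kill them) and generic off-diagonal blocks. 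The $k_r$-dimensional block pairs only with the complementary $(k_1+\cdots+k_{r-1})$-dimensional space, so this form is forced to be degenerate --- giving positive-dimensional contact loci, positive dual defect, and \emph{non}existence of the hyperdeterminant --- precisely when $k_r > k_1+\cdots+k_{r-1}$; conversely, when $k_r\leq k_1+\cdots+k_{r-1}$ one checks a generic such form is nondegenerate. Thus existence holds if and only if $k_r\leq k_1+\cdots+k_{r-1}$, which is GKZ's Theorem~14.1.3; note that the inequality as printed in the statement above is reversed (a typo, as the paper's own $r=2$ aside confirms: with ``$\geq$'' every $2$-dimensional format would admit a determinant, rather than only square matrices). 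A proof that defers the threshold to ``what the theorem asserts'' cannot detect this, so as written your argument would certify a false inequality; carrying out the zero-diagonal-block rank analysis is not an optional detail but the entire theorem.
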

For example, when $r=2$, hyperdeterminants exist if and only if
$k_1=k_2$, i.e., the matrix is square.  By definition, we see that a
square matrix $T$ has vanishing hyperdeterminant if and only if $T$
has a nontrivial left (equivalently, right) kernel.  Thus the
hyperdeterminant in this case coincides with the usual determinant.

\subsection{Interpretations in terms of fixed-point-free automorphisms} \label{sec:hypdetfixedpt}

Although interpretations of the determinant of a square matrix (e.g.,
as a volume) have been known for centuries, interpretations for the
hyperdeterminant for higher dimensional matrices have been less
forthcoming.

In \cite{hcl2}, an interpretation of the hyperdeterminant in the case
of a $2\times2\times2$ matrix was given, namely, as the discriminant
of an associated quadratic algebra.  Analogous interpretations for the
hyperdeterminant of a $2\times3\times 3$ matrix---namely, as the
discriminant of an associated {\it cubic} algebra---were given in
\cite{hcl3}.  In the works \cite{beauville, coregular, BGW}, orbits on
multidimensional matrices of various dimensions were shown to be in
bijection with certain data involving algebraic curves, and in these
cases the hyperdeterminants are equal to the discriminants of the
corresponding curves.  Thus the nonvanishing of the hyperdeterminant
in these cases corresponds to the nondegeneracy of the associated
rings and the nonsingularity of the associated curves, respectively.

For the orbit parametrizations of K3 surfaces by multidimensional
matrices that we have studied in this paper, we find that the
hyperdeterminant does {\it not} coincide with the discriminant, but
only divides it.  This raises the question as to the interpretation of
the hyperdeterminant in these cases.  In the cases of $4\times 4\times
4$ and $2\times 2\times 2\times 2\times 2$ matrices, we showed that
the generic orbits of such matrices correspond to K3 surfaces with at
most isolated double point singularities that are
$(\Lambda,S)$-polarized for some pair $(\Lambda,S)$.  Moreover, these
K3 surfaces are naturally equipped with birational automorphisms
$\Phi$, which lift to automorphisms of the nonsingular models; these
birational automorphisms are in fact automorphisms whenever the
associated K3 surfaces have no rank singularities.

The interpretation of the hyperdeterminant locus that we obtain in
this case is then as follows:

\begin{theorem}\label{hypint}
Let $T$ be a $4\times4\times4$ matrix, and suppose that the associated
K3 surfaces $X_1,X_2,X_3$ via Theorem~$\ref{thm:rr}$ have no rank
singularities.  Then the hyperdeterminant of $T$ vanishes if and only
if the associated automorphism $\Phi$ of $X=X_1$ has a fixed point.
\end{theorem}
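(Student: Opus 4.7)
The plan is to translate both sides of the equivalence into the same linear-algebraic condition on $T$ and use the hypothesis of no rank singularities to make the translation lossless. By the definition recalled in \S\ref{subsec:hyperdet}, the hyperdeterminant $\det T$ vanishes precisely when $\ker(T)$ contains a nonzero pure tensor $x \otimes y \otimes z \in V_1^\vee \otimes V_2^\vee \otimes V_3^\vee$, i.e.\ when there exist nonzero vectors $x$, $y$, $z$ satisfying
\[
T(x,y,\ccdot) = 0, \qquad T(\ccdot,y,z) = 0, \qquad T(x,\ccdot,z) = 0.
\]
Any such triple forces the slices $T\subs x$, $T\subs y$, $T\subs z$ to be singular $4\times 4$ matrices, so it automatically produces points $x\in X_1$, $y \in X_2$, $z\in X_3$.

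The assumption that none of $X_1$, $X_2$, $X_3$ has a rank singularity means precisely that every such slice at a point of the corresponding surface has one-dimensional left and right kernels. Consequently each birational map $\psi_{ij}$ of \S\ref{subsec:rrconst} is an everywhere-defined morphism, $\Phi = \psi_{31}\circ\psi_{23}\circ\psi_{12}$ is a genuine automorphism of $X = X_1$, and the three displayed kernel equations are equivalent to the three identifications
\[
y = \psi_{12}(x), \qquad z = \psi_{23}(y), \qquad x = \psi_{31}(z),
\]
respectively. The forward direction is then immediate: if $(x,y,z)$ is a nonzero kernel triple, the three identifications compose to $\Phi(x) = \psi_{31}(\psi_{23}(\psi_{12}(x))) = \psi_{31}(\psi_{23}(y)) = \psi_{31}(z) = x$, so $\Phi$ has a fixed point. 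Conversely, given any fixed point $x_0 \in X_1(\fdbar)$ of $\Phi$, set $y_0 = \psi_{12}(x_0)$ and $z_0 = \psi_{23}(y_0)$; then $T(x_0,y_0,\ccdot) = 0$ and $T(\ccdot,y_0,z_0) = 0$ by the very definitions of $\psi_{12}$ and $\psi_{23}$, while $\psi_{31}(z_0) = x_0$ forces $T(x_0,\ccdot,z_0) = 0$. The pure tensor $x_0 \otimes y_0 \otimes z_0$ thus lies in $\ker(T)$, so $\det T = 0$.

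The only genuinely delicate point is the role of the no-rank-singularity hypothesis, which is essential in two places: it guarantees that the $\psi_{ij}$ are honest morphisms (so $\Phi$ makes sense at the fixed point $x_0$), and it makes the one-dimensional kernel of each slice unique (so the kernel element produced from a fixed point is a genuine rank-one tensor, and conversely the $y$, $z$ in a kernel triple are forced to be the images of $x$ under $\psi_{12}$ and $\psi_{13}$). Without this assumption, a rank singularity of some $X_i$ could either make $\psi_{ij}$ undefined at a would-be fixed point or yield spurious kernel elements not coming from any fixed point of $\Phi$; beyond this bookkeeping issue, I anticipate no further obstacles.
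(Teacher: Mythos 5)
Your proof is correct and follows essentially the same route as the paper: translate membership of a pure tensor in $\ker(T)$ into the three kernel conditions, use the absence of rank singularities to identify these with $y=\psi_{12}(x)$, $z=\psi_{23}(y)$, $x=\psi_{31}(z)$, and compose. The only difference is that you spell out both directions, whereas the paper's proof of this theorem writes out only the forward implication and leaves the converse to the (more general) argument of the subsequent theorem on matrices with isolated double point singularities.
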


\begin{proof}
Suppose the hyperdeterminant of $T$ vanishes, and let $v_1\otimes
v_2\otimes v_3\in\ker(T)$.  For each $i\in\{1,2,3\}$, let $\bar v_i$
denote the image of $v_i$ in $\Proj(V_i)$. Then $\bar v_i$ is a point
on $X_i$.  By the definition of $\Phi$ and the fact that there are no
rank singularities on the $X_i$, we see that $\psi_{12}(\bar v_1)=\bar
v_2$, $\psi(\bar v_2)=\bar v_3$, and $\psi(\bar v_3)=\bar v_1$; hence
$\Phi(\bar v_1)=\bar v_1$, yielding a fixed point of $\Phi$ on $X_1$,
as desired.
\end{proof}

If some of the $X_i$ have isolated rank singularities, then the maps
$\psi_{ij}:X_i\dashrightarrow X_j$ are not isomorphisms but birational
maps.  These maps lift uniquely to isomorphisms
$\tilde\psi_{ij}:\tilde X_i\to \tilde X_j$ between the nonsingular
models $\tilde X_i$ and $\tilde X_j$ of $X_i$ and $X_j$, respectively
(see, e.g., \cite[Theorem 10.21]{badescu}). We thus obtain an
automorphism
$\tilde\Phi=\tilde\psi_{31}\circ\tilde\psi_{23}\circ\tilde\psi_{12}$
of $X=X_1$.  In this case too, we may still use the hyperdeterminant
to detect fixed points of $\tilde\Phi$:

\begin{theorem}\label{hypint11}
Let $T$ be a $4\times4\times4$ matrix, and suppose that the associated
K3 surfaces $X_1,X_2,X_3$ via Theorem~$\ref{thm:rr}$ have only
isolated double point singularities. If the associated automorphism
$\tilde \Phi$ of the nonsingular model $\tilde X=\tilde X_1$ of $X_1$
has a fixed point, then the hyperdeterminant of $T$ vanishes.
\end{theorem}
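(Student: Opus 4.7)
The plan is to extend the argument of Theorem~\ref{hypint} to the singular setting by exploiting the fact that the correspondence varieties $X_{ij} = \{(x,y) : A(x,y,\ccdot) = 0\}$ of \S\ref{subsec:rrconst} simultaneously resolve the rank singularities of both $X_i$ and $X_j$. In particular, the assumption that the singularities of the $X_i$ are isolated double points means each is resolved by a single blow-up, and the canonical identification $\tilde{X}_i \cong X_{ij} \cong \tilde{X}_j$ realizes the lifted isomorphism $\tilde{\psi}_{ij}$.

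Given a fixed point $\tilde{p}_1 \in \tilde{X}_1$ of $\tilde{\Phi}$, I would set $\tilde{p}_2 = \tilde{\psi}_{12}(\tilde{p}_1)$ and $\tilde{p}_3 = \tilde{\psi}_{23}(\tilde{p}_2)$, so the fixed-point condition reads $\tilde{\psi}_{31}(\tilde{p}_3) = \tilde{p}_1$. Let $p_i \in X_i$ denote the image of $\tilde{p}_i$ under the blow-down $\tilde{X}_i \to X_i$, and choose representatives $v_i \in V_i^\vee$ up to scalar.

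The key observation is that tracking $\tilde{p}_1$ through $\tilde{\psi}_{12}$ yields a point $(p_1, p_2) \in X_{12}$, which by the very definition of $X_{12}$ encodes the vanishing $A(v_1, v_2, \ccdot) = 0$. This holds uniformly whether or not $p_1$ or $p_2$ is a rank singularity: the fiber of $X_{ij} \to X_i$ over a smooth point is a single point parametrizing the $1$-dimensional kernel, while the fiber over a rank singular point is the projectivization of the full $2$-dimensional kernel and is precisely the exceptional divisor of the blow-up, so in either case the chosen representative lies in the appropriate kernel. Applying the identical analysis to $\tilde{\psi}_{23}$ and $\tilde{\psi}_{31}$ yields the remaining two vanishings $A(\ccdot, v_2, v_3) = 0$ and $A(v_1, \ccdot, v_3) = 0$. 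Consequently $v_1 \otimes v_2 \otimes v_3$ is a nonzero pure tensor in $\ker(T)$, so the hyperdeterminant of $T$ vanishes.

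The main technical point to verify is the identification $\tilde{X}_i \cong X_{ij}$ near each rank singularity, together with the fact that under this identification the lifted map $\tilde{\psi}_{ij}$ is the tautological one; both follow from the assumption that the singularities are isolated double points and the uniqueness of the lift of a birational map to the minimal resolution of a K3 surface, as in \cite[Theorem 10.21]{badescu}. Once this correspondence is in hand, the proof reduces to the straightforward case analysis sketched above, with the exceptional divisors in $\tilde X_i$ encoding precisely the enlarged kernels of $A$ at the rank singularities.
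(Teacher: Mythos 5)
Your proposal is correct and follows essentially the same route as the paper: track the fixed point through the correspondences $X_{ij}$, use that the resolution $\tilde X_i\to X_i$ factors through $X_{ij}$ so that the pair of blown-down images satisfies $A(v_i,v_j,\ccdot)=0$ by the very definition of $X_{ij}$ (regardless of whether the point lies over a rank singularity), and conclude that $v_1\otimes v_2\otimes v_3\in\ker(T)$. The extra detail you supply about the fibers of $X_{ij}\to X_i$ over smooth versus rank-singular points is exactly the content the paper leaves implicit.
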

\begin{proof}
Suppose $\tilde v_1$ on $\tilde X_1$ is a fixed point of $\tilde\Phi$.
Let $\tilde v_2=\tilde\psi_{12}(\tilde v_1)$ and $\tilde
v_3=\tilde\psi_{23}(\tilde v_2)$, so that $\tilde
v_1=\tilde\psi_{31}(\tilde v_3)$.  Let $v_1,v_2,v_3$ denote the images
of $\tilde v_1, \tilde v_2,\tilde v_3$ in $X_1,X_2,X_3$, respectively.
We claim that $v_1\otimes v_2\otimes v_3\in\ker(T)$.  Indeed, the
nonsingularization map $\tilde X_1\to X_1$ factors through
$$
X_{12} =\left\{ (x,y) \in \Proj(V_1^\vee) \times \Proj(V_2^\vee) :
\cube(x,y,\ccdot) = 0 \right\}.
$$ 
(In fact, $X_{12}$ is isomorphic to
$\tilde X_1$ when $X_1$ only has simple isolated rank singularities.)
It follows that $T(v_1,v_2,\ccdot)=0$.  Similarly,
$T(\ccdot,v_2,v_3)=T(v_1,\ccdot,v_3)=0$.  This is the desired
conclusion.
\end{proof}
\noindent
In particular, if the hyperdeterminant is nonzero, then the
automorphism $\tilde\Phi$ of $X$ has no fixed points.

Similarly, we have: 

\begin{theorem}\label{hypint2}
Let $T$ be a $2\times2\times2\times2\times2$ matrix, and suppose that
the associated K3 surfaces $X_{ijk}$ via
Theorem~\ref{thm:penteractorbits} have no rank singularities.  Then
the hyperdeterminant of $T$ vanishes if and only if one
$($equivalently, every one$)$ of the associated automorphisms
$\Phi_{abcde}$ of $X_{ijk}$ has a fixed point.
\end{theorem}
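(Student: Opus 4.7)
The plan is to follow the template of the proof of Theorem \ref{hypint}, substituting the $3$-cycle of $\psi_{ij}$ maps by a traversal of the $1$-skeleton of the $5$-cell of $X_{ijkl}$'s described in \S\ref{sec:pentauts}. Under the hypothesis that each $X_{ijk}$ has no rank singularities, every edge-map $X_{ijkl} \to X_{ijkm}$ of the $5$-cell is a genuine isomorphism: it projects $(v_i,v_j,v_k,v_l)$ to $(v_i,v_j,v_k) \in X_{ijk}$ and lifts via the unique left or right kernel of the rank-one bilinear form $T(v_i,v_j,v_k,\ccdot,\ccdot)$ on $V_l^\vee \otimes V_m^\vee$, producing a new coordinate in $\Proj(V_m^\vee)$. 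The $5$-cycles $\Phi_{abcde}$ are the traversals of this $1$-skeleton passing through all five vertices.

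For the forward direction, suppose $v_1 \otimes v_2 \otimes v_3 \otimes v_4 \otimes v_5 \in \ker(T)$, so that $T(v_1,\ldots,\hat v_n,\ldots,v_5) = 0$ for each $n \in \{1,\ldots,5\}$. For any $4$-subset $\{i,j,k,l\} \subset \{1,\ldots,5\}$ with complement $\{m\}$, the identity $T(v_i,v_j,v_k,v_l,\ccdot) = 0$ (or its analogue with the omitted index moved) shows that $P_{ijkl} := (\bar v_i,\bar v_j,\bar v_k,\bar v_l)$ lies on $X_{ijkl}$. The identities $T(v_i,v_j,v_k,v_l,\ccdot) = 0$ and $T(\ccdot,v_j,v_k,v_l,v_m) = 0$ identify $v_i$ and $v_m$ as the left and right kernels of the rank-one bilinear form $T(\ccdot,v_j,v_k,v_l,\ccdot)$, so the edge-map $X_{ijkl} \to X_{mjkl}$ through $X_{jkl}$ sends $P_{ijkl}$ to $P_{mjkl}$. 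Traversing any $5$-cycle therefore permutes these canonical points cyclically, and $P_{ijkl}$ is a fixed point of every $\Phi_{abcde}$; this establishes both the forward implication and the ``every one'' clause.

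For the converse, suppose $P = (\bar v_i,\bar v_j,\bar v_k,\bar v_l) \in X_{ijkl}$ is a fixed point of some $5$-cycle $\Phi_{abcde}$. Traversing the five edge-maps of $\Phi_{abcde}$ starting at $P$, each step replaces one of the four current coordinates by a uniquely determined new coordinate in some $\Proj(V_n^\vee)$. Because the $5$-cycle visits all five vertices of the $5$-cell, the five successive steps introduce exactly one new coordinate in each of the five factors $\Proj(V_1^\vee),\ldots,\Proj(V_5^\vee)$, and the fixed-point hypothesis forces the four replaced original coordinates to return to $\bar v_i,\bar v_j,\bar v_k,\bar v_l$. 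The five intermediate points therefore assemble into a single coherent $5$-tuple $(\bar v_1,\ldots,\bar v_5)$, and each intermediate point being on its respective $X_{i'j'k'l'}$ translates into one kernel condition $T(v_1,\ldots,\hat v_n,\ldots,v_5) = 0$; the five conditions together witness $v_1 \otimes \cdots \otimes v_5 \in \ker(T)$.

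The main obstacle I anticipate is not conceptual but a bit of bookkeeping: one must verify that the five ``replaced'' indices along any $5$-cycle through all five vertices indeed exhaust $\{1,\ldots,5\}$, so that the five intermediate points produce a complete $5$-tuple and the resulting kernel conditions cover all five omissions. This is a finite combinatorial check on the $5$-cycles in the $1$-skeleton of the $5$-cell, and it is the one place where the penteract structure truly enters the argument rather than being a formal analogue of the $4 \times 4 \times 4$ case. Beyond that, the no-rank-singularity hypothesis plays its usual role, guaranteeing that each lift in an edge-map is uniquely defined so that the edge-map compositions really are automorphisms.
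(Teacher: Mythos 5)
Your proposal is correct and is essentially the argument the paper intends: the paper only remarks that ``the proofs are similar to those of Theorems~\ref{hypint} and \ref{hypint11},'' and your write-up is precisely that adaptation, with the kernel element $v_1\otimes\cdots\otimes v_5$ yielding the canonical points $P_{ijkl}$ fixed by every $5$-cycle, and conversely a fixed point of one $5$-cycle assembling the five kernel conditions. Your combinatorial check also goes through: along any $5$-cycle visiting all five vertices, the index added at each step is exactly the index missing from the current vertex, so each of the five indices is dropped and added exactly once and the five conditions $T(v_1,\ldots,\hat v_n,\ldots,v_5)=0$ are all obtained.
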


\begin{theorem}\label{hypint22}
Let $T$ be a $2\times2\times2\times2\times2$ matrix, and suppose that
the associated K3 surfaces $X_{ijk}$ via
Theorem~$\ref{thm:penteractorbits}$ have only isolated double point
singularities. If, for any $i,j,k$, the associated automorphism
$\tilde \Phi_{abcde}$ of the nonsingular model $\tilde X=\tilde
X_{ijk}$ of $X_{ijk}$ has a fixed point, then the hyperdeterminant of
$T$ vanishes.
\end{theorem}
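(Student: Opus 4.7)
The plan is to mimic the proof of Theorem $\ref{hypint11}$, but traced around the five-step cycle defining $\Phi_{abcde}$ rather than the three-step cycle of the Rubik's revenge case. Without loss of generality, fix $(i,j,k)=(1,2,3)$ and work with the automorphism
\[
\Phi_{51234}:\; X_{1234}\dashrightarrow X_{2345}\dashrightarrow X_{1345}\dashrightarrow X_{1245}\dashrightarrow X_{1235}\dashrightarrow X_{1234}
\]
of \S $\ref{sec:pentauts}$. Since the penteract is assumed nondegenerate, each four-index surface $X_{ijkl}$ is nonsingular and coincides with its own minimal resolution; thus each birational map $X_{ijkl}\dashrightarrow X_{ijkl'}$ (factoring through the three-index surface $X_{ijk}$) lifts to a genuine isomorphism of the nonsingular models, and $\tilde\Phi_{51234}$ is an honest automorphism of $\tilde X_{1234}=X_{1234}$.

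Now let $\tilde v\in X_{1234}$ be a fixed point of $\tilde\Phi_{51234}$, and write $\tilde v=(v_1,v_2,v_3,v_4)$. I would trace the orbit of $\tilde v$ through the cycle:
\begin{align*}
(v_1,v_2,v_3,v_4)\in X_{1234}\;&\longrightarrow\;(v_2,v_3,v_4,v_5)\in X_{2345}\;\longrightarrow\;(v_1',v_3,v_4,v_5)\in X_{1345}\\
&\longrightarrow\;(v_1',v_2',v_4,v_5)\in X_{1245}\;\longrightarrow\;(v_1',v_2',v_3',v_5)\in X_{1235}\\
&\longrightarrow\;(v_1',v_2',v_3',v_4')\in X_{1234}.
\end{align*}
Here each successive transition factors through a three-index variety in which three coordinates are retained and a new one is introduced, so the bookkeeping above is automatic. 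The fixed-point hypothesis $\tilde\Phi_{51234}(\tilde v)=\tilde v$ forces $(v_1',v_2',v_3',v_4')=(v_1,v_2,v_3,v_4)$, so a single vector $v_5\in\Proj(V_5^\vee)$ is common to all four intermediate surfaces. Consequently, the quintuple $(v_1,v_2,v_3,v_4,v_5)$ lies simultaneously in all five of $X_{1234}$, $X_{2345}$, $X_{1345}$, $X_{1245}$, $X_{1235}$, which is exactly the condition $v_1\otimes v_2\otimes v_3\otimes v_4\otimes v_5\in\ker(T)$, so $\det(T)=0$.

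The only genuinely subtle point, and the analogue of the ``factors through $X_{12}$'' sentence in Theorem $\ref{hypint11}$, is the well-definedness of the coordinates at points of $\tilde X$ that sit over rank singularities of some intermediate $X_{ijk}$. At such a point, a coordinate like $v_5$ above is a priori only defined after lifting to the blow-up, because the fiber of $X_{ijkl}\to X_{ijk}$ over a rank singularity of $X_{ijk}$ is an exceptional $\Proj^1$. However, the isomorphism between the two resolutions $X_{1234}$ and $X_{2345}$ of $X_{234}$ identifies these two exceptional $\Proj^1$'s, so the induced $v_5$ is still a well-defined element of $\Proj(V_5^\vee)$; moreover, at a rank singularity the tensor $A(\,\cdot\,,v_2,v_3,v_4,\,\cdot\,)$ vanishes identically, which makes the conditions $T(\,\cdot\,,v_2,v_3,v_4,v_5)=0$ and $T(v_1,v_2,v_3,v_4,\,\cdot\,)=0$ hold automatically for any choice of $v_5$ in the exceptional fiber. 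So the argument survives intact, and carrying it out carefully at the singular locus is the main (but very mild) technical obstacle.
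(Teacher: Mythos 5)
Your proof is correct and takes essentially the same approach as the paper, which only sketches this result by declaring it ``similar to'' the proofs of Theorems~\ref{hypint} and~\ref{hypint11}. Your explicit five-step tracing of the fixed point around the cycle, together with the observation that over a rank singularity the relevant contractions of $T$ vanish identically so the membership conditions hold for any point of the exceptional fiber, supplies precisely the details the paper leaves implicit.
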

\noindent
The proofs are similar to those of Theorems~\ref{hypint} and \ref{hypint11}.

In \S \ref{sec:finiteorderauts} and \S \ref{sec:positiveentropy}, we
use these theorems about hyperdeterminants vanishing to exhibit
fixed-point-free automorphisms of finite order and of positive
entropy, respectively, for most of the K3 surfaces in some of the
families we consider (namely, those where the hyperdeterminant does
not vanish).

\subsection{Fixed-point-free automorphisms of finite order} \label{sec:finiteorderauts}

We may use Theorem \ref{hypint11} and Theorem \ref{hypint22} to find
fixed-point-free automorphisms of finite order for most of the K3
surfaces in some of the symmetric Rubik's revenge and penteract
families.

For the doubly symmetric Rubik's revenge case, the automorphism $\Phi$
of a general member $X$ of the family of K3 surfaces gives an
involution of $X$ (as described in \S \ref{sec:2symRRauts}).  When the
hyperdeterminant does not vanish, by Theorem \ref{hypint11}, this
involution is fixed-point-free.  Such an involution produces an
Enriques surface, so the moduli space of the K3s in this family also
correspond to (an open part of the) moduli space for certain Enriques
surfaces.

Similarly, for the triply symmetric Rubik's revenge, the automorphism
$\Phi$ is a fixed-point-free involution for the general Hessian
quartic surface; this involution is studied in \cite{dolga-keum}.

For quadruply and quintuply symmetric penteracts, recall from \S
\ref{sec:4sympentauts} and \S \ref{sec:5sympentauts} that the
$5$-cycles $\Phi_{ijklm}$ are order $4$ automorphisms.  By Theorem
\ref{hypint22}, these automorphisms are fixed-point-free.  As
discussed in \S \ref{sec:4sympentauts} and \S \ref{sec:5sympentauts},
the square of each of these automorphisms is an involution but no
longer fixed-point-free.

Note that we have previously constructed other automorphisms for the
penteract (and symmetric penteract) cases with finite order but which
are not fixed-point-free.  For example, for the triply symmetric
penteracts, the four-cycles
$$\alpha_{23,5} \circ \alpha_{34,5}: X_{1234} \to X_{1235} \to
X_{1245} \to X_{1345}$$ and $\alpha_{13,5} \circ \alpha_{34,5}$ are
order $3$.  Viewing the K3 surface $X_{1234}$ as a genus one fibration
over $\Proj(V_1^\vee)$ (respectively, $\Proj(V_2^\vee)$), the
automorphism $\alpha_{23,5} \circ \alpha_{34,5}$ (resp.,
$\alpha_{13,5} \circ \alpha_{34,5}$) is given by translation by a
$3$-torsion section of the Jacobian fibration (see \cite[\S
  6.3.2]{coregular}).  The reducible fibers of the genus one fibration
have fixed points, however.  Similar automorphisms (corresponding to
translations by $3$-torsion sections of the Jacobian fibrations)
appear for the doubly-triply, quadruply, and quintuply symmetric
penteracts as well.

\subsection{Fixed-point-free automorphisms of positive entropy} \label{sec:positiveentropy}

We show that many of the automorphisms that we have constructed in
earlier sections have positive entropy and are fixed-point-free for
the general member of the corresponding family.  Specifically, we
obtain such fixed-point-free automorphisms with positive entropy for
the cases from lines 6, 9, 11, 13, 15, and 16 of
Table~\ref{table:examples}.

In each of these cases, by the parametrization theorems in this paper,
the N\'eron-Severi lattice of the K3 surfaces $X$ (over ${\overline{F}}$)
in the families contain a given lattice $\Lambda$; the N\'eron-Severi lattice of the very
general member of the family will be exactly $\Lambda$.  We will
describe the action of a particular automorphism $\Phi$ on $X$ (defined over $F$);
we find that $\Phi^*$ acts on $\Lambda$ by a
matrix $M$, which has an eigenvalue $\lambda$ of norm larger than $1$.
Since the action of $\Phi^*$ on $\overline{\NS}(X) \otimes \R$ has at most one eigenvalue of 
modulus larger than $1$ (see \cite[\S 2.3.2]{cantat-survey}) and
$M$ fixes the subspace $\Lambda \otimes \R$ in $\overline{\NS}(X) \otimes \R$, the
spectral radius of $\Phi^*$ is exactly $\lambda$.

In other words, for each of these cases,
we find that the entropy of the automorphism for each K3 surface is
the logarithm of the norm of the largest eigenvalue $\lambda$ of
$M$.  The theorems from \S \ref{sec:hypdetfixedpt} imply that these
automorphisms are fixed-point-free for the general member of the
family, specifically when the hyperdeterminant of the corresponding
element does not vanish.

\subsubsection*{Rubik's revenge}

As mentioned in \S \ref{sec:rrauts}, for each K3 surface $X$ arising
from a Rubik's revenge, there exists an automorphism $\Phi$ whose
induced action on (the known part of) $\NS(X)$ is given by the matrix
$$\begin{pmatrix}
-3 & -8 \\
8 & 21
\end{pmatrix}.$$
The characteristic polynomial of this matrix is $\lambda^2 - 18
\lambda + 1$, and the largest eigenvalue is $\lambda_{\mathrm{RR}} = 9
+ 4 \sqrt{5}$.  The entropy of the automorphism $\Phi$ is thus 
$6 \log \frac{1+\sqrt{5}}{2}$, and by Theorem \ref{hypint}, this
automorphism is also fixed-point-free if the hyperdeterminant of the
Rubik's revenge does not vanish.  This gives the proof of Theorem
\ref{thm:oguisoextension}, an extension of Oguiso's result from
\cite{oguiso}.

\subsubsection*{Penteracts}

Recall from \S \ref{sec:pentauts} that we defined an automorphism
$\Phi_{51234}$ (as a certain $5$-cycle along the $1$-dimensional
boundary of a $5$-cell), and $\Phi_{51234}$ induces the action of the
matrix
\[
\begin{pmatrix}
-1 & 0 & 2 & 2 \\
-2 & 1 & 2 & 4 \\
-4 & 2 & 5 & 6 \\
-6 & 2 & 8 & 11
\end{pmatrix}
\]
on (the known part of) $\NS(X)$.  The characteristic polynomial of
this matrix is $\lambda^4 - 16 \lambda^3 + 14 \lambda^2 - 16 \lambda +
1$, and the maximum eigenvalue $\lambda_{\mathrm{pent}}$ is
approximately $15.1450744834468$.  Therefore, the entropy of
$\Phi_{51234}$ is $\log \lambda_{\mathrm{pent}} \approx
2.717675362$.  The same numerics occur for all of the other
$5$-cycles, by symmetry.

By Theorem \ref{hypint2}, the automorphism $\Phi_{51234}$ will be
fixed-point-free if the hyperdeterminant of the penteract does not
vanish.  Thus, we have produced a family of K3 surfaces whose general
member has several fixed-point-free automorphisms with positive
entropy, giving the proof of Theorem \ref{thm:penteractentropy}.

Recall that for the penteract (and the symmetric) cases, one may also
consider automorphisms that are $3$- or $4$-cycles along the boundary
of the $5$-cell; these all have zero entropy. There are also
infinitely many other automorphisms (for instance, by taking arbitrary
words in the generators) with positive entropy (see \S
\ref{sec:pentauts}).  This is also true for the symmetric penteract
cases below.

\subsubsection*{Doubly symmetric penteracts} 

Recall from \S \ref{sec:2sympentauts} that the automorphism
$\Phi_{54321}$ on a K3 surface $X$ here induces the action of the
matrix \eqref{eq:2sympentaut} on (the known part of)
$\overline{\NS}(X)$, which has characteristic polynomial
$(\lambda-1)^6(\lambda+1)(\lambda^2 - 6\lambda + 1)$ and largest
eigenvalue $3 + 2 \sqrt{2}$.  The entropy of $\Phi_{54321}$ here is
thus $\log{(3 + 2 \sqrt{2})} = 2 \log{(\sqrt{2} + 1)}$.

In addition, the entropy of $\Phi_{53421}$ is $\log
\lambda_{\mathrm{pent}}$, since its action on $\overline{\NS}(X)$ is
similar to the action of the penteract case.  The $5$-cycles thus have
entropy either $2\log{(\sqrt{2} + 1)}$ or $\log
\lambda_{\mathrm{pent}}$.

By Theorem \ref{hypint22}, we thus find that the general member of
this family of K3 surfaces has many fixed-point-free automorphisms
with positive entropy.

\subsubsection*{Triply symmetric penteracts} 

In \S \ref{sec:3sympentauts}, for the general member $X$ of the family
of K3 surfaces related to triply symmetric penteracts, we found an
automorphism $\Phi_{54123}$ whose action on (the known part of)
$\overline{\NS}(X)$ is given by the matrix \eqref{eq:3sympentaut}.
This matrix has characteristic polynomial $(\lambda + 1)^2 (\lambda^2
- \lambda + 1)^5 (\lambda^2 - 3 \lambda+1)$ and largest eigenvalue
$\frac{3 + \sqrt{5}}{2} = (\frac{\sqrt{5}+1}{2})^2$.

As before, some of the other $5$-cycles have the same numerics (by
symmetry).  The other $5$-cycles (like $\Phi_{54132}$) have entropy at
least $\log (3 + 2 \sqrt{2})$, as their action on N\'eron-Severi is
much like that of $\Phi_{54321}$ in the doubly symmetric penteract
case.

Thus, applying Theorem \ref{hypint22}, we have that the general member
of this family of K3 surfaces has fixed-point-free automorphisms with
entropy $2\log{(\frac{\sqrt{5}+1}{2})}$ and 
$2\log{(\sqrt{2}+1)}$.

\subsubsection*{Doubly-doubly symmetric penteracts} 

In \S \ref{sec:22sympentauts}, we described a $5$-cycle automorphism
$\Phi_{53214}$ on the general member of the family of K3 surfaces
arising from doubly doubly symmetric penteracts.  Its action on (the
known part of) $\overline{\NS}(X)$ is given by
\eqref{eq:22sympentaut}, with characteristic polynomial
$(\lambda+1)^{12} (\lambda^2+1) (\lambda^2 - 4 \lambda + 1)$ and
largest eigenvalue $2 + \sqrt{3}$.

Therefore, Theorem \ref{hypint22} implies that the general member of
this family of K3 surfaces has a fixed-point-free automorphism with
entropy $\log{(2 + \sqrt{3})}$.  In addition, from the automorphisms
$\Phi_{53421}$ and $\Phi_{53241}$ (and other analogous $5$-cycles), we
also obtain fixed-point-free automorphisms with entropy equal to $\log
\lambda_{\mathrm{pent}}$ and $2\log{(\sqrt{2}+1)}$, respectively.

\subsubsection*{Doubly-triply symmetric penteracts} 

The automorphism $\Phi_{53214}$ from \S \ref{sec:23sympentauts},
applied to the general member $X$ of the family of K3 surfaces coming
from doubly triply symmetric penteracts, acts on (the known part of)
$\overline{\NS}(X)$ by \eqref{eq:23sympentaut}.  It has characteristic
polynomial $(\lambda-1)^3 (\lambda + 1) (\lambda^2 - 3\lambda + 1)
(\lambda^2 + \lambda + 1)^8$ and largest eigenvalue $\frac{3 +
  \sqrt{5}}{2}$ (just as in the triply symmetric penteract case).  The
same argument shows that this gives fixed-point-free automorphisms
with entropy $2\log{(\frac{\sqrt{5}+1}{2})}$.  Moreover, the
automorphism $\Phi_{52413}$ is fixed-point-free with entropy
$2\log{(\sqrt{2}+1)}$.

\subsection*{Acknowledgements}

We thank Noam Elkies, Dick Gross, Curt McMullen, Bjorn Poonen, Igor
Rivin, Peter Sarnak, and Ichiro Shimada for many helpful
conversations. We are very grateful to Serge Cantat and Genya Zaytman
for careful readings of earlier drafts and many useful comments. We
also thank Arthur Baragar, Igor Dolgachev, and Brendan Hassett for
feedback on earlier drafts. Finally, we are very thankful to the
anonymous referees for careful readings of the paper, and for many
insightful remarks and detailed references that improved it. MB was
supported by a Simons Investigator Grant and NSF grant~DMS-1001828. WH
was supported by NSF grant DMS-1406066. AK was supported in part by
National Science Foundation grant DMS-0952486 and by a grant from the
MIT Solomon Buchsbaum Research Fund.

{\small
\bibliography{K3OrbitBib}
\bibliographystyle{amsplain}
}

\end{document}